\newcommand{\mc}[1]{{}}
\mathchardef\GG="321D
\newcommand{\Ebar}{{\overline{E}}}
\newcommand{\Qbar}{{\overline{Q}}}
\newcommand{\oDelta}{{\overline \Delta}}
\newcommand{\oT}{{\overline T}}
\newcommand{\oalpha}{{\overline \alpha}}
\newcommand{\oS}{{\overline \cS}}
\newcommand{\oomega}{{\overline \omega}}
\newcommand{\oE}{{\overline E}}
\newcommand{\oW}{{\overline W}}
\newcommand{\cBt}{{\widetilde{\cB}}}
\newcommand{\qt}{{\tilde{q}}}
\newcommand{\omegat}{{\tilde{\omega}}}
\newcommand{\Sigmat}{{\widetilde{\Sigma}}}
\newcommand{\St}{{\widetilde{S}}}
\newcommand{\alphabar}{{\overline{\alpha}}}
\newcommand{\omegabar}{{\overline{\omega}}}
\newcommand{\betabar}{{\overline{\beta}}}
\newcommand{\pp}{{\bf p}}
\newcommand{\cA}{{\mathcal A}}
\newcommand{\cB}{{\mathcal B}}
\newcommand{\cE}{{\mathcal E}}
\newcommand{\cF}{{\mathcal F}}
\newcommand{\cG}{{\mathcal G}}
\newcommand{\cH}{{\mathcal H}}
\newcommand{\cI}{{\mathcal I}}
\newcommand{\cK}{{\mathcal K}}
\newcommand{\cL}{{\mathcal L}}
\newcommand{\cM}{{\mathcal M}}
\newcommand{\cN}{{\mathcal N}}
\newcommand{\cP}{{\mathcal P}}
\newcommand{\cQ}{{\mathcal Q}}
\newcommand{\cR}{{\mathcal R}}
\newcommand{\cS}{{\mathcal S}}
\newcommand{\cT}{{\mathcal T}}
\newcommand{\cU}{{\mathcal U}}
\newcommand{\cW}{{\mathcal W}}
\renewcommand{\gg}{{\mathfrak g}}
\newcommand{\gh}{{\mathfrak h}}
\newcommand{\bg}{{\mathbf g}}
\newcommand{\bj}{{\mathbf j}}
\newcommand{\bp}{{\mathbf p}}
\DeclareMathOperator{\twist}{twist}
\DeclareMathOperator{\Mod}{Mod}
\DeclareMathOperator{\Ext}{Ext}
\DeclareMathOperator{\I}{i}
\DeclareMathOperator{\hol}{hol}
\DeclareMathOperator{\Log}{Log}
\DeclareMathOperator{\SL}{SL}
\DeclareMathOperator{\II}{Im}
\newcommand{\half}{{\mathbb H}}
\newcommand{\reals}{{\mathbb R}}
\newcommand{\natls}{{\mathbb N}}
\newcommand{\cx}{{\mathbb C}}
\newcommand{\T}{\mathcal{T}}
\newtheorem{theorem}{Theorem}[section]
\newtheorem{proposition}[theorem]{Proposition}
\newtheorem{corollary}[theorem]{Corollary}
\newtheorem{lemma}[theorem]{Lemma}
\theoremstyle{definition}
\newtheorem{definition}[theorem]{Definition}
\newtheorem{example}[theorem]{Example}
\theoremstyle{remark}
\newtheorem{remark}[theorem]{Remark}
\newcommand{\thmref}[1]{Theorem~\ref{Thm:#1}}
\newcommand{\propref}[1]{Proposition~\ref{Prop:#1}}
\newcommand{\secref}[1]{\S\ref{Sec:#1}}
\newcommand{\lemref}[1]{Lemma~\ref{Lem:#1}}
\newcommand{\corref}[1]{Corollary~\ref{Cor:#1}}
\newcommand{\figref}[1]{Fig.~\ref{Fig:#1}}
\newcommand{\exref}[1]{Example~\ref{Exmp:#1}}
\newcommand{\remref}[1]{Remark~\ref{Rem:#1}}
\newcommand{\eqnref}[1]{Equation~\eqref{Eq:#1}}
\newcommand{\defref}[1]{Definition~\ref{Def:#1}}
\newcommand{\param}{{\mathchoice{\mkern1mu\mbox{\raise2.2pt\hbox{$
\centerdot$}}
\mkern1mu}{\mkern1mu\mbox{\raise2.2pt\hbox{$\centerdot$}}\mkern1mu}{
\mkern1.5mu\centerdot\mkern1.5mu}{\mkern1.5mu\centerdot\mkern1.5mu}}}
\newcommand{\emul}{\stackrel{{}_\ast}{\asymp}}
\newcommand{\gmul}{\stackrel{{}_\ast}{\succ}}
\newcommand{\lmul}{\stackrel{{}_\ast}{\prec}}
\newcommand{\eadd}{\stackrel{{}_+}{\asymp}}
\newcommand{\gadd}{\stackrel{{}_+}{\succ}}
\newcommand{\ladd}{\stackrel{{}_+}{\prec}}
\newcommand{\C}{{\mathbb C}}
\newcommand{\Z}{{\mathbb Z}}
\newcommand{\R}{{\mathbb R}}
\newcommand{\bA}{{\bf A}}
\newcommand{\bB}{{\bf B}}
\newcommand{\bC}{{\bf C}}
\newcommand{\Bers}{{\sf B}}
\newcommand{\CNM}{{\sf M_\tau}}
\newcommand{\CK}{{\sf c_{\cK}}}
\newcommand{\CX}{{\sf c_X}}
\newcommand{\const}{{c}}
\newcommand{\cross}{\times}
\newcommand{\ep}{\epsilon}
\newcommand{\vs}{\varsigma}
\newcommand{\Teich}{Teich\-m\"uller~}
\newcommand{\from}{\colon \thinspace}
\newcommand{\ST}{{\: \Big| \:}}
\newcommand{\st}{{\: | \:}}
\newcommand{\Stratum}{{\mathcal C}}
\newcommand{\Moduli}{{\cM(S)}}
\newcommand{\QM}{\cQ^1\cM(S)}
\newcommand{\QT}{\cQ^1\cT(S)}
\newcommand{\Qs}{{\cQ(\sigma)}}
\newcommand{\QMs}{\cQ^1\cM(\sigma)}
\newcommand{\QTs}{\cQ^1\cT(\sigma)}
\newcommand{\Cje}{\Stratum_{j,\epsilon}}
\newcommand{\Qje}{\cQ_{j,\epsilon}}
\newcommand{\Bj}{B_{j} (\Qs, }
\newcommand{\Aj}{A^{\tau}_{j,\epsilon}}
\newcommand{\Pjt}{\cP_{\theta,\tau} (\Qje, }
\newcommand{\Njt}{N_{\theta}(\Qje, }
\newcommand{\Pjht}{\widehat \cP_{\theta,\tau} (\Qje,}
\newcommand{\Nt}{{\widetilde{\cN}}}
\newcommand{\Sqn}{{\cS_q^{\, {\scriptscriptstyle \le} \tau}}}
\newcommand{\Sqc}{{\cS_q^{\, {\scriptscriptstyle \ge} \tau}}}
\newcommand{\Sn}[1]{{\cS_{#1}^{\, {\scriptscriptstyle \le} \tau}}}
\newcommand{\Sc}[1]{{\cS_{#1}^{\,  {\scriptscriptstyle \ge} \tau}}}
\begin{document}

  \title    {Counting closed geodesics in strata}

  \author[A.~Eskin]{Alex Eskin}
  \address{Dept. of Mathematics,
  University of Chicago,
  Chicago, Illinois 60637
   \newline \phantom{x} \qquad \tt http://math.uchicago.edu/$\sim$eskin/}
   \email{\tt eskin@math.uchicago.edu}

  \author[M.~Mirzakhani]{Maryam Mirzakhani}
  \address{Department of Mathematics,   Stanford University,
   Building 380, Stanford, California 94305}
   \email{\tt mmirzakh@math.stanford.edu}
  
  \author[K.~Rafi]{Kasra Rafi}
  \address{Department of Mathematics, University of Toronto,
  Toronto, Ontario, Canada M5S 2E4
  \newline \phantom{x} \qquad \tt http://www.math.toronto.edu/$\sim$rafi/}
  \email{\tt rafi@math.toronto.edu}

\begin{abstract}
We compute the asymptotic growth rate of the number $N(\Stratum, R)$ of 
closed geodesics of length $\leq R$ in a connected component $\Stratum$ of 
a stratum of quadratic differentials. We prove that, for any $0\leq \theta\leq 1$, 
the number of closed geodesics $\gamma$ of length at most $R$ such that 
$\gamma$ spends at least $\theta$--fraction of its time outside of a compact  
subset of $\Stratum$ is exponentially smaller than $N(\Stratum, R)$. 
The theorem follows from a lattice counting statement. For points $x, y$ 
in the moduli space $\Moduli$ of Riemann surfaces, and for $0 \le \theta \le1$ 
we find an upper-bound for the number of geodesic paths of length $\leq R$ in 
$\Stratum$ which connect a point near $x$ to a point near $y$ and spend at
least a $\theta$--fraction of the time outside of a compact subset of $\Stratum$. 
\end{abstract}

\maketitle
 
 
\section{Introduction}
Let $S=S_{g,p}$ be a surface of genus $g$ with $p$ punctures and let $\Moduli$ 
be the moduli space of Riemann surfaces homeomorphic to $S$.  
The co-tangent bundle of $\Moduli$ is naturally identified with $\cQ\Moduli$ 
the space of finite area quadratic differentials on $S$. Let $\QM$ be 
subspace of quadratic differentials of area 1. There is a natural $\SL(2, \reals)$ 
action on the $\QM$. The orbits of the diagonal flow, 
$g_{t}=\begin{bmatrix} e^{t} & 0 \\ 0 & e^{-t}\end{bmatrix}$, projects 
to geodesics in $\Moduli$ equipped with the \Teich metric.
For $R>0$, let $N(R)$ be the number of closed \Teich geodesics of 
length less than or equal to $R$ on $\QM$. It was shown in 
\cite{EM:geodesic-counting} that, as $R \rightarrow \infty$, the number $N(R)$ 
is asymptotic to $e^{hR}/hR,$ where $h=6g-6+2p$. 

The moduli space of quadratic differentials is naturally stratified: to each 
quadratic differential $(x,q) \in \cQ\cM(S)$ we can associate 
$\sigma(q)=(\nu_i, \ldots, \nu_k, \vs)$ 
where $\nu_1, \ldots, \nu_k$ are the orders of the zeros and poles of $q$, and 
$\vs \in \{-1, 1\}$ is equal to $1$ if $q$ is the square of an abelian differential 
and $-1$ otherwise. For a given tuple $\sigma$, we say a quadratic differential 
$(x,q) \in \cQ\cM(S)$ is of type $\sigma$ if $\sigma(q)= \sigma$. The space 
$\cQ\cM(\sigma)$ of all quadratic differentials in $\cQ\cM(S)$ of type $\sigma$ 
is called the stratum of quadratic differentials of type $\sigma$. 
The stratum $\cQ\cM(\sigma)$ is an analytic orbifold of real dimension 
$4g+2k+ \vs-3$.

Let $\QMs$ be the space of quadratic differentials in $\cQ\cM(\sigma)$
of area $1$.  It is not necessarily connected (see \cite{K:Z} 
and \cite{Lanneau} for the classification of the connected components),
however, each connected component is $\SL(2,\reals)$ invariant. 
Let $\Stratum$ be a connected component of $\QMs$. In this paper, we study 
the asymptotic growth rate of the number $N (\Stratum, R)$
of closed \Teich geodesics of length less than or equal to $R$ in 
$\Stratum$. Our main tool is estimating the number $N(\Stratum_{\cK},R)$ 
of closed geodesics that stay completely outside of a large compact set 
$\cK \subset \Stratum$.  

\begin{theorem}\label{Thm:short:saddle}
Given $\delta> 0$ there exists a compact subset $\cK \subset \Stratum$ 
and $R_0>0$ such that for all  $R>R_0$,
\begin{displaymath}
N(\Stratum_\cK,R) \le e^{(h-1+\delta)R}.
\end{displaymath}
\end{theorem}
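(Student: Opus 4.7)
The plan is to derive this theorem from the lattice counting statement announced in the abstract, applied in the extreme case $\theta=1$. That statement gives, for any pair of points $x,y\in\Moduli$, an upper bound of the form $C\,e^{(h-\theta+\delta/2)R}$ on the number of geodesic paths of length $\leq R$ in $\Stratum$ that connect a fixed small neighborhood of $x$ to a fixed small neighborhood of $y$ and spend at least a $\theta$--fraction of their time outside a compact subset of $\Stratum$. Setting $\theta=1$ immediately gives the right-hand side of the theorem, so the whole content is in reducing the closed-geodesic count $N(\Stratum_\cK,R)$ to such a lattice counting problem.

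The reduction should go as follows. First I would choose a compact set $\cK'\supset \cK$ whose boundary sits in a definite thick part of $\Stratum$, and cover a tubular neighborhood of $\bdy\cK'$ (inside $\Stratum$) by finitely many $\epsilon$--balls $B(x_1,\epsilon),\ldots,B(x_m,\epsilon)$. For each closed \Teich geodesic $\gamma\subset\Stratum_\cK$ of length $\ell(\gamma)\leq R$, pick the point $p\in\gamma$ closest to $\cK$; then $p$ lies within a bounded distance of some $x_i$, and $\gamma$ itself determines, via its holonomy, a pseudo-Anosov mapping class $\phi_\gamma$ acting on the Teichmüller space lift: the geodesic segment from a lift $\tilde p$ of $p$ to $\phi_\gamma(\tilde p)$ has length $\ell(\gamma)$ and projects to a loop based at $p$ that lies entirely in $\Stratum_\cK\subset\Stratum_{\cK'}$. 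Thus $N(\Stratum_\cK,R)$ is bounded by a constant multiple of the sum over $i$ of the number of geodesic arcs of length $\leq R$ from $B(x_i,\epsilon)$ to itself that remain outside $\cK'$ for their entire length, i.e.\ have $\theta=1$.

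The second step is then to invoke the lattice counting bound with $x=y=x_i$ and $\theta=1$, sum over the finitely many $i$, and absorb multiplicative constants into the exponent by taking $R_0$ large, yielding the required $e^{(h-1+\delta)R}$.

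The main obstacle is the multiplicity control in the reduction: I must verify that the map $\gamma\mapsto\phi_\gamma$ (together with the choice of marking and of nearest point $p$) is finite-to-one with a bound independent of $R$. This requires the usual Bers-type argument that pseudo-Anosov conjugacy classes with axes intersecting a fixed compact set are represented by boundedly many mapping classes after quotienting by the finite stabilizer of the chosen base point, together with a discreteness statement for the mapping class group orbit in the thick part. A secondary difficulty is that the thin part of a stratum is more complicated than that of $\Moduli$ --- higher-codimension degenerations of the horizontal and vertical foliations can coexist --- so the "collar neighborhood of $\bdy\cK$" must really be chosen inside $\Stratum$, not inside $\Moduli$, and the comparison between the $\Stratum$--metric neighborhoods and the Teichmüller metric on the cover must be handled by a standard thick--thin decomposition argument. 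Once these finite-multiplicity and geometric-combinatorial issues are taken care of, the conclusion is a one-line application of the lattice counting estimate.
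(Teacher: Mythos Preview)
Your overall strategy --- reduce to the lattice counting estimate with $\theta=1$ --- is the same as the paper's, but your reduction has a real gap. You assert that the point $p\in\gamma$ closest to $\cK$ lies near $\partial\cK'$, hence in one of finitely many fixed balls. This fails: a closed geodesic in $\Stratum\setminus\cK$ need not approach $\cK$ at all; it can sit entirely in the thin part, and as $R$ grows such geodesics can go arbitrarily deep. The only a priori control is Veech's inequality (\lemref{veech}): along a closed geodesic of length at most $R$ every extremal length is at least $e^{-cR}$, so the geodesic stays in a region of $\Moduli$ covered by polynomially many (in $R$) net points, not by a fixed finite family.

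The paper repairs exactly this point. Rather than fixing basepoints in advance, it sums $N_\theta(\Stratum_{j,\ep}, x, R)$ over all points $x$ of a net $\cN\subset\Moduli$, uses \lemref{veech} together with \lemref{polynomial} to bound the number of relevant net points by a polynomial in $R$, and absorbs that polynomial into the exponent. The per-point bound (\eqnref{Nx:Q}) comes from the trajectory estimate \lemref{est:tilde:P:j}; if you instead invoke \thmref{j:short:XY} directly with $X=Y$, note that the factor $G(X)G(Y)=G(X)^2$ can be exponentially large in the thin part, but it cancels against the multiplicity $|I_X|\asymp G(X)^2$ (\lemref{count}) in the closed-geodesic-to-lattice-point correspondence --- this is the content of \lemref{geodeiscs:to:trajectories}, not the uniform ``finite-to-one independent of $R$'' you envisage. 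Finally, one needs \lemref{ext:saddle} to guarantee that a sufficiently large compact $\cK$ forces every $(x,q)\in\Stratum\setminus\cK$ to lie in $\Stratum_{1,\ep}$, so that $N(\Stratum_\cK,R)\le N_1(\Stratum_{1,\ep},R)$; a short saddle connection in flat length does not automatically yield one in the extremal-length sense of \defref{basic}.
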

This result implies that:
\begin{theorem}  \label{Thm:asymp:all:geodesics}
As $R \to \infty$, we have
\begin{displaymath}
N(\Stratum, R) \sim \frac{e^{hR}}{hR},
\end{displaymath}
where $h = \frac{1}{2} [1+\dim_{\reals}(\Stratum)]$ and
the notation $A \sim B$ means that the ratio $A/B$ tends to $1$
as $R$ tends to infinity. 
\end{theorem}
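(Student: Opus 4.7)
The plan is to adapt the framework of Eskin--Mirzakhani~\cite{EM:geodesic-counting}, which established the analogous asymptotic for the full space $\QM$, to a single connected component $\Stratum$ of a stratum. The essential ingredients are mixing of the \Teich flow $g_t$ on $(\Stratum, \mu_\Stratum)$ with respect to the Masur--Veech probability measure, the fact that this flow has entropy $h = \tfrac{1}{2}[1 + \dim_{\reals}\Stratum]$, and \thmref{short:saddle}.

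First I would translate the counting of closed geodesics into a lattice counting problem in $\cQ^1\cT(S)$: closed geodesics in $\Stratum$ of length $\le R$ through a point $x \in \Moduli$ correspond to pairs $(x, \gamma x)$ with $\gamma \in \MCG$ and $d_\Teic(x, \gamma x) \le R$, subject to the constraint that the connecting \Teich segment stays in the preimage of $\Stratum$. Integrating this lattice count against a compactly supported bump function near $x$ and applying mixing of $g_t$ yields, in the compact setting, the leading-order asymptotic $e^{hR}/(hR)$. This is the lattice counting statement referenced in the abstract.

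The upper bound on $N(\Stratum, R)$ then follows by the parallel of the argument in \cite{EM:geodesic-counting}, with one wrinkle: closed geodesics that make long cusp excursions in $\Stratum$ must be controlled separately. This is exactly where \thmref{short:saddle} enters---closed geodesics spending any fixed positive fraction $\theta$ of their time outside a sufficiently large compact $\cK \subset \Stratum$ number at most $e^{(h-1+\delta)R}$, which is exponentially smaller than $e^{hR}/(hR)$ and is absorbed into the error. For the matching lower bound, one combines mixing of $g_t$ on $(\Stratum, \mu_\Stratum)$ with a closing-lemma argument applied to long orbit segments that spend most of their time in $\cK$, producing $(1-o(1))\,e^{hR}/(hR)$ distinct closed \Teich geodesics of length at most $R$.

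The main obstacle throughout is the non-compactness of $\Stratum$: without a thick-part restriction, neither the relevant mixing estimates nor the closing lemma hold uniformly, and a priori one cannot exclude that orbits making long excursions into the cusp contribute to the leading-order count. \thmref{short:saddle} is precisely the tool that closes this gap, since it quantitatively confines the dominant contribution to $N(\Stratum, R)$ to closed geodesics that spend nearly all their time in a fixed compact set. Once that reduction is in place, the lattice counting and mixing arguments of \cite{EM:geodesic-counting} apply essentially verbatim and yield $N(\Stratum, R) \sim e^{hR}/(hR)$.
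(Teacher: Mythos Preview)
Your proposal is correct and follows essentially the same approach as the paper: reduce to a compact part of $\Stratum$ via \thmref{short:saddle} (the paper cites the stronger \thmref{j:short}, but the $j=1$, $\theta=1$ case suffices), then invoke mixing of $g_t$ on $(\Stratum,\mu)$, uniform hyperbolicity on compact sets, and a closing lemma to run the Margulis argument. The one point you leave implicit is that the closing lemma in a stratum requires nontrivial work---the paper devotes \secref{Hodge} to adapting Forni's Hodge norm estimates to relative cohomology in order to obtain the needed uniform contraction along $\cF^{ss}$ over a compact set, which is what makes the contraction-mapping step in \lemref{closing} go through.
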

\begin{remark}
In the case of abelian differentials, $h$ is equal to the dimension of the 
relative homology of $S$ with respect to the set of singular points of 
$(x,q) \in \Stratum$, otherwise $h$ is one less.  
\end{remark}

\subsection*{Recurrence of geodesics.}
We prove a stronger version of \thmref{short:saddle}. Every 
quadratic differential defines a singular Euclidean
metric on the surface $S$ and for every compact set $\cK \subset \Stratum$, 
there is a lower bound for the $q$--length of a saddle connection
where $q \in \cK$. Here, we restrict attention to closed geodesics where more 
than one simple closed curve or saddle connection is assumed to be short; 
in this case the growth rate is of even lower order. 

Let $\T(S)$ be the \Teich space, the universal cover of $\Moduli$.
Let $\cQ\T(S)$ and $\QT$ be defined similarly. 
To distinguish between points in the Moduli space and \Teich space, 
we use $x \in \Moduli$ and $X \in \T(S)$. Also, we use the notation 
$(x,q)$ for points in  $\QM$ and $(X,q)$ for points in $\QT$. We denote a geodesic
in $\QM$ by $\bg$ and a geodesic in $\QT$ by $\cG$. The space $\QT$ is also 
naturally stratified. We denote the space of quadratic differentials in $\QT$ of type 
$\sigma$ by $\QTs$. To simplify the notation, let 
$$
\Qs := \QTs.
$$
Recall that $\Ext_X(\alpha)$ denotes the extremal length of a 
a simple closed curve $\alpha$ on the Riemann surface $X \in \cT(S)$. 
(see \eqnref{ext} for definition). We introduce a notion of
{\it extremal length} for saddle connections on quadratic differentials. 
Essentially, the extremal length of a saddle connection 
$\omega$ in a quadratic differential $(X,q) \in \QT$ with distinct end points $p_1$ 
and $p_2$ is the extremal length of the associated curve in the ramified double cover 
of $X$ with simple ramification points at only $p_1$ and $p_2$ 
(see \secref{ext:saddle} for more details). 

\begin{definition}\label{Def:basic}
For $\ep>0$ and for any quadratic differential $(X,q)\in \Qs$, let 
$\Omega_q(\ep)$ be the set of saddle connections $\omega$ 
so that either $\Ext_q(\omega) \leq \ep$ or $\omega$ appears in a geodesic 
representative of a simple closed curve $\alpha$ with $\Ext_X(\alpha) \leq \ep$. 
Let  $\Qje(\sigma)$ be the set of quadratic  differentials $(X,q)$ 
of type $\sigma$ so that $\Omega_q(\ep)$ contains at least $j$ disjoint 
homologically independent saddle connections.  When $\sigma$ is fixed, 
we denote this set simply by $\Qje$. Let $\Cje$ be the set of points in $\Stratum$
whose lift to $\QT$ lies in $\Qje$. For $0\le\theta\le1$, define $N_{\theta}(\Cje,R)$ 
to be the number of closed geodesics of length at most $R$ in $\Stratum$ that spend 
at least $\theta$--fraction of their length in $\Cje$. 
\end{definition}
In this paper, we show: 
\begin{theorem}\label{Thm:j:short}
Given $\delta> 0$, there exist $\ep > 0$ small enough and $R>0$ large enough so that,
for all $j \geq 1$ and $0\leq\theta\leq1$,
\begin{displaymath}
N_{\theta}(\Cje,R) \le e^{(h-j \theta+\delta)R}.
\end{displaymath}
\end{theorem}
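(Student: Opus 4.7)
The plan is to reduce the closed geodesic count to a lattice counting problem in \Teich space and then apply a sharp upper bound capturing how the \Teich flow contracts $j$ additional directions whenever $j$ homologically independent saddle connections are short. For step one, every closed \Teich geodesic of length $\le R$ in $\Stratum$ lifts to a pair $(X,\phi X)$ with $\phi \in \Mod(S)$ pseudo-Anosov, and the projected geodesic spends $\theta$--fraction of its time in $\Cje$ exactly when the lifted geodesic from $X$ to $\phi X$ spends $\theta$--fraction in $\Qje$. Cover $\Stratum$ by finitely many small balls $B_1,\dots,B_m$ centered at quadratic differentials $(X_i,q_i)$; a partition-of-unity and averaging argument bounds $N_\theta(\Cje,R)$ by a sum over pairs $(B_i,B_k)$ of lattice counts of the number of $\phi \in \Mod(S)$ with $d_{\Teic}(X_i,\phi X_k) \le R$ whose connecting geodesic spends $\theta R$ of its time in $\Qje$. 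This is precisely the ``geodesic path between neighborhoods'' count announced in the abstract.

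The second step, which is the heart of the argument, is to bound each such lattice count by $e^{(h-j\theta+\delta)R}$ times factors that are integrable over $\Moduli$ and therefore absorbed. Partition the geodesic from $X$ to $\phi X$ into maximal subsegments lying inside $\Qje$ and in its complement. Along a $\Qje$--subsegment of length $L$, by definition there exist $j$ homologically independent saddle connections with small extremal length (in the sense defined before the theorem via the ramified double cover). In period coordinates of the double cover, these $j$ classes span a symplectic subspace, and under $g_L$ the ``short'' coordinates contract exponentially while their duals expand. The Margulis-type thickening of such a subsegment therefore has transverse volume reduced by $e^{-jL}$ compared to a thick-part thickening of a segment of the same length, so after integrating over starting points the contribution of the subsegment to the lattice count is suppressed by $e^{-jL}$.

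For the third step, sum over combinatorial types specifying the ordering and lengths of the thick and thin subsegments. For a given type with total thin time $L_{\mathrm{thin}} \ge \theta R$, the Step~2 estimate gives a contribution of order $e^{hR - j L_{\mathrm{thin}}} \le e^{(h-j\theta)R}$. The number of types with a bounded number of transitions is sub-exponential in $R$, and types with many transitions are handled by a standard large-deviations estimate on the symbolic coding of the flow; both contributions are absorbed into $e^{\delta R}$.

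The main obstacle is Step~2: obtaining the sharp factor $e^{-jL}$ rather than merely $e^{-L}$. This is exactly the place where the new extremal length for saddle connections, introduced via the ramified double cover in the paragraph before Definition~\ref{Def:basic}, must carry real content: it converts the $j$ disjoint homologically independent short saddle connections into $j$ short simple closed curves on the cover whose period-coordinate thin-part geometry is understood. Verifying that these genuinely produce $j$ independent symplectic contraction directions, so that the volume reductions multiply rather than overlap, is where homological independence is indispensable. Once this refined lattice-counting estimate is established, the global assembly follows the pattern of \thmref{short:saddle}, which corresponds to the case $j=1$, $\theta=1$ and serves as both model and base case.
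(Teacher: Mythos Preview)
Your overall architecture is right in spirit---reduce to a lattice count and show that each unit of time spent in $\Qje$ costs a factor $e^{-j}$---but Step~2 has a genuine gap, and the mechanism you propose is not the one that actually works.

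The problem is your claim that the $j$ short saddle connections ``span a symplectic subspace'' and that a volume/thickening argument then gives the $e^{-jL}$ suppression. Saddle connections live in \emph{relative} homology $H_1(S,\Sigma,\reals)$, while the symplectic form lives on absolute (co)homology $H^1(S,\reals)$; a saddle connection joining two distinct zeros may well be trivial in absolute homology, so there is no symplectic pairing to invoke. Homological independence in relative homology does not translate into $j$ independent symplectic contraction directions, and the ``Margulis-type thickening'' you allude to is not defined. Your Step~3 also leans on an unspecified symbolic coding and large-deviations estimate that is not available off the shelf for arbitrary strata.

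What the paper actually does is quite different and entirely combinatorial. It discretizes at a fixed large scale $\tau$ (not into maximal thin subsegments), replaces geodesics by random-walk trajectories on a net in $\cT(S)$, and proves the one-step estimate
\[
\big|\Bj X,Y,\tau)\big| \lmul \tau^{|\cS_X|+|\cS_Y|}\,e^{(h-j)\tau}\,G(X)\,G(Y)
\]
(\thmref{LatticeCount}). The factor $e^{-j\tau}$ here comes not from symplectic geometry but from \emph{counting triangulations}: the $j$ short saddle connections extend to $(q,\tau)$--regular triangulations at both endpoints (\lemref{Exist}), and fixing $j$ homologically independent edges common to both triangulations reduces the number of possible terminal triangulations by exactly $e^{j\tau}$, via intersection-number relations in $\Z[U]$ (\lemref{Triangle}, \lemref{NonOrientable}, \lemref{MarkingCount}). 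The dimension count $h_\cR = h-j$ is where homological independence enters. One then iterates this one-step bound along trajectories (\lemref{est:tilde:P:j}) to get the exponent $(h-j\theta+\delta)R$, and finally relates closed geodesics to nearly-closed trajectories (\lemref{geodeiscs:to:trajectories}) and sums over the net using \lemref{veech}. The entire analytic content is in the triangulation intersection bounds of \S\ref{Sec:Intersection-Bound}; there is no volume or thickening argument.
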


\begin{remark}
The condition on $\Omega_q(\ep)$ is necessary. Just assuming there are
$j$ saddle connections of $q$--length less than $\ep$ does not 
reduce the exponent by $j$. In fact, for any $\ep$, there is a closed 
geodesic $\bg$ in $\QM$ where the number of saddle connections with 
$q$--length less than $\ep$ is as large as desired at every quadratic
differential $(X, q)$ along $\bg$. This is because the Euclidean size
of a subsurface could be as small as desired (see \secref{thinthick}) and
short saddle connection can intersect. 
\end{remark}

\subsection*{Lattice counting in \Teich space.}
Let $\Gamma(S)$ denote the mapping class group of $S$ and 
let $B_{R}(X)$ denote the ball of radius $R$ in the \Teich space with 
respect to the \Teich metric centered at the point $X \in \cT(S)$. It is known 
(\cite{ABEM}) that, for and $Y \in \T(S)$,
$$
\big| \Gamma(S) \cdot Y \cap B_{R}(X)\big| \sim \Lambda^2 \, e^{(6g-6)R},
$$
as $R \rightarrow \infty.$ Here $\Lambda$ is a constant depending only on the topology of $S$ (See \cite{Dumas:Skinning}). 

The main theorem in this paper is a partial generalization of this result for the 
strata of quadratic differentials. Here we are interested in
the case where the \Teich geodesic joining $X$ to $\gg \cdot Y$, 
for $\gg \in \Gamma(S)$, is assumed to belong to the stratum $\Qs$ or stay 
close to it.

\begin{figure}[ht]
\setlength{\unitlength}{0.01\linewidth}
\begin{picture}(100, 33)

\put(30,0){
  \begin{tikzpicture}
  [thick, 
    scale=.04\unitlength,
    vertex/.style={circle,draw,fill=black,thick,
                   inner sep=0pt,minimum size=1mm}]
   \coordinate (A) at (2,21); 
   \node (X) at (15,15) [vertex, label=above:$X$]  {};  
   \node [draw,circle through=(A),label=left:$B_{R}(X)$] at (X) {};
   \draw[dashed] (X) to node [above] {$R$}   (A); 
   \node (L) at (11,8) [label=below:$\Qs$]  {};  

   \node (X1) at (14.2,12) [vertex, label=below:$X_1$]  {};
   \node (Y1) at (23.5,10) [vertex, label=below:$Y_1$]  {};
   \node (Z) at (24.1,12.5) [vertex, label=above:{$Z = \gg \cdot Y $}]  {};
   \node (Y) at (37,10) [vertex, label=above:$Y$]  {};
   
   \draw (X1) to node [below] {$\cG$}  (Y1); 
   \draw[dotted] (X) -- (X1); 
   \draw[dotted] (Z) -- (Y1); 

   \draw[very thin] (6,13.5) .. controls (15,14) .. (27,11);
   \draw[very thin] (27,11) -- (25,6);
   \draw[very thin] (5,8) .. controls (13,8.5) .. (25,6);
   \draw[very thin] (6,13.5) -- (5,8);
      
   \end{tikzpicture}}
\end{picture}

\caption{There is a geodesic $\cG$ in $\Qs$ connecting a point near $X$ 
to a point near $Z \in B_R(X)$ that is in the orbit of $Y$.}
\label{Fig:Z}
\end{figure}

More precisely, for a fix $r_{0}>0$ (see \secref{Net}), let $N_{\theta}(\Qje, X, Y, R)$ 
be the number of points $Z \in \T(S)$ such that (See \figref{Z}):
\begin{itemize}
\item $Z \in B_{R}(X)$ and $Z = \gg \cdot Y$, for some $\gg \in \Gamma(S)$. 
\item there is a \Teich geodesic segment $\cG \subset \Qs$ joining 
$X_{1} \in B_{r_0}(X)$ to $Y_{1} \in B_{r_0}(Z)$
\item $\cG$ spends at least 
$\theta$--fraction of the time in $\Qje$. 
\end{itemize}
Also, for a fix $\ep_0$ (see \secref{Constants} below), we define $\cS_X$ to be the set of 
$\ep_0$--short curves in $X$ and 
$$
G(X) = 1+ \prod_{\beta \in \mathcal{S}_{X}}
\frac{1}{\sqrt{\Ext_X(\beta)}}.
$$

\begin{theorem}\label{Thm:j:short:XY}
Given $\delta> 0$, there exist $\ep > 0$  small enough and $R>0$ large enough 
such that, for every $0\le\theta\leq1$, $j \geq 1$ and $X, Y \in \cT(S)$,
we have
\begin{displaymath} \label{Eq:XYr}
N_\theta(\Qje, X,Y, R) \le G(X) G(Y) \, e^{(h-j\theta+\delta)R},
\end{displaymath}
\end{theorem}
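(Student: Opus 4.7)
The plan is to reduce the lattice count to a measure count on $\Stratum$ and then control it by combining an ABEM-type volume estimate with the fact that the thin locus $\Cje$ has exponentially small mass. I would proceed in three steps.

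First, discretize the geodesic. Fix a time-step of order $1$ and partition $[0,R]$ into times $0 = t_0 < t_1 < \cdots < t_n = R$. To each contributing $Z = \gamma \cdot Y$ I assign a binary word $\bw \in \{0,1\}^n$ recording whether $g_{t_i}(X_1, q) \in \Cje$ along its associated geodesic $\cG$ from $X_1 \in B_{r_0}(X)$ to $Y_1 \in B_{r_0}(Y)$. The hypothesis that $\cG$ spends at least a $\theta$-fraction of its time in $\Qje$ forces $|\bw| \ge \theta n - O(1)$. Writing $N(\bw, X, Y, R)$ for the count of lattice points with word $\bw$, it then suffices to bound $\sum_{\bw : |\bw| \ge \theta n - O(1)} N(\bw, X, Y, R)$.

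Second, each $N(\bw, X, Y, R)$ is rewritten as a measure estimate on $\Stratum$. Thickening the endpoints to $B_{r_0}(X)$ and $B_{r_0}(Y)$ and passing to $\SL(2,\reals)$ coordinates in the usual ABEM fashion yields
$$
N(\bw, X, Y, R) \lmul G(X) \, G(Y) \; \mu\bigl(\{(x,q) \in \Stratum : g_{t_i}(x,q) \in \Cje \text{ for each } i \text{ with } \bw_i = 1\}\bigr),
$$
where $\mu$ is the Masur--Veech measure on $\Stratum$. The factor $G(X) G(Y)$ is precisely the volume of the Teichm\"uller balls around $X$ and $Y$ in the natural density on the stratum, which blows up when the endpoints have short curves; that is exactly how this factor enters the bound.

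Third, control the multi-time measure by propagating the single-time estimate $\mu(\Cje) \lmul \epsilon^j$. Using exponential mixing of $g_t$ on $\Stratum$ together with a bootstrap along the partition, I expect a product-type bound
$$
\mu\bigl(\{(x,q) : g_{t_i}(x,q) \in \Cje \text{ for all } i \in I\}\bigr) \lmul c(\epsilon)^{|I|},
$$
with $c(\epsilon) \to 0$ as $\epsilon \to 0$, and $c(\epsilon) \le e^{-j + \delta/2}$ once $\epsilon$ is small enough. Feeding this back, and summing over $|\bw| \ge \theta n$ against the full growth $e^{hR}$ coming from the EM/ABEM lattice count for the whole stratum, produces the target bound $G(X) G(Y) \, e^{(h - j\theta + \delta)R}$.

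The main obstacle is the third step. The indicators of $\Cje$ at different $t_i$ are not independent, and the single-time estimate $\mu(\Cje) \lmul \epsilon^j$ itself relies crucially on the homological-independence built into the definition of $\Omega_q(\epsilon)$: without it, short saddle connections at successive moments along the geodesic can simply be the same short saddle connection persisting, and the decay fails (exactly the phenomenon highlighted in the remark following \thmref{j:short}). Quantifying the effective mixing rate of the Teichm\"uller flow on $\Stratum$ sharply enough, and combining it with a Siegel--Veech-type simultaneous count of short saddle connections, is where the real work lies; the appearance of the exponent $j$ (rather than $1$) on the right-hand side is a direct reflection of that Siegel--Veech count.
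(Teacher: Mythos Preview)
Your proposal has genuine gaps, and the paper's approach is entirely different from what you sketch.

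\textbf{Circularity in Step~3.} You invoke ``the full growth $e^{hR}$ coming from the EM/ABEM lattice count for the whole stratum.'' But ABEM counts lattice points in full \Teich space, with exponent $6g-6+2p$; for a non-principal stratum $\Stratum$ with its smaller $h = \tfrac12(1+\dim_\reals\Stratum)$, the upper bound $|\Bj X,Y,\tau)| \lmul e^{h\tau} G(X)G(Y)$ is precisely \thmref{LatticeCount} with $j=0$, and that theorem (together with the case $j\ge 1$) is the engine behind \thmref{j:short:XY}. There is no prior lattice estimate for the stratum you can quote.

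\textbf{Step~2 is not a valid reduction.} The endpoints $X,Y$ live in $\cT(S)$, while the geodesic lives in $\Qs$; for a non-principal stratum the fiber $\pi^{-1}(X)\cap\Qs$ is not of constant dimension, so ``thickening to $B_{r_0}(X)$ and passing to $\SL(2,\reals)$ coordinates'' does not produce the displayed inequality with the Masur--Veech measure on $\Stratum$. The $G(X)G(Y)$ factors in the paper do not arise as ball-volumes in the stratum; they come from \lemref{count} (the size of the approximate stabilizer of $X$) and from the multiplicity of the map $Z\mapsto\oDelta$ in the proof of \thmref{LatticeCount}.

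\textbf{Step~3 fails quantitatively.} With time step of order $1$ the events $\{g_{t_i}q\in\Cje\}$ are essentially the same event at consecutive $i$ (a saddle connection short at time $t$ is short at time $t+1$), so no product bound holds. With time step $\tau$ large, you would need $c(\ep)\le e^{-j\tau}$; exponential mixing gives $|\mu(A\cap g_\tau^{-1}B)-\mu(A)\mu(B)|\lmul e^{-\gamma\tau}$ only for H\"older observables, with a rate $\gamma$ that is not known to exceed $j$ for all $j$, and the indicator of $\Cje$ is not H\"older. The student's own final paragraph correctly identifies this as the obstacle, but does not overcome it.

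\textbf{What the paper actually does.} The argument is purely combinatorial and uses no mixing, no Siegel--Veech estimate, and no prior volume asymptotic. One fixes a net $\Nt\subset\cT(S)$ and approximates each contributing geodesic by a trajectory $\lambda_0,\dots,\lambda_n$ with step size $\tau$ (\lemref{geodeiscs:to:trajectories}). The core is \thmref{LatticeCount}: given $\lambda_k$, the number of net points $\lambda_{k+1}\in\Gamma\cdot Y$ reachable by a $\Qs$--geodesic of length $\le\tau$ whose initial and terminal $(q,\tau)$--regular triangulations share $j$ homologically independent edges is at most $\tau^m e^{(h-j)\tau}G(\lambda_k)G(Y)$. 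The exponent drop $-j$ is a \emph{dimension count}: fixing $j$ independent edges of the source triangulation cuts the number of admissible target triangulations by $e^{-j\tau}$, via the intersection-number relations of \lemref{Triangle} and \lemref{NonOrientable} and the marking count of \lemref{MarkingCount}. Iterating this single-step estimate over the trajectory (\propref{inequalities:saddles} and \lemref{est:tilde:P:j}) gives \eqref{Eq:est:tilde:P:j:2}, from which \thmref{j:short:XY} follows by summing over net points.
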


Compare with Theorem $7.2$ in \cite{EM:geodesic-counting}. 

\subsection{Notes on the proof.} \label{Sec:Notes}
\subsection*{1.} Each stratum $\QMs$ has an affine integral structure, and carries 
a unique probability measure $\mu$, called the Masur-Veech measure, 
invariant by the \Teich flow which is equivalent to the Lebesgue measure. Moreover, 
the restriction of the \Teich flow to any connected component $\Stratum$ of 
$\QMs$ is mixing with respect to the Lebesgue measure class \cite{Masur1},
\cite{Veech}. In fact, the \Teich flow on $\Stratum$ is exponentially mixing with 
respect to $\mu$ \cite{Avila:Resende}, \cite{AGY:mixing}. However, we will only 
use the mixing property (as stated in \thmref{VM}) in this paper. 

\subsection*{2.} The main difficulty for proving \thmref{asymp:all:geodesics} 
is the fact that the Teichm\"uller flow is not uniformly hyperbolic. As in 
\cite{EM:geodesic-counting}, we show that the \Teich geodesic flow (or more 
precisely an associated random walk) is biased toward the part of $\Stratum$ 
that does not
contain short saddle connections (see \lemref{est:tilde:P:j}). 
Similar method has been used in \cite{EM:geodesic-counting} where it is enough 
to use Minsky's product region theorem (see \secref{extremal}) to prove 
the necessary estimates. In this paper, since the projection map from $\Stratum$ to 
$\Moduli$ is not easy to understand, we need different and more delicate methods 
to obtain similar results. 

\subsection*{3.} 
We define a notion of a 
\emph{$(q, \tau)$--regular} triangulation for a quadratic differential $(X,q)$ 
(\defref{nice:triangulation}). Such a triangulation captures the geometry
of singular Euclidean metric associated to $q$ in a way that is compatible
with the hyperbolic metric associated to $X$. We will show that
a set of disjoint saddle connections in $\Omega_q(\ep)$ can be included in a
$(q, \tau)$--regular triangulation (\lemref{Exist}).

\subsection*{4.}
In order to prove \thmref{LatticeCount} (\secref{main:counting})
we compute, given the triangulation $T_a$, the number of possible 
triangulations $T_b$ which have certain bounds on their intersection
number with $T_a$. It turns out that the number of possible triangulations
$T_b$ is related to the number of edges in $T_a$ that are
homologically independent. This is the main reason that the growth rate of 
$N_\theta(\Qje, X,Y, R)$ is related to $\dim_\reals \Stratum$. 
In \secref{geometry} we establish the basic properties of a $(q,\tau)$--regular 
triangulation and in \secref{Intersection-Bound} we establish the necessary 
bounds on the intersection number between $T_a$ and $T_b$
needed in \secref{main:counting}.

\subsection*{5.}
Theorems \ref{Thm:short:saddle} and \ref{Thm:j:short} are essentially corollaries 
of \thmref{j:short:XY}. In \secref{thin:part}, we use \thmref{LatticeCount} to prove \thmref{j:short:XY}. 
Here we describe the steps involved in the proof of 
\thmref{j:short:XY}. First, we fix a net $\cN$ in $\Moduli$ and its lift $\tilde \cN$ in $\T(S)$.  
For any constant $\tau$, we note that $N_\theta(\Qje, X,Y, R)$ is bounded above by 
the number of trajectories $\{\lambda_0, \ldots, \lambda_n \}$ in $\tilde \cN$ from 
$X$ to $Y$ so that the distance between $\lambda_i$ and $\lambda_{i+1}$ is at most 
$\tau$ and, for $\theta$ proportion of steps, the segment $[\lambda_i, \lambda_{i+1}]$ 
can be approximated by a path in $\Qje$. 

Given $\lambda_i$, we bound the number of possible choices for
$\lambda_{i+1}$ so that the segment $[\lambda_i, \lambda_{i+1}]$ 
can be approximated by a path in $\Qje$. The bound depends on the geometry of 
$\lambda_i$ (captured by the function $G(\param)$. 

On the other hand, if
$\cG \from [a,b] \to \Qje$ is a  geodesic segment with initial and terminal 
quadratic differentials $(X_a, q_a)$ and $(X_b, q_b)$ with $|b-a| \leq \tau$, one can find
a $(q_a, \tau)$--regular triangulation $T_a$ and $(q_b,\tau)$--regular
triangulation $T_b$ so that $T_a$ and $T_b$ have $j$ nomologically independent edges in common 
(See Lemma \ref{Lem:quad} for the precise statement). Then \thmref{LatticeCount} shows that
the number of choices for 
$\lambda_{i+1}$ is also reduced by a factor $e^{-j\tau}$.

\subsection*{6.} To obtain \thmref{asymp:all:geodesics}, we 
use the basic properties of the Hodge norm \cite{ABEM} to prove a 
closing lemma for the \Teich geodesic flow in \secref{closing}.
We remark that the Hodge norm behaves badly near smaller strata, i.e. near points 
with degenerating zeros of the quadratic differential, where quadratic differentials 
have small geodesic segments.

On the other hand, the set of quadratic differentials with no small geodesic 
segment is compact and 
in any compact subset of $\Stratum$, the geodesic flow is uniformly hyperbolic 
(See \cite{Veech}, \cite{forni} and \secref{Hodge}). 
Also, in view of \thmref{j:short}, for any $0 \leq \theta \leq 1$, 
the number of closed geodesics $\gamma$ of length at most $R$ such that 
$\gamma$ spends at least a $\theta$--fraction of the time outside of a compact 
subset of $\Stratum$ is exponentially smaller than $N (\Stratum,R)$. Therefore, 
"most" closed geodesics spend at least $(1-\theta)$--fraction 
of the time away from the degenerating locus. 
This allows us to prove  \thmref{asymp:all:geodesics} following the ideas from  Margulis' thesis \cite{Margulis:thesis}.

\subsection{Further remarks and references} \label{Sec:Remarks} 
\subsection*{1} According to the Nielsen-Thurston classification, every 
irreducible mapping class $\gg \in \Gamma(S)$ of
infinite order has a representative which is a pseudo-Anosov homeomorphism. 
Let $K_\gg$ denote the dilatation factor of $\gg$ \cite{Th}. 
By a theorem of Bers, every closed geodesic in $\Moduli$ is the unique loop of 
minimal length in its homotopy class. Also a pseudo-Anosov $\gg \in \Gamma(S)$ 
gives rise to a closed geodesic $\cG_{\gg}$ of length $\log(K_\gg)$ in $\QM.$
Hence $\log(K_\gg)$ is the translation length of $\gg$ as an isometry of 
$\cT(S)$ \cite{Bers}. In other words, 
$$
\cL(S)=\big\{ \log(K_\gg) \st \gg \in \Gamma(S) \quad
\text{pseudo-Anosov} \; \big\} 
$$
is the length spectrum of $\Moduli$ equipped with the \Teich metric.
By \cite{AY} and \cite{Iv}, $\cL(S)$ is a discrete subset of $\reals$. 
Hence the number of conjugacy classes of pseudo-Anosov elements of the group $
\Gamma(S)$ with dilatation factor $K_\gg \leq e^R$ is finite.
We remark that for any pseudo-Anosov $\gg \in \Gamma(S)$ the number $K_\gg$ 
is an algebraic number and $\log(K_\gg)$ is equal to the minimal topological 
entropy of any element in the same homotopy class \cite{FLP}. (See
\cite{P} and \cite{BC} for simple explicit constructions of pseudo-Anosov 
mapping classes.) In terms of this notation, $N(\Stratum,R)$ is the number of 
conjugacy classes of pseudo-Anosov elements $\gg$ in the mapping class group $
\Gamma(S)$ with expansion factor of at most $e^{R}$ such that 
$\cG_{\gg} \subset \Stratum.$

\subsection*{2}The first results on this problem are due to Veech \cite{Veech}. He
proved that there exists a constant $c$ such that
\begin{displaymath}
h \le \liminf_{R \to \infty} \frac{\log N(R)}{R} \le \limsup_{R
  \to \infty} \frac{\log N(R)}{R} \le c
\end{displaymath}
and conjectured that $c= h$.

Foliations fixed by pseudo-Anosov maps can be characterized by being
representable by eventually periodic "convergent" words \cite{PP1}. Moreover, there 
is an inequality relating the length of the repeating
part of the word corresponding to a pseudo-Anosov foliation and the
dilatation factor of a pseudo-Anosov map preserving that foliation \cite{PP2}. 
However, the estimates obtained using these inequalities are weaker. 

\subsection*{3}
The basic idea behind the proof of the main theorem in this paper is 
proving recurrence results for \Teich geodesics. Variations on this theme have been 
used in \cite{EMM}, \cite{EM}, \cite{Jayadev}, and \cite{EM:geodesic-counting}.
One reason the proof is different from \cite{EM:geodesic-counting} is 
that in general the projection map 
$\pi\from \QMs \rightarrow \Moduli$ is far from being a fibration: in many cases 
$\dim(\QMs)<\dim(\cM(S))$ and $\dim(\pi^{-1}(X) \cap \QMs)$ depends on the 
geometry of $X$. In this paper, we need to analyze the geometry of quadratic 
differentials more carefully. The results obtained in \secref{geometry}
allow us to deal with this issue. 

\subsection*{4} 
Our results are complimentary to the following result:
\begin{theorem}[Hammenstadt]
\label{Thm:rafi:hammenstadt}
There exists a compact $\cK \subset \Stratum$ such that  for $R$ sufficiently large, 
\begin{displaymath}
N(\Stratum_{\cK}, R) \ge e^{(h-1)R}.
\end{displaymath}
\end{theorem}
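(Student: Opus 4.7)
The plan is to construct enough closed geodesics in $\Stratum_{\cK}$ by shadowing closed geodesics from an adjacent lower-dimensional stratum. Let $\Stratum_{0}$ be a stratum of real codimension $2$ in $\Stratum$ in the natural stratification, obtained by collapsing a single saddle connection (merging two distinct zeros into one, or the analogous operation for quadratic differentials). In period coordinates on $\Stratum$, a neighborhood of $\Stratum_{0}$ corresponds to quadratic differentials in $\Stratum$ with a single short saddle connection $\omega$ whose complex period approaches $0$. Thus $\dim_{\reals} \Stratum_{0} = \dim_{\reals} \Stratum - 2$ and consequently
\[
h_{0} \, := \, \tfrac{1}{2}(1+\dim_{\reals} \Stratum_{0}) \, = \, h - 1.
\]
If $\cK$ is chosen to be the locus of quadratic differentials whose saddle connections all have $q$-length at least some fixed $\epsilon_{0}>0$, then differentials in $\Stratum$ with sufficiently short $\omega$ lie in $\Stratum_{\cK}$.

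The first step is to count closed geodesics inside $\Stratum_{0}$. Arguing by induction on $\dim_{\reals} \Stratum$ with \thmref{asymp:all:geodesics} applied to $\Stratum_{0}$ yields
\[
N(\Stratum_{0}, R) \, \sim \, \frac{e^{(h-1)R}}{(h-1)R}.
\]
The second step is to associate to each closed geodesic $\gamma_{0} \subset \Stratum_{0}$ with underlying pseudo-Anosov $\phi_{0}$ of log-dilatation $R_{0}$ a family of pseudo-Anosovs in $\Stratum$ whose axes shadow $\gamma_{0}$ and remain in $\Stratum_{\cK}$. I would accomplish this by a \emph{plumbing} construction: take a quadratic differential $q_{0}$ on the axis of $\phi_{0}$ in $\Stratum_{0}$, open up its collapsed zero into two nearby zeros joined by a small saddle connection $\omega$, and then compose $\phi_{0}$ with powers of a twist supported near $\omega$. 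The Teichm\"uller flow acts on the complex period of $\omega$ by $\diag(e^{t}, e^{-t})$, and the return map to $\Stratum_{0}$ on this $2$-dimensional normal plane is the composition of $\diag(e^{R_{0}}, e^{-R_{0}})$ with the action of the twist. For appropriate discrete choices of the twist count, this return map admits a small $\omega$-configuration that closes up, producing a pseudo-Anosov in $\Stratum$ whose axis shadows $\gamma_{0}$ while keeping $\omega$ uniformly short throughout the flow.

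Summing these plumbed realizations over closed geodesics $\gamma_{0} \subset \Stratum_{0}$ of length at most $R$ produces the required lower bound for $N(\Stratum_{\cK}, R)$. The main obstacle is the plumbing construction itself: verifying that the mapping classes produced are genuinely pseudo-Anosov elements of $\Gamma(S)$ whose translation lengths fall in the desired range, that their axes remain uniformly in the thin part throughout (and not merely at the single base differential $q_{0}$), and that distinct twist counts give distinct conjugacy classes. This will require a closing-type lemma for the Teichm\"uller flow near $\Stratum_{0}$, analogous to the closing argument established in \secref{closing} for the thick region, together with a careful linearization analysis of $\phi_{0}$ acting on the normal $2$-plane where the geodesic flow is hyperbolic with known eigenvalues.
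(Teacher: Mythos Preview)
The paper does not prove this theorem. It is stated in the ``Further remarks and references'' section as a result attributed to Hamenst\"adt and Rafi, with citations to \cite{Hamenstadt:BM} and \cite{Rafi} (the latter listed as ``In preparation''). There is therefore no proof in the paper to compare your proposal against.

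On its own merits, your outline has the right intuition --- closed orbits near a boundary stratum $\Stratum_0$ should account for the thin-part count --- but the central step does not work as written. The phrase ``powers of a twist supported near $\omega$'' is the problem: when $\omega$ is a saddle connection joining two distinct zeros, there is no element of $\Gamma(S)$ that acts as a Dehn twist about it. The normal $\C$--direction you describe is the period of $\omega$, and motion in that direction is the \emph{kernel} (REL) foliation of the stratum, which is a deformation of the flat structure, not the action of a mapping class. Consequently ``composing $\phi_0$ with a twist'' does not produce a new element of $\Gamma(S)$, and you have no candidate pseudo-Anosov in hand. Relatedly, the return map on the normal plane is not simply $\diag(e^{R_0}, e^{-R_0})$: one must also account for how $\phi_0$ acts on the relative homology class of $\omega$, which in general mixes it with the absolute periods and need not preserve the short-$\omega$ locus at all.

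There is also a counting discrepancy. Even granting one closed geodesic in $\Stratum_{\cK}$ per closed geodesic in $\Stratum_0$, \thmref{asymp:all:geodesics} for $\Stratum_0$ gives only $\sim e^{(h-1)R}/((h-1)R)$, which falls short of the claimed $e^{(h-1)R}$ by a factor of $R$. Your mention of ``distinct twist counts'' suggests you hope to recover this factor, but since the twist is not available, you would need a different mechanism to produce roughly $R$ distinct closed geodesics in $\Stratum_{\cK}$ shadowing each $\gamma_0$. A more robust route is to work with closed geodesics along which a \emph{simple closed curve} (rather than a saddle connection) stays short: there one genuinely has Dehn twists to compose with, and the product-region description of the thin part (\thmref{PRT}) lets one control the resulting translation lengths.
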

Also, by results in \cite{Hamenstadt:BM} the normalized geodesic flow 
invariant 
measure supported on the set of closed geodesics of length $\leq R$ in $\Stratum$ 
become equidistributetd
with respect to the Lebesgue measure $\mu$ as $R\rightarrow \infty$.

\subsection{Choosing constants} \label{Sec:Constants}  
We choose our constants as follows: We call a curve short if its extremal
length is less than $\ep_0$. This is a constant that depends on the topology 
of $S$ only (a uniform constant) and is chosen so that \thmref{PRT}
and the estimate in \eqnref{EFE} hold. We call any other constant that
depends in the topology of $S$ or the choice of $\ep_0$ a uniform constant. 
Most of these constants are hidden in notations $\emul$ and $\lmul$ (see 
the notation section below). For the arguments in \secref{thin:part} to work, 
we need to choose $\tau$ large enough depending on the value of $\delta$
(see proofs of \thmref{j:short} and \lemref{est:tilde:P:j}
in \secref{thin:part}). Then $\ep$ is chosen
small enough depending on the value of $\tau$. We need 
$\ep \le \ep_1=\ep_1(\tau)$ so that \lemref{Exist} holds and 
$\ep \leq \ep_2=\ep_{2}(\tau)$ so that \lemref{quad} holds. 
The dependence on the choice of $\tau$ and $\ep$ is always highlighted
and a constant that we call uniform does not depend on $\ep$ or $\tau$. 

\subsection{Notation} \label{Sec:Notation}
In this paper, the expression $\bA \lmul \bB$ 
means that $\bA < \const \, \bB$ and $\bA \ladd \bB$ means 
$\bA \leq \bB + \const$ for some uniform constant $\const$ which only depends 
on the topology of $S$ (a uniform constant). We write $\bA \emul \bB$ if we have both 
$\bA \lmul \bB$ and $\bB \lmul \bA$. Similarly, $\bA \eadd \bB$ if both $\bA \ladd \bC$ 
and $\bB \ladd \bA$ hold. The notation $\bA=O(\bB)$ means that $\bA \lmul \bB$.

\subsection*{Acknowledgements}
We would like to thanks the referee for many useful comments that have improve
the exposition of the paper at several places. 

\section{\Teich Space and Quadratic Differentials}
\label{Sec:back}
In this section, we recall some definitions and known results about the 
geometry of $\Moduli$ equipped with the \Teich metric. For more details, 
see \cite{Hubbard}, \cite{FarbMargalit} and \cite{Strebel}.

\subsection{\Teich space} 
Let $S$ be a connected oriented surface of genus $g$ with $p$ marked points. 
A point in the \emph{\Teich space} $\cT(S)$ is a Riemann surface $X$ of genus
$g$ with $p$ marked points equipped with a diffeomorphism 
$f \from S \rightarrow X$ sending marked points to marked points. 
The map $f$ provides a {\it marking} on $X$ by $S$. Two marked surfaces 
$f_1 \from S \rightarrow X$ and $f_2 \from S \rightarrow Y$ define the same 
point in $\cT(S)$ if and only if $f_1 \circ f_2^{-1}\from Y \rightarrow X$ is 
isotopic (relative to the marked points) to a holomorphic map. 
By the uniformization theorem, each point $X$ in $\cT(S)$ has a complete 
metric of constant curvature $-1$ with punctures at the marked points. The space 
$\cT(S)$ is a complex manifold of dimension $3g-3+p$, diffeomorphic to a cell. 
Let $\Gamma(S)$ denote the mapping class group of $S$, the group of isotopy 
classes of orientation preserving self-homeomorphisms of $S$ fixing the marked 
points point-wise. The mapping class group $\Gamma(S)$ acts on $\cT(S)$ by 
changing the marking. The quotient space 
$$\Moduli= \cT(S)/\Gamma(S)$$ 
is the moduli space of Riemann surfaces homeomorphic to $S$.

\subsection{\Teich distance and Teichm\"uller's theorem}\label{Sec:Tt}
The \Teich metric on $\cT(S)$ is defined by
$$
d_{\cT} \big( (f_1 \from S \rightarrow X_1), (f_2 \from S 
             \rightarrow X_2) \big) = \frac 12 \inf_f \log (K_f),
$$ 
where $f\from X_{1} \rightarrow X_{2}$ ranges over all quasiconformal maps 
isotopic to $f_{1} \circ f_{2}^{-1}$ and $K_f \geq 1$ is the dilatation of $f$.
For convenience, we will often omit the marking and write $X \in \T(S)$. 
To distinguish between a marked point and an un-marked point, 
we use small case letters for points in Moduli space and write $x \in \Moduli$. 

We recall the following important theorem due to Teichm\"uller. Given any 
$X_{1}, X_{2} \in \cT(S)$, there exists a unique quasi-conformal map $f$, 
called the \Teich map and quadratic differentials $(X_i,q_i) \in \cQ^{1}(X_{i})$ such 
that the map $f$ takes zeroes and poles of $q_{1}$ to zeroes and poles of $q_{2}$ 
of the same order and $d_\cT (X_{1}, X_{2}) = \frac{1}{2} \log(K_f)$.
%

\subsection{The space of quadratic differentials} \label{Sec:maps}
Let $\cQ(X)$ denote the vector space of quadratic differentials on $X$ with at most 
simple poles at the marked points of $X$. The cotangent space of $\cT(S)$ at a 
point $X$ can be identified with $\cQ(X)$ and the space 
$$
\cQ\T(S) = \Big\{(X,q) \ST X \in \T(S), \, q \in Q(X) \Big\}
$$ 
can be identified with the cotangent space of $\T(S)$. 

In local coordinates $z$, $q$ is the tensor 
given by $q(z) dz^{2}$, where $q(z)$ is a meromorphic function with poles 
of degree at most one at the punctures of $X$. In this setting, the \Teich metric 
corresponds to the norm 
$$
\parallel q \parallel_{\cT} = \int _{X} |q(z)| \, |dz|^{2}
$$
on $\cQ\T(S)$. Let $\cQ^{1}\cT(S)$ denote the
space of (marked) unit area quadratic differentials, or equivalently the
unit cotangent bundle over $\T(S)$.
Define 
$$
\cQ\Moduli\cong \cQ\T(S)/ \Gamma(S) \quad\text{and}\quad
\QM \cong \cQ^{1}\cT(S)/ \Gamma(S).
$$

To simplify the notation, in this paper, we let $\bp$ denote both projection maps 
$$
  \bp\from \cT(S) \rightarrow \cM(S),
  \qquad\text{and}\qquad \bp\from \cQ^{1}\cT(S)\rightarrow \cQ^{1}\cM(S).
$$
Similarly, $\pi$ will denote both projection maps:
$$
  \pi\from \cQ^{1}\cM(S) \rightarrow\cM(S), 
  \qquad\text{and}\qquad 
  \pi\from \cQ^{1}\cT(S) \rightarrow \cT(S).
$$

\subsection{Extremal and hyperbolic lengths of simple closed curves}
By a \emph{curve} we always mean the free homotopy class 
of a non-trivial, non-peripheral, simple closed curve on the surface $S$
where the homotopy is relative to the marked points. We denote the set of 
curves on $S$ by $\cS$ to emphasize that they are simple curves. 

Given a curve $\alpha$ on the surface $S$ and $X \in \cT(S)$, let $\ell_{X}(\alpha)$
denote the hyperbolic length of the unique geodesic in the homotopy class of $\alpha$
on $X$. The {\it extremal length} of a curve $\alpha$ on $X$
is defined by \begin{equation}\label{Eq:ext}
\Ext_X(\alpha):=\sup_{\rho}
\frac{\ell_{\rho}(\alpha)^{2}}{\operatorname{Area}(X,\rho)},
\end{equation}
where the supremum is taken over all metrics $\rho$ conformally equivalent
to $X$, and $\ell_\rho(\alpha)$ denotes the infimum of  $\rho$--lengths of 
representatives of $\alpha$. 

Here $X$ can be any Riemann surface, even an open annulus. Recall
that the modulus of an annulus $A$ is defined to 
$$
\Mod(A) := \frac 1{\Ext_A(\alpha)}, 
$$
where $\alpha$ is the core curve of $A$.

Given curves $\alpha$ and $\beta$ on $S$, the intersection
number $\I(\alpha,\beta)$ is the minimum number of points in which
representatives of $\alpha$ and $\beta$ must intersect. In general, by 
\cite{GM}
\begin{equation}\label{Eq:intersection}
  \I(\alpha, \beta) \leq \sqrt{\Ext_X(\alpha)} \cdot \sqrt{\Ext_X(\beta)}. 
\end{equation}
The following result \cite{Kerckhoff:Asm} relates the ratios of extremal lengths 
to the \Teich distance: 

\begin{theorem}[Kerckhoff] \label{Thm:ker} Given 
$X,Y \in \T(S)$, the \Teich distance between $X$ and $Y$ is given by 
$$
d_{\cT}(X,Y)=\sup_{\beta \in \cS}
   \log\left(\frac{\sqrt{\Ext_X(\beta)}}{\sqrt{\Ext_Y(\beta)}}\right).
$$ 
\end{theorem}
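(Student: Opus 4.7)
Setting $K = e^{2 d_\cT(X,Y)}$, the plan is to prove $\sup_{\beta \in \cS} \Ext_X(\beta)/\Ext_Y(\beta) = K$ via two matching inequalities. For the easy direction ($\le$), I would use the quasi-invariance of extremal length under quasiconformal maps, phrased through annular moduli: if $f \from X \to Y$ is $K_f$-quasiconformal in the correct homotopy class, then every embedded annulus $A \subset X$ with core homotopic to $\beta$ satisfies $\Mod(f(A)) \ge K_f^{-1} \Mod(A)$. Combined with the characterization $\Ext_Y(\beta)^{-1} = \sup_A \Mod(A)$ (supremum over embedded annular neighborhoods of $\beta$), this gives $\Ext_Y(\beta) \le K_f \, \Ext_X(\beta)$. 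Swapping the roles of $X$ and $Y$, taking the infimum over admissible $f$, and finally the supremum over $\beta$ yields the ``$\le$'' bound of the theorem.

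The reverse direction ($\ge$) rests on Teichm\"uller's existence theorem: there is an extremal map $f \from X \to Y$ of dilatation exactly $K$ whose Beltrami coefficient has the form $k\,\bar q/|q|$ for some unit-norm holomorphic quadratic differential $q$ on $X$, with $k = (K-1)/(K+1)$. In the natural $q$-coordinates, $f$ is affine: it stretches horizontal lengths by $\sqrt K$ and contracts vertical lengths by the reciprocal. When $q$ is Jenkins--Strebel, its closed horizontal trajectories decompose $X$ into maximal annular domains $A_i$ with core curves $\alpha_i$; Strebel's theorem identifies $\Ext_X(\alpha_i) = 1/\Mod(A_i)$, and the affine structure of $f$ forces $\Mod(f(A_i)) = K^{-1}\Mod(A_i)$, hence $\Ext_Y(\alpha_i) \ge K \, \Ext_X(\alpha_i)$ for some $i$. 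This is exactly the witnessing curve one needs.

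To handle a general (not necessarily Jenkins--Strebel) extremal $q$, I would invoke density of Jenkins--Strebel differentials in $Q(X)$: approximate $q$ by a sequence $q_n$ of Jenkins--Strebel differentials, apply the previous paragraph to the Teichm\"uller deformations $Y_n$ of $X$ along $q_n$, extract core curves $\alpha_n$ achieving $\Ext_{Y_n}(\alpha_n)/\Ext_X(\alpha_n) \ge K_n$ with $K_n \to K$, and use continuity of $\Ext_X(\param)$ and of $\Ext_Y(\param)$ (with respect to $Y \in \cT(S)$) to pass to the limit. The main obstacle is exactly this last approximation step: controlling the supremum over \emph{all} simple closed curves rather than a single explicit curve read off from $q$ is what makes the theorem delicate, and is where the real work happens in Kerckhoff's original argument; the conformal-invariance inputs and Teichm\"uller's existence theorem are, by comparison, off-the-shelf.
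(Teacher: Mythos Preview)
The paper does not prove this statement at all: it is quoted as a known result of Kerckhoff \cite{Kerckhoff:Asm} and used as a black box, so there is no ``paper's own proof'' to compare against. Your outline is essentially Kerckhoff's original argument (quasi-invariance of modulus for the easy inequality, Teichm\"uller's theorem plus the Jenkins--Strebel case and a density argument for the hard one), and the overall strategy is sound.

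One small correction worth flagging: in your Jenkins--Strebel paragraph the direction is reversed relative to the formula as stated. If $q$ is Jenkins--Strebel on $X$ with horizontal core curves $\alpha_i$ and the Teichm\"uller map stretches horizontally by $\sqrt K$, then the cylinder moduli drop by a factor $K$, giving $\Ext_Y(\alpha_i) \ge K\,\Ext_X(\alpha_i)$; this witnesses $\sup_\beta \Ext_Y(\beta)/\Ext_X(\beta) \ge K$, not $\sup_\beta \Ext_X(\beta)/\Ext_Y(\beta) \ge K$. You recover the stated inequality by running the same argument with the roles of $X$ and $Y$ interchanged (equivalently, by taking the terminal differential on $Y$, or the vertical Jenkins--Strebel data). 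Since you already invoke this swap in the easy direction, the fix is cosmetic. Your caveat about the approximation step is well placed: passing from Jenkins--Strebel to general $q$ while keeping control of the witnessing curves is exactly where Kerckhoff's argument does real work, and your sketch does not yet supply that control.
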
 

The relationship between the extremal length and the hyperbolic length is
 complicated; in general, by the definition of extremal length,
$$ 
  \frac{\ell_{X}(\alpha)^{2}}{ \pi (2g-2+p)} \leq \Ext_X(\alpha).
$$
Also, for any $X \in \T(S)$, the extremal length can be extended continuously
to the space of measured laminations \cite{Kerckhoff:Asm} such that 
$$
  \Ext_X(r \cdot \lambda)=r^{2} \Ext_X(\lambda).
$$ 
As a result, since the space of projectivized measured laminations
is compact, for every $X$ there exists a constant $\CX$ so that 
$$
  \frac{1}{\CX} \ell_{X}(\alpha) \leq \sqrt{\Ext_X(\alpha)} 
  \leq \CX \, \ell_{X}(\alpha). 
$$
However, by \cite{Maskit}
\begin{equation}\label{Eq:Maskit}
 \frac{1}{\pi} \leq 
 \frac{\Ext_X(\alpha)} { \ell_{X}(\alpha)} \leq \frac{1}{2} e^{\ell_{X}(\alpha)/2}.
\end{equation}
Hence, as $\ell_{X}(\alpha) \rightarrow 0, $ 
$$
  \frac{\ell_{X}(\alpha)}{\Ext_X(\alpha)} \emul 1.
$$

\subsection{Minsky's product theorem} \label{Sec:extremal}
Let $\cA=\{\alpha_1, \ldots, \alpha_j\}$ be a collection of disjoint simple 
closed curves on $S$ and, for a fixed $\ep_0$,
$$
\cT_{\ep_0}(\cA)=
\Big\{ X \in \cT(S) \ST \Ext_X(\alpha_i) \leq \ep_0, 
\quad 1\leq i\leq j  \Big\}. 
$$
Then, using the Fenchel-Nielsen coordinates on $\cT(S)$, we can define 
$$ \Phi_\cA\from \T_{\ep_0}(\cA) \to (\half^2)^j $$
by
$$
\Phi_\cA(X)= \left( \theta_1(X), \frac{1}{\ell_X(\alpha_1)}, \ldots, 
\theta_j(X), \frac{1}{\ell_X(\alpha_j)} \right).
$$
Here, $\theta_i(\param)$ is the Fenchel-Nielson twist coordinate around 
$\alpha_i$ and represents the $x$--coordinate in upper-half plane $\half$
and the $y$--coordinate in $\half$ is the reciprocal of the hyperbolic length. 
Following Minsky, we get a map
$$
\Phi\from \cT_{\ep_0}(\cA) \rightarrow (\half^2)^j \times \cT(S \setminus \cA),
$$
where $\cT(S \setminus \cA)$ is the quotient \Teich space obtained by collapsing 
all the $\alpha_i$. The product region theorem \cite{Minsky}
states that for sufficiently small $\epsilon_0$ the \Teich metric on 
$\T_{\ep_0}(\cA)$ is within an additive constant of the supremum metric on
$(\half^2)^j \cross \cT(S \setminus \cA)$. More precisely, let 
$d_\cA(\param, \param)$ denote the supremum metric on 
$(\half^2)^j \cross \cT(S \setminus \cA)$. Then:

\begin{theorem}[Minsky]\label{Thm:PRT}
There is $\ep_0>0$ is small enough and $B > 0$ depending only on $S$ 
such that for all $X,Y \in \T_{\ep_0}(\cA)$, 
$$ 
\big| d_{\cT}(X,Y) - d_\cA \big( \Phi(X),\Phi(Y) \big) \big| < B.
$$ 
\end{theorem}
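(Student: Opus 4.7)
The plan is to prove two matching inequalities; the additive constant $B$ will absorb all the multiplicative and additive errors coming from extremal-length estimates for short curves.

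\textbf{Lower bound} $d_\cA(\Phi(X),\Phi(Y)) \le d_\cT(X,Y) + B$. By definition of the sup metric on $(\half^2)^j \cross \cT(S\sm\cA)$, it suffices to bound each of the $j+1$ factor distances by $d_\cT(X,Y) + O(1)$. For the $i$-th $\half^2$ factor I would apply \thmref{ker} to two well-chosen test curves. First, $\beta = \alpha_i$ itself, together with \eqnref{Maskit}, yields
\[
  \Big|\log\tfrac{1}{\ell_X(\alpha_i)} - \log\tfrac{1}{\ell_Y(\alpha_i)}\Big|
    \ladd 2\, d_\cT(X,Y),
\]
which controls the vertical motion. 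Second, a curve $\gamma_i$ crossing $\alpha_i$ once (or twice, depending on topology) controls the twist via the short-curve expansion
\[
  \Ext_X(\gamma_i) \emul \frac{\theta_i(X)^2 + 1}{\ell_X(\alpha_i)},
\]
valid for $\ell_X(\alpha_i)$ small; inserting this into \thmref{ker} bounds the horizontal motion. For the $\cT(S\sm\cA)$ factor, any curve $\beta$ supported in $S\sm\cA$ satisfies $\Ext_X(\beta) \emul \Ext_{X\sm\cA}(\beta)$ (with constants tending to $1$ as $\ell_X(\alpha_i) \to 0$); combined with \thmref{ker} for the collapsed surface, this controls the last factor distance.

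\textbf{Upper bound} $d_\cT(X,Y) \le d_\cA(\Phi(X),\Phi(Y)) + B$. Here I would build an explicit quasiconformal map $F\from X \to Y$ whose dilatation satisfies $\log K_F \ladd 2\, d_\cA(\Phi(X),\Phi(Y))$. For each $i$ fix a standard collar $A_i^X$ around the $X$-geodesic representative of $\alpha_i$ and a matching collar $A_i^Y$ in $Y$; each is conformally an annulus of modulus comparable to $1/\ell(\alpha_i)$ and, up to a bounded amount of twist, is parametrized by the $\half^2$ coordinates. The hyperbolic distance in $\half^2$ equals, up to a bounded additive error, the log-dilatation of the best affine map $A_i^X \to A_i^Y$ expressed in flat cylinder coordinates, so there is an explicit QC map of annuli realizing the $i$-th factor distance. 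On the complement $X \sm \bigcup_i A_i^X$, which is conformally close to a bordered surface representing a point of $\cT(S\sm\cA)$, pick a QC map to $Y \sm \bigcup_i A_i^Y$ that realizes the $\cT(S\sm\cA)$ factor distance. Finally, glue the pieces along the annular boundaries after reparametrizing by a bounded-dilatation correction that matches boundary traces, and verify that the total map has dilatation at most the supremum of the factor dilatations plus a universal error.

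\textbf{Main obstacle.} The main technical difficulty is the decomposition of extremal length into an annular contribution and a collapsed-surface contribution, together with the matching of QC maps across the annular boundaries during gluing. In particular, justifying the estimate $\Ext_X(\gamma_i) \emul (\theta_i^2+1)/\ell_X(\alpha_i)$ requires a careful analysis of how a curve crosses a very thin hyperbolic collar, relating its extremal length to the twist parameter encoded by $\theta_i$; this is where Minsky's original paper does most of the work. All of these estimates force $\ep_0$ to be small enough that the standard collars around the $\alpha_i$ are pairwise disjoint, that \eqnref{Maskit} is in its linear regime $\ell/\Ext \emul 1$, and that the extremal length on $X$ of a curve in $S\sm\cA$ is comparable, within a factor close to $1$, to its extremal length on the noded surface $X\sm\cA$.
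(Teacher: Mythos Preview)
The paper does not prove \thmref{PRT}; it is quoted from \cite{Minsky} (Minsky, \emph{Extremal length estimates and product regions in Teichm\"uller space}, Duke Math.\ J.\ \textbf{83} (1996)) and used as a black box. So there is no ``paper's own proof'' to compare against.

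As a sketch of Minsky's argument your outline is in the right spirit: Minsky does prove the two inequalities separately, the lower bound via Kerckhoff's extremal-length formula together with a decomposition of $\Ext_X(\beta)$ into contributions from the thin collars and from the complement, and the upper bound by constructing an explicit quasiconformal map piece by piece. One point to be careful about: your claimed estimate $\Ext_X(\gamma_i) \emul (\theta_i(X)^2+1)/\ell_X(\alpha_i)$ for a single transversal curve is too crude to recover the hyperbolic metric on the $\half^2$ factor with only an additive error; Minsky's actual analysis tracks the full extremal-length formula for arbitrary curves crossing the collar (essentially the content of his Theorem~4.1 and what appears in the present paper as \thmref{minsky:ext}), and it is this uniform estimate, not a single test curve, that yields the $\half^2$ geometry. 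Your gluing step for the upper bound also glosses over the nontrivial issue of matching boundary parametrizations without introducing dilatation growing with the twist; Minsky handles this by working in the universal cover and using explicit models for the thin parts. If you intend to fill in a proof rather than cite it, those are the two places requiring real work.
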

As mentioned in the introduction, we fix $\ep_0$ so that the above theorem
and the estimate in $\eqnref{EFE}$ hold. 

\subsection{Short curves on a surface} \label{Sec:bounded}
For $\ep_0$ as above, we say a curve $\alpha$ is {\it short} on $X$ if 
$\Ext_X(\alpha) \leq \epsilon_0$. From discussions in \cite{Minsky}, 
we know that, if two curves are short in $X$ they can not intersect. 
Let $\cS_X$ be the set of short curves on $X$. 
Define $G \from \cT(S) \rightarrow \reals_{+}$ by
\begin{equation}\label{Eq:GG}
G(X) = 1+ \prod_{\alpha \in \cS_X} \frac{1}{\sqrt{\Ext_X(\alpha)}}.
\end{equation}
If $d_{\cT}(X,Y)=O(1)$ then $G(X) \emul G(Y).$
The function $G$ is $\Gamma(S)$ invariant and 
induces a proper function on $\Moduli$. We also recall the following
lemma which, for example, follows from \cite{EM:geodesic-counting}.

\begin{lemma}\label{Lem:count} 
For any $X \in \T(S)$ let
$$
I_X = \big\{ \gg \in \Gamma(S) \, \big|\, d_\T(\gg \cdot X, X) = O(1) \big\}.
$$
be the set of mapping classes that move $X$ by a bounded amount. 
Then 
$$\big| I_X \big|\emul G(X)^2.$$
\end{lemma}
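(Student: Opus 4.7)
The plan is to apply Minsky's product region theorem (\thmref{PRT}) with $\cA = \cS_X$ and count bounded-displacement mapping classes by counting powers of Dehn twists around the short curves. When $\cS_X = \emptyset$ we have $G(X) \emul 1$ and $|I_X|$ is bounded by proper discontinuity of $\Gamma(S)$ on the thick part of $\cT(S)$, so I may assume $\cS_X = \{\alpha_1,\ldots,\alpha_k\}$ is nonempty.

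First I would argue that any $\gg \in I_X$ essentially preserves $\cS_X$. By \thmref{ker}, the ratio $\Ext_{\gg X}(\beta)/\Ext_X(\beta)$ is bounded above and below by $e^{\pm 2 d_\cT(\gg X, X)} = O(1)$ for every curve $\beta$. Hence $\gg\cdot\cS_X$ is a set of curves on $X$ with extremal length $\lmul \ep_0$, which after a universally bounded enlargement of the short-curve threshold coincides with $\cS_X$. The number of setwise permutations of $\cS_X$ depends only on the topology of $S$, so up to that bounded factor I may restrict to $\gg$ fixing each $\alpha_i$.

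Next, for such $\gg$, \thmref{PRT} yields $d_\cA(\Phi(\gg X), \Phi(X)) = O(1)$. Since $\gg$ acts factorwise on $(\half^2)^k \times \cT(S\setminus\cA)$, and the only mapping classes that act nontrivially on the $i$-th $\half^2$ factor while acting trivially on every other factor are powers of the Dehn twist $\tau_i$ about $\alpha_i$ (shifting $\theta_i$ by $1$ and preserving $1/\ell_X(\alpha_i)$), counting reduces to bounding integer shifts at the relevant hyperbolic height. A horizontal shift by $n$ at height $y_i = 1/\ell_X(\alpha_i)$ has hyperbolic length $\emul n\,\ell_X(\alpha_i)$ in the small-distance regime, so the admissible powers satisfy $|n| \lmul 1/\ell_X(\alpha_i) \emul 1/\Ext_X(\alpha_i)$ by \eqnref{Maskit}. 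The residual contribution from the $\cT(S\setminus\cA)$ factor is bounded, since the image of $X$ there has no short curves and so lies in the thick part, where the relevant mapping class group acts properly discontinuously. Multiplying the contributions yields
\[
|I_X| \lmul \prod_{i=1}^k \frac{1}{\Ext_X(\alpha_i)} \emul G(X)^2.
\]

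For the matching lower bound, the composite twists $\tau_1^{a_1}\cdots\tau_k^{a_k}$ with $|a_i| \le c/\Ext_X(\alpha_i)$ for a small universal constant $c$ give $\emul \prod_i 1/\Ext_X(\alpha_i)$ elements, each lying in $I_X$ by \thmref{PRT}. The main delicate point is tracking that the comparisons between hyperbolic and \Teich metrics (the additive constant in \thmref{PRT}) and between hyperbolic and extremal lengths of short curves (\eqnref{Maskit}) only introduce universal multiplicative errors, so that the smallness of $\ep_0$ propagates cleanly through the counting.
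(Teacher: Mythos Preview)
Your argument is correct. The paper does not supply its own proof of this lemma, instead citing \cite{EM:geodesic-counting}; your approach via \thmref{PRT} is exactly the expected one and is essentially what appears in that reference.

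One small point worth tightening: when you argue that $\gg \in I_X$ permutes $\cS_X$, what Kerckhoff's formula gives directly is that $\gg$ sends $\cS_X$ into the set of curves of extremal length at most $C\ep_0$ on $X$ (for some universal $C$ depending on the implied $O(1)$). There could in principle be curves with extremal length in $(\ep_0, C\ep_0]$, so $\gg$ need not preserve $\cS_X$ on the nose. Your phrase ``after a universally bounded enlargement of the short-curve threshold'' addresses this, but you should make explicit that one simply replaces $\cS_X$ by the (still pairwise disjoint, still of bounded cardinality) set of curves with extremal length at most $C\ep_0$; the extra curves contribute only a bounded factor to the product and hence do not affect the conclusion $|I_X| \emul G(X)^2$.
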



\subsection{Stratum of quadratic differentials} 
Although the value of $q\in \cQ(X)$ at a point in $X$ depends on the local 
coordinates, the zero set of $q$ is well defined.
As a result, there is a natural stratification of the space $\cQ\T(S)$ by 
the multiplicities of zeros of $q$. For $\sigma=(\nu_1,\dots,\nu_k,\varsigma)$
define $\cQ\T(\sigma) \subset \cQ\T(S)$ to be the subset
consisting of pairs $(X,q)$ of quadratic differentials on $X$
with zeros and poles of multiplicities $(\nu_{1},\ldots,\nu_{k})$.
The poles are always assumed to be simple and are located at the marked points, 
however, not all marked points have to be poles. The sign $\vs \in \{-1, 1\}$ 
is equal to $1$ if $q$ is the square of an abelian differential (an
abelian differential). Otherwise, $\vs=-1$. Then
$$
\cQ\T(S)=
\bigsqcup\limits_{\sigma} \cQ\T(\sigma).
$$ 
It is known that each $\cQ\T(\sigma)$ is an orbifold. See \cite{Masur1} 
and \cite{Masur:Smillie2} for more details.

\subsection{Flat lengths of simple closed curves and saddle connections.}
\label{Sec:Geodesic}
Let $(X,q)$ be a quadratic differential.  If we represent $q$ locally as 
$q(z) dz^{2}$ then $|q|=|q(z)|^{\frac 12} |dz|$ defines a singular Euclidean 
metric on $X$ with cone points at zeros and poles. The total angle at a singular 
point of degree $\nu$ is $(2+\nu)\pi$. (for more details, see \cite{Strebel}).
This is not a complete metric space since poles are a finite distance away.
However, one can still talk about the \emph{geodesic representative} of a curve 
that may \emph{pass through} the poles even though the poles. 
Namely, for a arc in $(X,q)$, consider the lift of this arc to the universal cover,
take the geodesic representative in the completion of the universal cover
and then project it back to $(X,q)$.  Following the discussion in \cite[Page 185]{Rafi:SC},
we can ignore this issue and treat these special geodesics as we would any 
other geodesic.

The homotopy class of an arc  (relative to its endpoints) has a
unique $q$--geodesic representative. Any curve $\alpha$ either has a unique 
$q$--geodesic representative or there is flat cylinder of parallel representatives. 
In this case, we say $\alpha$ is a \emph{cylinder curve} and we denote the 
cylinder of geodesics representatives of $\alpha$ by $F_\alpha$. We denote the 
Euclidean length of the $q$--representative of 
$\alpha$ by $\ell_q(\alpha)$. 

A {\it saddle connection} on $(X,q)$ is a $q$--geodesic segment which 
connects a pair of singular points without passing through one in its interior.
We denote the Euclidean length of a saddle connection $\omega$ on $q$ by 
$\ell_{q}(\omega)$.

\subsection{Period coordinates on the strata.} \label{Sec:Period}
 In general, any saddle connection $\omega$ joining two zeros of
a quadratic differential $q=\zeta dz^{2}$ determines a complex number 
$\hol_{q}(\omega)$ (after choosing a branch of $\sqrt \zeta$ and
an orientation of $\omega$) by
$$
\hol_q(\omega)=
\left(\int\limits_{\omega} \Re \sqrt \zeta \right) + 
\left(\int\limits_{\omega} \II \sqrt \zeta \right) i.
$$
We recall that for any $\sigma= (\nu_i, \ldots, \nu_k, \vs)$ the period coordinates 
gives $\cQ\T(\sigma)$ the structure of an affine manifold.  
Consider the first relative homology group $H_{1}(S, \Sigma, \reals)$ of the pair 
$(S, \Sigma)$ with $|\Sigma|=k.$  Let 
$$
h=(2g+k-1)=\dim\big(H_1(S, \Sigma, \reals)\big)
$$ 
if $\vs=1,$ and 
$$
h=(2g+k-2)=\dim\big(H_1(S, \Sigma, \reals)\big) -1
$$ 
if $\vs=-1.$ We recall that given $(X,q) \in \QTs$ 
there is a triangulation $T$ of the underlying surface by saddle connections
(see for example \cite[Proposition 3.1]{Vorobets} and  \cite[Proposition 3.1]{Thurston}). 
One can choose  $h$ directed edges $\omega_1,\ldots,\omega_h$ of $T$, 
and an open neighborhood $U_q \subset \cQ\T(\sigma)$ of $q$ 
such that the map 
$$
  {\boldsymbol \psi}_{T,q} \from \cQ\T(\sigma) \rightarrow \cx^h
\qquad\text{defined by}\qquad
  {\boldsymbol \psi}_{T,q}(q)= \big( \hol_{q}(\omega_{i}) \big)_{i=1}^{h}
$$
is a local homeomorphism. For any other geodesic triangulation $T'$, the map 
${\boldsymbol \psi}_{T',q}\circ {\boldsymbol \psi}_{T,q}^{-1}$ is linear.

In case of abelian differentials ($\vs=1$) it is enough to choose a basis for 
$H_{1}(S, \Sigma, \reals)$ from the edges of $T$.
Note that for non-orientable differentials ($\vs=-1$) there will be a linear relation 
between the holonomies of the vectors corresponding to a basis for the relative 
homology (see \secref{Relations}). In this case, it is enough to choose 
$\dim(H_{1}(S, \Sigma,  \reals))-1$ independent vectors of $T$.
For a more detailed discussion of the holonomy coordinates see \cite{Masur:Smillie1}. 

\subsection{\Teich geodesic flow}
We recall that when $3g+p>4$ the \Teich metric is not even Riemannian. 
However, geodesics in this metric are well understood. 
A quadratic differential $(X,q) \in \QT$ with zeros at $p_{1},\ldots p_{k}$ 
is determined by an atlas of charts mapping open subsets of 
$S -\{p_{1},\ldots,p_{k}\}$ to $\reals^{2}$ such that the change of 
coordinates are of the form $v \rightarrow \pm{v}+c.$ Therefore the group 
$\SL(2,\reals)$ acts naturally on $\QT$ by acting on 
the corresponding atlas; given $A \in \SL(2,\reals)$, 
$A \cdot q \in \QT$ is determined by the new atlas $\{ A \phi_{i}\}.$
The action of the diagonal subgroup
$g_{t}=\begin{bmatrix} e^{t}&0\\
0 & e^{-t} \end{bmatrix}$
is the \Teich geodesic flow for the \Teich metric.
In other words, in holonomy coordinates the \Teich flow is simply defined by 
$$ 
\Re \big(\hol_{g_{t}(q)}(\omega_{i})\big)=e^{t} \, 
  \Re\big(\hol_{q}(\omega_{i})\big),
$$ 
and 
$$ 
\Im\big(\hol_{g_{t}q}(\omega_{i})\big) =e^{-t} \, 
  \Im \big(\hol_{q}(\omega_{i})\big).
$$

This action descends to $\QM$ via the projection map 
$\bp \from \QT \to \QM$. We denote both actions
(on $\QT$ and $\QM$)  by $g_{t}$. The subspaces $\QTs$ and $\QMs$ 
are invariant under the \Teich geodesic flow. Moreover, we have 
(\cite{Veech}, \cite{Masur1}):

\begin{theorem}[Veech-Masur] \label{Thm:VM}
Each connected component $\Stratum$ of a stratum 
$\cQ^{1}\mathcal{M}(\sigma)$ carries a unique probability measure 
$\mu$  in the Lebesgue measure class such that:
\begin{itemize}
\item the action of $\SL(2,\reals)$ is volume preserving and ergodic;
\item \Teich geodesic flow is mixing.
\end{itemize}
\end{theorem}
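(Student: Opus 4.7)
The plan is to establish the three assertions (existence of $\mu$, ergodicity of the full $\SL(2,\reals)$ action, mixing of the diagonal flow) in three stages, following the classical Masur/Veech strategy.

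First I would construct $\mu$ via period coordinates. By \secref{Period}, each stratum $\cQ\cT(\sigma)$ admits local charts ${\boldsymbol \psi}_{T,q}$ to $\cx^h$ (or a codimension--one linear subspace in the non-orientable case) whose transition maps are integral linear transformations. Pullback of Lebesgue measure on $\cx^h$ is therefore well-defined on $\cQ\cT(\sigma)$, descends to $\cQ\cM(\sigma)$, and restricts via the coning construction to a measure $\mu$ on the unit-area locus $\cQ^1\cM(\sigma)$. Since the $\SL(2,\reals)$--action is linear on holonomy coordinates, this Lebesgue measure is automatically $\SL(2,\reals)$--invariant. The decisive non-trivial point is that $\mu$ has finite total mass on the area-$1$ slice. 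The plan is to follow Masur by estimating, in period coordinates near the ``cusps'' of the stratum (where some holonomies are small, i.e.\ where a saddle connection or cylinder degenerates), how quickly the volume accumulates. Siegel--Veech type counting of short saddle connections controls this; combined with the polynomial rate at which short connections appear, one obtains $\mu(\Stratum)<\infty$.

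Next I would establish ergodicity of the $\SL(2,\reals)$--action by producing a Poincar\'e section for the Teichm\"uller flow whose return map is a piecewise linear Markov map. Concretely, for abelian differentials one uses Rauzy--Veech induction on interval exchange transformations; for quadratic differentials one uses the analogous induction on half--translation surfaces obtained from the orienting double cover. Ergodicity of the return map is then proved by a standard argument using the transitivity of Rauzy classes together with the expansion of the induction. Ergodicity of the return map upgrades to ergodicity of the $g_t$--flow via a Kac-type argument, and then to ergodicity of the full $\SL(2,\reals)$--action via Moore's theorem (since the diagonal subgroup in $\SL(2,\reals)$ is already non-compact and ergodic).

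Finally, once one has a finite $\SL(2,\reals)$--invariant ergodic measure $\mu$, mixing of $g_t$ follows from the Howe--Moore/Mautner phenomenon: the unitary representation of $\SL(2,\reals)$ on $L^2_0(\Stratum,\mu)$ has no invariant vectors, and for any such representation the matrix coefficients $\langle g_t f, h\rangle$ decay to $0$ as $t\to\infty$. This yields mixing in the sense stated.

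The main obstacle is the finiteness claim $\mu(\Stratum)<\infty$. In period coordinates the unit-area locus is non-compact in many independent directions (one cusp for each type of degeneration), and one needs a genuinely global estimate, not merely a local one. My expectation is that the bulk of the work goes into translating the combinatorics of saddle--connection degenerations into a polynomial upper bound on the Lebesgue volume of thin neighborhoods, exactly as in Masur's and Veech's original arguments; uniqueness of $\mu$ in the Lebesgue class is then a consequence of ergodicity together with this finiteness.
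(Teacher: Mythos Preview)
The paper does not prove this theorem at all: it is stated with attribution to Veech and Masur and cited to \cite{Veech} and \cite{Masur1}, then used as a black box (specifically, only the mixing statement is invoked, in the proof of \thmref{asymp:all:geodesics}). So there is no ``paper's own proof'' to compare your proposal against.

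That said, your outline is a reasonable high-level sketch of the classical argument. A few remarks if you intend to flesh it out. First, the finiteness of $\mu$ was originally obtained by Masur and by Veech through rather different routes than a Siegel--Veech estimate (which is historically downstream of finiteness, not upstream); Masur's argument goes through the Thurston--Masur--Smillie compactification and careful volume estimates in period coordinates, while Veech's goes through zippered rectangles. Second, invoking Moore's theorem to pass from $g_t$--ergodicity to $\SL(2,\reals)$--ergodicity is backwards: ergodicity of the diagonal flow already implies ergodicity of the larger group action trivially, and Howe--Moore is what you need for the mixing step (as you correctly say later). Third, uniqueness of $\mu$ in the Lebesgue class is immediate from ergodicity plus absolute continuity, so that part is fine. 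None of these are fatal to the plan, but the finiteness step is where the real content lies and your sketch underestimates it.
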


\section{Geometry of a quadratic differential}\label{Sec:geometry}

In this section, we recall some of the basic geometric properties of a 
quadratic differential $(X,q)$. We describe how the extremal length of a curve, 
which can be calculated from the conformal structure of $X$, relates
to the singular Euclidean metric associated to $(X,q)$. We also
define the notion of a $(q, \tau)$--regular triangulation, where $\tau>0$ is a large
constant. This is a partial triangulation of $(X,q)$ using the saddle connections 
that captures the geometry of the singular Euclidean metric associated to $q$. 
The main statement of the section is \lemref{Exist} which shows the existence 
of such triangulations. In the rest of the section, we establish some basic properties 
of $(q, \tau)$--regular triangulations which are used in section 5. 

\subsection{Intersection number}
In the hyperbolic metric of $X$, the geodesic representatives of any
two curves $\alpha$ and $\beta$ intersect minimally. Hence, the geometric
intersection number between homotopy classes of curves is equal to 
the intersection number between their geodesic representatives. 

In the singular Euclidean metric $|q|$, this is not true.  First, as mentioned in 
\ref{Sec:Geodesic}, the geodesic representative might \emph{pass through} the poles 
even though the poles are removed from the surface.
Also, the $q$--geodesic representatives of curves $\alpha$
and $\beta$ that have geometric intersection number zero may intersect. 
However, these intersections are \emph{tangential}. That is, 
$\alpha$ and $\beta$ may share an edge, but they do not \emph{cross}.
By this, we mean that any lifts $\tilde \alpha$ and $\tilde \beta$ to
the universal cover $\tilde q$ of $q$ have end points in the boundary
that do no interlock. To simplify the exposition, when we say $\alpha$ and $\beta$ 
intersect, we always mean that they have an essential intersection not tangential. 

We also talk about the intersection number between two saddle connections. 
Here, we say two saddle connections are \emph{disjoint} if they have
disjoint interiors or if they are equal. The intersection number between two saddle 
connections is the number of interior intersection points. The intersection number
between a saddle connection and itself is zero. In both cases, 
(saddle connections and curves) the intersection number is denoted by 
$\I(\param, \param)$. 

If $A$ is an embedded annulus, we distinguish between a curve $\alpha$ 
\emph{intersecting} $A$ and \emph{crossing} it. To intersect $A$, $\alpha$ needs 
only to enter the interior of $A$. The curve $\alpha$ crosses $A$ if
$\alpha$ enters one side of $A$ and exits the other. To be more precise,
in the annular cover $\tilde X_A$ of $X$ associated to $A$, there is a
lift of $\alpha$ connecting the two boundary components of $\tilde X_A$. 

\subsection{Extremal lengths and flat lengths of simple closed curves.} 
One can give an estimate for the extremal length of a simple closed curve 
$\alpha$ in $X$ by examining the singular Euclidean metric $|q|$. 
As mentioned before, $\alpha$ may not have a unique geodesic representative;
different geodesic representatives of $\alpha$ are parallel and foliate 
a flat cylinder that we refer to as $F_\alpha$.  Denote the two boundary
curves of $F_\alpha$ by $\alpha_E$ and $\alpha_G$. When 
$F_\alpha$ is degenerate, $\alpha_E = \alpha_G$.

We say an annulus is regular if its boundary curves are equidistant. 
Let $E_\alpha$ be the largest embedded regular annulus with boundary curve
$\alpha_E$ and let $G_\alpha$ be the largest embedded regular annulus with 
boundary curve $\alpha_G$. Note that $E_\alpha$ and $G_\alpha$ may
intersect $F_\alpha$ and each other. In a degenerate case, the interior of some 
or all of these annuli could be empty, for example, the interior of $F_\alpha$
is empty when $\alpha$ has a unique geodesic representative.

We call $\alpha_E$, the shared boundary of $E_\alpha$ and $F_\alpha$, the 
inner boundary of $E_\alpha$ (and similarly $\alpha_G$ is the inner boundary of 
$G_\alpha$). The annuli $E_\alpha$ 
and $G_\alpha$ are called \emph{expanding} because the equidistance curves 
parallel to the inner boundary get longer as they span $E_\alpha$ and $G_\alpha$. 
Let $l= \ell_q(\alpha)$ and let $e,f$ and $g$ be the $q$--distances 
between the boundaries of $E_\alpha$, $F_\alpha$ and $G_\alpha$
respectively. According to \cite{Rafi:HT}, when $\Ext_X(\alpha) \leq \ep_0$, 
(see \secref{Constants} for the discussion of the choice of $\ep_0$) we have the 
following estimates
\begin{equation} \label{Eq:EFE}
\frac 1{\Ext_X(\alpha)} \emul \Mod(E_\alpha)+ \Mod(F_\alpha)+ \Mod(G_\alpha)
\end{equation} 
where
\begin{equation} \label{Eq:Moduli} 
\Mod(E_\alpha) \emul \Log \frac el
\qquad  \Mod(F_\alpha)= \frac fl,
\qquad\text{and}\qquad \Mod(G_\alpha) \emul \Log \frac gl . 
\end{equation}
Here $\Log(\param)$ is a modified logarithm function:
$$
\Log(t) = \max \big\{ \log (t), 1 \big\}. 
$$
We intend $\Log$ to apply only to large numbers. 
Of course, the value of either $e$, $f$ or $g$ could be zero and the second
line will be $-\infty$. We use the modified logarithm to avoid this issue. 

Note that, a simple closed curve that has a short flat length may not have a small 
extremal length. We need to measure what is the largest neighborhood of 
$\alpha$ that still has a simple topology. Later, we use this idea to define
a notion of extremal length for a saddle connection. 

\subsection{Short simple closed curves}
As in \secref{bounded}, we say a curve
$\alpha$ is {\it short} in $q$ if $\Ext_X(\alpha) \leq \ep_0$.  
Denote the set of short curves in $q$ by $\cS_q$. We say $\alpha$ is a 
\emph{cylinder curve} if the interior of $F_{\alpha}$ is not empty. 
In what follows, the cases when $\alpha \in \cS_q$ is a cylinder curve and 
$F_\alpha$ has a large enough modulus will need special treatments. 
When the modulus of $F_\alpha$ is extremely small, $\alpha$ behaves
essentially like a non-cylinder curve. We make this precise:

\begin{definition} \label{Def:c-n}
Let $\tau$ be a positive real number and let $\CNM = e^{-2\tau}$. 
We say a curve $\alpha \in \cS_q$ is a \emph{large-cylinder curve} if 
$\Mod(F_\alpha) \geq \CNM$. Denote the set of large-cylinder curves by 
$\Sqc$ and define 
$$\Sqn = \cS_q \setminus \Sqc.$$ 
For $\alpha \in \Sqc$, the size $s_\alpha$ of $F_\alpha$ is defined to be
the distance between the boundaries of $F_\alpha$. 
\end{definition}

\begin{remark}\label{Rem:tau}
The constant $\tau$, which is determined in \secref{thin:part}, 
is the distance between steps of a random walk trajectory. 
We use $\CNM$ instead of just writing $e^{-2\tau}$ to 
highlight the fact that $\CNM$ is a bound for modulus. 
There is an implicit assumption that $\tau$ is \emph{large enough}
(say, $\tau \ge \tau_0$ for some uniform constant $\tau_0$).
That is, unless otherwise stated, all statements hold with uniform constants 
independent of $\tau$ as long as $\tau\ge \tau_0$.  
\end{remark}

Along \Teich geodesics, the length of a curve $\alpha \in \Sqn$ changes 
slowly while the modulus of $F_\alpha$ remains small. 
More precisely, let 
$$(X_t,q_t)= g_t(X,q),$$
where $g_t$ is the \Teich geodesic flow. Assuming $\alpha \in \Sqn$ and 
$0\leq t \leq \tau$, we have 
$\Mod_{q_t}(F_\alpha) \lmul 1$.  As a consequence of 
Equations~\eqref{Eq:EFE} and \eqref{Eq:Moduli}, $\Mod_{q_t}(G_\alpha)$
and $\Mod_{q_t}(E_\alpha)$ change at most linearly and we have
\begin{equation}\label{Eq:ext:curve}
 \frac1{\Ext_X(\alpha)}- t \lmul \frac 1{\Ext_{X_t} (\alpha)}
 \lmul  \frac 1{\Ext_X(\alpha)} + t.
\end{equation}

\subsection{The thick-thin decomposition of quadratic differentials} 
 \label{Sec:thinthick}
We call the components of $S \setminus \cS_q$ the {\it thick subsurfaces}
of $q$. The homotopy class of each such subsurface $Q$ of $S$ has a 
representative with $q$--geodesic boundaries. There is, in fact, a unique 
such representative that is disjoint from the interior of cylinders associated 
to the boundary curves of $Q$. This can also be described
as the smallest representative of $Q$ with $q$--geodesic boundaries. 
We denote this subsurface by $Q$ as well.
Define the {\it size} $s_Q$ of $Q$ to be the $q$--diameter of this representative.
The following theorem states that the geometry of the subsurface $Q$ is 
essentially the same as that of the thick hyperbolic subsurface of $X$ 
in the homotopy class of $Q$ but scaled down to a size $s_Q$: 

\begin{theorem}[\cite{Rafi:TT}] \label{Thm:TT}
For every essential closed curve $\gamma$ in $Q$, 
$$
\ell_X(\gamma) \emul \sqrt{\Ext_X(\gamma)} \emul \frac{\ell_q(\gamma)}{s_Q}.
$$
In particular, the $q$--length of shortest essential curve in $Q$ 
is on the order of $s_Q$. 
\end{theorem}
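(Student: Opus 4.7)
The plan is to reduce all three quantities to the rescaled flat surface $(Q, |q|/s_Q)$, which by the thickness assumption lies in a compact family of bordered Riemann surfaces (up to the finitely many possible topological types of $Q$). On such a compact family the comparisons become automatic, and the factor $s_Q$ appears simply by undoing the rescaling.

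The preliminary observation is that the conformal structure on $Q$ induced by the flat metric $|q|$ agrees with the restriction of the Riemann surface $X$, since $q$ is holomorphic on $X$. After rescaling lengths by $s_Q^{-1}$, the flat surface $(Q, |q|/s_Q)$ has diameter $\emul 1$ by definition of $s_Q$ and shortest essential simple closed curve of length $\emul 1$ by the absence of short curves inside $Q$. Consequently $\area(Q,|q|) \emul s_Q^2$, with constants depending only on the Euler characteristic, and the injectivity radius in the rescaled metric is bounded below by a constant depending only on the topology.

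For $\sqrt{\Ext_X(\gamma)} \emul \ell_q(\gamma)/s_Q$, the lower bound follows by using $\rho = |q|$ restricted to $Q$ (extended by zero elsewhere) as a test metric in \eqnref{ext}: one has $\ell_\rho(\gamma) = \ell_q(\gamma)$ and $\area(X,\rho) \emul s_Q^2$, yielding $\sqrt{\Ext_X(\gamma)} \gmul \ell_q(\gamma)/s_Q$. For the upper bound I would construct an embedded annulus $A$ in $Q$ around the $|q|$--geodesic of $\gamma$ of flat modulus $\gmul s_Q^2/\ell_q(\gamma)^2$, giving $\Ext_X(\gamma) \le 1/\Mod(A) \lmul \ell_q(\gamma)^2/s_Q^2$. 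The equivalence $\ell_X(\gamma) \emul \sqrt{\Ext_X(\gamma)}$ then follows by continuity and compactness: on a compact family of bordered Riemann surfaces, hyperbolic length and $\sqrt{\Ext}$ are positive continuous functions of the isotopy class that agree up to a universal multiplicative constant, and the scaling factor $s_Q$ cancels between the two sides.

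The principal obstacle I expect is the construction of the embedded annulus when $\ell_q(\gamma) \gg s_Q$, so that $\gamma$ winds many times through the rescaled compact surface: the maximal embedded flat tubular neighborhood must have width $\emul s_Q^2/\ell_q(\gamma)$, and verifying this precise rate (not merely the larger crude bound obtained from the injectivity radius) requires exploiting the simplicity of $\gamma$ together with a covering argument on the rescaled surface. A secondary technical point is to check that test metrics supported on $X \setminus Q$ cannot improve the extremal length estimate for $\gamma$: since $\gamma$ does not cross any short curve $\alpha \in \cS_q$, it lifts trivially to the annular cover associated to $\alpha$, so the cylinder and expanding annuli contributing to \eqnref{EFE} for $\alpha$ do not contribute modulus that encircles $\gamma$. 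One must also verify that the hyperbolic geodesic representative of $\gamma$ remains in $Q$, which follows because the hyperbolic geodesic representatives of the short curves $\partial Q$ separate $Q$ from its complement and $\gamma$ is essential in $Q$.
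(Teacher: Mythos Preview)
This theorem is quoted from \cite{Rafi:TT} and is not proved in the present paper, so there is no in-paper argument to compare against. I will comment on your outline on its own terms.

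Your overall plan --- rescale the flat metric on $Q$ by $s_Q^{-1}$ so that it sits in a bounded-geometry family, then compare lengths --- is the right picture, and it is essentially how one should think of the result. But two of your three implementation steps have genuine gaps.

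First, the test-metric step. Taking $\rho = |q|\cdot \chi_Q$ does give $\area(X,\rho)\emul s_Q^2$, but you do \emph{not} get $\ell_\rho(\gamma)=\ell_q(\gamma)$. A competitor $\gamma'\sim\gamma$ may exit $Q$ through a boundary component $\alpha$; the excursion has zero $\rho$--cost, and when you push it back onto $\alpha$ to produce a curve $\gamma''\subset\overline Q$, you add length along $\alpha$ that is \emph{not} counted in $\ell_\rho(\gamma')$. So $\ell_\rho(\gamma')=\ell_q(\gamma'')-(\text{arcs on }\partial Q)$, and nothing prevents the subtracted term from being a definite fraction of $\ell_q(\gamma'')$. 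One needs an additional argument (for instance, comparing everything to intersection numbers with a fixed short marking of $Q$, or thickening $\rho$ across the collars of $\partial Q$ in a controlled way) to close this.

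Second, the ``compactness'' justification of $\ell_X(\gamma)\emul\sqrt{\Ext_X(\gamma)}$ is not an argument. The thick subsurface $Q$ has finitely many topological types, but $\gamma$ ranges over infinitely many isotopy classes, so compactness of a space of surfaces gives you nothing uniform over $\gamma$. What you actually need here is a statement valid curve-by-curve: e.g.\ \thmref{minsky:ext} applied on $X$ (noting $\I(\gamma,\alpha)=0$ for $\alpha\in\cS_X$) gives $\Ext_X(\gamma)\emul\max_{\beta\in\cB_X}\I(\gamma,\beta)^2$, and on the thick piece $Q$ the hyperbolic length satisfies $\ell_X(\gamma)\emul\max_{\beta\in\cB_X\cap Q}\I(\gamma,\beta)$; together these yield the comparison. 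The same intersection-number bookkeeping (now against curves of $q$--length $\emul s_Q$ in $Q$) handles $\ell_q(\gamma)/s_Q$ and largely bypasses both the annulus construction you flagged and the test-metric issue above. This is closer in spirit to the argument in \cite{Rafi:TT}.
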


\begin{example} \label{Exmp:TT}
A quadratic differential can be described as a singular flat structure of a surface
plus a choice of a vertical direction. For example, the surface obtained from the 
polygon in \figref{(X,q)} with the given edge identifications is a once punctured 
genus 
$2$ surface.  Assume that the edges $2,3,5$ and $6$ have a comparable
lengths, the edge $1$ is significantly shorter and the edge $4$ is significantly 
longer than the others. Choose an arbitrary vertical direction and let $(X,q)$ be 
the associated quadratic differential. 

\begin{figure}[ht]
\setlength{\unitlength}{0.01\linewidth}
\begin{picture}(100, 22)
\put(3,3){\includegraphics[width=40\unitlength]{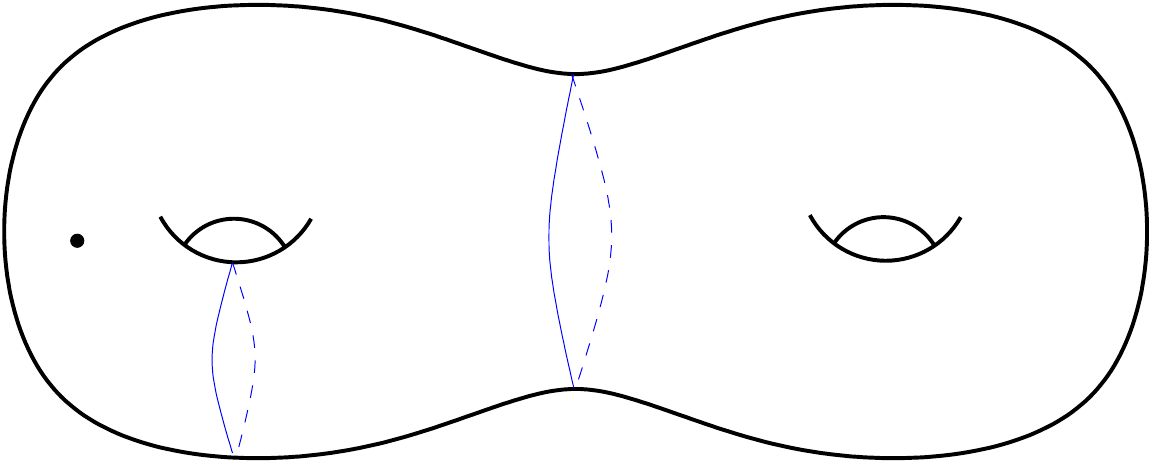}}
\put(20,11){$\beta$}
\put(8,6){$\alpha$}

\put(48,0){
\begin{tikzpicture}
  [thick, 
    scale=.04\unitlength,
    vertex/.style={circle,draw,fill=black,thick,
                   inner sep=0pt,minimum size=1mm}]

   \node[vertex] (l0) at (0,0)   {};
   \node[vertex] (l1) at (0,1)   {};
   \node[vertex] (l2) at (0,7)   {};
   \node[vertex] (l3) at (0,13)  {};
   \node[vertex] (m0) at (10,0)  {};
   \node[vertex] (m1) at (10,1)  {};
   \node[vertex] (m2) at (10,13) {};
   \node[vertex] (r0) at (40,1)  {};
   \node[vertex] (r1) at (40,3)  {};
   \node[vertex] (r2) at (40,7)  {};
   \node[vertex] (r3) at (40,9)  {};
   \node[vertex] (r4) at (40,13) {};

   \draw (l0) -- (l1) node at (-1, -.1) {1};
   \draw (l1) -- (l2) node[midway,left] {2};
   \draw (l2) -- (l3) node[midway,left] {2};
   \draw (l3) -- (m2) node[midway,above] {3};
   \draw (m2) -- (r4) node[midway,above] {4};
   \draw (r4) -- (r3) node[midway,right] {5};
   \draw (r3) -- (r2) node[midway,right] {6};
   \draw (r2) -- (r1) node[midway,right] {5};
   \draw (r1) -- (r0) node[midway,right] {6};
   \draw (r0) -- (m1) node[midway,below] {4};
   \draw (m1) -- (m0) node at (11,-.1) {1};
   \draw (m0) -- (l0) node[midway,below] {3};
       
   \draw[draw=blue]  (0,3) arc (90:0:2) -- (2,0)
            node at (2.5,3) {$\alpha$};
   
   \draw [draw=blue] (0,11) arc (270:360:2);    
   
      \draw [draw=blue] (18,13) -- (18,1)
         node[midway,right] {$\beta$};    
\end{tikzpicture}}
\end{picture}

\caption{Quadratic differential $(X,q)$ and short curves of $X$.}
\label{Fig:(X,q)}
\end{figure}

Then the hyperbolic metric on $X$ has two short simple closed curves; 
$\cS_q = \{ \alpha, \beta \}$. The curve $\beta$ is a cylinder curve and has a 
small extremal length because the flat annulus $F_\beta$ (\figref{EFG}) has a 
large modulus. In fact, $\beta$ is a large-cylinder curve ($\Sqc= \{\beta \}$).
The curve $\alpha$ is a non-cylinder curve and it has a small extremal length 
because the expanding annuli $E_\alpha$ and $G_\alpha$ (\figref{EFG}) have 
large moduli ($\Sqn= \{\alpha \}$). Note that the $q$--geodesic representative 
of $\alpha$ is the saddle connection $1$ (the end points of arc $1$ are identified). 

\begin{figure}[ht]
\setlength{\unitlength}{0.01\linewidth}
\begin{picture}(100, 18) 

\put(1,0){
\begin{tikzpicture}
   [thick, 
    scale=.04\unitlength,
    vertex/.style={circle,draw,fill=black,thick,
                   inner sep=0pt,minimum size=1mm}]

   \filldraw[fill=purple!70!white, draw=black] 
   (0,0)  -- (6,0) -- (6,1) arc (0:90:6) -- cycle;
   
   \filldraw[fill=purple!70!white, draw=black] 
   (0,13) -- (6,13) arc (360:270:6) -- cycle;
   
   \filldraw[fill=blue!70!white, draw=black] 
   (10,0) -- (10,1) -- (16,1) arc (0:180:6) -- (4,0) -- cycle;
   
   \filldraw[fill=blue!70!white, draw=black] 
   (10,13) -- (16,13) arc (360:180:6) -- cycle;

   \node[vertex] (l0) at (0,0)   {};
   \node[vertex] (l1) at (0,1)   {};
   \node[vertex] (l2) at (0,7)   {};
   \node[vertex] (l3) at (0,13)  {};
   \node[vertex] (m0) at (10,0)  {};
   \node[vertex] (m1) at (10,1)  {};
   \node[vertex] (m2) at (10,13) {};
   \node[vertex] (r0) at (40,1)  {};
   \node[vertex] (r1) at (40,3)  {};
   \node[vertex] (r2) at (40,7)  {};
   \node[vertex] (r3) at (40,9)  {};
   \node[vertex] (r4) at (40,13) {};

   \draw (l0) -- (l1);
   \draw (l1) -- (l2); 
   \draw (l2) -- (l3); 
   \draw (l3) -- (m2); 
   \draw (m2) -- (r4); 
   \draw (r4) -- (r3); 
   \draw (r3) -- (r2); 
   \draw (r2) -- (r1); 
   \draw (r1) -- (r0); 
   \draw (r0) -- (m1); 
   \draw (m1) -- (m0); 
   \draw (m0) -- (l0); 
      
\end{tikzpicture}}

\put(51,0){
\begin{tikzpicture}
   [thick, 
    scale=.04\unitlength,
    vertex/.style={circle,draw,fill=black,thick,
                   inner sep=0pt,minimum size=1mm}]

   \filldraw[fill=purple!70!white, draw=black] 
   (10,1) -- (40,1)  -- (40,13) -- (10,13) -- cycle;  

   \node[vertex] (l0) at (0,0)   {};
   \node[vertex] (l1) at (0,1)   {};
   \node[vertex] (l2) at (0,7)   {};
   \node[vertex] (l3) at (0,13)  {};
   \node[vertex] (m0) at (10,0)  {};
   \node[vertex] (m1) at (10,1)  {};
   \node[vertex] (m2) at (10,13) {};
   \node[vertex] (r0) at (40,1)  {};
   \node[vertex] (r1) at (40,3)  {};
   \node[vertex] (r2) at (40,7)  {};
   \node[vertex] (r3) at (40,9)  {};
   \node[vertex] (r4) at (40,13) {};

   \draw (l0) -- (l1);
   \draw (l1) -- (l2); 
   \draw (l2) -- (l3); 
   \draw (l3) -- (m2); 
   \draw (m2) -- (r4); 
   \draw (r4) -- (r3); 
   \draw (r3) -- (r2); 
   \draw (r2) -- (r1); 
   \draw (r1) -- (r0); 
   \draw (r0) -- (m1); 
   \draw (m1) -- (m0); 
   \draw (m0) -- (l0); 
   
 \end{tikzpicture}
 } 
 
 \put(4,17){$E_\alpha$} 
 \put(15,17){$G_\alpha$} 
  \put(59, 8){$F_\beta$} 
 
 \end{picture}
\caption{The maximal expanding annuli $E_\alpha$ and $G_\alpha$ and the
maximal flat annulus $F_\beta$.}
\label{Fig:EFG} 
\end{figure}
\end{example}

There are two thick subsurfaces. There is a once punctured torus with a
boundary curve $\beta$ whose $q$--representative is degenerate and is
represented in $q$ with a graph of area zero (the union of arcs $5$ and
$6$). The other is a pair of pants whose boundaries consist of two copies
of $\alpha$ and one copy of $\beta$. The maximal expanding annuli
$E_\alpha$ and $G_\alpha$ do not necessarily stay inside of the
$q$--representative of this pair of pants and they may overlap. 

The size of a thick subsurface $Q$ is related to the radii of annuli $E_\alpha$,
$F_\alpha$ and $G_\alpha$ for every boundary curve $\alpha$. We make a few 
observations that will be useful later.

\begin{lemma} \label{Lem:size:radius} 
Let $Q$ be a thick subsurface of $(X,q)$, $\alpha$ be
a boundary component of $Q$ and $E_\alpha$ be 
the expanding annulus in the direction of $Q$. Using the
notation of \eqnref{EFE} we have
\begin{enumerate} 
\item $l \leq 2 s_Q$. 
\item $e \leq s_Q$. 
\item $\max(e,f,g) \geq \ell_q(\alpha)$. 
\item $(e+ l )\emul s_Q$.
\item If $\Mod(E_\alpha) \gmul 1$ then $e \emul s_Q$. 
\end{enumerate}
\end{lemma}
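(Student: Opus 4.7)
The plan is to address the four assertions in turn: (1) and (2) follow almost immediately from the definitions, (3) is obtained by combining them with a matching opposite inequality, and (4) is then a quick consequence of (3) together with the modulus estimate in \eqnref{EFE}.

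For (1), the $q$--geodesic representative of $\alpha$ lies in $Q$, so any two of its points are at $q$--distance at most $s_Q$. Choose antipodal points $p,p'\in\alpha$, so that each of the two arcs of $\alpha$ from $p$ to $p'$ has length $l/2$. Let $\beta$ be a $q$--geodesic in $Q$ from $p$ to $p'$; its length is at most $s_Q$. Concatenating $\beta$ with one of the two arcs of $\alpha$ yields a loop freely homotopic to $\alpha$, so by minimality of $\alpha$ in its free homotopy class we get $l\le l/2+s_Q$, whence $l\le 2s_Q$. For (2), since $E_\alpha$ is the expanding annulus in the direction of $Q$, it is contained in $Q$, with inner boundary on $\alpha\subset \partial Q$ and outer boundary also in $Q$. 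The $q$--distance $e$ between the two boundaries is therefore bounded by the $q$--diameter $s_Q$.

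For (3), the inequality $e+l\lmul s_Q$ is immediate from (1) and (2). The opposite inequality $s_Q\lmul e+l$ is the substantive part and uses the maximality of $E_\alpha$: at $q$--distance $e$ from $\alpha$ the expanding annulus meets a singular point or becomes non-embedded, so its outer boundary is a finite graph of saddle connections of total length $O(e+l)$. Combined with \thmref{TT}, which says $s_Q$ is comparable to the flat length of the shortest essential simple closed curve in $Q$, and a short case analysis describing how $Q$ decomposes as $E_\alpha$ union its complement, this yields $s_Q\lmul e+l$.

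For (4), use the estimate $\Mod(E_\alpha)\emul \Log(e/l)$ from \eqnref{EFE}. The hypothesis $\Mod(E_\alpha)\gmul 1$, interpreted in the effective sense of being bounded below by a sufficiently large universal constant, forces $\log(e/l)$ to be bounded below, hence $e\gmul l$; then $e+l\emul e$, and (3) gives $s_Q\emul e$. The main obstacle in the whole argument is the reverse inequality in (3): one must show that the part of $Q$ beyond $E_\alpha$ has diameter controlled by $e+l$, which requires a careful description of the saddle-connection graph forming the outer boundary of $E_\alpha$ and of how the complementary ``thick'' piece sits inside $Q$. This is where the bulk of the geometric analysis lies, while the remaining steps are essentially bookkeeping.
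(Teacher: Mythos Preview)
Your overall structure matches the paper's, and parts (1) and (4) are fine. The genuine gap is in your justification of (2), and this error propagates into (3).

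For (2) you assert that $E_\alpha$ is contained in $Q$. This is false in general: $E_\alpha$ is defined as (one side of) the maximal regular neighborhood of $\alpha$ in the whole surface that is still an annulus, not the maximal such neighborhood inside $Q$. It can spill across other boundary components of $Q$ into adjacent thick pieces; the paper itself remarks later (in the proof of \lemref{Exist}, Claim 1) that ``the annulus $E_\alpha$ may not be contained entirely in $Q$.'' The paper's argument for (2) goes the other way: if $e>s_Q$, then every point of $Q$ lies within distance $e$ of $\alpha$ on the $Q$ side, hence $Q\subset E_\alpha$, contradicting the fact that a thick subsurface is not contained in an annulus.

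For (3), the paper does use maximality of $E_\alpha$ as you suggest, but the mechanism is different and more explicit than your sketch. Since the outer boundary of $E_\alpha$ self-intersects, one builds a concrete essential simple closed curve $\gamma$ from a sub-arc of $\alpha$ and two radial arcs across $E_\alpha$, so that $\ell_q(\gamma)\lmul e+l$. If $\gamma$ lies in $Q$, \thmref{TT} gives $\ell_q(\gamma)\gmul s_Q$ and you are done. But precisely because $E_\alpha$ need not stay in $Q$, $\gamma$ may exit $Q$ through some other boundary curve $\alpha'$; the paper then performs a surgery with $\alpha'$ (using that $\alpha'$ is short, so the surgery costs only a multiplicative factor) to replace $\gamma$ by an essential curve inside $Q$ of comparable length. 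Your ``short case analysis describing how $Q$ decomposes as $E_\alpha$ union its complement'' does not capture this; in particular, bounding $\diam(Q)$ directly by covering $Q$ with $E_\alpha$ and its complement does not work, since the complement of $E_\alpha$ in $Q$ is not a priori small.
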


\begin{proof}
Since $\alpha$ is part of $Q$, its length is less than twice the diameter of
$Q$ which is the first assertion. To see part two, note that if
$e$ is larger than $s_Q$, then $Q$ is contained in $E_\alpha$ which
is an annulus. This is a contradiction. Part (3) follows from \eqnref{Moduli}
and the fact that $\alpha$ is $\ep_0$--short. Parts (1) and (2) imply $(e+ l)\lmul s_Q$. 
Hence, to prove part (4), we need to show $(e+ l)\gmul s_Q$.

Since $E_\alpha$ is maximal, its outer boundary self-intersects. 
Let $\gamma$ be the curve constructed as a concatenation 
a sub arc of $\alpha$ and two arcs connecting $\alpha$ to the boundary 
points of $E_\alpha$ associated to the self intersection of $E_\alpha$.  
Note that the inner boundary of $E_\alpha$ is a geodesic and its outer boundary
has positive curvature, therefore, the interior of $E_\alpha$ is convex,
and the curve $\gamma$ must be essential. 

Then $l+e \gmul \ell_q(\gamma)$. If $\gamma$ is contained in 
$Q$ and is essential in $Q$, then $\Ext_X(\gamma) \gmul 1$ ($Q$ is a thick subsurface). 
From \thmref{TT} we get, 
$$
\frac{\ell_q(\gamma)}{s_Q} \gmul 1
\quad\text{and hence}\quad 
(e+l) \gmul s_Q.
$$
If $\gamma$ is not contained in $Q$, we show that there exists a closed
curve $\gamma'$ in $Q$ whose length is not much longer than $\gamma$. 

Assume that $\gamma$ exists $Q$ by intersecting a boundary curve $\alpha'$ 
and returns via a boundary curve $\alpha''$ ($\alpha'$ and $\alpha''$ 
maybe the same curve). By part (3),  $\max(e',f',g')$ is larger than $l'$, 
$\max(e'',f'',g'')$ is larger than $l''$ and $\ell_q(\gamma)$ is larger than both. 
There is a sub-arc $\omega$ of $\gamma$ connecting $\alpha'$ to $\alpha''$,
in particular, $\ell_q(\omega) \leq \ell_q(\gamma)$. If $\alpha' \not = \alpha''$, let 
$\gamma'$ be the curve obtained as a concatenation of two copies of $\omega$ and 
a copy of $\alpha'$ and $\alpha''$ each. This curve is essential in $Q$ unless $Q$ is
a pair of pants, in which case, we take $\gamma'$ to be the curve that
wraps around $\alpha'$ twice. If $\alpha' = \alpha''$, then let $\gamma'$
be the curve obtained as a concatenation of $\omega$ and a sub-arc of $\alpha'$. 
Again, this curve is essential in $Q$ unless $Q$ is a pair of pants, in which 
case, we take $\gamma'$ to be the curve that wraps around $\alpha'$ twice. 
The curve $\gamma'$ resides in $Q$ and 
$\ell_q(\gamma') \lmul \ell_q(\gamma)$. We have
\[
(e+l) \gmul \ell_q(\gamma) \gmul \ell_q(\gamma') \gmul s_Q.
\]
To see part (5), we note that, if 
$$
\Log \frac el \emul \Mod(E_\alpha) \gmul 1
\quad\text{then}\quad 
e\emul (e+l).
$$ 
Now, part (5) follows from part (4). 
%
%
%

\end{proof}

As a corollary we get the following analogue of the collar lemma: 

\begin{corollary} \label{Cor:CollarLemma}
Let $\alpha \in \cS_q$ be the boundary of a thick subsurface $Q$ and let 
$\gamma$ be any curve crossing $\alpha$. Then
$$
\ell_q(\gamma) \gmul s_Q.
$$
\end{corollary}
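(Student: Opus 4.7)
The plan is to show that the $q$--geodesic representative of $\gamma$ must traverse the entire maximal annular neighborhood of $\alpha$, which by the setup decomposes as $E_\alpha \cup F_\alpha \cup G_\alpha$ with respective widths $e, f, g$. By construction this neighborhood is embedded (it is the maximal regular neighborhood of $\alpha$ that is still an annulus), so any arc of $\gamma$ realizing an essential crossing of $\alpha$ must connect the two outer boundary components of this annulus; hence
\begin{displaymath}
\ell_q(\gamma) \geq e + f + g.
\end{displaymath}
The remainder of the proof is to show that $e+f+g \gmul s_Q$, and for this I would do a case split guided by which summand of the decomposition \eqnref{EFE} is responsible for the smallness of $\Ext_X(\alpha)$.

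First, if $\Mod(E_\alpha) \gmul 1$ then \lemref{size:radius}(4) gives $e \emul s_Q$ at once. Otherwise $\Mod(E_\alpha) \emul \Log(e/l)$ is bounded, which forces $e \lmul l$, and combined with \lemref{size:radius}(3) this yields $l \emul s_Q$; that is, the boundary curve $\alpha$ itself has flat length comparable to the size of $Q$. In this regime I use the hypothesis $\alpha \in \cS_q$: by \eqnref{EFE}, $\Mod(E_\alpha) + \Mod(F_\alpha) + \Mod(G_\alpha) \gmul 1/\ep_0$, so since the $E$--summand is bounded, either $\Mod(F_\alpha) \emul f/l$ is large, in which case $f \gmul l \emul s_Q$, or $\Mod(G_\alpha) \emul \Log(g/l)$ is large, in which case $g/l$ is large and $g \gmul l \emul s_Q$. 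In every branch one of $e, f, g$ is $\gmul s_Q$, which completes the argument.

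The main obstacle is the first step: justifying the lower bound $\ell_q(\gamma) \geq e + f + g$ in the presence of tangential intersections between flat geodesics. The point is that crossing (in the sense defined just before the statement, i.e., lifts with interlocking endpoints at infinity) is preserved under replacing $\gamma$ by its $q$--geodesic representative; and because $E_\alpha \cup F_\alpha \cup G_\alpha$ is an embedded annulus, an interlocking lift of $\gamma$ in the annular cover must cross both boundary components of the corresponding lifted strip, picking up length at least the full width $e+f+g$. A secondary bookkeeping point is that for $\Log$--based moduli the phrase ``$\Mod \gmul 1$'' must mean exceeding a definite threshold strictly larger than $1$; this is automatic once $\ep_0$ is chosen small enough that $1/\ep_0$ dominates all the universal constants appearing in \eqnref{EFE}.
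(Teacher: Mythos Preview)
Your proof is correct and follows the same overall strategy as the paper: first establish $\ell_q(\gamma)\ge e+f+g$ from the fact that $\gamma$ must cross the maximal embedded annulus $E_\alpha\cup F_\alpha\cup G_\alpha$, then show $e+f+g\gmul s_Q$.

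The only difference is in the second step. Your case split on whether $\Mod(E_\alpha)\gmul 1$ works, but the paper's argument is shorter: from $\Ext_X(\alpha)\le\ep_0$ and \eqnref{EFE} one gets $e+f+g\ge l$ directly (some term in the sum of moduli is large, so the corresponding width exceeds $l$). Combined with the trivial $e+f+g\ge e$, this gives $\ell_q(\gamma)\gmul e+l$, and then part~(3) of \lemref{size:radius} finishes in one stroke. This avoids invoking part~(4) and the subcase analysis on $F_\alpha$ versus $G_\alpha$. Your route does have the minor virtue of making explicit which annulus is responsible for the width, but the paper's packaging via $e+l\emul s_Q$ is cleaner.
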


\begin{proof}
We have $\ell_q(\gamma) \geq \max(e,f,g)$ and by part (3) of \lemref{size:radius}, 
$\max(e,f,g) \geq l$. Hence, $\ell_q(\gamma) \gmul (e+l )$. The corollary now follows 
from part (4) of \lemref{size:radius}.
\end{proof}

\subsection{Extremal lengths and flat lengths of saddle connections}
\label{Sec:ext:saddle}

As mentioned above, we can also define a notion of extremal length for
saddle connections. Let $\omega$ be a saddle connection
connecting two distinct critical points in $(X,q)$. Let $E_\omega$ be the 
annulus obtained by taking the largest regular neighborhood of $\omega$ that 
is still a topological disk and then cutting a slit open along $\omega$. 
Let $l=\ell_q(\omega)$ and $e$ be the radius of $E_\omega$ (the $q$--distance
between $\omega$ and the boundary of $E_\omega$). Then, we define
(the second inequality follows from \eqnref{Moduli}) 
$$
\Ext_q(\omega) :=\frac 1{\Log (e/l)} \emul \frac 1{\Mod(E_\omega)}. 
$$
Another interpretation of this notion of extremal length, that would provide 
roughly the same result, is to compute the extremal length in a ramified double cover 
of $(X,q)$. Denote the end points of $\omega$ by $p_1$ to $p_2$. There exists 
a unique ramified double cover 
$\phi\from X_\omega \rightarrow X$ with simple ramification points at only $p_{1}$ 
and $p_{2}$. Note that $\alpha_\omega=\phi^{-1} \omega$ is a simple 
closed curve on $X_\omega$. 

\begin{lemma} \label{Lem:EXT}
If $\Ext_q(\omega) \leq \ep_0$, then
$$
\Ext_{X_\omega}( \alpha_\omega) \emul \Ext_q(\omega).
$$
\end{lemma}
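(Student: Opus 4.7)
The plan is to bound $\Ext_{X_\omega}(\alpha_\omega)$ in both directions against $\Ext_q(\omega)$ by transferring annular neighborhoods of $\alpha_\omega$ in $X_\omega$ back and forth with the slit disk $E_\omega$ in $X$ via the branched double cover $\phi$. The smallness hypothesis on $\Ext_q(\omega)$ (equivalently, largeness of $\Mod(E_\omega)$) will ensure that the embedded topological disk $D_\omega := E_\omega \cup \omega$ is so large that all constructions are genuinely embedded and not obstructed by other geometry of $X$.

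For the upper bound $\Ext_{X_\omega}(\alpha_\omega) \lmul \Ext_q(\omega)$, I would pull back $D_\omega$ by $\phi$ and analyze $\tilde D := \phi^{-1}(D_\omega)$. Both ramification points $p_1, p_2$ of $\phi$ lie in the interior of $D_\omega$, while $\partial D_\omega$ is unramified (the combined monodromy around the two branch points is trivial). By Riemann--Hurwitz, $\chi(\tilde D) = 2\chi(D_\omega) - 2 = 0$, and $\partial \tilde D$ consists of two circles, so $\tilde D$ is a topological annulus whose core curve is $\phi^{-1}(\omega) = \alpha_\omega$. Conformally $\tilde D$ is obtained by gluing two copies of $E_\omega$ along the two sides of the slit $\omega$, which yields $\Mod(\tilde D) \emul \Mod(E_\omega)$. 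Using $\tilde D$ as a candidate annular neighborhood of $\alpha_\omega$ gives
\[
\Ext_{X_\omega}(\alpha_\omega) \leq \frac{1}{\Mod(\tilde D)} \emul \frac{1}{\Mod(E_\omega)} \emul \Ext_q(\omega).
\]

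For the reverse inequality, let $\tilde A$ be a maximal annular neighborhood of $\alpha_\omega$ in $X_\omega$, so $\Mod(\tilde A) \emul 1/\Ext_{X_\omega}(\alpha_\omega)$. The deck involution $\iota$ of $\phi$ is a biholomorphism of $X_\omega$ preserving $\alpha_\omega$ setwise, whose restriction to $\alpha_\omega$ is an orientation-reversing involution with fixed points at the two preimages of $p_1, p_2$. I would pass to the annular cover $\pi \colon \hat X \to X_\omega$ associated with $\alpha_\omega$: it is a conformal annulus of modulus $\emul \Mod(\tilde A)$, and $\iota$ lifts to a holomorphic involution $\hat\iota$ of $\hat X$ with two fixed points on the central core. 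By the classification of biholomorphisms of a standard annulus, $\hat\iota$ must be conjugate to $z \mapsto R/z$, and $\hat X/\hat\iota$ is a topological disk. The composition $\phi \circ \pi$ descends to a map $\hat X/\hat\iota \to X$ whose image in a thin neighborhood of the core is an embedded disk around $\omega$; slitting this disk along $\omega$ produces an annulus in $X$ homotopic to $E_\omega$ of modulus $\emul \Mod(\tilde A)$. Maximality of $E_\omega$ then yields $\Mod(E_\omega) \gmul \Mod(\tilde A)$, hence $\Ext_q(\omega) \lmul \Ext_{X_\omega}(\alpha_\omega)$.

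The main obstacle I anticipate is justifying that the descended disk is genuinely embedded in $X$, rather than immersed or overlapping other singular features of $q$. This is where the hypothesis that $\Ext_q(\omega)$ is sufficiently small is essential: because $\Mod(E_\omega)$ is large, $E_\omega$ sits inside a deep embedded topological disk in $X$, so any $\iota$-invariant sub-annulus concentrated near the core of $\hat X$ projects injectively into $X$. Carrying this out rigorously will require comparing the conformal modulus of the descended slit-annulus with the flat-geometric definition of $E_\omega$ and localizing the argument to the genuinely embedded portion of $\hat X/\hat\iota$.
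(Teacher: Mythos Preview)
Your approach is correct in spirit but considerably more elaborate than the paper's, and your lower bound contains a subtle gap.

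The paper's proof is a two-line application of the flat-geometric estimate \eqref{Eq:EFE}. The key observation you are missing is that $q$ lifts via $\phi$ to a quadratic differential $q_\omega$ on $X_\omega$, so all of the flat-metric machinery of \secref{geometry} is available on the cover. Since $\phi$ is a local isometry for the singular Euclidean metrics, the regular $r$--neighborhood of $\alpha_\omega$ in $q_\omega$ is exactly $\phi^{-1}$ of the regular $r$--neighborhood of $\omega$ in $q$. It follows that $\alpha_\omega$ has no flat cylinder ($\Mod(F_{\alpha_\omega})=0$) and that the two expanding annuli $E_{\alpha_\omega}$, $G_{\alpha_\omega}$ are each conformally equivalent to $E_\omega$. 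Plugging into \eqref{Eq:EFE} on $X_\omega$ gives
\[
\frac{1}{\Ext_{X_\omega}(\alpha_\omega)} \emul \Mod(E_{\alpha_\omega}) + \Mod(G_{\alpha_\omega}) = 2\Mod(E_\omega) \emul \frac{1}{\Ext_q(\omega)},
\]
which is both directions at once. Your upper bound is essentially this computation: your annulus $\tilde D$ is exactly $E_{\alpha_\omega} \cup \alpha_\omega \cup G_{\alpha_\omega}$.

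Your lower bound, by contrast, tries to work purely conformally and runs into two problems. First, the assertion that a maximal annular neighborhood $\tilde A$ satisfies $\Mod(\tilde A) \emul 1/\Ext_{X_\omega}(\alpha_\omega)$ is not automatic; it is precisely the content of an \eqref{Eq:EFE}-type estimate, so you are implicitly invoking the same tool the paper uses directly. Second, and more seriously, your final step appeals to ``maximality of $E_\omega$'' to bound the modulus of the pushed-down slit disk by $\Mod(E_\omega)$. But $E_\omega$ is defined as the maximal \emph{metric} regular neighborhood of $\omega$, not the maximal topological disk; an arbitrary embedded disk around $\omega$ can in principle have larger slit modulus than $E_\omega$, so this comparison does not follow from maximality alone. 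Closing this gap again requires relating $\Mod(E_\omega)$ to the genuine extremal length of the loop around $\omega$ in $X\setminus\{p_1,p_2\}$, which is once more an \eqref{Eq:EFE} argument. The paper's route avoids all of this by staying in the flat metric throughout.
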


\begin{proof}
Let $q_\omega$ be the lift of $q$ to $X_\omega$. 
Note that $\alpha_\omega$ has a unique geodesic representative in $q_\omega$ 
($\Mod(F_{\alpha_\omega})=0$) and $E_{\alpha_\omega}$ and 
$G_{ \alpha_\omega}$ are conformally equivalent to $E_\omega$. Hence,
by \eqnref{EFE}
\begin{equation*} 
\frac 1{\Ext_{X_\omega}( \alpha_\omega)} \emul 
\Mod(E_{ \alpha_\omega}) + \Mod(G_{\alpha_\omega}) 
=  2 \Mod(E_\omega) \emul \frac 1{\Ext_q(\omega)}.  \qedhere
\end{equation*}
\end{proof}

Since $l$ and $e$ change at most exponentially fast along a \Teich
geodesic, similar to \eqnref{ext:curve}, for $q_t=g_t(q)$ we have
\begin{equation}\label{Eq:ext:saddle}
 \frac 1{\Ext_q(\omega)} -t \lmul \frac 1{\Ext_{q_t}(\omega)}
   \lmul  \frac 1{\Ext_q(\omega)} +t.
\end{equation}

\begin{definition} \label{Def:Omega_q}
For any $0<\ep \leq \ep_0$, let $\Omega_q(\ep)$ be the set of saddle connections 
$\omega$ of $q$ so that, either
\begin{itemize} 
\item $\Ext_q(\omega) \leq \ep$, or
\item $\omega$ lies on a geodesic representative for $\alpha$ with 
$\Ext_X(\alpha) \leq \ep$. 
\end{itemize}
Later in the text, we will add further restrictions on the value of 
$\ep$ depending on $\tau$ (see \lemref{Exist} and \lemref{quad}). 
We note however that, in all the proofs, making $\ep$ smaller or making 
$\tau$ larger does not effect the constants involved in any of our estimates. 
\end{definition}

In general, knowing $\ell_q(\omega)$ is small does not imply that
$\omega$ has a small extremal length. However, we have the following
lemma which is enough to show that \thmref{short:saddle} 
follows from \thmref{j:short}. 

\begin{lemma}\label{Lem:ext:saddle}
Assume that $(X,q)$ has a saddle connection $\omega$ with 
$\ell_q(\omega) \ll 1$. Then, either 
$$
\frac 1{\Ext_q(\omega)} \emul \Log \frac 1{\ell_q(\omega)} \quad\text{or}\quad
\frac 1{\Ext_X(\alpha)} \gmul \Log \frac 1{\ell_q(\omega)},
$$
for some simple closed curve $\alpha$. In particular, $\Omega_q(\ep)$ is 
non-empty. 
\end{lemma}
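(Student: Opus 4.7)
The plan is to reduce everything to the radius $e$ of the maximal slit annulus $E_\omega$ around $\omega$, and split into two regimes depending on whether $e$ is bigger or smaller than $\sqrt{l}$, where $l=\ell_q(\omega)$. By definition, $\tfrac{1}{\Ext_q(\omega)} \emul \Log(e/l)$, so the whole lemma is an estimate of this quantity together with a comparison to extremal lengths of nearby simple closed curves.

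I would first establish the universal upper bound $\tfrac{1}{\Ext_q(\omega)} \lmul \Log(1/l)$ for any short saddle connection. The argument is an area bound: $E_\omega$ is embedded in the unit-area surface $X$ and contains either a flat rectangle of area $\emul e\cdot l$ (if $e\lmul l$) or a half-disk of area $\emul e^2$ (if $e\gmul l$). Either way $e\lmul \max(1/\sqrt l,\, 1)$, and since $l\ll 1$ this yields $\Log(e/l)\lmul \Log(1/l)$. Next comes the dichotomy. If $e\gmul \sqrt l$, then $e/l\gmul 1/\sqrt l$, so $\Log(e/l)\gmul \tfrac12\Log(1/l)\emul \Log(1/l)$, which combined with the universal upper bound gives the first alternative $\tfrac{1}{\Ext_q(\omega)}\emul \Log(1/l)$.

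The second regime is $e\lmul \sqrt l$, and here I need to produce a simple closed curve $\alpha$ with $\omega$ on its $q$-geodesic representative and with $\tfrac{1}{\Ext_X(\alpha)}\gmul \Log(1/l)$. Since $E_\omega$ is maximal, its outer boundary must either self-identify or hit a singular point not on $\omega$. In either case I can extract from this obstruction a closed $q$-geodesic loop which, after standard surgery at singular points, yields a simple closed curve $\alpha$ with $\ell_q(\alpha)\lmul e+l\lmul \sqrt l$. The region bounded by $\alpha$ containing $\omega$ has area $\lmul e\cdot l\lmul l^{3/2}$, so in particular $\omega$ lies inside the thin region enclosed by $\alpha$ and is therefore on the $q$-geodesic representative of $\alpha$.

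The last and main step is to control $\Ext_X(\alpha)$ from above. Using the estimate \eqnref{EFE} for $\alpha$, it suffices to exhibit an expanding or flat annulus around $\alpha$ of modulus $\gmul \Log(1/l)$. Since $\alpha$ bounds a region of area at most $l^{3/2}$ while the total area of $X$ is $1$, almost all of the surface lies on the other side of $\alpha$, and I can build an expanding annulus $E'_\alpha$ around $\alpha$ in this complementary side by flowing outward until the first topological obstruction. The radius of $E'_\alpha$ is $\gmul 1$ by an area argument symmetric to the one used for $\omega$, so $\Mod(E'_\alpha)\gmul \Log(1/\ell_q(\alpha))\gmul \tfrac12\Log(1/l)\emul \Log(1/l)$, as needed. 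The delicate point here—and the main technical obstacle—is ensuring that the constructed simple closed curve $\alpha$ genuinely bounds a thin region so that the expanding annulus on the other side can be taken as large as claimed; this requires carefully unwinding the possible ways in which the outer boundary of $E_\omega$ can fail to embed, and using the collar-type bound from \corref{CollarLemma} to rule out degenerate configurations. Once $\alpha$ is produced, the non-emptiness of $\Omega_q(\ep)$ in the final clause follows directly from \defref{Omega_q}: in the first case of the dichotomy $\omega\in\Omega_q(\ep)$, and in the second case $\alpha$ has $\Ext_X(\alpha)\leq \ep$ and $\omega$ lies on its geodesic representative.
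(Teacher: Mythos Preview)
Your upper bound $\tfrac{1}{\Ext_q(\omega)}\lmul \Log(1/l)$ via an area estimate is correct, and the first branch of your dichotomy ($e\gmul\sqrt l$) is fine. The gap is in the second branch.

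The unjustified step is the assertion that the expanding annulus $E'_\alpha$ around your curve $\alpha$ has radius $\gmul 1$ ``by an area argument symmetric to the one used for $\omega$.'' An area bound on an embedded region yields an \emph{upper} bound on its radius, never a lower one: knowing that the side of $\alpha$ away from $\omega$ carries almost all the area says nothing about where the outward flow from $\alpha$ first meets a topological obstruction. Concretely, there may be a second short curve $\beta$ (of length comparable to $\ell_q(\alpha)$) lying very close to $\alpha$, so that $E'_\alpha$ has tiny radius while the bulk of the surface sits beyond $\beta$. \corref{CollarLemma} does not help here, since it bounds lengths of curves \emph{crossing} $\alpha$, not the distance from $\alpha$ to the nearest obstruction on one side.

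This is exactly the obstacle the paper's proof is designed around. Instead of stopping at the first short curve $\alpha_1$ with $\ell_q(\alpha_1)\lmul e+l$, the paper follows a chain of adjacent thick subsurfaces $Q_1,\ldots,Q_k$ with sizes $s_1,\ldots,s_k$ out to one with $s_k\emul 1$. The telescoping estimate
\[
\Big(\prod_{i=1}^{k}\frac{s_i}{s_{i-1}}\Big)\cdot\frac{e+l}{l}\ \gmul\ \frac{1}{l},
\]
together with $k=O(1)$, forces either some ratio $s_i/s_{i-1}$ to be large---so the common boundary curve $\alpha_i$ has $1/\Ext_X(\alpha_i)\gmul\Log(s_i/s_{i-1})\gmul\Log(1/l)$ by \lemref{size:radius} and \eqnref{EFE}---or else $(e+l)/l$ to be large, giving the first alternative. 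Your dichotomy at $e\sim\sqrt l$ tries to shortcut this chain argument, but the chain is genuinely needed: a single short curve near $\omega$ need not itself have large modulus.

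A minor separate point: you claim $\omega$ lies on the $q$--geodesic representative of $\alpha$. The lemma does not assert this, your construction does not guarantee it, and it is not needed for the final clause---once $\Ext_X(\alpha)\le\ep$, any saddle connection on the geodesic representative of $\alpha$ already lies in $\Omega_q(\ep)$.
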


\begin{proof}
Let $l= \ell_q(\omega)$ and $e$ be the radius of $E_\omega$. Since
the boundary of $E_\omega$ self intersects ($E_\omega$ is maximal), there 
is a simple closed curve $\gamma$,
obtained by a concatenation of a sub arc of $\omega$ and two arcs connecting
$\omega$ to the boundary of $E_\omega$, with $\ell_q(\gamma) \lmul  (e+l)$. 

Assume first that $\cS_q$ is empty. Then, $\ell_q(\gamma) \gmul 1$. 
Since, $e \lmul 1$, we have 
\[
\frac el \emul \frac {(e+l)}l \emul \frac 1{\ell_q(\omega)}
\qquad\text{and}\qquad
\frac 1{\Ext_q(\omega)} \emul \Log \frac{e}{l} \emul \Log \frac 1{\ell_q(\omega)}.
\]
That is, the first inequality holds. Otherwise, we show that, there is a curve 
$\alpha_1 \in \cS_q$ with $\ell_q(\alpha_1) \lmul (e+l)$. This is because, either 
$\gamma \in \cS_q$ and we can take $\alpha_1=\gamma$ or $\gamma$ intersects
a thick subsurface $Q$ in which case we let $\alpha_1$ be any boundary
component of $Q$. Using \corref{CollarLemma} and part one of 
\lemref{size:radius}, we get:
$$
(e+l) \gmul \ell_q(\gamma) \gmul s_Q \gmul \ell_q(\alpha_1). 
$$

Since the total area of $q$ is 1, there is always a thick subsurface of size 
comparable to 1. Let $Q_1, \ldots, Q_k$ be a sequence of distinct subsurfaces of 
sizes $s_1, \ldots, s_k$ respectively, where $\alpha_1$ is a boundary component 
of $Q_1$, $Q_{i-1}$ and $Q_i$ share a boundary curve $\alpha_i$ and 
$s_k \emul 1$. Let $l_i = \ell_q(\alpha_i)$ and let $s_0=l_1$. 

Consider $G_{\alpha_i}$, the expanding annulus with inner boundary $\alpha_i$
in the direction of $Q_i$ with radius $g_i$. For $i\geq 1$, part (4) of
\lemref{size:radius} implies, $(g_i+l_i) \emul s_i$ and by part (1)
$s_{i-1} \gmul l_i$. Hence, from \eqnref{EFE}, we know that 
$$
 \frac 1{\Ext_q(\alpha_i)} \gmul 
 \max \left( \Log \frac{g_i}{l_i}, 1\right)
 \gadd \Log \frac{g_i+l_i}{l_i} \gadd \Log \frac{s_i}{s_{i-1}}.
$$ 
That is, the common boundary curve of two surfaces of very different
size has a very small extremal length. Also, (recall that $s_0=l_1 \lmul (e+l)$):
\begin{equation} \label{Eq:s_i}
\left(\prod_{i=1}^k  \frac{s_i}{s_{i-1}}\right) \frac{e+l}{l}
\emul \left( \frac 1{s_0} \right) \frac{e+l}{l} \gmul \frac 1{l}. 
\end{equation}
Here, the maximum value of $k$ depends only on the topology of $S$. 
Therefore, taking the logarithm of both sides of \eqnref{s_i}, we conclude that either 
$$
  \text{there is some $i$ where,}\quad
  \frac 1{\Ext_q(\alpha_i)} \gmul \Log \frac 1l 
  \quad\text{or}\quad 
  \Log \frac{e+l}{l} \gmul  \Log \frac 1l.
$$ 
In the first case, the lemma holds for $\alpha=\alpha_i$. 
In second case,
\begin{equation*}
\frac 1{\Ext_q(\omega)} \emul \Log \frac{e}{l} 
     \emul \Log \frac{e+l}{l} \emul  \Log \frac 1{\ell_q(\omega)}. \qedhere
\end{equation*}
\end{proof}
\begin{remark} Note that in both \lemref{EXT} and \lemref{ext:saddle}
the implied constants only depend on the topology of $S.$
\end{remark}

\subsection{A $(q, \tau)$--regular triangulation}\label{Sec:triangulation}
We would like to mark a quadratic differential $q$ by a triangulation 
where the edges have a bounded length. However, the notion of having a bounded
length should depend on which thick subsurface we are in. That is, we would like
the $q$--length of an edge to not be longer than the size of the thick subsurfaces
it intersects. The complication comes from the fact that a saddle connection may 
intersect several thick subsurfaces of various sizes. 

Also,  as mentioned before, large-cylinder curves will require a 
special treatment. Hence, we triangulate only the complement of large-cylinders.
Recall that two saddle connections are said to be disjoint
if they have disjoint interiors but they may share one or two end points. 
\begin{definition}\label{Def:nice:triangulation}
Let $(X,q)$ be a quadratic differential. 
Given a cylinder curve $\alpha$, let $\upsilon_\alpha$ be an
arc connecting the boundaries of $F_\alpha$ that is perpendicular to $\alpha$. 
By a \emph{$(q, \tau)$--regular triangulation} 
$T$ of $q$ we mean a collection of disjoint saddle connections 
satisfying the following conditions:
\begin{enumerate}
\item  For $\alpha \in \Sqc$, denote the interior of a cylinder $F_\alpha$ 
by $F_\alpha^\circ$. Then, $T$ is disjoint from $F_\alpha^\circ$ and 
it triangulates their complement 
$$
q \setminus \bigcup_{\textstyle \alpha \in \Sqc} F_\alpha^\circ.
$$
That is, the complement of $T$ is a union of triangles and large-cylinders
$F_\alpha^\circ$, $\alpha \in \Sqc$. In particular, $T$ contains the boundaries 
of $F_\alpha$.
\item If an edge $\omega$ of $T$ intersects a thick subsurface $Q$ of $q$
then $\ell_q(\omega)\lmul s_Q$.
\item If $\alpha$ is a cylinder curve in $\Sqn$ then $\upsilon_\alpha$ intersects 
$T$ a uniformly bounded number of times. 

\end{enumerate}
\end{definition}

We shall see that condition $3$ means that the triangulation $T$ does not twist 
around short simple closed curves.

\begin{remark}
It is important to choose the implied constants in conditions $2$ and $3$ in 
\defref{nice:triangulation} large enough 
so that every quadratic differential $q$ has a $(q, \tau)$--regular triangulation. 
In fact, we choose the constants so that the key \lemref{Exist} below holds.
\end{remark}

\begin{lemma} \label{Lem:Exist} 
For every $\tau$ there is $\ep_1(\tau)$ so that for $\ep<\ep_1(\tau)$ the following holds.
Let $\Omega$ be a subset of $\Omega_q(\ep)$ consisting of pairwise
disjoint saddle connections. Then $\Omega$ can be extended 
to a $(q, \tau)$--regular triangulation $T$. 
\end{lemma}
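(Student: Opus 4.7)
The plan is to first verify that every $\omega \in \Omega$ is \emph{compatible} with the three defining conditions of a $(q,\tau)$-regular triangulation (for $\ep$ sufficiently small), and then to extend $\Omega$ by a greedy procedure. Throughout, the choice $\ep_1(\tau) \ll \CNM = e^{-2\tau}$ is what makes the winding-versus-modulus arguments work.

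For compatibility, suppose $\omega \in \Omega$ has $\Ext_q(\omega) \le \ep$. Then its maximal embedded slit-disc neighborhood $E_\omega$ has $\Mod(E_\omega) \emul 1/\ep$. If $\omega$ were to cross the interior of $F_\alpha$ for some $\alpha \in \Sqc$, the core curve $\alpha$ would obstruct the embedding of $E_\omega$, forcing $\Mod(E_\omega) \lmul 1/\Mod(F_\alpha) \le 1/\CNM$, a contradiction once $\ep < \ep_1(\tau)$. The same winding-versus-modulus dichotomy shows that $\omega$ can intersect the perpendicular $\upsilon_\beta$ of any $\beta \in \Sqn$ only a uniformly bounded number of times: many intersections would make $\omega$ twist around $\beta$, and each twist reduces the available modulus of $E_\omega$ by a factor controlled by $\Mod(F_\beta) < \CNM$. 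If instead $\omega$ lies on the geodesic representative of a short curve $\gamma$, then $\gamma$ is disjoint from every other short curve (two short curves cannot intersect), so its $q$-representative avoids the interior of each $F_\alpha$ and likewise cannot twist around $\beta$. The length condition \defref{nice:triangulation}(2) is automatic, since a saddle connection in $\Omega_q(\ep)$ has $q$-length much smaller than the size $s_Q$ of any thick subsurface it meets, by \thmref{TT} and \lemref{size:radius}.

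To complete $\Omega$ to a triangulation, I would proceed in three stages. First, adjoin all saddle connections forming $\partial F_\alpha$ for each $\alpha \in \Sqc$; these sit on the geodesic representative of $\alpha$ and so inherit the same compatibility properties just established. Second, for each cylinder curve $\alpha \in \Sqn$, pre-commit to one saddle-connection chain crossing $F_\alpha$ essentially transversally (a straightening of $\upsilon_\alpha$), so that subsequent edges near $\alpha$ inherit a non-twisting orientation. Third, run a greedy Delaunay-type procedure on the complement: at each step, add the shortest saddle connection disjoint from the current set that cuts off a non-triangular, non-cylinder complementary region. By \thmref{TT} and \lemref{size:radius}, any such added saddle connection inside a thick subsurface $Q$ has length $\lmul s_Q$, so condition (2) persists; the process terminates with a triangulation of the complement of $\bigcup_{\alpha \in \Sqc} F_\alpha^\circ$, giving condition (1).

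The main obstacle is maintaining the no-twist condition (3) during the greedy extension, since a naive choice of shortest available saddle connection could wind around a cylinder in $\Sqn$ if no anchoring transverse edge is already present. The key idea is the second stage above: by pre-selecting one straight transverse saddle connection per $\alpha \in \Sqn$-cylinder before the greedy step begins, every later edge added in a neighborhood of $\alpha$ is forced to share endpoints with non-twisting edges, and the final triangulation intersects $\upsilon_\alpha$ only a uniformly bounded number of times.
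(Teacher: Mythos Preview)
Your outline captures several of the right ingredients---verifying that arcs in $\Omega$ avoid large cylinders, pre-selecting a transverse arc for each $\alpha \in \Sqn$, and extending greedily---but there is a genuine gap in maintaining condition (2) of \defref{nice:triangulation} during the greedy stage. You assert that ``any such added saddle connection inside a thick subsurface $Q$ has length $\lmul s_Q$,'' but a global shortest-available-edge procedure gives no such guarantee: the complementary region you are cutting may straddle two adjacent thick subsurfaces $Q$ and $Q'$ with $s_{Q'} \ll s_Q$, and the shortest cutting arc may have length comparable to $s_Q$ while still intersecting $Q'$, violating (2). Neither \thmref{TT} nor \lemref{size:radius} prevents this; they control lengths of essential curves within a single $Q$, not saddle connections that cross between thick pieces of very different scales.

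The paper handles this by first constructing, for each short curve $\alpha$, a perturbed representative $\alphabar$ that is a union of saddle connections, stays within an $(\ell_q(\alpha)/2)$-neighborhood of $\alpha$, is disjoint from $\Omega$, and---crucially---only meets thick subsurfaces of size $\gmul s_Q$ where $Q$ borders $\alpha$. This step is nontrivial precisely because arcs of $\Omega$ may cross curves $\alpha \in \Sqn$ (your compatibility check only ruled out crossings with $\alpha \in \Sqc$), and $\alphabar$ must be built as a convex-hull boundary inside $\Ebar_\alpha$ to absorb those crossings. Cutting along the $\alphabar$ yields subsurfaces $\Qbar$, each of diameter $\emul s_Q$ and touching only pieces at least as large; the greedy extension is then run \emph{inside each $\Qbar$ separately}, which is what pins the new edge lengths to the local scale. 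Without this localization your global greedy step does not obviously produce a $(q,\tau)$-regular triangulation.
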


\begin{proof}
We would like to triangulate each thick piece Q separately and 
let $T$ be the union of these triangulations. However, saddle connections
in $\Omega$ may intersect a boundary curve $\alpha$ of $Q$. 
To remedy this, we perturb $\alpha$ slightly to a curve $\alphabar$ 
that is a union of saddle connections, lies in a small neighborhood of 
$\alpha$ and is disjoint from $\Omega$ (see Claim 1). 
These curves divide the surface into subsurfaces
with nearly geodesic boundaries. We denote the surface associated to $Q$ with 
$\Qbar$. We then extend $\Omega$ to a triangulation in each $\Qbar$ so that 
the edge lengths are not much longer than the diameter of $\Qbar$
which is comparable to $s_Q$ (see Claim 3) and let $T$ be the union of these 
triangulations. However, one needs to be careful that $\Qbar$ does not 
intersect any subsurface of size much smaller that $s_Q$, otherwise
the resulting triangulation would not be $(q, \tau)$--regular. 

\subsection*{Claim 1:} For every $\alpha \in \cS_q$, there is a
representative $\alphabar$ of $\alpha$ that is a union of saddle connections,
lies in a $(\ell_q(\alpha)/2)$--neighborhood of $\alpha$ and is disjoint from 
$\Omega$. For $\alpha, \beta \in \cS_q$, $\alphabar$ and $\betabar$ do not 
intersect. Furthermore, if $\alpha$ is a boundary of $Q$ then $\alphabar$ intersects
only surfaces that are larger than $Q$, namely, if $\alphabar$ intersects 
a thick subsurface $Q'$ we have:
$$
s_{Q'} \gmul s_Q.
$$

\begin{proof}[Proof of Claim 1:]
Let $\alpha \in \cS_q$ be a common boundary of thick subsurfaces $Q$ and 
$R$. Recall that $\CNM = 2^{-2\tau}$. If $\Mod(F_\alpha) \geq \CNM$, 
we can choose $\ep_1$ small enough to ensure that $\alpha$ is disjoint 
from $\Omega$. This is because, if $\omega$ is part of a short curve $\alpha'$, 
then $\omega$ is disjoint from $\alpha$ because short curves $\alpha$
and $\alpha'$ do not intersect. Otherwise, $\omega$ has to satisfy
the first assumption in \defref{Omega_q}. But, $F_\alpha$ does not contain any 
singular points and any arc $\omega \in \Omega$ intersecting $\alpha$ has to cross 
$F_\alpha$. Therefore, $\ell_q(\omega) \geq f_\alpha$ ($f_\alpha$ is the distance 
between the boundaries of $F_\alpha$) and, for the radius $e_\omega$ of $E_\omega$, 
we have $e_\omega \leq \ell_q(\alpha)$ (otherwise $\alpha$ would be
contained in $E_\omega$). But $\Mod(F_\alpha) = \frac{f_\alpha}{\ell_q(\alpha)}\geq \CNM$ and thus (the second inequality follows from \eqnref{Moduli}) 
$$
\frac 1{\ep_1} \leq \frac 1{\Ext_q(\omega)} 
\emul \Log \frac {e_\omega} {\ell_q(\omega)} 
\leq \Log \frac{\ell_q(\alpha)}{f_\alpha}
\leq \Log \frac 1\CNM = 2\tau.
$$
which is not possible if $\ep_1$ is chosen to be small enough. 
To summarize, if $\Mod(F_\alpha) \geq \CNM$, 
then $\alpha$ is already disjoint from $\Omega$, we can take $\alphabar=\alpha$.

If $\Mod(F_\alpha) \leq \CNM$, then either $E_\alpha$ or $G_\alpha$ has a large 
modulus. The annulus with the larger modulus is in the direction of the
thick surface with the larger size (\lemref{size:radius}). Assume $E_\alpha$, 
the annulus in the direction of $Q$, has a large modulus. Let $e_\alpha$ be the distance 
between the boundaries of $E_\alpha$. By part (5) of \lemref{size:radius} and the 
previous assumption we have 
$$
e_\alpha \emul s_Q \geq s_R.
$$

Denote the $(\ell_q(\alpha)/2)$--neighborhood of $\alpha$ in $E_\alpha$ with 
$\Ebar_\alpha$. The annulus $E_\alpha$ may not be contained entirely in $Q$ 
and may intersect some thick subsurfaces with very small size. But $\Ebar_\alpha$
does not intersect any small subsurfaces. To see this, assume $Q'$ intersects 
$\Ebar_\alpha$. Since $Q'$ is disjoint from $\alpha$, it has to enter $E_\alpha$ 
intersecting the outer boundary of $E_\alpha$. But $e_\alpha$ is much larger than 
$\ell_q(\alpha)$, and hence:
$$
s_{Q'} \gmul \ell_q(\partial Q') > e-\ell_q(\alpha)/2 \gmul s_Q. 
$$
Thus, the last condition of the claim is satisfied as long as $\alphabar$
stays in $\Ebar_\alpha$. 

Note that no arc in $\Omega$ can cross $\Ebar_\alpha$ (intersect both boundaries). 
This is because, if $\omega$ is an arc in a curve $\beta \in \cS_q$, then it does
not intersect $\alpha$ since $\beta$ and $\alpha$ have intersection number zero. 
Otherwise, $\Ext_q(\omega)$ is small, which implies that its length is
much less than the injectivity radius of any point along $\omega$. 
But the injectivity radius of any point in $\Ebar_\alpha$ is less that 
$2\ell_q(\alpha)$. Hence, (by choosing $\ep$ small enough) 
$\ell_q(\omega)$ is less than the distance between the boundaries of 
$\Ebar_\alpha$ with is equal to $\ell_q(\alpha)/2$. 

Consider the union  of $\alpha$ and the set $\Omega_\alpha$ of arcs in 
$\Omega$ that intersect $\alpha$. The convex hull $H_\alpha$ of this set in 
$\Ebar_\alpha$ is an annulus (perhaps degenerate). 
We observe that the interior of $H_\alpha$ does not
contain any singular points. Otherwise, there would be a geodesic quadrilateral,
where two edges are subsegments of arcs in $\Omega_\alpha$ and one edge is 
a subsegment of $\alpha$, that contains a singular point in its interior. But this violates
the Gauss-Bonnet theorem. Let $\alphabar$ be the boundary component of $H_\alpha$
that is not $\alpha$. Then $\alphabar$ is in the homotopy class of $\alpha$ and lies inside 
$\Ebar_\alpha$. Also, because the interior $H_\alpha$ does not contain any
singular points, $\alphabar$ is disjoint from every saddle connection in $\Omega$. 
Furthermore, by the triangle inequality, any  saddle connection $\omegabar$ 
that appears in $\alphabar$ has a $q$--length less than or equal to $2\ell_q(\alpha)$. 

It remains to show that for $\alpha, \beta \in \cS_q$, $\alphabar$ and $\betabar$ 
are disjoint. Assume $\ell_q(\beta) \geq \ell_q(\alpha)$. Then, 
$\alpha$ is disjoint from $\Ebar_\beta$, otherwise, $\alpha$ would be
contained in $E_\beta$ which is an annulus an does not contain any curve
non-homotopic to $\beta$. This means $\alpha$ is disjoint from $\betabar$
which is contained in $\Ebar_\beta$. Also, since $H_\beta$ contains 
no singular points, if a saddle connection $\omega \in \Omega_\alpha$ intersects 
$\betabar$ then  it also intersects $\beta$. But then $\omega$ is in $\Omega_\beta$ 
and hence it is disjoint from $\betabar$. Therefore, $\betabar$ is disjoint from 
the convex hull $H_\alpha$ and thus also from $\alphabar$. 
This finishes the proof of claim 1. 
\end{proof}

Next, let $\overline{\Omega}$ be the set of edges that appear in curves 
$\alphabar$ for every $\alpha \in \cS$. We have shown that saddle connections 
in $\overline{\Omega}$ are disjoint from those in $\Omega$. After removing the 
interiors of large cylinders from the quadratic differential $(X,q)$ and cutting along 
curves $\alphabar$, $\alpha \in \cS_q$, we obtains a collection of subsurfaces with 
nearly geodesic boundaries. Denote the representative of a thick subsurface 
$Q$ that is disjoint from curves $\alphabar$ by $\Qbar$. 

For each $\alpha \in \Sqn$, if $F_\alpha^\circ$ is disjoint from every saddle 
connection in $\Omega \cup \overline{\Omega}$, we choose a saddle connection 
$\omega_\alpha$ that crosses $F_\alpha$, is disjoint from $\nu_\alpha$ 
(does not twists around $\alpha$). In particular, $\omega_\alpha$ is disjoint from 
every saddle connection $\Omega \cup \overline{\Omega}$ and has a length 
that is comparable with $\ell_q(\alpha)$. Let $\Omega^{n}$ denote 
the set of such saddle connections $\omega_\alpha$. 

\subsection*{Claim 2:} Saddle connections in 
$$
T_{0}=\Omega \cup \overline{\Omega}\cup \Omega^n
$$ 
satisfying conditions (2-3) of \defref{nice:triangulation}. 

\begin{proof}[Proof of Claim 2:]
All the conditions follow
immediately from the construction, but the argument is long since
we have to look at all the cases. 
We have already shown that these edges satisfy condition (1) and
arcs in $\alphabar$ satisfy condition (2). To see that an arc 
$\omega \in \Omega$ satisfies condition (2) note that if it did not, 
$\omega$ would intersect a thick subsurface $Q$ with 
$\ell_q(\omega) \geq s_Q$. The radius of $E_\omega$ is much larger than 
length of $\omega$ ($\log \frac{e_\omega}{l_\omega} \geq \frac 1{\ep_1}$), 
which implies $E_\omega$ contains $Q$. This is a contradiction.
 
We show that arcs in $T_0$ satisfy condition (3). Namely, if $\omega \in \Omega$
intersects a cylinder $F_\alpha$, we need to show that $\omega$ intersects
$\upsilon_\alpha$ a bounded number of times. In fact, if they
intersect more than once, then $\ell_q(\omega) \ge \ell_q(\alpha)$. 
But then $E_\omega$ would contain the curve $\alpha$ which is a 
contradiction ($E_\omega$ is a topological disk). Also, the curve
$\alphabar$ is a convex hull of the union of the curve $\alpha$ which
is disjoint from $F_\alpha$ and a bounded number of arcs in $\Omega$, 
each of which intersect $\upsilon_\alpha$ at most once. Hence $\alphabar$ 
intersects $\upsilon_\alpha$ at most a bounded number of times and
thus arcs in $\alphabar$ satisfy condition (3).  
 
Since, for every $\alpha \in \Sqn$, there is a saddle connection in $T_0$
crossing $F_\alpha$, any triangulation containing $T_0$ is guaranteed 
to satisfy the condition (3). 
\end{proof}

In the next claim, we describe how to add the remaining edges to $T_0$ 
while still satisfying conditions (1) and (2). 

\subsection*{Claim 3:} A partial triangulation of $\Qbar$ where the
length of edges are less than a fixed multiple of $s_Q$ can be 
extended to a triangulation using saddle connections of length less than a 
larger fixed multiple of $s_Q$.

\begin{proof}[Proof of claim 3:] 
We prove the claim by induction. Start by cutting $\Qbar$ along the
given edges. Each cutting increases the diameter by at most
twice the length of edge being cut. Hence, in the end, we have
several components each with diameter comparable to $s_Q$.
If all components are triangles, we are done. 
Otherwise, some component contains a saddle connection that is not part of 
its boundaries or the given triangulation, the shortest such saddle connection has 
a length less than the diameter of the component it is in, which is comparable to $s_Q$
(again, see \cite[Proposition 3.1]{Vorobets}). The claim follows from the fact that 
this process ends after a uniformly bounded number of times. The diameter grows 
at most multiplicatively each time but still it is uniformly bounded multiple of $s_Q$ in 
the end. We choose the constant in the second condition of a $(q, \tau)$--regular
triangulation large enough so that the outcome of this algorithm is
in fact a $(q, \tau)$--regular triangulation. 
\end{proof}

The triangulation $T$ is now defined to be the union of all the saddle connections 
in $T_{0}$ and those coming from claim $3$. The newly added edges in $\Qbar$
have a $q$--length less than a fixed multiple of $s_Q$ and, for any thick subsurface $R$ 
that $\Qbar$ intersects, we have $s_Q \lmul s_R$. Hence, the condition (2) in 
\defref{nice:triangulation} is satisfied. Therefore, the resulting triangulation $T$ is 
$(q, \tau)$--regular. 
\end{proof}
 
 \subsection{Twisting and extremal lengths}\label{Sec:Twist}
In this section we define several notions of twisting and discuss how
they relate to each other. This is essentially the definition introduced
by Minsky extended to a slightly more general setting. We denote the relative 
twisting of two objects or structures around a curve $\alpha$ by 
$\twist_\alpha(\param, \param)$. This is often only coarsely defined, 
that is, the value of $\twist_\alpha(\param, \param)$ is determined
up to a uniformly bounded additive error. 

In the simplest case, let $A$ be an annulus with
core curve $\alpha$ and let $\beta$ and $\gamma$ be homotopy classes
of arcs connecting the boundaries of $A$ (here, homotopy is relative
to the end points of an arc). The relative twisting of $\beta$ and $\gamma$
around $\alpha$, $\twist_\alpha(\beta,\gamma)$, is defined to be the 
geometric intersection number between $\beta$ and $\gamma$. 

Now consider a more general case where $\alpha$ is a curve on the surface $S$
and $\beta$ and $\gamma$ are two transverse curves to $\alpha$.
Let $\tilde S_\alpha$ be the annular cover of $S$ associated to $\alpha$
and denote the core curve of $\tilde S_\alpha$ again by $\alpha$.
Let $\tilde \beta$ and $\tilde \gamma$ be the lifts of $\beta$ and
$\gamma$ to $\tilde S_\alpha$ (respectively) that connect the boundaries of 
$\tilde S_\alpha$.  Note that freely homotopic curves lift to arcs that
are homotopic relative their endpoints. The arc $\tilde \beta$ is not
uniquely defined, however any pair of lifts are disjoint. 
We now define 
$$
\twist_\alpha(\beta,\gamma) = \twist_\alpha(\tilde \beta,\tilde \gamma),
$$ 
using the previous case. This is well defined up to an additive error of 2
(see \cite{Minsky}).

We can generalize this further and define twisting between any two structures 
on $S$ as long as the structures in question provide a (nearly) canonical choice 
of a homotopy class of an arc $\tilde \beta$ connecting the boundaries of 
$\tilde S_\alpha$. Then we say the given structure defines a notion of 
\emph{zero twisting} around $\alpha$. The relative twisting between
two structures is the relative twisting between the associated arcs in 
$\tilde S_\alpha$. Here are a few examples:

\begin{itemize}
\item Let $X$ be a Riemann surface. Then $\tilde \beta$ can be taken to be 
the geodesic in $\tilde X_\alpha$ that is perpendicular to $\alpha$ in the
Poincare metric of $\tilde X_\alpha$. Alternatively, we can pick a
shortest curve $\beta$ transverse to $\alpha$ and let $\tilde \beta$ be the
lift of $\beta$ that connects the boundaries of $\tilde X_\alpha$.
In any case, the choice of $\tilde \beta$ is not unique, but any two such
transverse arcs have bounded geometric intersection number (see \cite{Minsky})
and the associated relative twisting $\twist_\alpha(\param, X)$ is well defined 
up to an additive error.
\item Let $q$ be a quadratic differential. As before, $\tilde \beta$ can be taken to be 
the geodesic in $\tilde q_\alpha$ that is perpendicular to $\alpha$ in the
Euclidean metric coming from $q$ or a lift of a $q$--shortest
curve $\beta$  transverse to $\alpha$ (see \cite{Rafi:minima}).
We denote the associated relative twisting with  $\twist_\alpha(\param, q)$.
\item Let $T$ be a $(q, \tau)$--regular triangulation of $(X,q)$ and
$\alpha \in \Sqn$. Then we can choose a curve $\beta$ transverse to $\alpha$
that is carried by $T$ and has a bounded combinatorial length in $T$
and let the lift of $\beta$ to the annular cover of $\alpha$ define zero twisting.
Since curves with bounded combinatorial length intersect a bounded
number of times, the associated relative twisting $\twist_\alpha(\param, T)$,  
is again well defined up to an additive error.  
\end{itemize}

The expression ``fix a notion of zero twisting around $\alpha$'' for a curve 
$\alpha$ in $S$ means ``choose a homotopy class of arcs connecting the 
boundaries of $\tilde S_\alpha$.''

\subsection{Intersection and twisting estimates} 
In this section we establish some statements relating Extremal length,  
twisting and intersection number. We start with a theorem of Minsky
giving an estimate for the extremal length of a curve. For a $X \in \T(S)$, 
let $\cS_X$ be a set of $\ep_0$--short simple closed curves in $X$. There is a 
uniform constant $B$ depending on $\ep_0$ and the topology of $S$ so that, 
for every $X$, any curve $\beta$ not in $\cS_X$ intersects a curve $\gamma$ with 
$\Ext_X(\gamma) \leq B$. That is, the curves with 
extremal length at most $B$ fill every complementary component of $\cS_X$. 
Let $\cB_X$ be the set of curves with extremal length at most $B$. 

\begin{theorem}\emph{(Minsky, \cite[Theorem 5.1]{Minsky})}
\label{Thm:minsky:ext}
Given $X \in \T(S)$ and a simple closed curve $\gamma \not \in \cS_X$, 
\begin{equation}\label{Eq:es}
\Ext_X(\gamma)  \emul 
\max_{\alpha \in \cS_X} \I(\gamma,\alpha)^2 \left[\frac{1}{\Ext_X(\alpha)}+
\twist_\alpha^{2}(\gamma,X)\Ext_X(\alpha) \right]
+ \max_{\alpha \in \cB_X} \I(\gamma, \alpha)^2.
\end{equation}
The multiplicative constant depends only on the topology of $S$.
\end{theorem}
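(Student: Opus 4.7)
The plan is to prove the two estimates (upper and lower bound) separately, using the thick-thin decomposition of $X$ together with the annular covers $\tilde X_\alpha$ for each $\alpha\in\cS_X$. The multiplicative constants will come only from comparing extremal lengths across covers and across standard collars, hence depend only on the topology of $S$.

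For the lower bound (every summand on the right is $\lmul \Ext_X(\gamma)$), I would treat the three kinds of contributions separately. First, for $\alpha\in\cB_X$, the inequality $\I(\gamma,\alpha)\le \sqrt{\Ext_X(\gamma)}\sqrt{\Ext_X(\alpha)}$ from \eqnref{intersection} and the fact that $\Ext_X(\alpha)\emul 1$ immediately give $\I(\gamma,\alpha)^2 \lmul \Ext_X(\gamma)$. Second, for $\alpha\in\cS_X$ the same intersection inequality yields $\I(\gamma,\alpha)^2/\Ext_X(\alpha)\lmul \Ext_X(\gamma)$, which is the first piece of the bracket. Third, for the twist contribution, I would pass to the annular cover $\tilde X_\alpha$ of $X$, which by \eqnref{EFE} is (up to uniform error) a round annulus of modulus $1/\Ext_X(\alpha)$. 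A lift $\tilde\gamma$ of $\gamma$ connecting the two boundary components wraps around the core of $\tilde X_\alpha$ a number of times comparable to $\twist_\alpha(\gamma,X)$, so the extremal length in $\tilde X_\alpha$ of a single arc lift is $\gmul \twist_\alpha^2(\gamma,X)\,\Ext_X(\alpha)$; summing over the $\I(\gamma,\alpha)$ lifts and pushing down to $X$ (extremal length is subadditive under covers in this direction) gives the twist piece $\I(\gamma,\alpha)^2\twist_\alpha^2(\gamma,X)\Ext_X(\alpha)\lmul \Ext_X(\gamma)$.

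For the upper bound, the plan is to build an explicit conformal metric $\rho$ on $X$ and evaluate $\ell_\rho(\gamma)^2/\area(X,\rho)$. Away from the $\ep_0$-thin part, take $\rho$ to be the hyperbolic metric on $X$; this region has area $O(1)$ and in it every bounded-length curve $\alpha\in\cB_X$ has bounded length, so $\gamma$ picks up length $\lmul \max_{\alpha\in\cB_X}\I(\gamma,\alpha)$ there. On the collar around each $\alpha\in\cS_X$ I would use a flat cylinder metric of modulus $\emul 1/\Ext_X(\alpha)$ (as in \eqnref{EFE}), scaled so the core has length $\lambda_\alpha>0$ to be chosen; then the collar has area $\lambda_\alpha^2/\Ext_X(\alpha)$, and $\gamma$ contributes length $\I(\gamma,\alpha)\lambda_\alpha\bigl(1/\Ext_X(\alpha)+|\twist_\alpha(\gamma,X)|\bigr)$, since each of the $\I(\gamma,\alpha)$ strands crosses the height of the cylinder and wraps around $\twist_\alpha(\gamma,X)$ times. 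Balancing the $\lambda_\alpha$ (Cauchy--Schwarz across the collars) produces exactly the right-hand side of the claimed estimate.

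The main obstacle is the bookkeeping in the upper bound: the strands of $\gamma$ crossing different collars must be handled so that the total length squared divides by the total area in the correct way, and one must ensure that the thick-part contribution and each collar contribution can be combined without losing the maximum over $\alpha$. The clean way to do this is via Minsky's product region theorem (\thmref{PRT}): the right-hand side of \eqnref{es} is (up to the square of a sup versus a sum of squares) the squared hyperbolic-in-$\half^2$ distance from a fixed basepoint to the point representing $\gamma$ in the $\alpha$-factor of the product coordinates, and the additivity of those coordinates across short curves together with the uniform comparison on the thick factor reproduces the $\emul$ estimate. This reduces the balancing issue to convexity of the $\half^2$ metric, which is standard.
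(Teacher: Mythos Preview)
The paper does not prove this theorem; it is quoted as a result of Minsky (reference \cite{Minsky}) and used as a black box, so there is no proof in the paper to compare against.

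On the merits of your sketch: the lower-bound direction (each summand on the right is $\lmul \Ext_X(\gamma)$) is essentially correct and standard --- the intersection inequality \eqnref{intersection} handles the non-twist terms, and the annular-cover computation handles the twist term, though the phrase ``extremal length is subadditive under covers in this direction'' is not quite the right justification (what you want is monotonicity of extremal length under restriction to the collar, together with the explicit extremal length of an arc that crosses an annulus while winding $t$ times).

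The upper-bound direction, however, has a genuine gap. You propose to build a single conformal metric $\rho$ and evaluate $\ell_\rho(\gamma)^2/\area(X,\rho)$. But by the definition \eqref{Eq:ext}, $\Ext_X(\gamma)$ is the \emph{supremum} of this quantity over all $\rho$, so a single metric only gives a \emph{lower} bound on $\Ext_X(\gamma)$, not an upper bound. To bound $\Ext_X(\gamma)$ from above you must either work with the dual characterization (exhibit enough modulus for the family of curves homotopic to $\gamma$, i.e.\ bound $\ell_\rho(\gamma)$ uniformly over all $\rho$ via a length--area argument on the dual family), or construct a quasiconformal map of controlled dilatation to a model surface on which the extremal length is computable --- this is what Minsky actually does. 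Your final paragraph gestures at the Product Region Theorem as a cleanup device, but in Minsky's paper the extremal length estimate \eqref{Eq:es} is proved first and is an ingredient in the Product Region Theorem, so invoking \thmref{PRT} here is circular.
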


It follows from the definition of twisting and elementary hyperbolic geometry that
if $\twist_\alpha(\beta, X)$ is large (that is, if $\beta$ twists around $\alpha$ 
a lot), then $\Ext_X(\beta) \gmul \Ext_X(\alpha)$. 

\begin{corollary} \label{Cor:the:curve}
For every curve $\gamma$ and any $X\in \cT(S)$, 
there is a curve $\beta$ so that,
$$
\sqrt{\Ext_X(\gamma) \Ext_X(\beta)} \lmul \I(\gamma, \beta)
\qquad\text{and}\qquad
\twist_\gamma(X, \beta) =O(1).
$$
\end{corollary}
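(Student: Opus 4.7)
The plan is to derive the existence of $\beta$ from Minsky's formula (\thmref{minsky:ext}) by selecting $\beta$ as the curve that witnesses the dominant term on the right-hand side of~\eqref{Eq:es}. The intersection inequality is the reverse of~\eqref{Eq:intersection}, so the task amounts to finding a curve that saturates~\eqref{Eq:intersection} up to a multiplicative constant, while also arranging bounded twisting around $\gamma$ in $X$.

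If $\gamma\in\cS_X$, I take $\beta$ to be the shortest simple closed curve transverse to $\gamma$; then $\beta\in\cB_X$, $\I(\gamma,\beta)\ge 1$, and $\sqrt{\Ext_X(\gamma)\Ext_X(\beta)}\lmul\sqrt{\ep_0}\lmul\I(\gamma,\beta)$, while minimality of $\beta$ forces $\twist_\gamma(X,\beta)=O(1)$.

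Otherwise $\gamma\notin\cS_X$ and I apply \thmref{minsky:ext}; let $\alpha_0$ realize the maximum. When $\alpha_0\in\cB_X$, or $\alpha_0\in\cS_X$ with the non-twist term $1/\Ext_X(\alpha_0)$ dominating the bracket, I set $\beta=\alpha_0$; the intersection inequality is immediate from the corresponding summand in~\eqref{Eq:es}. When instead $\alpha_0\in\cS_X$ with the twist term $\twist_{\alpha_0}^2(\gamma,X)\Ext_X(\alpha_0)$ dominating, I take $\beta$ to be a bounded-hyperbolic-length curve transverse to $\alpha_0$ with $\twist_{\alpha_0}(\beta,X)=O(1)$; such a $\beta$ exists as a component of any short marking of $X$. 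Applying \thmref{minsky:ext} to $\beta$ gives $\Ext_X(\beta)\emul 1/\Ext_X(\alpha_0)$, while counting intersections inside the collar of $\alpha_0$ yields $\I(\gamma,\beta)\emul\twist_{\alpha_0}(\gamma,X)\,\I(\gamma,\alpha_0)$, since each of the $\I(\gamma,\alpha_0)$ crossings of $\alpha_0$ by $\gamma$ meets $\beta$ about $\twist_{\alpha_0}(\gamma,X)$ times as $\beta$ does not twist in the collar. A short computation then gives $\sqrt{\Ext_X(\gamma)\Ext_X(\beta)}\emul\I(\gamma,\beta)$.

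For the twisting bound $\twist_\gamma(X,\beta)=O(1)$, the uniform argument is that a lift $\tilde\beta$ in the annular cover $\tilde X_\gamma$ has hyperbolic length at most $\ell_X(\beta)/\I(\gamma,\beta)$, while the core has length $\ell_X(\gamma)$, so the number of wraps is bounded by their ratio. In cases (i) and (ii) the bounds $\ell_X(\beta)\lmul 1$ and $\ell_X(\gamma)\gmul 1$ give an immediate $O(1)$ wrap count. The hard case is (iii): here $\ell_X(\beta)\emul\log(1/\Ext_X(\alpha_0))$ is unbounded as $\alpha_0$ degenerates, but so is $\ell_X(\gamma)\gmul\I(\gamma,\alpha_0)\log(1/\Ext_X(\alpha_0))$ (since $\gamma$ traverses the collar that many times), and the extra factor $\I(\gamma,\beta)$ in the denominator of the per-arc length forces the wrap count down to $O(1)$.
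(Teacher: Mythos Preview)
Your overall strategy matches the paper's: treat $\gamma\in\cS_X$ separately, and otherwise pick $\beta$ to be the curve realizing the dominant term in Minsky's formula~\eqref{Eq:es}, splitting into the three sub-cases (non-twist short term, twist short term, bounded-length term) exactly as the paper does.

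There is, however, a genuine error in your $\gamma\in\cS_X$ case. You assert that the shortest curve $\beta$ transverse to $\gamma$ lies in $\cB_X$, and then conclude $\sqrt{\Ext_X(\gamma)\Ext_X(\beta)}\lmul\sqrt{\ep_0}$. This is false: any curve crossing a very short $\gamma$ must traverse its collar, so the shortest such $\beta$ has $\Ext_X(\beta)\emul 1/\Ext_X(\gamma)$, which is large, not bounded. The paper fixes this by choosing $\beta$ to intersect $\gamma$ once or twice, be disjoint from the other curves of $\cS_X$, have $\twist_\gamma(\beta,X)=O(1)$, and have bounded intersection with $\cB_X$; then applying~\eqref{Eq:es} to $\beta$ gives $\Ext_X(\beta)\emul 1/\Ext_X(\gamma)$, whence $\sqrt{\Ext_X(\gamma)\Ext_X(\beta)}\emul 1\lmul\I(\gamma,\beta)$. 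Your conclusion survives, but only after replacing your incorrect reasoning with this computation.

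For the twisting bound $\twist_\gamma(X,\beta)=O(1)$, your argument via lengths of lifted arcs in the annular cover is more laborious than necessary. The paper instead invokes the single principle stated just before the corollary: if $\twist_\gamma(\beta,X)$ were large then $\Ext_X(\beta)\gmul\Ext_X(\gamma)$. In each sub-case one checks directly that $\Ext_X(\beta)\lmul\Ext_X(\gamma)$ (in the twist-dominant case this uses $\Ext_X(\gamma)\gmul 1/\Ext_X(\alpha_0)\emul\Ext_X(\beta)$ since $\gamma$ crosses $\alpha_0$), and the twisting bound follows in one line. Your geometric argument is not wrong, but it is redundant given this cleaner route.
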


Note that the reverse of first inequality always holds (\eqnref{intersection}). 

\begin{proof}
If $\gamma \in \cS_X$, then we choose $\beta$ to be a curve that
intersects $\gamma$ once or twice, is disjoint from other curves
in $\cS_X$, where $\twist_\gamma(\beta, X)$ is bounded and  where
$\I(\beta, \alpha)=O(1)$ for $\alpha \in \cB_X$. Applying \eqnref{es} 
to $\beta$ we have $\Ext_X(\beta) \emul \frac 1{\Ext_X(\gamma)}$ which 
implies that the corollary holds for $\beta$ and $\gamma$. 

If $\gamma$ is not short in $X$, \thmref{minsky:ext} applies to $\gamma$. 
Since the number of elements in $\cS_X$ and $\cB_X$ is uniformly bounded, 
$\Ext_X(\gamma)$ is comparable to one the following terms:
$$
\frac{\I(\gamma, \alpha)^2}{\Ext_X(\alpha)}, \quad
\I(\gamma, \alpha)^2 \twist_\alpha^{2}(\gamma,X)\Ext_X(\alpha)
\quad\text{or}\quad
\I(\gamma, \alpha)^2. 
$$
In the fist two cases $\alpha \in \cS_X$ and in the third case $\alpha \in \cB_X$.
We argue in 3 cases. 

If $\Ext_X(\gamma) \lmul \frac{\I(\gamma, \alpha)^2}{\Ext_X(\alpha)}$,
for $\alpha \in \cS_X$, then the corollary holds for $\beta = \alpha$
(the second conclusion follows from the fact that the twisting number of
a short curve around a long curve is uniformly bounded). 

In the second case, we take $\beta$ to be a curve transverse 
to $\alpha$ with (see above) $\Ext_X(\beta) \emul \frac 1{\Ext_X(\alpha)}$ 
and $\twist_\alpha(\beta,X) =O(1)$. In particular 
\begin{equation} \label{Eq:twists-same}
\twist_\alpha(\gamma, X) \eadd \twist_\alpha(\gamma, \beta). 
\end{equation}
The curve $\gamma$ also intersects $\alpha$ and hence 
$\Ext_X(\gamma) \gmul \frac 1{\Ext_X(\alpha)} \emul \Ext_X(\beta)$. 
Thus, $\beta$ twist around $\gamma$ at most a uniformly bounded number of times. 
Also, every strand of  $\gamma$ intersecting $\alpha$ intersects $\beta$ at least 
$\twist_\alpha(\gamma, \beta)$ times (up to an additive error). In this case
$\twist_\alpha(\gamma, \beta)$ is large and the additive error can be replaced
by a multiplicative error to obtain
\begin{equation} \label{Eq:int-times-twist}
\I(\gamma, \alpha) \twist_\alpha(\gamma, \beta)
\lmul \I(\gamma, \beta).
\end{equation}
Therefore,   
\begin{align*}
\Ext_X(\gamma) &\lmul 
\I(\gamma, \alpha)^2 \twist_\alpha^{2}(\gamma,X)\Ext_X(\alpha)
\tag{\text{Assumption on $\gamma$}}\\
&\lmul \frac{\I(\gamma, \alpha)^2 \twist_\alpha^2(\gamma, \beta)}
{\Ext_X(\beta)} \tag{\text{\eqnref{twists-same}}}\\
&\lmul \frac{\I(\gamma, \beta)^2}{\Ext_X(\beta)},
\tag{\text{\eqnref{int-times-twist}}}
\end{align*}
which implies the corollary.

The last case is when $\alpha \in \cB_X$ and 
$\Ext_X(\gamma) \lmul \I(\gamma, \alpha)^2$. In this case, we take 
$\beta= \alpha$. Since $\beta$ has bounded length in $X$, 
$$
\twist_\gamma(\beta, X)=O(1)
\qquad\text{and}\qquad 
\Ext_X(\beta) \emul 1.
$$ 
Again, the corollary follows. 
\end{proof}

We also recall the following lemma (\cite[Theorem 4.3]{Rafi:CM}):

\begin{lemma}[Rafi] \label{Lem:Twist-q-X}
For a quadratic differential $(X,q)$ and a Riemann surface $Y \in \T(S)$ with
$d_\T(X, Y) =O(1)$, we have
$$
\twist_\alpha(Y, q) \lmul \frac1{\Ext_X(\alpha)}. 
$$
\end{lemma}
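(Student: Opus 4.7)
The plan is to split the estimate via the triangle inequality for relative twisting and bound each piece separately.  If $\Ext_X(\alpha) \gmul 1$, all the relative twists in sight are uniformly bounded and the claim is trivial, so we assume $\alpha$ is short in $X$.  By \thmref{ker}, $\Ext_Y(\alpha) \emul \Ext_X(\alpha)$, so $\alpha$ is also short in $Y$.  The triangle inequality for the relative twist (valid up to a universal additive error coming from the ambiguity in each zero-twisting reference arc) then gives
$$
\twist_\alpha(Y,q) \;\ladd\; \twist_\alpha(Y,X) + \twist_\alpha(X,q),
$$
and it suffices to bound each of the two summands by $1/\Ext_X(\alpha)$.

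For the first summand I apply Minsky's product region theorem (\thmref{PRT}) to $\cA = \{\alpha\}$.  The image $\Phi_\cA(X)$ has $y$-coordinate $1/\ell_X(\alpha)$ in the $\half^2$-factor, which by \eqnref{Maskit} is $\emul 1/\Ext_X(\alpha)$.  The assumption $d_\T(X,Y) = O(1)$ and the quasi-isometry of \thmref{PRT} force $\Phi(X)$ and $\Phi(Y)$ to lie within bounded Poincar\'e distance in that $\half^2$-factor, and a bounded-distance move at height $1/\ell_X(\alpha)$ translates into a displacement of the Fenchel--Nielsen twist of size $\lmul 1/\ell_X(\alpha)$.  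Converting to combinatorial twist (one Dehn twist = one unit in the normalization used by the product region theorem) and invoking $\ell_X(\alpha) \emul \Ext_X(\alpha)$ yields $\twist_\alpha(Y,X) \lmul 1/\Ext_X(\alpha)$.

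For the second summand, choose a simple closed curve $\beta$ whose $q$-geodesic representative crosses $\alpha$ exactly once and perpendicularly, and has intersection and twist with every other short curve $\alpha' \in \cS_X$ uniformly bounded; existence of such a $\beta$ is essentially \corref{the:curve} applied in the flat metric.  By construction $\twist_\alpha(\beta,q) = O(1)$, so $\twist_\alpha(X,q) \eadd \twist_\alpha(\beta,X)$.  Applying Minsky's extremal length lower bound \eqnref{es} to $\beta$ and isolating the $\alpha$-twist term gives
$$
\twist_\alpha(\beta,X)^2 \, \Ext_X(\alpha) \;\lmul\; \Ext_X(\beta).
$$
It remains to produce the upper bound $\Ext_X(\beta) \lmul 1/\Ext_X(\alpha)$.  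For this I exhibit a concrete embedded annulus around $\beta$ in the flat metric $|q|$: by \eqnref{EFE} the maximal regular annulus around $\alpha$ in $q$ (either the flat cylinder $F_\alpha$, or the union of expanding annuli $E_\alpha \cup G_\alpha$) has modulus $\emul 1/\Ext_X(\alpha)$, which forces the height of this region to be $\lmul \ell_q(\alpha)/\Ext_X(\alpha)$.  Using the universal inequality $\ell_q(\alpha) \leq \sqrt{\Ext_X(\alpha)}$ (from $\Area(q) = 1$ and the definition of extremal length) together with a width $\emul \ell_q(\alpha)$ neighborhood of the perpendicular $\beta$ inside this region produces an embedded annulus of modulus $\gmul \Ext_X(\alpha)$, hence $\Ext_X(\beta) \lmul 1/\Ext_X(\alpha)$.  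Chaining the two displayed inequalities yields $\twist_\alpha(\beta,X) \lmul 1/\Ext_X(\alpha)$, which completes the proof.

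The main obstacle is the last step when $\alpha$ is \emph{not} a cylinder curve: then $F_\alpha$ is degenerate and the construction of the annulus around $\beta$ must be extracted entirely from the expanding annuli $E_\alpha$ and $G_\alpha$, where the relevant modulus comes from the $\Log$ terms in \eqnref{EFE} rather than from a genuine flat cylinder.  One also has to choose the transverse curve $\beta$ with enough care that it does not accumulate uncontrolled twisting around some other short curve of $X$, since this would pollute Minsky's formula with additional terms that are not bounded in the desired way.
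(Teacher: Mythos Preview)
The paper does not prove this lemma; it is quoted verbatim from \cite[Theorem~4.3]{Rafi:CM}, so there is no in-paper proof to compare against.  Your overall strategy (triangle inequality for relative twist, then bound $\twist_\alpha(Y,X)$ via the product region theorem and $\twist_\alpha(X,q)$ via a well-chosen transversal $\beta$) is the natural one, and the first summand is handled correctly.

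The gap is in the second summand.  You need $\Ext_X(\beta) \lmul 1/\Ext_X(\alpha)$ for the specific $\beta$ that is $q$--perpendicular to $\alpha$, and your annulus construction does not deliver this.  What you build is a flat \emph{rectangle} inside the region $E_\alpha\cup F_\alpha\cup G_\alpha$ whose core is the \emph{arc} $\beta\cap F_\alpha$; this is not an embedded annulus with core the closed curve $\beta$, so it gives no upper bound on $\Ext_X(\beta)$.  The portion of $\beta$ lying in the thick subsurfaces on either side of $\alpha$ is simply ignored, and that is exactly where the difficulty lives: a priori the $q$--perpendicular $\beta$ could twist many times around $\alpha$ from the point of view of the hyperbolic metric on $X$, which by \thmref{minsky:ext} would force $\Ext_X(\beta)$ to be large, not small.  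Bounding this is essentially equivalent to the statement you are trying to prove, so the argument as written is circular.  You flag both obstructions yourself in the final paragraph (the non-cylinder case and the interaction of $\beta$ with the other curves in $\cS_X$), but neither is resolved.  The actual proof in \cite{Rafi:CM} goes through a careful comparison of the $q$--geometry and the hyperbolic geometry on the thick pieces (in the spirit of \thmref{TT} here) rather than a direct annulus estimate for $\beta$.
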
 

\subsection{Geometry of quadratic differentials and $(q, \tau)$--regular 
triangulations} As we mentioned at the beginning of the section, a 
$(q, \tau)$--regular triangulation is supposed to capture the geometry of $q$. 
We make this explicit in the following two lemmas. In \lemref{Length-Int}, we 
relate the length of a saddle connection to its intersection number with a 
$(q, \tau)$--regular triangulation. \lemref{Twist-q-T} shows that the notion of 
zero twisting coming from $q$ or $T$ is the same. These are used
to prove \lemref{up:to:twisting} but more essentially they are needed in 
\secref{Intersection-Bound}.

\begin{lemma} \label{Lem:Length-Int} 
Let $T$ be a $(q, \tau)$--regular triangulation and $\omega_T$ be an edge of
$T$. Let $s$ be the minimum of $s_Q$ where $Q$ is a thick subsurface of $q$
that intersects $\omega_T$. Let $\omega$ be any 
other saddle connection in $q$ so that, for every curve $\alpha \in \Sqn$,
$\twist_\alpha(\omega, q) =O(1)$. Then
$$
\I(\omega_T, \omega) \lmul \frac{\ell_q(\omega)}{s} +1.
$$
\end{lemma}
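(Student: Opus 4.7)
The plan is to localize each intersection of $\omega$ with $\omega_T$ in the thick-thin decomposition of $(X,q)$. Since $T$ is $(q,\tau)$--regular, condition~(1) of \defref{nice:triangulation} forces $\omega_T$ to be disjoint from the interior of every large cylinder $F_\beta$ with $\beta \in \Sqc$. Consequently every intersection point of $\omega$ with $\omega_T$ lies either in some thick subsurface $Q$ of $q$ that meets $\omega_T$ or in some small cylinder $F_\alpha$ with $\alpha \in \Sqn$ that meets $\omega_T$. There are only boundedly many such pieces (a bound depending only on the topology of $S$), so it is enough to estimate the intersection contribution from each piece and sum.

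For a thick subsurface $Q$ that meets $\omega_T$, condition~(2) of \defref{nice:triangulation} gives $\ell_q(\omega_T \cap Q) \lmul s_Q$. Rescaling $Q$ by $1/s_Q$ produces a flat surface with bounded geometry (\thmref{TT}), in which two $q$--geodesic arcs intersect with density controlled by the product of their lengths plus a bounded boundary term. Applied to $\omega_T \cap Q$ and each connected strand of $\omega \cap Q$, this yields
\[
\I(\omega_T \cap Q, \omega \cap Q) \lmul \frac{\ell_q(\omega_T \cap Q)\,\ell_q(\omega \cap Q)}{s_Q^2} + 1 \lmul \frac{\ell_q(\omega \cap Q)}{s_Q} + 1 \le \frac{\ell_q(\omega \cap Q)}{s} + 1.
\]
Summing over the (topologically bounded) set of thick subsurfaces met by $\omega_T$ contributes at most $\ell_q(\omega)/s + O(1)$ to the total.

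For a small cylinder $F_\alpha$ with $\alpha \in \Sqn$ that meets $\omega_T$, condition~(3) of \defref{nice:triangulation} implies $\twist_\alpha(\omega_T, q) = O(1)$ and that $\omega_T$ crosses $F_\alpha$ at most $O(1)$ times. Combined with the hypothesis $\twist_\alpha(\omega, q) = O(1)$, both arc systems run essentially straight across $F_\alpha$, so lifting to the annular cover associated to $\alpha$ shows that each strand of $\omega$ in $F_\alpha$ meets each strand of $\omega_T$ in $F_\alpha$ at most $O(1)$ times. Hence the intersection contribution from $F_\alpha$ is bounded by $O(m_\alpha)$, where $m_\alpha$ is the number of strands of $\omega$ crossing $F_\alpha$.

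The remaining step is a global bound $\sum_\alpha m_\alpha \lmul \ell_q(\omega)/s + 1$, where the sum is over the small cylinders met by $\omega_T$. Between two consecutive crossings of small cylinders along $\omega$, the arc of $\omega$ exits one cylinder and re-enters another (possibly the same) cylinder; since $\omega$ is a $q$--geodesic it cannot turn around inside a cylinder, so this transit crosses an intervening thick subsurface $Q'$. By \corref{CollarLemma} together with \lemref{size:radius}, the transit has $q$--length $\gmul s_{Q'} \ge s$. Charging each crossing to such a transit of length $\gmul s$ bounds $\sum_\alpha m_\alpha$ by $\ell_q(\omega)/s + 1$, and combining this with the thick estimate yields the lemma. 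The main obstacle is the thick-subsurface intersection inequality: one must verify the product-of-lengths intersection bound on the rescaled thick piece, paying attention to how the bounded-geometry model interacts with boundary curves whose attached cylinder has modulus close to $\CNM$.
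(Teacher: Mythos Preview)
Your thick--thin decomposition is a natural idea, but the thick-piece estimate as written has a real gap. You apply the product-of-lengths bound ``to $\omega_T\cap Q$ and each connected strand of $\omega\cap Q$'' and then record the result as
\[
\I(\omega_T\cap Q,\omega\cap Q)\lmul \frac{\ell_q(\omega_T\cap Q)\,\ell_q(\omega\cap Q)}{s_Q^2}+1.
\]
But summing a per-strand bound of the form $\ell_i/s_Q+1$ over all strands gives $\ell_q(\omega\cap Q)/s_Q+k$, where $k$ is the number of strands of $\omega$ in $Q$, not $+1$. You never bound $k$; your later crossing count (step~4) controls only the strands of $\omega$ through the \emph{small} cylinders that $\omega_T$ meets, and says nothing about strands entering $Q$ through a large-cylinder boundary or through a boundary curve of $Q$ that $\omega_T$ does not cross. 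Moreover, even the per-strand product bound is not immediate: the no-bigon argument only gives that the concatenated $\omega$- and $\omega_T$-arcs between consecutive intersections have total length $\gmul s_Q$, which does not force the $\omega$-arc alone to be long unless you first arrange $\ell_q(\omega_T\cap Q)$ to be a \emph{definite fraction} smaller than the systole of $Q$. You flag this yourself in the last sentence, but it is the crux, not a boundary technicality.

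The paper sidesteps both issues by working not with the thick--thin pieces but with the metric $s/3$--neighborhood $N$ of (a short subsegment of) $\omega_T$. Every component of $\omega\cap N$ that touches $\omega_T$ has $q$--length $\gmul s$, which immediately bounds the number of such components by $\ell_q(\omega)/s+1$; this replaces your strand-counting entirely. It then remains to show each component meets $\omega_T$ only $O(1)$ times. Inside small cylinders $F_\alpha$ the twisting hypothesis handles this just as you say. Outside the cylinders, two consecutive intersection points would produce a two-edge loop $\beta$ in $N$; any nontrivial curve in $N$ is homotopic to some $\alpha\in\cS_q$, and then Gauss--Bonnet on the annulus between $\beta$ and $\alpha$ gives a contradiction (cf.\ \cite[Lemma~5.6]{Rafi:minima}). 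So the paper's mechanism is a neighborhood-plus-bigon argument rather than a rescaled product-of-lengths bound on thick pieces; this is what makes the additive $+1$ genuinely $+1$.
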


\begin{proof}
Condition (2) in the definition of a $(q, \tau)$--regular triangulation implies that
$\ell_q(\omega_T) \lmul s$. It is sufficient to prove the lemma for a
subsegment of $\omega_T$ with a $q$--length less than $s/7$, because 
$\omega_T$ can be covered but uniformly bounded number of such segments. 
Hence, without loss of generality, we assume $\ell_q(\omega_T) \leq s/7$.

Consider the $s/7$--neighborhood $N$ of $\omega_T$. Then 
$\omega \cap N$ has at most $O\left( \frac{\ell_q(\omega)}s \right)$ components. 
Hence, it is sufficient to show, for every component $\bar \omega$ of
$\omega \cap N$, that 
$$
\I(\omega_T, \bar \omega)=O(1).
$$

\begin{figure}[ht]
\setlength{\unitlength}{0.01\linewidth}
\begin{picture}(50, 42)
\put(0,0){\includegraphics[width=44\unitlength]{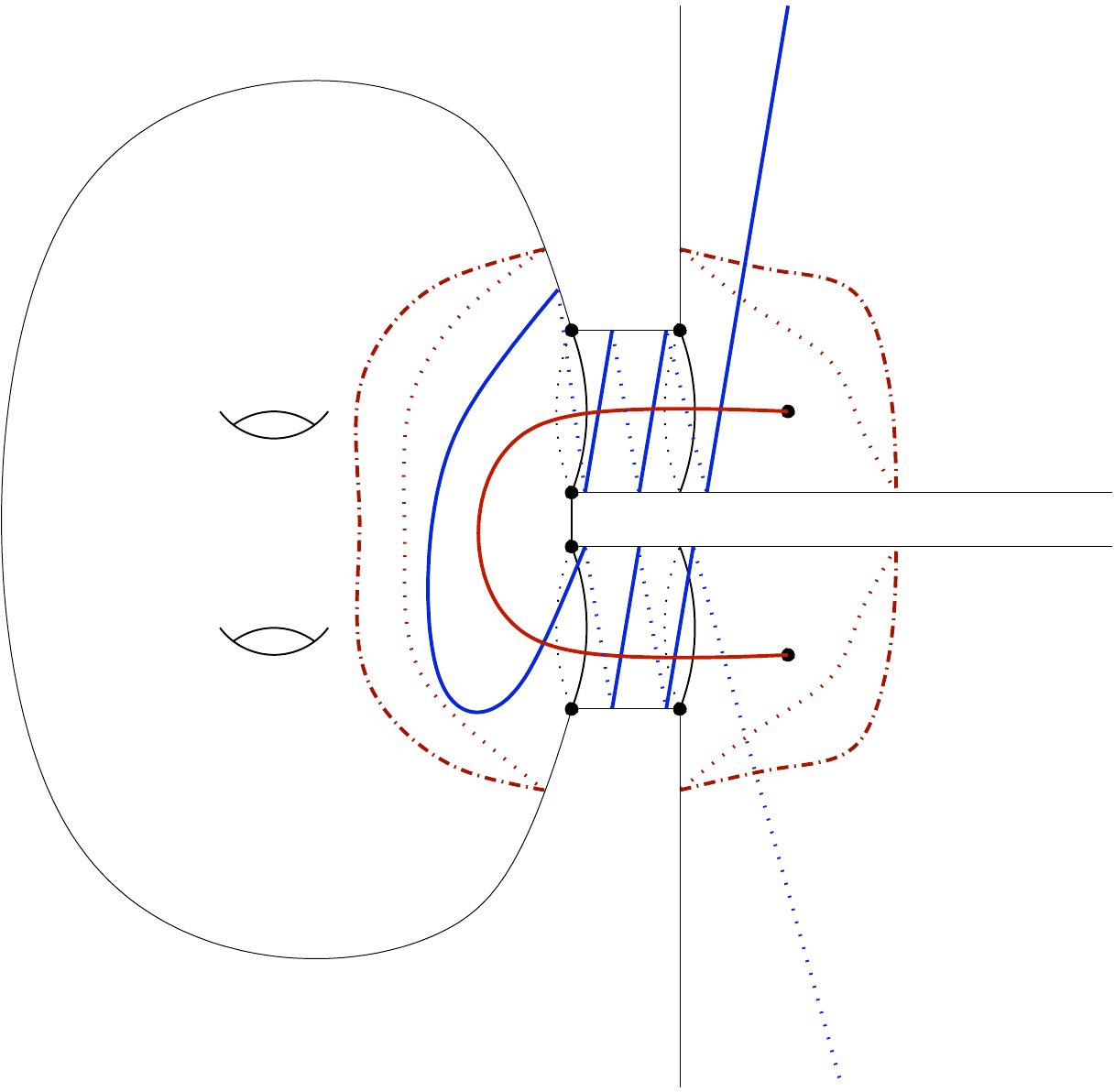}}
\put(0,5){$Q_1$}
\put(45,5){$Q_2$}
\put(45,35){$Q_3$}
\put(22.5,33){$F_{\alpha_1}$}
\put(22.5,12){$F_{\alpha_2}$}
\put(30,15){$\omega_T$}
\put(32,38){$\omega$}
\put(36,29){$N$}
\end{picture}
\caption{The arc $\omega_T$ intersects curves $\alpha_1, \alpha_2 \in \Sqn$
and thick subsurfaces $Q_1, Q_2$ and $Q_3$. Each component $\bar \omega$ of 
$\omega \cap N$ intersects $\omega_T$ only a bounded number of times outside of 
cylinders $F_{\alpha_1}$ and $F_{\alpha_2}$. The number of intersection
points inside of $F_{\alpha_i}$ is bounded because of the assumption on the 
twisting.}
\label{Fig:Intersection}
\end{figure}

First, we claim that any non-trivial curve in $N$ is homotopic to some curve
in $\cS_q$. This is because, any nontrivial loop $\gamma$ in $N$ has a $q$--length
of at most $3s/7$. By the definition of $s$, it can not be an essential curve in 
any subsurface $Q$ that $\omega_T$ intersects. Assume it intersects 
curves $\alpha_1, \alpha_2 \in \cS_q$ that are boundary curves of $Q_1$
($\alpha_1$ may equal $\alpha_2$). Then, $\ell_q(\alpha_1)$ and $\ell_q(\alpha_1)$  
are much smaller than $\ell_q(\gamma)$ which is at most $3s/7$. But, the sum of  
$\ell_q(\alpha_1)$,
$\ell_q(\alpha_2)$ and twice the distance between $\alpha_1$ and $\alpha_2$
(the sum is less than $s$) 
is an upper-bound for the size of $Q$ which is assumed to be larger than $s$. 
The contradiction proves the claim. 

We have shown that a closed curve in $N$ cannon intersect curves in $\cS_q$. 
However, the saddle connection $\omega_T$ may still intersects some curve 
$\alpha \in \Sqn$ (in fact more than one, see \figref{Intersection}). As before, 
let $\nu_\alpha$ be an arc in $F_\alpha$ that connects the boundaries of $F_\alpha$ 
and is perpendicular to them. 

First we observe that the number of intersection points between $\omega_T$ and 
$\bar \omega$ inside of $F_\alpha$ is uniformly bounded. This is because
both $\omega_T$ and $\bar \omega$ intersect $\nu_\alpha$ a uniformly
bounded number of times. (This follows from the definition $(q, \tau)$--regular
triangulation and the twisting assumption on $\omega$.) If two arcs inside
of a cylinder have a large intersection number, at least one of them 
has to twist around $F_\alpha$ a large number of times. 

It remains to show that the number of intersection points
outside of all cylinders $F_\alpha$ is bounded. To see this we observe that,
for any thick subsurface $Q$, it is not possible to have a subsegment of $\omega_T$ 
and a subsegment of $\bar \omega$ that are contained in $Q$ 
and have the same endpoint. Otherwise, the concatenation would create 
a two segment curve $\beta$ that is non-trivial in $N$. Hence, it has
to be homotopic to some curve $\alpha \in \cS_q$. Which means, $\alpha$
and $\beta$ create a cylinder with total negative curvature which contradicts 
the Gauss-Bonnet theorem. (See \cite[Lemma 5.6]{Rafi:minima} for a 
more detailed discussion.) 

Since the number of thick components $Q$ is uniformly bounded and $\omega_T$ and 
$\bar \omega$ can intersect at most once in each $Q$ we conclude that the total 
intersection number outside of cylinders $F_\alpha$ is uniformly bounded as well. 
This finishes the proof. 
\end{proof}

\begin{lemma} \label{Lem:Twist-q-T}
For a quadratic differential $(X,q)$, $\alpha \in \Sqn$ and a
$(q, \tau)$--regular triangulation $T$ we have
$$
\twist_\alpha(T, q) = O(1).  
$$
\end{lemma}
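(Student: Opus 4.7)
The plan is to express both notions of ``zero twisting'' in terms of concrete reference arcs, pass to the annular cover $\tilde X_\alpha$, and bound the intersection number of their lifts.

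Concretely, I would select two reference arcs. For zero twisting relative to $q$, when $\alpha$ is a cylinder curve take $\nu = \upsilon_\alpha$; when $\alpha$ is not a cylinder curve take $\nu$ to be a $q$-shortest simple arc transverse to $\alpha$, so that $\twist_\alpha(\nu, q) = O(1)$ by construction. For zero twisting relative to $T$, pick a simple curve $\beta$ transverse to $\alpha$ that is carried by $T$ and uses only $O(1)$ edges. Such a $\beta$ exists because conditions (1)--(2) of Definition~\ref{Def:nice:triangulation} force the triangles of $T$ abutting $\alpha$ to have $q$-diameter $\lmul s_Q$ for the adjacent thick subsurfaces $Q$, so a bounded number of edges suffices to cross $\alpha$ and close up to a simple loop.

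Passing to the annular cover $\tilde X_\alpha$, both arcs lift canonically to arcs joining the two boundary components of $\tilde X_\alpha$, and by the general definition of relative twisting recalled in \secref{Twist},
\[
\twist_\alpha(T, q) \eadd \I(\tilde\nu, \tilde\beta) \leq \I(\nu, \beta) + O(1).
\]
So the main task is to prove $\I(\nu, \beta) = O(1)$. In the cylinder case, condition (3) of Definition~\ref{Def:nice:triangulation} directly bounds $\I(\upsilon_\alpha, T)$ by a uniform constant; since $\beta$ is a concatenation of $O(1)$ edges of $T$, we obtain $\I(\nu, \beta) = O(1)$. In the non-cylinder case, condition (3) does not apply; instead I would apply \lemref{Length-Int} to each edge $\omega_T$ of $T$ that might meet $\nu$. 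The arc $\nu$ satisfies the twisting hypothesis of \lemref{Length-Int} (it is a $q$-shortest transverse arc, hence has bounded twist around every curve in $\Sqn$), and its $q$-length is comparable to the sizes of the adjacent thick subsurfaces via \lemref{size:radius}. Thus \lemref{Length-Int} gives $\I(\omega_T, \nu) \lmul \ell_q(\nu)/s + 1 = O(1)$ for each edge, and summing over the boundedly many edges used by $\beta$ yields $\I(\nu, \beta) = O(1)$.

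The main obstacle is the non-cylinder case: condition (3) is tailored to cylinder curves, and one must find a suitable shortest transverse arc $\nu$ that simultaneously defines zero twisting for $q$ and fits the hypotheses of \lemref{Length-Int}. Matching $\nu$ to the geometry near singular points that lie on the $q$-geodesic representative of $\alpha$, and verifying that $\ell_q(\nu)$ is indeed comparable to the minimum size $s$ appearing in \lemref{Length-Int}, requires combining the estimates from \lemref{size:radius} with the structure of the expanding annuli $E_\alpha$ and $G_\alpha$ supplied by \eqnref{EFE}.
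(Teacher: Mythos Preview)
Your proposal is correct and follows essentially the same route as the paper: choose a $T$-carried transverse curve $\beta$ of bounded combinatorial length, choose a $q$-short transverse reference, and bound their intersection via a \lemref{Length-Int}-style argument. The paper treats cylinder and non-cylinder $\alpha \in \Sqn$ uniformly by taking the $q$-shortest transverse \emph{curve} $\gamma$ (rather than the arc $\upsilon_\alpha$) and invoking ``an argument similar to that of \lemref{Length-Int}'', whereas you split off the cylinder case via condition~(3); your shortcut there is valid, and in the non-cylinder case your $\nu$ should really be a curve (a concatenation of saddle connections), so \lemref{Length-Int} is applied piecewise rather than verbatim, exactly as in the paper.
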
 

\begin{proof}
Let $Q_1$ and $Q_2$ be the thick subsurfaces of $(X,q)$ glued along
the cylinder $F_\alpha$ (which by assumption, has a modulus at most 
$\CNM$), and let $\beta$ be an essential curve in $Q_1 \cup F_\alpha \cup Q_2$ 
that is transverse to $\alpha$ and has the shortest combinatorial $T$--length. 
A representative for the curve $\beta$ can be constructed using edges of $T$ that 
intersect either $Q_1$ or $Q_2$. Consider such a representative traversing the 
minimum possible number of edges. Let $\gamma$ be a curve transverse to $\alpha$ 
with the shortest $q$--length. From the definition of relative twisting,
$$
\twist_\alpha(T,q) \ladd \I(\beta, \gamma).
$$
Hence, it is sufficient to show that $\I(\beta, \gamma)$ is uniformly bounded. 

The curve $\gamma$ intersects $\alpha$ once if $Q_1=Q_2$ and twice otherwise.
Its restriction to $Q_i$ has a length bounded by $O(s_{Q_i})$ and its restriction 
to $F_\alpha$ has a length bounded by $\ell_q(\alpha)$ ($\Mod(F_\alpha)$
is bounded and there is no twisting around $\alpha$) which is less than both
$s_{Q_1}$ and $s_{Q_2}$. 
An argument similar to that of \lemref{Length-Int} implies that $\gamma$
intersects any edge of $T$ at most a bounded number of times. 

On the other hand, each edge of $T$ appears at most twice along the
representative of $\beta$, otherwise a surgery argument would reduce the 
length of $\beta$. Also, 
the total number of edges of $T$ is bounded by the topology of $S$. 
Hence, $\I(\beta, \gamma)$ is uniformly bounded. 
\end{proof}

\subsection{The number of $(q, \tau)$--regular triangulation} 
We now count the number of $(q, \tau)$--regular triangulations near a point 
in \Teich space. We can think of a $(q, \tau)$--regular triangulations on $(X,q)$ 
as topological objects on $S$, after being pulled back by the marking map 
$f_X \from S \to X$, up to homotopy. That is, we say a $(q, \tau)$--regular 
triangulation $T$ on $(X,q)$ is equivalent to a $q'$--regular triangulation $T'$ on 
$(X',q')$ if the pre images $f_X^{-1}(T)$ and $f_{X'}^{-1}(T')$ are homotopic 
on $S$. The homotopy does not have to fix the vertices of $T$. For a 
multi-curve $\cS_0$, we say $T$ is equivalent to $T'$ up twisting around $\cS_0$ 
if, $T$ is equivalent to $\phi(T')$ where $\phi$ is a multi-twist with
support on curves in $\cS_0$. 

\begin{lemma} \label{Lem:up:to:twisting} 
Let $U$ be a ball of radius one in $\T(S)$ centered at $X_0$. 
Then the number of equivalence classes, up to twisting around
$\cS_{X_0}$, of $(q, \tau)$--regular triangulations $T$ on a quadratic differential 
$(X,q)$ where $X \in U$ is uniformly bounded. 
\end{lemma}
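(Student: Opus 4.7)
The plan is to show that, modulo multi-twists around $\cS_{X_0}$, each $(q,\tau)$--regular triangulation on $(X,q)$ consists of a bounded number of edges, each of which has bounded combinatorial complexity; then \thmref{minsky:ext} gives a finite enumeration.

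Since $d_\T(X,X_0) \leq 1$, Kerckhoff's theorem \thmref{ker} gives $\Ext_X(\beta) \emul \Ext_{X_0}(\beta)$ for every simple closed curve $\beta$. Hence $\cS_X$ lies in the finite set $\cS^* := \{\beta : \Ext_{X_0}(\beta) \lmul \ep_0\}$, whose elements are pairwise disjoint and bounded in number by $3g-3+p$. There are only finitely many possible subsets $\cS_X \subseteq \cS^*$ and partitions $\cS_X = \Sqc \sqcup \Sqn$, and for each such choice the topological type of the thick decomposition of $S$ is determined. The number of edges of any $(q,\tau)$--regular triangulation is in turn bounded by the Euler characteristic of $S$ minus the removed large cylinders, so it suffices to show that each individual edge of $T$ lies in one of finitely many homotopy classes modulo twisting.

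Fix an edge $\omega$ of $T$ and let $Q$ be a thick subsurface that $\omega$ meets. Condition (2) of \defref{nice:triangulation} gives $\ell_q(\omega) \lmul s_Q$; by \thmref{TT} the simple closed curve $\gamma$ obtained by closing $\omega$ with a path of $q$--length $O(s_Q)$ along $\partial Q$ satisfies $\Ext_X(\gamma) = O(1)$. Further, \lemref{Twist-q-T} gives $\twist_\alpha(T, q) = O(1)$ for every $\alpha \in \Sqn$, and combining with \lemref{Twist-q-X} applied to $Y = X$ yields
$$
\twist_\alpha(\gamma, X) \ladd \twist_\alpha(\gamma, q) + \twist_\alpha(q, X) \lmul \frac{1}{\Ext_X(\alpha)}.
$$
For $\alpha \in \cS^* \setminus \cS_{X_0}$ we have $\Ext_X(\alpha) \emul \Ext_{X_0}(\alpha) \gmul \ep_0$, so the right-hand side is $O(1)$. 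For $\alpha \in \cS_X \cap \cS_{X_0}$ our equivalence relation permits applying a multi-twist around $\alpha$, and a single multi-twist normalizes $\gamma$ so that $\twist_\alpha(\gamma, X) = O(1)$ simultaneously for every such $\alpha$.

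After normalization, $\gamma$ satisfies $\Ext_X(\gamma) = O(1)$ and $\twist_\alpha(\gamma, X) = O(1)$ for every $\alpha \in \cS_X$, so \thmref{minsky:ext} forces $\I(\gamma, \alpha) \lmul \sqrt{\Ext_X(\alpha)}$ for $\alpha \in \cS_X$ and $\I(\gamma, \beta) = O(1)$ for any bounded-length curve $\beta$ in the complement of $\cS_X$. Bounded intersection with a finite reference system leaves $\gamma$ in one of finitely many homotopy classes, and the same follows for $\omega$. The main subtlety I anticipate is the simultaneous normalization step: a single multi-twist must normalize all edges of $T$ at once, which relies on $\twist_\alpha(T,q)$ being a global invariant of the triangulation rather than of individual edges. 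Extra care is needed for $\alpha \in \Sqc$ (where $T$ does not enter $F_\alpha^\circ$ and \lemref{Twist-q-T} does not directly apply); for $\alpha \in \Sqc \setminus \cS_{X_0}$ the modulus bound $\Mod(F_\alpha) \lmul 1/\Ext_X(\alpha) \lmul 1/\ep_0$ leaves only finitely many twist classes, and for $\alpha \in \Sqc \cap \cS_{X_0}$ the modding-out absorbs the ambiguity.
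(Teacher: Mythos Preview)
Your overall skeleton — bound the possible $\cS_X$ and its partition, normalize twists around $\cS_{X_0}$, then show each edge has bounded combinatorial complexity — matches the paper's. The gap is in the central step: the passage from an edge $\omega$ to a simple closed curve $\gamma$.

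An edge $\omega$ is a saddle connection, an \emph{arc} with endpoints at singular points of $q$. These singular points may lie anywhere inside a thick piece $Q$, not on $\partial Q$, and $\omega$ may meet several thick pieces at once (condition (2) only bounds $\ell_q(\omega)$ by the size of any such piece). The phrase ``closing $\omega$ with a path of $q$--length $O(s_Q)$ along $\partial Q$'' is therefore not well defined: you have not specified how to connect the endpoints of $\omega$ to $\partial Q$, why the concatenation is simple and essential, or which $Q$ to use. More seriously, even if such a $\gamma$ could be produced, the map $\omega \mapsto \gamma$ is not finite-to-one: many distinct relative homotopy classes of arcs complete to the same free homotopy class of curves, so bounding the number of $\gamma$'s does not bound the number of $\omega$'s. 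Your last sentence, ``and the same follows for $\omega$'', is exactly the missing step. Finally, you never address the vertex set $\Sigma$ or the incidence structure of $T$; controlling edges individually does not obviously control the triangulation as a graph.

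The paper avoids the arc-to-curve conversion entirely. It builds a reference system $M$ consisting of $\cS_q$, a short pants marking $\mu_Q$ in each thick piece, and transversals $\beta_\alpha^q$ for $\alpha \in \Sqn$; then \lemref{Length-Int} and \lemref{Twist-q-T} show that every edge of $T$ has uniformly bounded intersection with every curve of $M$. Since the curves of $M$ cut $S$ into finitely many polygons and annuli, there are boundedly many regions in which each vertex of $T$ can sit, and for each pair of vertices boundedly many arc classes with the prescribed intersection pattern. The number of possible $M$'s is itself bounded up to twisting around $\cS_{X_0}$. The key object your argument lacks is exactly this finite \emph{cell decomposition} against which arcs (not just curves) can be enumerated.
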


\begin{proof}
We start with a topological counting statement. 
Let $\cS_0=\cS_0^c \cup \cS_0^n$ be a system of curves on $S$.
For every subsurface $Q$ in $S \setminus \cS_0$, let $\mu_Q$  
be a marking for the subsurface $Q$ in the sense of \cite{CCII}.
That is, $\mu_Q$ is a pants decomposition $\{\gamma_1, \ldots, \gamma_k\}$ 
for $Q$ together with a transverse curve $\overline{\gamma}_i$ for 
$0\leq i\leq k$. Each $\overline{\gamma}_i$ is contained in $Q$, 
intersects $\gamma_i$ once or twice and is disjoint from $\gamma_j$, 
$j \not = i$. Also,  for $\alpha \in \cS_0^n$,
let $\beta_\alpha$ be a curve transverse to $\alpha$ that is
disjoint from all other curves in $\cS_0$ and $\I(\beta_\alpha, \mu_Q)=O(1)$.
Define
$$
M= \bigcup_{Q} \mu_Q \cup \cS_0 \cup \{\beta_\alpha\}_{\alpha \in \cS_0^n}. 
$$
\subsection*{Claim:} Given a set $M$ as above, there is a uniformly bounded 
number of possibilities for the homotopy class of a triangulation $T$,
triangulating $S \setminus \cS_0^c$, where the curves in $M$ and $T$ have 
representatives with the following properties: 
\begin{enumerate}
\item curves in $M$ have no self intersections and intersect each other minimally. 
\item for any $\alpha \in \cS_0^c$, $\I(T, \alpha) = 0$.
\item for any $\gamma \in \mu_Q$, $\I(T, \gamma) = O(1)$.
\item for $\alpha \in \cS_0^n$, $\twist_\alpha(T, \beta_\alpha) =O(1)$, 
and $\I(T, \alpha) = O(1)$.
\end{enumerate}
To see the claim, note that the curves in $M$ divide $S$ into a uniformly 
bounded number of complementary regions, each one is either a polygon 
or an annulus parallel to a curve $\alpha \in \cS_0^c$. Choose a representative 
of the homotopy class of $T$ that intersects curves in $M$ minimally. 
There are a uniformly bounded number of possibilities for the location of 
vertices of $T$. Once the vertices of $T$ are fixed, there are a 
uniformly bounded number of possibilities for any given arc, with end points on these
vertices, that can appear as an edge of $T$. This is because there are a 
uniformly bounded number of possibilities for the intersection pattern of the given arc 
with the complementary regions. Also, each region is either a polygon where there is 
a unique arc (up to homotopy) connecting any two edges (or a vertex to an edge) 
or an annulus neighborhood of a curve $\alpha \in \cS_0^c$ where there are 
two possibilities (edges of $T$ are simple and disjoint from curves in $\cS_0^c$).  

It remains to show, that for every $(q, \tau)$--regular triangulation $T_q$
on $(X,q)$ where $X \in U$, there is a set of simple closed curves $M_q$ so that 
$T_q$ and $M_q$ satisfy the above properties and then to bound the number 
of possibilities for the set $M_q$. 

Let $(X, q)$ be a quadratic differential so that $X \in U$. We construct $M_q$ as 
follows: The curves $\cS_q=\Sqc \cup \Sqn$ have a uniformly bounded 
length in $X_0$ hence there are a uniformly bounded number of 
possibilities for these sets. For each thick subsurface $Q$ of $q$, choose
a $q$--short marking $\mu_Q$ in $Q$. Curves in $\mu_Q$
have a uniformly bounded length on $X$ and hence a uniformly bounded
length in $X_0$. Hence there are only a uniformly bounded number of choices for 
these as well. Now for each $\alpha$, let $\beta_\alpha^q$ be the shortest $q$
transverse curve to $\alpha$. \lemref{Twist-q-X} implies that 
$\twist_\alpha(X_0, \beta_\alpha^q) \lmul \frac 1{\Ext_{X_0}(\alpha)}$.
Hence the number of possible choices for $\beta_\alpha^q$ is of the order
of $\frac 1{\Ext_{X_0}(\alpha)}$. Define
$$
M_q= \bigcup_{Q} 
  \mu_Q \cup \cS_q \cup \{\beta^q_\alpha\}_{\alpha \in \Sqn}.
$$
Bu construction, the total number of possible sets $M_q$ chosen as above is
of the order of $G(X_0)$. However, up to twisting around $\cS_{X_0}$
there are only finitely many choices. For a $(q, \tau)$--regular triangulation $T_q$
in $(X,q)$, we need to check that the conditions (1)-(4) hold for $T_q$ and
$M_q$. Perturb the $q$--geodesic representative of curves in $M_q$ 
so that they have no self-intersections, intersect each other minimally
and the intersection number with $T$ does not increase. Condition (2)
follows from the construction of $(q, \tau)$--regular triangulations. Condition
(3) follows from \lemref{Length-Int}. The first part of condition (4)
is a consequence of \lemref{Twist-q-T} and the second part again follows 
from \lemref{Length-Int}.
\end{proof}

\section{Intersection bounds between regular triangulations}
\label{Sec:Intersection-Bound}

As before, let $\Qs$ be the stratum of quadratic differentials of type $\sigma$.
In this section, we establish some intersection bounds for $(q, \tau)$--regular
triangulations associated to a pair of quadratic differentials that appear
at the end points of a geodesic segment in $\Qs$. 

Recall, from \remref{tau}, that there is an implicit assumption that 
the constant $\tau$ is large. That is, there is a uniform constant $\tau_0$
so that all statements in this section hold as long as $\tau \ge \tau_0$.
In particular, the implied constant in our estimates do not get worst as $\tau$ 
gets larger. 
 
\subsection{Notation}\label{Sec:notation}
First we need to establish some notations. 

\subsection*{1}
For a fixed constant $r_0$, define $\cB (\Qs, X, \tau)$ to be the set of points 
$Z \in \cT(S)$ so that there is a \Teich geodesic 
$$
\cG_Z \from [a,b] \to \QTs, \qquad \cG_Z(t) =(X_t, q_t),
$$
such that 
$$
 d_\T(X_a, X) \leq r_0, \quad d_\T(X_b, Z) \leq r_0, \quad
 b-a \leq \tau.
$$
and
$$
  (X_t, q_t)  \in \Qs.
$$
One could think of $B(\Qs, X, \tau)$ as a ball of radius $\tau$
centered at $X$, except that one is allowed only to move in the direction of 
$\Qs$. Since $r_0$ is fixed, we refer to any constant that depends on $r_0$
as a uniform constant. The value of $r_0$ will be determined in \secref{Net}
depending on the choice of the net $\cN$. 

\subsection*{2} 
We use the notation of \eqnref{EFE} for $q_a$ and denote the flat
and expanding annuli associated to a curve $\alpha$ by 
$E^a_\alpha$, $F^a_\alpha$ and $G^a_\alpha$ and distances 
between their boundaries by $e^a$, $f^a$ 
and $g^a$. Let $\upsilon_a$ be an 
arc of length $f^a$ connecting the boundaries of $F_\alpha$. Also, let
$l^a= \ell_{q_a}(\alpha)$ and let $d^a= \max(e^a,f^a,g^a)$ be the maximum distance
between the boundaries of these annuli.  As a consequence of 
Equations~\eqref{Eq:EFE} and \eqref{Eq:Moduli} we have
\begin{equation} \label{Eq:collar} 
\frac 1{\Ext_X(\alpha)} \lmul \frac{d^a}{l^a}
\qquad\text{and}\qquad
\frac 1{\Ext_X(\alpha)} \gmul \frac{f^a}{l^a}.
\end{equation}

\subsection*{3} 
Let $T_a$ be a $(q_a, \tau)$--regular triangulation and $T_b$  be a 
$(q_b, \tau)$--regular 
triangulation. The geodesic flow induces a one-to-one correspondence between
saddle connections of $q_a$ and $q_b$. Hence, we can consider 
$T_b$ as a union of saddle connections in $q_a$. Then $T_a$ and $T_b$ have 
identical vertex sets and their edges are either identical or intersect transversally. 
The slope of a saddle connection in $q_a$ (or in $q_b$) is a well defined number 
in the interval $[0,\infty]$. 

\begin{definition} \label{Def:Positive}
Let $\omega_a$ be a saddle connection in $q_a$ and let $\omega_b$ be 
a saddle connection in $q_b$. We say $\omega_b$ intersects $\omega_a$ 
\emph{positively},  if when considering them both in $q_a$ (or $q_b$), 
the slope of $\omega_b$ is larger than the slope of $\omega_a$. We say 
$\omega_b$ intersects $\omega_a$ \emph{essentially positively} if 
either $\omega_b$ intersects $\omega_a$ positively or 
$\I(\omega_a,\omega_b)=O(1)$. We use similar terminology for intersection
between a saddle connection and a cylinder curve and two cylinder curves. 
\end{definition}

\subsection{Intersection and twisting bounds between $T_a$ and $T_b$} 
\label{Sec:IntersectioBound} 
For the rest of this subsection, we assume that $q_a$ and $q_b$,
 $T_a$ and $T_b$ are as described in the beginning of the section. 

\begin{lemma} \label{Lem:EdgeNonCylinder} 
Let $\alpha \in \Sn{q_a}$, $\omega_b \in T_b$ and 
$\alpha_b \in \cS_Z$, then 
$$
\twist_\alpha(q_a, \omega_b) =O(1)
\qquad\text{and}\qquad
\twist_\alpha(q_a, \alpha_b) =O(1).
$$
Similarly, let $\alpha \in \Sn{q_b}$, $\omega_a \in T_a$ and 
$\alpha_a \in \cS_X$, then 
$$
\twist_\alpha(q_b, \omega_a) =O(1)
\qquad\text{and}\qquad
\twist_\alpha(q_b, \alpha_a) =O(1).
$$
\end{lemma}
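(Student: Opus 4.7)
The plan is to reduce each bound to two pieces via the triangle inequality for twisting,
$$
\twist_\alpha(q_a, \omega_b) \ladd \twist_\alpha(q_a, q_b) + \twist_\alpha(q_b, \omega_b),
$$
and the analogous inequality for $\alpha_b$. So the proof has two parts: (i) establishing $\twist_\alpha(q_a, q_b) = O(1)$, and (ii) bounding $\twist_\alpha(q_b, \omega_b)$ and $\twist_\alpha(q_b, \alpha_b)$ according to how $\alpha$ sits inside $q_b$. The symmetric pair of estimates in the second half of the lemma then follows by reversing the roles of $a$ and $b$.

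For (i), the quantitative content of $\alpha \in \Sn{q_a}$ is $\Mod(F_\alpha^a) < \CNM = e^{-2\tau}$; equivalently $f^a \leq e^{-2\tau}\, l^a$. I would work in the annular cover $\tilde X_\alpha$ of $\alpha$ and argue that although the Teichm\"uller distance $d_\T(X_a, X_b) \leq \tau$ could a priori produce a very large twist, the exponential thinness of the flat cylinder of $\alpha$ in $q_a$ means that the $q_a$-perpendicular and $q_b$-perpendicular transverse arcs in $\tilde X_\alpha$ differ by only $O(1)$ twists. In Minsky's product-region coordinates at $\alpha$ the geodesic may travel a hyperbolic distance $\tau$ in the upper half-plane factor, but net motion in the twist (real-axis) direction is of order $\Mod(F_\alpha^a)\, e^{2\tau} = O(1)$, giving $\twist_\alpha(q_a, q_b) = O(1)$.

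For (ii), I would split into cases based on the status of $\alpha$ in $q_b$. If $\alpha \in \Sc{q_b}$ then by condition (1) of \defref{nice:triangulation}, $T_b$ is disjoint from the interior of $F_\alpha^b$, so no edge of $T_b$ crosses $\alpha$ essentially and the twist is bounded trivially. If $\alpha \in \Sn{q_b}$, \lemref{Twist-q-T} gives $\twist_\alpha(T_b, q_b) = O(1)$, which transfers to each edge $\omega_b \in T_b$ transverse to $\alpha$ since the twist of $T_b$ is defined via a bounded-combinatorial-length transverse curve carried by $T_b$. If $\alpha \notin \cS_{q_b}$, then $\Ext_{X_b}(\alpha) \gmul 1$ and a direct intersection count using \lemref{Length-Int} (the shortest $q_b$-transverse curve to $\alpha$ has $q_b$-length comparable to a thick-subsurface size) yields $\twist_\alpha(\omega_b, q_b) = O(1)$. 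Combining with (i) gives the first estimate. For the second, $\alpha_b \in \cS_Z$ is $\ep_0$-short on $X_b$ since $d_\T(X_b, Z) = O(1)$, and the same three-way case analysis together with \lemref{Twist-q-X} yields $\twist_\alpha(\alpha_b, q_b) = O(1)$, so the triangle inequality concludes.

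The main obstacle is part (i). Without a cylinder-modulus hypothesis, \lemref{Twist-q-X} applied at scale $\tau$ would only give a bound of order $\tau/\Ext_{X_a}(\alpha)$, which is far too weak. The condition $\Mod(F_\alpha^a) \leq e^{-2\tau}$ built into the definition of $\Sn{q_a}$ is precisely calibrated to cancel the exponential factor $e^{2\tau}$ coming from the time of the Teichm\"uller geodesic. Making this cancellation rigorous in the annular cover, while allowing $\alpha$ to possibly change status (become non-short, or become a large cylinder) at intermediate times $t \in [a,b]$, is the delicate technical step; the case analysis in (ii) is then routine given the structural lemmas from \secref{geometry}.
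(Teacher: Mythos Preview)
Your approach is correct and hits the same key point as the paper: the condition $\Mod(F_\alpha^a) \le e^{-2\tau}$ is exactly calibrated to cancel the $e^{2\tau}$ factor coming from flowing distance at most $\tau$, so the relative twist $\twist_\alpha(q_a,q_b)$ is $O(1)$; the remaining bound on $\twist_\alpha(q_b,\omega_b)$ (resp.\ $\twist_\alpha(q_b,\alpha_b)$) follows from the structure of a $(q_b,\tau)$--regular triangulation and shortness of $\alpha_b$.

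The paper packages this more directly, without the triangle inequality and without invoking \thmref{PRT}. It simply takes the $q_a$--perpendicular arc $\nu_\alpha$ across $F_\alpha^a$ and computes
\[
\frac{\ell_{q_b}(\nu_\alpha)}{\ell_{q_b}(\alpha)} \le e^{2\tau}\,\frac{\ell_{q_a}(\nu_\alpha)}{\ell_{q_a}(\alpha)} = e^{2\tau}\Mod(F_\alpha^a) \lmul 1,
\]
so $\nu_\alpha$ winds around $\alpha$ only $O(1)$ times in $q_b$. Since $\omega_b$ also winds $O(1)$ times around $\alpha$ in $q_b$ (this is your part (ii), which the paper asserts in one line), $\I(\omega_b,\nu_\alpha)=O(1)$, which is $\twist_\alpha(q_a,\omega_b)$ by definition. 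Your product--region heuristic recovers the same inequality but is a detour: the twist being bounded is a statement about the flat structures $q_a,q_b$, not the conformal structures $X_a,X_b$, so the direct flat--length ratio is both simpler and better adapted. Your case analysis in (ii) is a fine expansion of what the paper leaves implicit.
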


\begin{proof}
Let $\nu_\alpha$ be the arc connecting the boundaries of $F_\alpha^a$
and is perpendicular to them. Then, by definition of $\Sqn$, 
$$
\frac{\ell_{q_a}(\nu_\alpha)}{\ell_{q_a}(\alpha)}
= \Mod(F_\alpha^a) \lmul e^{-2\tau}. 
$$
Therefore
$$
\frac{\ell_{q_b}(\nu_\alpha)}{\ell_{q_b}(\alpha)} \lmul 1. 
$$
That is, $\nu_\alpha$ twists around $\alpha$ in $q_b$ a bounded
number of times. But the same is true for $\omega_b$. 
This gives a bound on $\I(\omega_b, \nu_\alpha)$ and thus on
$\twist_\alpha(q_a, \omega_b)$. Also, the curve $\alpha_b$
is short in $Z$ and hence in $q_b$. A short curve can not twist
around any other curve. Hence $\I(\alpha_b, \nu_\alpha)$
is uniformly bounded. Which means $\twist_\alpha(q_a, \alpha_b)$
is uniformly bounded. The proofs of the other two assertions are similar. 
\end{proof}

\begin{remark} \label{Rem:TwistCondition}
The main consequence of this lemma is that the twisting condition
of \lemref{Length-Int} is satisfied and can be applied freely. 
\end{remark} 

\begin{lemma} \label{Lem:EdgeEdge}
Let $\omega_a$ and $\omega_b$ be edges of $T_a$ and $T_b$ respectively.
Then $\omega_b$ intersects $\omega_a$ essentially positively and 
$$\I(\omega_a, \omega_b) \lmul e^\tau.$$
\end{lemma}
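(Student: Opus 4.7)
The lemma has two parts: the quantitative bound $\I(\omega_a,\omega_b) \lmul e^\tau$ and the qualitative statement of essential positivity. My plan is to derive the bound from \lemref{Length-Int} applied in both directions, and to prove essential positivity by a separate Euclidean argument driven by the flow.

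For the intersection bound, first observe that the twisting hypothesis of \lemref{Length-Int} is satisfied in both directions: by \lemref{EdgeNonCylinder} (together with \remref{TwistCondition}), $\omega_b$ twists boundedly around each $\alpha \in \Sn{q_a}$, and symmetrically $\omega_a$ around each $\alpha \in \Sn{q_b}$. Let $s_a$ (resp.\ $s_b$) denote the minimum of $s_Q$ over thick subsurfaces $Q$ of $q_a$ (resp.\ $q_b$) that intersect $\omega_a$ (resp.\ $\omega_b$). Since $\omega_b$ is an edge of a $(q_b,\tau)$-regular triangulation, property~(2) of \defref{nice:triangulation} gives $\ell_{q_b}(\omega_b) \lmul s_b$; and because the Teichm\"uller flow distorts the length of any straight segment by at most a factor $e^\tau$ over a time interval of length $\tau$, we conclude $\ell_{q_a}(\omega_b) \lmul e^\tau s_b$, and symmetrically $\ell_{q_b}(\omega_a) \lmul e^\tau s_a$. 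Feeding these into \lemref{Length-Int} in the two directions yields
\[
\I(\omega_a,\omega_b) \lmul \frac{e^\tau s_b}{s_a} + 1
\qquad\text{and}\qquad
\I(\omega_a,\omega_b) \lmul \frac{e^\tau s_a}{s_b} + 1.
\]
Whichever of $s_a$, $s_b$ is larger makes the corresponding bound at most $e^\tau+1$, giving $\I(\omega_a,\omega_b) \lmul e^\tau$.

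For essential positivity I would argue by contradiction. Suppose the intersection of $\omega_b$ with $\omega_a$ is not essentially positive, so the slope of $\omega_b$ in $q_a$ is strictly less than that of $\omega_a$ while $\I(\omega_a,\omega_b)$ exceeds any prescribed universal constant. Writing the $q_a$-holonomy of $\omega_b$ as $(x'_b, y'_b)$, the slope hypothesis pins a substantial horizontal coordinate $x'_b$ relative to $\ell_{q_a}(\omega_a)$. Since $\omega_a \in T_a$ avoids the interiors of all large-cylinder flat annuli by property~(1) of \defref{nice:triangulation}, intersections occur only in the thick pieces and in the narrow cylinders $F^a_\alpha$ with $\alpha \in \Sn{q_a}$, where the bounded twisting of $\omega_b$ allows one to count intersections by a parallel-straight-segments estimate in the singular flat structure. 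This count grows linearly with $x'_b$; combined with the lower bound $\ell_{q_b}(\omega_b) \gmul e^\tau x'_b$ coming from the horizontal stretching under the flow, a large $\I$ would force $\ell_{q_b}(\omega_b) \gg s_b$, contradicting property~(2) of $(q_b,\tau)$-regularity for $\omega_b \in T_b$.

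The main obstacle is the Euclidean intersection count underlying essential positivity: a straight segment of a given slope in a singular flat surface can cross another saddle connection many times because of the multiple thick subsurfaces it traverses and the short curves in $\Sn{q_a}$ it may run alongside. The twisting bound from \lemref{EdgeNonCylinder} is precisely what suppresses the extra intersections produced by swirling around short curves, reducing the count to a clean transverse-lines estimate in the thick pieces and allowing the contradiction with the $(q_b,\tau)$-regularity of $T_b$ to close.
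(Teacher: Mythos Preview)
Your argument for the quantitative bound $\I(\omega_a,\omega_b) \lmul e^\tau$ is correct and in fact a touch cleaner than the paper's: applying \lemref{Length-Int} in each direction gives $\I \lmul e^\tau s_b/s_a + 1$ and $\I \lmul e^\tau s_a/s_b + 1$, and whichever ratio is at most $1$ finishes. The paper instead decomposes $\omega_a$ and $\omega_b$ into horizontal and vertical components $x_a, y_a$ (in $q_a$) and $x_b, y_b$ (in $q_b$), and extracts from \lemref{Length-Int} the sharper pair
\[
\I \lmul \frac{x_a e^\tau + y_a e^{-\tau}}{s_b}
\qquad\text{and}\qquad
\I \lmul \frac{x_b e^{-\tau} + y_b e^\tau}{s_a},
\]
then multiplies and takes a square root. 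For the bound alone this is extra work, but the decomposition is precisely what drives the essential positivity argument.

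Your essential positivity argument, by contrast, has a genuine gap. The claim that the intersection count ``grows linearly with $x'_b$'' is never justified, and more seriously the slope hypothesis does not enter your argument in any way that distinguishes it from the generic situation: the inequality $\ell_{q_b}(\omega_b) \gmul e^\tau x'_b$ (which in any case needs $b-a=\tau$) holds regardless of the relative slopes, so the contradiction you sketch would, if it worked, rule out large intersection unconditionally. The paper's route is short and purely algebraic: if $\omega_b$ fails to have larger slope than $\omega_a$, one reads off $x_a y_b\, e^{2\tau} \le x_b y_a$. Expanding the product of the two displayed bounds, the terms $x_a x_b$, $y_a y_b$, $x_b y_a e^{-2\tau}$ are trivially $\lmul s_a s_b$, and the one dangerous term $x_a y_b e^{2\tau}$ is now bounded by $x_b y_a \lmul s_a s_b$ as well, giving $\I^2 = O(1)$. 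So the horizontal--vertical decomposition is not incidental setup but the device that turns the slope hypothesis into exactly the algebraic inequality needed to kill the only large term.
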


\begin{proof}
Let $Q_a$ be the thick subsurface of $q_a$ with the smallest size that 
intersects $\omega_a$ and let $s_a$ be the size of the subsurface $Q_a$. 
Recall that, by the definition of a $(q_a, \tau)$--regular triangulation, we have
$$
\ell_{q_a}(\omega_a) \lmul s_a. 
$$
We denote the horizontal and the vertical lengths 
of $\omega_a$ by $x_a$ and $y_a$. Let $Q_b$, $s_b$, $x_b$ and $y_b$ be 
similarly defined. The length of $\omega_a$ in $Q_b$ is 
$$
\sqrt{(x_a e^{\tau})^2 + (y_a e^{-\tau})^2}\emul x_a e^\tau + y_a e^{-\tau}.
$$
If $\I(\omega_a, \omega_b)=O(1)$ we are done. Otherwise, 
Considering $\omega_a$ and $\omega_b$ in $Q_b$, 
in view of \remref{TwistCondition}, \lemref{Length-Int} implies  that 
$$
\I(\omega_a,\omega_b) \lmul \frac{\ell_{q_b}(\omega_a)}{s_b} +O(1).
$$
However, since $\I(\omega_a,\omega_b)$ is large, 
$\frac{\ell_{q_b}(\omega_a)}{s_b}$ is large and we can 
incorporate the additive error into the multiplicative error. That is, 
\begin{equation} \label{Eq:Intersection1}
\I(\omega_a,\omega_b) \lmul \frac{\ell_{q_b}(\omega_a)}{s_b} 
\emul \frac{x_a e^\tau + y_a e^{-\tau}}{s_b}.
\end{equation}
Similarly, considering $\omega_a$ and $\omega_b$ in $Q_a$ we get
\begin{equation} \label{Eq:Intersection2}
\I(\omega_a,\omega_b) \lmul \frac{\ell_{q_a}(\omega_b)}{s_a} 
\emul \frac{x_b e^{-\tau} + y_b e^\tau}{s_a}
\end{equation}

Observing that $x_a, y_a \lmul s_a$ and $x_b, y_b\lmul s_b$, we can multiply
the two inequalities and take a square root to get 
$\I(\omega_a,\omega_b) \lmul e^\tau$.

Now assume that $\omega_b$ does not intersect $\omega_a$ positively. 
This means that the slope of $\omega_a$ in $q_b$ is larger than the slope of 
$\omega_b$. That is
$$
\frac{y_a e^{-\tau}}{x_a e^\tau} \geq \frac {y_b}{x_b}
\qquad \Longrightarrow \qquad
x_ay_b \, e^{2\tau} \leq x_by_a. 
$$
From the product of inequalities in \eqnref{Intersection1} and 
\eqnref{Intersection2}, 
we have
\begin{align*}
\I(\omega_a,\omega_b)^2 
 & \lmul \frac{x_a x_b + y_a y_b + x_a y_b \, e^{2\tau} 
                 + x_b y_a e^{-2\tau}}{s_a s_b} \\
  & \lmul  \frac{x_a y_b \, e^{2\tau}}{s_a s_b} 
     \lmul \frac{x_b y_a}{s_a s_b} =O(1). \qedhere
\end{align*}
\end{proof}

For a simple closed curve $\alpha$ and a triangulation $T$, we say $T$ intersects
$\alpha$ essentially positively if any saddle connection in $T$ intersects 
any saddle connection in the geodesic representative of $\alpha$ 
essentially positively.

\begin{lemma}\label{Lem:EdgeCylinder}
If $\alpha \in \cS_X$ and $\alpha \not \in \cS_Z$
then $\alpha$ intersects $T_b$ essentially positively and
$$
\twist_\alpha(X,Z) \I(\alpha, T_b) \lmul \frac{e^\tau}{\sqrt{\Ext_X(\alpha)}}.
$$
Similarly, if $\alpha \in \cS_Z$ and $\alpha \not \in \cS_X$ then 
$\alpha$ intersects $T_a$ essentially positively and
$$
\twist_\alpha(X,Z) \I(\alpha, T_a) \lmul \frac{e^\tau}{\sqrt{\Ext_Z(\alpha)}}.
$$
\end{lemma}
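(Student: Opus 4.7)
The plan is to adapt the proof of \lemref{EdgeEdge}, with the short curve $\alpha$ in place of the edge $\omega_a$ and its flat annulus $A^a_\alpha \subset q_a$ playing the role of the thick subsurface of size $s_a$. The extra factor $\twist_\alpha(X,Z)$ in the target bound will be absorbed via the standard estimate $\twist_\alpha(X,Z) \lmul 1/\Ext_X(\alpha)$, matched against a tighter intersection bound provided by the flat-annulus width $d^a$. Since $\alpha \in \cS_X$, from $\Mod(A^a_\alpha) \emul 1/\Ext_X(\alpha)$ and the unit area bound on $q_a$ we extract $\ell_{q_a}(\alpha) \lmul \sqrt{\Ext_X(\alpha)}$ and $d^a \lmul 1/\sqrt{\Ext_X(\alpha)}$, and the holonomy decomposition $(x_\alpha, y_\alpha)$ of the $q_a$-geodesic representative of $\alpha$ in $q_a$ satisfies $x_\alpha + y_\alpha \emul \ell_{q_a}(\alpha)$.

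For each edge $\omega_b \in T_b$ meeting $\alpha$, with holonomy $(x_b, y_b)$ in $q_b$ and smallest thick-subsurface size $s_b$, I produce two bounds on $\I(\omega_b, \alpha)$: one in $q_b$, by applying \lemref{Length-Int} to saddle connections of the $q_b$-geodesic representative of $\alpha$ (whose twisting hypothesis is automatic since $\alpha \notin \cS_Z$), yielding $\I(\omega_b, \alpha) \lmul (x_\alpha e^\tau + y_\alpha e^{-\tau})/s_b$; and one in $q_a$, using that each transverse passage of $\omega_b$ through $A^a_\alpha$ costs $\gmul d^a$ in $q_a$-length while contributing a single crossing with the core $\alpha$, yielding $\I(\omega_b, \alpha) \lmul (x_b e^{-\tau} + y_b e^\tau)/d^a$. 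Multiplying these, expanding, and running the slope dichotomy of \lemref{EdgeEdge}: if $\omega_b$ does not intersect $\alpha$ essentially positively in $q_b$, the offending cross term $x_\alpha y_b e^{2\tau}$ is dominated by $x_b y_\alpha$, and using $x_\alpha, y_\alpha \lmul \ell_{q_a}(\alpha) \lmul d^a$ the whole expanded product is $O(d^a s_b)$, forcing $\I(\omega_b, \alpha) = O(1)$. This gives the essential positivity claim. Summing the $q_a$-side bound over the topologically-bounded number of edges of $T_b$, whose total $q_a$-length is $\lmul e^\tau$ (they have total $q_b$-length $\lmul 1$ since the thick-subsurface sizes of $q_b$ satisfy $\sum s_{Q_b} \lmul 1$), we get $\I(T_b, \alpha) \lmul e^\tau/d^a + O(1)$.

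For the twist, by the triangle inequality
$$\twist_\alpha(X,Z) \eadd \twist_\alpha(X, q_a) + \twist_\alpha(q_a, q_b) + \twist_\alpha(q_b, Z).$$
The outer terms are $\lmul 1/\Ext_X(\alpha)$ and $O(1)$ by \lemref{Twist-q-X} and the fact that $\alpha \notin \cS_Z$. The middle term is controlled by the deformation of $A_\alpha$ under the flow $g_\tau$: a direct computation of the shear in cylinder coordinates shows that it is likewise $\lmul 1/\Ext_X(\alpha)$. Combining everything and using $d^a \Ext_X(\alpha) \emul \ell_{q_a}(\alpha)$,
$$\twist_\alpha(X,Z)\, \I(T_b, \alpha) \lmul \frac{1}{\Ext_X(\alpha)} \cdot \frac{e^\tau}{d^a} \emul \frac{e^\tau}{\ell_{q_a}(\alpha)} \lmul \frac{e^\tau}{\sqrt{\Ext_X(\alpha)}}.$$
The second half of the lemma is proved by the symmetric argument with the roles of $q_a$ and $q_b$ reversed.

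The main technical obstacle is the control of $\twist_\alpha(q_a, q_b)$ along the Teichm\"uller flow when $\alpha$ transitions from short in $q_a$ to non-short in $q_b$: the annulus $A_\alpha$ both shrinks in the $\alpha$-direction and shears, and one has to verify that the resulting shear in cylinder coordinates is bounded by a multiple of $1/\Ext_X(\alpha)$. Once this is in place, the essential-positivity dichotomy and the summed intersection bound follow the template of \lemref{EdgeEdge} with only minor modifications.
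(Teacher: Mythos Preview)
Your final chain of inequalities breaks. You write
\[
\frac{1}{\Ext_X(\alpha)}\cdot\frac{e^\tau}{d^a}\;\emul\;\frac{e^\tau}{\ell_{q_a}(\alpha)}\;\lmul\;\frac{e^\tau}{\sqrt{\Ext_X(\alpha)}},
\]
but the last step requires $\ell_{q_a}(\alpha)\gmul\sqrt{\Ext_X(\alpha)}$, which is the wrong direction: in general only $\ell_{q_a}(\alpha)\lmul\sqrt{\Ext_X(\alpha)}$ holds (from the definition of extremal length, \eqnref{ext}). Concretely, if $\alpha$ is short because of a large expanding annulus rather than a flat cylinder, say $e^a\emul 1$, $f^a=g^a=0$, $l^a=\epsilon$, then $\Ext_X(\alpha)\emul 1/\Log(1/\epsilon)$ and your bound gives something of order $e^\tau\Log(1/\epsilon)$, while the target is $e^\tau\sqrt{\Log(1/\epsilon)}$. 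The underlying reason is that your intersection estimate $\I(T_b,\alpha)\lmul e^\tau/d^a$, obtained by summing only the $q_a$--side crossing bound, is too weak. You already set up both a $q_b$--side bound via \lemref{Length-Int} and a $q_a$--side bound via crossings of $A_\alpha^a$; what you must do (and what the paper does) is \emph{multiply} them and take the square root, exactly as in \lemref{EdgeEdge}, to get $\I(\alpha,T_b)\lmul e^\tau\sqrt{l^a/d^a}$. Combined with $1/\Ext_X(\alpha)\lmul d^a/l^a$ from \eqnref{collar}, this yields $\I(\alpha,T_b)/\Ext_X(\alpha)\lmul e^\tau/\sqrt{\Ext_X(\alpha)}$ as desired.

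There is a second gap: your assertion $\twist_\alpha(q_a,q_b)\lmul 1/\Ext_X(\alpha)$ is not justified, and the ``direct computation of the shear'' you allude to does not give it without $e^\tau$ factors. The paper does \emph{not} bound $\twist_\alpha(q_a,q_b)$ separately; instead it bounds the product $\twist_\alpha(q_a,q_b)\,\I(\alpha,T_b)$ directly. The point is that a second, independent pair of estimates is available using the image $F_\alpha^b$ of the flat cylinder: each crossing of $\omega_b$ through $F_\alpha^b$ costs $\ge f^b$ in $q_b$--length, giving $\I(\alpha,\omega_b)\lmul s_b/f^b$, and combined (again by multiplying and taking a square root) with the $q_b$--side bound $\I(\alpha,\omega_b)\lmul l^b/s_b$ one gets $\I(\alpha,T_b)\lmul l^b/\sqrt{l^a f^a}$. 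Separately, $\twist_\alpha(q_a,q_b)\lmul e^\tau f^b/l^a$ by comparing the perpendicular arcs $\upsilon^a_\alpha$ and $\upsilon^b_\alpha$. The product then telescopes via the area identity $l^a f^a=l^b f^b$ to $e^\tau\sqrt{f^a/l^a}\lmul e^\tau/\sqrt{\Ext_X(\alpha)}$. So the key device, used twice, is the geometric--mean trick on paired estimates from the two ends of the geodesic; simply summing one side loses exactly the factor you are missing.
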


\begin{proof}
Let $\alpha$ be a simple closed curve in $\cS_X \setminus \cS_Z$. 
Applying \lemref{Twist-q-X} to the pair $X$ and $q_a$ and to
the pair $Z$ and $q_b$, we get 
\begin{equation} \label{Eq:Xtoq}
\twist_\alpha(X,Z) \lmul \twist_\alpha(q_a,q_b) + \frac 1{\Ext_X(\alpha)}.
\end{equation}
(The term $\frac 1{\Ext_Z(\alpha)}$ is omitted from the right hand side
because it is bounded and can be absorbed in the multiplicative error.) 
Hence, to prove the lemma, it is sufficient to show that the expression
$\frac{e^\tau}{\sqrt{\Ext_X(\alpha)}}$ is an upper-bounds for both
$$
\frac {\I(\alpha, T_b)}{\Ext_X(\alpha)} 
\qquad\text{and}\qquad
\twist_\alpha(q_a,q_b) \I(\alpha, T_b).
$$

Let $\omega_b$ be an edge of $T_b$ and let $Q_b$ be the thick subsurface
of $q_b$ with the smallest size intersecting $\omega_b$. Let $s_b$ be the size 
of $Q_b$ (thus $\ell_{q_b}(\omega_b) \lmul s_b$, by the definition of a
$(q_b, \tau)$--regular triangulation). Applying \lemref{Length-Int} to $\omega_b$ 
and $\alpha$ in $q_b$, we get
\begin{equation} \label{Eq:IntersectionBound}
\I(\alpha, \omega_b) - O(1) \lmul \frac{\ell_{q_b}(\alpha)}{s_b} \leq
 e^\tau \frac{l^a}{s_b}.
\end{equation}
Also, each subsegment of $\omega_b$ with end points in $\alpha$ 
has a length larger than $d^a$. Hence, 
$$
\I(\alpha, \omega_b) -1 \leq \frac {\ell_{q_a}(\omega_b)}{d^a}
\lmul e^\tau \frac{s_b}{d^a}.
$$
Multiplying these two equations, taking the square root we and summing
over all arcs in $T_b$ we get
$$
\I(\alpha, T_b) - O(1) \lmul e^\tau \sqrt{\frac{l^a}{d^a}}.
$$
In view of \eqnref{collar}, we obtain
$$
\I(\alpha, T_b) - O(1) \lmul e^\tau \sqrt{\Ext_X(\alpha)}.
$$
Dividing both sides by $\Ext_X(\alpha)$ we obtain
\begin{equation} \label{Eq:first:estimate} 
\frac{\I(\alpha, T_b)}{\Ext_X(\alpha)} 
\lmul \frac{e^\tau+O(1)}{\sqrt{\Ext_X(\alpha)}}
\lmul \frac{e^\tau}{\sqrt{\Ext_X(\alpha)}}.
\end{equation}
This is the first estimate we required.

We now find an upper-bound for $\twist_\alpha(q_a,q_b) \I(\alpha, T_b)$
by finding separate upper bounds for $\twist_\alpha(q_a,q_b)$ 
and $\I(\alpha, T_b)$. The argument involved in this new upper bound for
$\I(\alpha, T_b)$ is somewhat similar to above, but the two bounds do not imply
each other. We need to consider the image of $F^a_\alpha$ in $q_b$ under the 
\Teich geodesic flow. Denote this cylinder by $F_\alpha^b$, the distance between 
its boundaries by $f^b$ and let $\upsilon^b_\alpha$ be an arc of length $f^b$ 
connecting the boundaries of $F^b_\alpha$. Let $l^b= \ell_{q_b}(\alpha)$. 
Note that the area of $F_\alpha^a$ and $F_\alpha^b$ 
are equal, that is 
$$
l^a \, f^a = l^b\, f^b. 
$$

Consider again the arc $\omega_b$ in $T_b$ of $q_b$--length of order $s_b$. 
Then the $q_b$--length of every component of $\omega_b \cap F^b_\alpha$ 
is larger than $f^b$. Therefore
$$
\I(\alpha, \omega_b) \lmul \frac{s_b}{f_b} = 
\frac{s_b \, l^b}{l^a \, f^a}.
$$
As before, applying \lemref{Length-Int} to $\omega_b$ and $\alpha$ in 
$q_b$ we have
\begin{equation*}
\I(\alpha, \omega_b) \lmul \frac{l^b}{s_b} + O(1) \lmul \frac{l^b}{s_b}.
\end{equation*}
The reason we can ignore the additive errors here is that
since $\alpha$ is not short in $Z$, it has to either be an essential curve
in $Q_b$ or intersect some boundary curve of $Q_b$. In either case,
$l^b \gmul s_b$, in the first case by definition of the size and in the second case 
by \corref{CollarLemma}. Hence, the additive error can be absorbed into the fraction 
$\frac{l^b}{s_b}$. Multiplying the last two inequalities, taking the square root and 
summing over all arcs in $T_b$, we obtain
\begin{equation} \label{Eq:TwistBound}
\I(\alpha, T_b) \lmul \frac{l^b}{\sqrt{ l^a \, f^a  }}.
\end{equation}

We now argue that a component of $\omega_b \cap F_\alpha$ can intersect 
$\upsilon_\alpha^b$ at most a uniformly bounded number of times: since $\alpha$ is 
not short in $Z$, $\ell_{q_b}(\alpha) \gmul s_b$ and $\ell_{q_b}(\omega_b) \lmul s_b$, 
which means the intersection number between $\omega_b$ and $\upsilon_\alpha^b$
is at most $\frac{\ell_{q_b}(\omega_b) }{\ell_{q_b}(\alpha)}  = O(1)$. 
Therefore, the relative twisting of $q_a$ and $q_b$ around $\alpha$ is comparable 
to the intersection number between $\upsilon_\alpha^a$ and $\upsilon_\alpha^b$
which is at most the $q_a$--length of $\upsilon_\alpha^b$ divided by
the $q_a$--length of $\alpha$. That is
$$
\twist_\alpha(q_a, q_b) -O(1)\emul  \I(\upsilon_\alpha^a, \upsilon_\alpha^b)
\leq \frac{e^\tau f^b}{l^a}.
$$
Taking a product and using the second part of \eqnref{collar} we get:
\begin{align} \label{Eq:TwistIntersection}
\twist_\alpha(q_a,q_b) \I(\alpha, T_b) - O(\I(\alpha, T_b) )
 & \lmul e^\tau\left( \frac{l^b f^b}{l^a } \right)\frac{1}{\sqrt{l^a f^a}}  \\
 & \lmul e^\tau \sqrt{\frac{f^a}{l^a}} \lmul  \frac{e^\tau}
{\sqrt{\Ext_X(\alpha)}}. \notag
\end{align}
By \eqnref{first:estimate}, we have $\I(\alpha, T_b)$ is much smaller than 
$\frac{e^\tau}{\sqrt{\Ext_X(\alpha)}}$. Hence, 
\begin{equation} \label{Eq:second:estimate} 
\twist_\alpha(q_a,q_b) \I(\alpha, T_b) \lmul \frac{e^\tau}{\sqrt{\Ext_X(\alpha)}}. 
\end{equation}
The estimate in the Lemma follows from Equations \eqref{Eq:Xtoq}, 
\eqref{Eq:first:estimate} and \eqref{Eq:second:estimate}.

It remains to show that $\omega_b$ and $\alpha$ intersect essentially positively. 
Let $\omega_a$ be a saddle connection of $\alpha$ that intersects
$\omega_b$ many times. Then, by \lemref{Length-Int}, 
$\ell_{q_b}(\omega_a) \geq s_b$. However, $\ell_{q_b}(\omega_b) \lmul s_b$ 
and hence $\ell_{q_b}(\omega_a) \gmul l_{q_b}(\omega_b)$. 
If the slope of $\omega_\alpha$ was smaller than $\omega_b$ (say in $q_b$) 
then we would also have $\ell_{q_a}(\omega_a) \gmul \ell_{q_a}(\omega_b)$. 
Hence, $\omega_a$ intersects $\omega_b$ at most twice
(its length is less than $d^a$). This proves that $\omega_b$ intersects $\omega_\alpha$ 
essentially positively. 
But this is true for every saddle connection of $\alpha$. Thus
$\omega_b$ intersects $\alpha$ essentially positively. 
The case when $\alpha \in \cS_Z$ can be treated similarly.
\end{proof}

\begin{lemma}\label{Lem:curves}
Let $\alpha \in \cS_X$ and $\alpha_0 \in \cS_Z$. Then
\begin{enumerate}
\item If $\alpha_0 \not = \alpha$, then 
\begin{equation*} \label{Eq:IntersectionTwistTwist}
\I(\alpha,\alpha_{0}) \twist_{\alpha_{0}} (X, Z) 
\twist_{\alpha}(X, Z) \lmul \frac{e^\tau}{ \sqrt{\Ext_X(\alpha) 
\Ext_Z(\alpha_0)} }, 
\end{equation*} 
\item If $\alpha=\alpha_{0}$, then
\begin{equation*} \label{Eq:IntersectionTwist}
\twist_{\alpha} (X, Z) \lmul \frac{e^\tau}{ \sqrt{\Ext_X(\alpha) 
\Ext_Z(\alpha_0)} }.
\end{equation*}
\end{enumerate}
\end{lemma}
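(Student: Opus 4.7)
The proof rests on Minsky's formula (Theorem~\ref{Thm:minsky:ext}), Kerckhoff's theorem (Theorem~\ref{Thm:ker}), and the additive triangle inequality for the twisting function. The two parts are treated separately.

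\textbf{Part (2), the case $\alpha=\alpha_0$.} This part is essentially the prototype. Fix a simple closed curve $\beta$ transverse to $\alpha$ with $\I(\beta,\alpha)=O(1)$, disjoint from every other curve in $\cS_X\cup\cS_Z$, and satisfying $\twist_\alpha(\beta,X)=O(1)$. Applying Minsky's formula in $X$ only $\alpha$ contributes nontrivially, so $\Ext_X(\beta)\emul 1/\Ext_X(\alpha)$. The additive triangle inequality for twisting gives $\twist_\alpha(\beta,Z)\eadd \twist_\alpha(X,Z)$, so Minsky's formula in $Z$ (with $\alpha\in\cS_Z$) yields $\Ext_Z(\beta)\gmul \twist_\alpha(X,Z)^2\,\Ext_Z(\alpha)$. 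Kerckhoff's theorem gives $\Ext_Z(\beta)\lmul e^{2d_\T(X,Z)}\Ext_X(\beta)\lmul e^{2\tau}/\Ext_X(\alpha)$, and combining these inequalities and taking a square root produces (2).

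\textbf{Part (1), the case $\alpha\neq\alpha_0$.} Assume $\I(\alpha,\alpha_0)>0$ (otherwise the bound is vacuous). Since two distinct curves in $\cS_X$ (or in $\cS_Z$) cannot intersect by the collar lemma, we must have $\alpha\notin\cS_Z$ and $\alpha_0\notin\cS_X$. This lets us apply Minsky's formula directly to $\alpha$ in $Z$ using $\alpha_0\in\cS_Z$, and to $\alpha_0$ in $X$ using $\alpha\in\cS_X$:
\begin{align*}
\Ext_Z(\alpha) &\gmul \I(\alpha,\alpha_0)^2\,\twist_{\alpha_0}(\alpha,Z)^2\,\Ext_Z(\alpha_0),\\
\Ext_X(\alpha_0) &\gmul \I(\alpha,\alpha_0)^2\,\twist_\alpha(\alpha_0,X)^2\,\Ext_X(\alpha).
\end{align*}
Multiplying and applying Kerckhoff separately to $\alpha$ and to $\alpha_0$ (giving $\Ext_Z(\alpha)\Ext_X(\alpha_0)\lmul e^{4\tau}\Ext_X(\alpha)\Ext_Z(\alpha_0)$) yields, after taking a square root,
\[
\I(\alpha,\alpha_0)^2\,\twist_{\alpha_0}(\alpha,Z)\,\twist_\alpha(\alpha_0,X)\lmul e^{2\tau}.
\]

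\textbf{From intermediate to surface-to-surface twistings.} By the additive triangle inequality,
\[
\twist_{\alpha_0}(X,Z)\lmul \twist_{\alpha_0}(X,\alpha)+\twist_{\alpha_0}(\alpha,Z),\quad \twist_\alpha(X,Z)\lmul \twist_\alpha(X,\alpha_0)+\twist_\alpha(\alpha_0,Z),
\]
and the elementary hyperbolic estimate $\twist_\gamma(\beta,Y)\lmul \ell_Y(\beta)/\ell_Y(\gamma)$ (coming from the annular cover of $\gamma$) gives, using $\alpha\in\cS_X$, $\alpha_0\notin\cS_X$ and symmetrically, the ``error'' bounds $\twist_{\alpha_0}(X,\alpha)\lmul \sqrt{\Ext_X(\alpha)}$ and $\twist_\alpha(\alpha_0,Z)\lmul \sqrt{\Ext_Z(\alpha_0)}$. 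Expanding the product $T_\alpha T_{\alpha_0}$ accordingly produces one main term $t_\alpha t_{\alpha_0}$ plus three error terms. Combined with the intersection bound $\I(\alpha,\alpha_0)\lmul e^\tau\sqrt{\Ext_X(\alpha)\Ext_Z(\alpha_0)}$ (which follows from the $1/\Ext_Z(\alpha_0)$ term of Minsky's formula for $\Ext_Z(\alpha)$ and Kerckhoff), each contribution is bounded by $e^\tau/\sqrt{\Ext_X(\alpha)\Ext_Z(\alpha_0)}$.

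\textbf{Main obstacle.} The delicate point is the last step: the ``main'' bound $I^2\,t_\alpha t_{\alpha_0}\lmul e^{2\tau}$ and the intersection bound $(\dagger)$ interact asymmetrically, and one must perform a short case analysis on whether the twisting or the $1/\Ext$ term in Minsky's formula dominates, and on whether $T_{\alpha_0}$ (resp.\ $T_\alpha$) is comparable to $t_{\alpha_0}$ (resp.\ $t_\alpha$) or is dominated by the error $\sqrt{\Ext_X(\alpha)}$ (resp.\ $\sqrt{\Ext_Z(\alpha_0)}$). In each sub-case the resulting estimate simplifies to the claimed bound.
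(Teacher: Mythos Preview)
Your argument for Part~(2) is fine and close in spirit to the paper's: you build a transversal $\beta$ adapted to $X$, while the paper (via \corref{the:curve}) builds one adapted to $Z$; either works when $\alpha\in\cS_X\cap\cS_Z$.

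Part~(1), however, has a real gap. After your two Minsky estimates and two applications of Kerckhoff you obtain
\[
\I(\alpha,\alpha_0)^2\,t_\alpha\,t_{\alpha_0}\ \lmul\ e^{2\tau}
\qquad\text{and}\qquad
\I(\alpha,\alpha_0)\ \lmul\ e^\tau\sqrt{\Ext_X(\alpha)\Ext_Z(\alpha_0)},
\]
and you want $\I(\alpha,\alpha_0)\,t_\alpha\,t_{\alpha_0}\lmul e^\tau/\sqrt{\Ext_X(\alpha)\Ext_Z(\alpha_0)}$. But these two inputs do \emph{not} combine to give the conclusion: writing $E=\sqrt{\Ext_X(\alpha)\Ext_Z(\alpha_0)}$, the first gives $\I t_\alpha t_{\alpha_0}\lmul e^{2\tau}/\I$, and to reach $e^\tau/E$ you would need $\I\gmul e^\tau E$, which is the \emph{opposite} of your second inequality. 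Your case analysis helps only when one of the $1/\Ext$ terms in Minsky dominates (equivalently $t_{\alpha_0}\lmul 1/\Ext_Z(\alpha_0)$ or $t_\alpha\lmul 1/\Ext_X(\alpha)$); in the remaining case where both twisting terms dominate, your ingredients are genuinely too weak. Concretely, nothing you have used excludes $\I=1$, $\Ext_X(\alpha)=\Ext_Z(\alpha_0)=\ep_0$, and $t_\alpha,t_{\alpha_0}$ each of order $e^\tau$, which is consistent with both of your displayed bounds but violates the desired estimate once $e^\tau\gg 1/\ep_0$.

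The paper avoids this by proving the asymmetric inequality
\[
\twist_\alpha(X,Z)\,\sqrt{\Ext_X(\alpha)\,\Ext_Z(\alpha)}\ \lmul\ e^\tau
\]
(with $\Ext_Z(\alpha)$, not $\Ext_Z(\alpha_0)$) using \corref{the:curve}: one finds $\beta_0$ with $\twist_\alpha(\beta_0,Z)=O(1)$ and $\sqrt{\Ext_Z(\beta_0)\Ext_Z(\alpha)}\lmul\I(\alpha,\beta_0)$, applies Kerckhoff \emph{once} to $\beta_0$, and then Minsky in $X$. This loses only a single factor of $e^\tau$. Part~(1) then follows by multiplying this by the single Minsky bound $\I(\alpha,\alpha_0)\,\twist_{\alpha_0}(X,Z)\,\sqrt{\Ext_Z(\alpha_0)}\lmul\sqrt{\Ext_Z(\alpha)}$ in $Z$. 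The missing idea in your approach is precisely this use of \corref{the:curve} to control $\Ext_Z(\alpha)$ rather than applying Kerckhoff symmetrically to both $\alpha$ and $\alpha_0$.
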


\begin{proof}
It is enough to prove that 
\begin{equation} \label{Eq:Twist}
\twist_{\alpha} (X, Z) \sqrt{\Ext_X(\alpha) \Ext_Z(\alpha)}  \lmul e^\tau.
\end{equation}
If $\alpha=\alpha_{0},$ this is equivalent to \eqnref{IntersectionTwist}. 
Also, if $\alpha \not = \alpha_0$ and $\alpha \in S_Z$, then the inequality (1) 
trivially holds (the left hand side is $0$). Otherwise, from \thmref{minsky:ext} (estimating 
$\Ext_Z(\alpha)$) we have
\begin{equation}\label{Eq:0}
\I(\alpha,\alpha_{0}) \sqrt{\Ext_Z(\alpha_0)} \twist_{\alpha_0}(X, Z) 
   \lmul \sqrt{\Ext_Z(\alpha)}.
\end{equation}
Multiplying the above equation to \eqnref{Twist} we obtain part (1) of the lemma. 

By \corref{the:curve}, (replace $X$ with $Z$, $\gamma$ with $\alpha$ and
$\beta$ with $\beta_0$) there always exits a simple closed curve $\beta_0$ so that
$\twist_{\alpha}(\beta_0, Z)=O(1)$ and 
\begin{equation}\label{Eq:1}
\sqrt{\Ext_Z(\beta_0) \Ext_Z(\alpha)} \lmul \I(\alpha,\beta_0). 
\end{equation}
On the other hand, from \thmref{ker} we have
\begin{equation}\label{Eq:2}
\sqrt{\Ext_X(\beta_0)} \lmul e^{\tau} \sqrt{\Ext_Z(\beta_0)}, 
\end{equation}
and from \thmref{minsky:ext} (this time estimating the length of 
$\beta_0$ is $X$) we have
\begin{equation}\label{Eq:3}
\I(\alpha,\beta_0) \twist_{\alpha}(X,\beta_0) \sqrt{\Ext_X(\alpha)}
   \lmul \sqrt{\Ext_X(\beta_0)}. 
\end{equation}
Since $\twist_{\alpha}(\beta_0, Z)=O(1)$, we can replace $\twist_{\alpha}(X,
\beta_0)$
with $\twist_{\alpha}(X,Z)$ in the above inequality. Now, \eqnref{Twist} is
obtained by successive substitution using Equations 
\eqref{Eq:0}, \eqref{Eq:1}, \eqref{Eq:2} and \eqref{Eq:3}. 
\end{proof}

\subsection{Relations between intersections numbers} \label{Sec:Relations}
So far, we have provided upper-bounds for the intersection numbers between 
the edges of $T_a$ and the edges of $T_b$. But these intersection numbers are 
not independent. The fact that the edges in $T_a$ intersect edges in $T_b$ 
essentially positively allows us to find relations between these intersection numbers. 
In this section we will describe these relations. There are two kinds of relations. 

\begin{lemma} \label{Lem:Triangle}
For every triangle in $T_a$ with edges $\omega_1$, $\omega_2$ and $\omega_3$ ,
there are sings $\vs_1, \vs_2, \vs_3 \in\{-1,+1\}$ so that, for every edge $\omega_b
$ 
in $T_b$ (respectively, for any $\alpha_b \in \Sc{q_b}$), we have the relation:
\begin{equation} \label{Eq:RelationOne}
\sum_{i=1,2,3}\vs_i \I(\omega_i, \omega_b) = O(1)
\qquad \left(\text{respectively,} \quad 
\sum_{i=1,2,3}\vs_i \I(\omega_i, \alpha_b) = O(1) \right).
\end{equation}
The additive error depends on the constant involved in the definition of essential positively. 
\end{lemma}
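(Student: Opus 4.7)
The plan is to read the claimed identity as the statement that an oriented triangle is null-homologous, combined with the observation that essential positivity converts unsigned intersection numbers to signed ones up to $O(1)$. First I would orient the three edges $\omega_1, \omega_2, \omega_3$ so that they form the oriented boundary $\partial \Delta$ of the triangle $\Delta$ they bound; let $\vs_i \in \{-1,+1\}$ record the sign of this orientation relative to an arbitrary fixed reference orientation of $\omega_i$. Fix any orientation for $\omega_b$ and write $\hat\I(\param,\param)$ for the algebraic intersection number of two oriented transverse arcs. Since $\partial \Delta$ bounds the disk $\Delta$ in $S$ and $\omega_b$ is a proper arc whose endpoints are singular points of $q_a$,
\begin{equation*}
\sum_{i=1,2,3} \vs_i\, \hat\I(\omega_i,\omega_b) = O(1),
\end{equation*}
the bounded error coming only from endpoints of $\omega_b$ that may lie at vertices of $\Delta$.

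Next I would invoke \lemref{EdgeEdge}: for each $i$, either $\I(\omega_i,\omega_b) = O(1)$, in which case that term can be absorbed into the error, or the slope of $\omega_b$ strictly exceeds the slope of $\omega_i$ at every transverse crossing. In the latter case all crossings have the same sign $\varepsilon_i \in \{\pm 1\}$, determined entirely by the chosen orientations of $\omega_i$ and $\omega_b$ together with the fixed inequality of slopes, and hence $\hat\I(\omega_i,\omega_b) = \varepsilon_i\, \I(\omega_i,\omega_b)$. Replacing $\vs_i$ by $\vs_i \varepsilon_i$ (still a sign $\pm 1$) gives the stated relation. The key point is that the slope inequality ``slope of $\omega_b >$ slope of $\omega_i$'' holds uniformly in the varying arc $\omega_b \in T_b$, so reversing the orientation of $\omega_b$ reverses all three $\varepsilon_i$ simultaneously; consequently the signs $\vs_i$ depend only on $\Delta$, not on $\omega_b$.

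For the parenthetical variant, the same argument applies verbatim once one replaces $\omega_b$ by any saddle connection appearing in the $q_b$--geodesic representative of $\alpha_b$: by \lemref{EdgeCylinder} (with the roles of $q_a$ and $q_b$ exchanged, noting $\alpha_b \in \Sc{q_b} \subset \cS_Z$), such a saddle connection meets each $\omega_i \in T_a$ essentially positively, and again the slope comparison is uniform in $\alpha_b$. The degenerate case $\alpha_b \in \cS_X$ falls directly into the $\I(\omega_i,\alpha_b)=O(1)$ regime and is trivial.

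\textbf{Expected main obstacle.} The only subtle point is confirming that the signs $\vs_i$ are determined intrinsically by the oriented triangle and are independent of the choice of $\omega_b$ or $\alpha_b$. This is not an analytical difficulty but a careful bookkeeping of orientations: it rests on the fact that essential positivity is a uniform one-sided slope condition, so any change in the orientation of the second arc flips all three local signs simultaneously and leaves the weighted sum unchanged.
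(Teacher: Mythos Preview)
Your approach is sound and reaches the same conclusion, but by a somewhat different route than the paper. The paper argues combinatorially: it singles out the edge $\omega_3$ opposite the vertex from which a vertical leaf enters the triangle, and observes that any sub-arc of $\omega_b$ in $\Delta$ crossing $\omega_1$ must (when $\omega_b$ is steeper than $\omega_1$) next cross $\omega_3$ rather than $\omega_2$. Hence the number of strands joining $\omega_1$ to $\omega_2$ is $O(1)$, and the strand count directly gives $\I(\omega_1,\cdot)+\I(\omega_2,\cdot)-\I(\omega_3,\cdot)=O(1)$ with the signs read off from the vertical foliation. Your homological argument --- null-homologous boundary plus conversion from algebraic to geometric intersection via essential positivity --- is really the mechanism the paper deploys in the proof of the \emph{next} lemma, \lemref{NonOrientable}; it is a perfectly good alternative here.

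There is, however, a genuine gap at precisely the point you flagged. Saying that reversing the orientation of $\omega_b$ flips all three $\varepsilon_i$ only shows that for a \emph{single} $\omega_b$ the products $\vs_i\varepsilon_i$ are insensitive to that orientation choice; it does not show that two \emph{distinct} edges $\omega_b,\omega_b'\in T_b$ produce the same triple of signs up to a common factor. The missing computation is this: if $\omega_i$ has direction $(a_i,b_i)$ and $\omega_b$ has direction $(c,d)$, then the slope inequality $|d/c|>|b_i/a_i|$ forces $|a_i d|>|b_i c|$ and hence
\[
\varepsilon_i=\operatorname{sign}(a_i d - b_i c)=\operatorname{sign}(a_i)\cdot\operatorname{sign}(d).
\]
The factor $\operatorname{sign}(a_i)$ depends only on the fixed orientation of $\omega_i$, while $\operatorname{sign}(d)$ is a single global sign attached to $\omega_b$; taking $\vs_i':=\vs_i\cdot\operatorname{sign}(a_i)$ then gives signs depending only on $\Delta$. (If some $\omega_i$ is vertical the slope inequality can never hold for it, so its term is always $O(1)$ and any sign works.) With this one line added your argument is complete; it is exactly how the paper handles the analogous issue in the proof of \lemref{NonOrientable}, orienting $\omegat_b$ to have positive $y$--coordinate and defining $\vs_\omega$ via the sign of the $x$--coordinate of $\omegat$.
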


\begin{proof}
There is a leaf of the vertical foliation that passes through a vertex
of the given triangle before entering it. Assume this leaf intersects 
the interior of $\omega_3$ and makes an acute angle with $\omega_1$
inside of the triangle. We claim that, since $\omega_b$ intersects $\omega_1$
essentially positively, the number of sub-arcs of $\omega_b$ going
from $\omega_1$ to $\omega_2$ is uniformly bounded. This is because
either the slope of $\omega_b$ is larger than the slope of $\omega_1$ and 
every time $\omega_b$ intersects $\omega_1$ it has to intersect $\omega_3$ 
next, or it intersect $\omega_1$ a bounded number of times. 
Hence, we have
$$
 \I(\omega_1, \omega_b) + \I(\omega_2, \omega_b) =
  \I(\omega_3, \omega_b) +  O(1).
$$
Note that the signs $\vs_1=1$, $\vs_2=1$ and $\vs_3 = -1$ depend only
on the triangle and are independent of $\omega_b$. The proof for $\alpha_b$ 
is similar. 
\end{proof}

For each $\alpha \in \Sc{q_a}$, consider a saddle connection
$\beta_\alpha$ connecting the boundaries of $F_\alpha$.
Let 
$$
U_a= T_a \cup \bigcup_{\alpha \in \Sc{q_a}} \beta_\alpha.
$$  
We can choose the arcs
$\beta_\alpha$ so that $\twist_\alpha(q_a, U_a) = O(1)$. After orienting
the arcs in $U_a$, we can think of them as elements
of $H_1(S, \Sigma)$ where $\Sigma$ is the set critical points of $q_a$. 
In fact, arcs in $U_a$ generate $H_1(S, \Sigma)$.

\begin{lemma} \label{Lem:NonOrientable}
Assume that the vertical foliation of $q_a$ is not orientable. Then, there 
is a set $\cB$ of edges of $U_a$ and for $\omega \in \cB$ there is a 
sign $\vs_\omega \in \{-1, +1\}$ so that, for every $\omega_b$ in $T_b$ 
(respectively, for any $\alpha_b \in \Sc{q_b}$), we have the relation:
\begin{equation} \label{Eq:RelationTwo}
\sum_{\omega \in \cB} \vs_\omega \I(\omega, \omega_b) = O(1)
\qquad \left(\text{respectively,} \quad 
\sum_{\omega \in \cB} \vs_\omega \I(\omega, \alpha_b) = O(1) \right).
\end{equation}
Furthermore, this relation is independent of all the relations in 
\lemref{Triangle}. 
\end{lemma}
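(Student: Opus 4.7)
The strategy is to exploit the codimension-one deficiency of the period map in the non-orientable case. As recalled in Section 3.3, when $\vs=-1$ the rank of the holonomy map on $\cQ\cT(\sigma)$ is $h=2g+k-2$, one less than $\dim_{\reals} H_1(S,\Sigma_a;\reals)=2g+k-1$. The triangle relations of \lemref{Triangle} realize exactly the kernel of the boundary map $\zed^{U_a}\to H_1(S,\Sigma_a;\zed)$, namely the relations produced by the $2$-simplices of $T_a$; since $\hol_{q_a}\colon \zed^{U_a}\to\cx$ has rank strictly smaller than $\dim H_1$ modulo these triangle relations, there is one further relation, produced by non-orientability, and this is what we convert into an intersection-number identity.

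To make the holonomy relation explicit, I would pass to the orientation double cover $\pi\colon \hat X_a\to X_a$, on which $\pi^* q_a=\hat\omega_a^{\,2}$ is an abelian differential with $\iota^*\hat\omega_a=-\hat\omega_a$ for the covering involution $\iota$. Each oriented arc $\omega\in U_a$ admits a distinguished lift $\hat\omega^+$ and a holonomy $\hol_{q_a}(\omega)=\int_{\hat\omega^+}\hat\omega_a$. Since $U_a$ generates $H_1(S,\Sigma_a;\zed)$ and the period map has rank strictly less than $\dim H_1$ modulo triangle relations, there exist a subset $\cB\subseteq U_a$ and signs $\vs_\omega\in\{-1,+1\}$ with
$$
\sum_{\omega\in\cB}\vs_\omega\,\hol_{q_a}(\omega)=0\qquad\text{in }\cx,
$$
and this relation is linearly independent of the triangle relations of \lemref{Triangle}.

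To translate this into intersection numbers, observe that for two straight arcs in the translation surface $(\hat X_a,\hat\omega_a)$, every intersection point carries the same algebraic sign, namely $\sgn\bigl(\hol(\omega)\times \hol(\omega_b)\bigr)$. The essential-positivity conclusions of \lemref{EdgeEdge} and \lemref{EdgeCylinder} (together with their direct analogues for the crosscuts $\beta_\alpha$, which are straight and lie in the cylinders $F_\alpha$) imply that, under a fixed coherent orientation of each edge of $U_a$, the unsigned intersection $\I(\omega,\omega_b)$ agrees with the signed intersection in $\hat X_a$ up to an $O(1)$ error per edge, and the same holds for any $\alpha_b\in\Sc{q_b}$. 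Signed intersection of straight chains in $\hat X_a$ is bilinear on the anti-invariant part of relative homology, so the holonomy relation gives
$$
\sum_{\omega\in\cB}\vs_\omega\,\I(\omega,\omega_b)=O(|\cB|)=O(1),
$$
and the analogous estimate for $\alpha_b$; independence from \lemref{Triangle} is inherited from the holonomy level.

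The main obstacle I expect is matching unsigned intersection numbers with the signed, bilinear pairing on $\hat X_a$ uniformly in $\omega_b$ and $\alpha_b$. Essential positivity supplies the required sign consistency, but some bookkeeping is needed: arcs of $U_a$ and $T_b$ may share endpoints in $\Sigma_a=\Sigma_b$, and the lifts $\hat\omega^+$ must be chosen coherently across edges of $U_a$. Both issues are resolved by a fixed choice of fundamental domain for $\iota$ together with the convention that only interior intersections are counted; since $|U_a|$ is bounded in terms of the topology of $S$, the per-edge $O(1)$ error does not accumulate and the total error stays $O(1)$, yielding the lemma.
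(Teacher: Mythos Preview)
Your overall architecture is the right one---pass to the orientation double cover, use essential positivity to replace geometric intersection by signed intersection up to $O(1)$, and exploit a single relation coming from non-orientability---but the central step, converting the \emph{holonomy} relation $\sum_{\omega\in\cB}\vs_\omega\,\hol_{q_a}(\omega)=0$ into an intersection-number relation, does not go through as written. The holonomy identity is a relation in $\C$, not a relation among the relative homology classes $[\hat\omega^+]\in H_1(\hat X_a,\hat\Sigma_a)$, and algebraic intersection with $\hat\omega_b$ is not a linear functional on holonomy values. Even if you upgrade to a genuine relation in the anti-invariant part $H_1^-(\hat X_a,\hat\Sigma_a)$ (which does exist, with integer but not a priori $\pm1$ coefficients), you then need to pair it with $[\hat\omega_b]$, which also lies in \emph{relative} homology; there is no well-defined intersection pairing $H_1(\hat X_a,\hat\Sigma_a)\times H_1(\hat X_a,\hat\Sigma_a)\to\Z$, so ``bilinearity on the anti-invariant part of relative homology'' is not available.

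The paper's proof avoids this by working with a concrete separating cycle rather than a homology relation. One cuts $q_a$ open along a minimal subset $\overline\cB\subset U_a$ to a single polygon $P$, takes $\cB\subset\overline\cB$ to be those edges whose two boundary copies in $\partial P$ are identified by a flip (rather than a translation), and builds the orientation double cover as $P\cup P'$ with the $\cB$--edges glued across the two copies. The lifts $\widetilde\cB$ then form an \emph{absolute} $1$--cycle separating $P$ from $P'$, so for any lifted arc $\widetilde\omega_b$ one has $\big|\sum_{\widetilde\omega\in\widetilde\cB}\hat\I(\widetilde\omega_b,\widetilde\omega)\big|\le 1$: each time $\widetilde\omega_b$ exits $P$ it re-enters with the opposite sign. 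The signs $\vs_\omega$ in the statement are not coefficients of a homology relation but the signs (determined by the $x$--coordinate of the lift) needed to match $\I(\omega,\omega_b)$ with $\hat\I(\widetilde\omega,\widetilde\omega_b)+\hat\I(\widetilde\omega',\widetilde\omega_b)$ via essential positivity. Independence from the triangle relations is then argued over $\Z/2$, since the triangle relations are $\Z/2$--homology relations while the edges of $\cB$ are $\Z/2$--independent. To repair your argument you would need to replace the holonomy relation by this separating-cycle picture; the rank-deficiency heuristic is correct motivation but does not by itself produce a pairing with $\omega_b$.
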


\begin{proof}
Choose a minimum number of edges of $U_a$ so that the complement is 
simply connected. Denote the set of all these edges by $\overline \cB$
and orient them in some arbitrary way. Minimality implies that the compliment 
$P$ is connected. We can visualize $P$ as a polygon in $\C$ with the vertical foliation 
parallel to the imaginary axis. Each edge of $\overline \cB$ 
has two representatives in the boundary of $P$. The two vectors are equal up 
to a multiplication by $\pm 1$. Let $\cB$ be the subset of $\overline \cB$ 
where the two representatives are negatives of each other (\figref{P}). 
Note that $\cB$ is non-empty since the vertical foliation in $q_a$ is not orientable. 

\begin{figure}[ht]
\setlength{\unitlength}{0.01\linewidth}
\begin{picture}(50, 24)
\put(0,3){\includegraphics[height=20\unitlength]{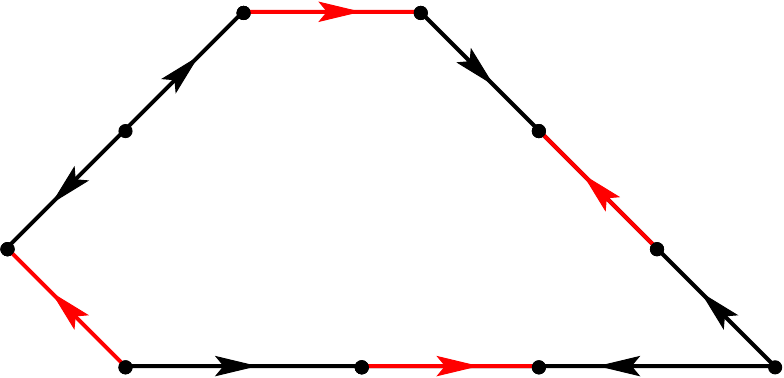}}
\put(1.5,15){$a$}
\put(7,20.5){$a$}
\put(17,24){$e$}
\put(23,0){$e$}
\put(26.5,20){$c$}
\put(39,7.5){$c$}
\put(33.5,13.5){$d$}
\put(1,4.5){$d$}
\put(13,0){$b$}
\put(34,0){$b$}
\put(17, 13){$P$}
\end{picture}
\caption{Polynomial $P$. The set $\cB= \{a,b,c\}$.}
\label{Fig:P}
\end{figure}

Now consider a double cover of $q_a$ constructed as follows. Take
a second copy $P'$ of $P$. Glue the edges that were not in $\cB$
as before and glue the edges in $\cB$ to the corresponding
edge in $P'$. Let $\cBt$ be the set of lifts of edges in $\cB$ to this cover. 
We now orient edges in $\cBt$ so that, for every $\omegat \in \cBt$, $P$ is in the 
same side of $\omegat$ (say, the left side). Denote this double cover by 
$\qt_a = P \cup P'$. 

Let $\St$ be the underlying surface for $\qt_a$ and $\Sigmat$ be 
the pre-image of $\Sigma$. Considering oriented saddle connections as 
elements of $H_1(\St, \Sigmat)$ we let $\hat \I (\param, \param)$ denote 
the algebraic intersection number. 
Note that $\qt_a$ is the unique double cover of $q_a$ where $\qt_a$ 
is a square of an abelian differential. Hence, for every two oriented saddle 
connections $\omegat$ and $\omegat'$ in $\qt_a$, all the intersection points 
have the same signature. That is, 
$$
\I(\omegat, \omegat')= |\hat \I (\omegat, \omegat')|.
$$

\begin{figure}[ht]
\setlength{\unitlength}{0.01\linewidth}
\begin{picture}(90, 24)
\put(0,3){\includegraphics[height=20\unitlength]{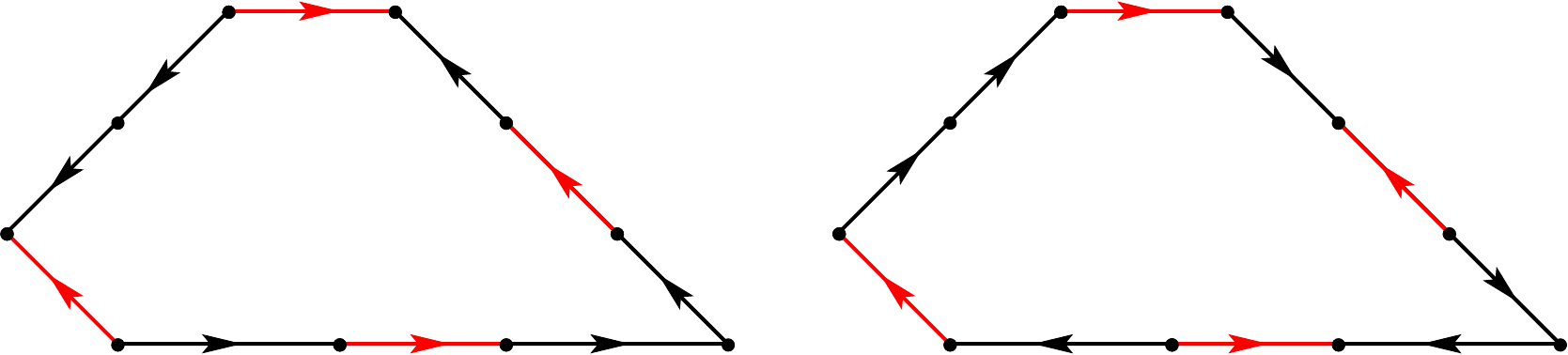}}
\put(1.5,15){$a$}
\put(7,20.5){$a'$}
\put(13,0){$b$}
\put(34,0){$b'$}
\put(26.5,20){$c$}
\put(39,7.5){$c'$}
\put(33.5,13.5){$d$}
\put(1,4.5){$d$}
\put(17,24){$e$}
\put(23,0){$e$}
\put(17, 13){$P$}

\put(48.2,15){$a$}
\put(54,20.5){$a'$}
\put(60,0){$b$}
\put(81,0){$b'$}
\put(74,20){$c$}
\put(87,7.5){$c'$}
\put(80,13.5){$d'$}
\put(48,4.5){$d'$}
\put(64,24){$e'$}
\put(70,0){$e'$}
\put(64, 13){$P'$}

\end{picture}
\caption{Polynomials $P$ and $P'$. The set $\cBt = \{a, a', b, b', c, c' \}$.}
\label{Fig:PP}
\end{figure}

Consider $\omega \in \cB$ and its lift $\omegat$. Note that $\omegat$
has an orientation and hence is identified with vector in $\C$. 
We define $\vs_\omega$ to be $+1$ if $\omegat$ has a positive $x$--coordinate 
and $-1$ otherwise. Let $\omega_b \in T_b$ and let $\omegat_b$ be a lift of 
$\omega_b$. We choose an orientation for $\omegat_b$ so it has a positive 
$y$--coordinate. We will show that
$$
\sum_{\omega \in \cB} \vs_\omega \I(\omega, \omega_b) = O(1).
$$

Consider an intersection point of $\omegat_b$ and 
$\omegat$ where $\vs_\omega=1$. If the absolute value of the slope
of $\omegat_b$ is larger than that of $\omegat$ then $\omegat_b$ is to the 
left of $\omegat$ and hence $\omegat$ intersects $\omegat_b$ with
a positive signature. Otherwise, $\omegat$ and $\omegat_b$ intersect
a uniformly bounded number of times ($\omegat_b$ and $\omegat$ 
intersect essentially positively). The opposite of this is true if $\vs_\omega=-1$;
either $\omegat$ intersects $\omegat_b$ with a negative signature or
a uniformly bounded number of times. If $\omegat, \omegat' \in \cBt$ are 
lifts of the same arc $\omega \in \cB$ then, choosing orientations for
$\omegat$ and $\omegat'$ as above, we have
\begin{equation} \label{Eq:lifts}
\I(\omega, \omega_b) = \I(\omegat, \omegat_b) + \I(\omegat', \omegat_b) \eadd 
\vs_\omega \left(\hat \I(\omegat, \omegat_b) + \hat \I(\omegat', \omegat_b) 
\right).
\end{equation}
To reiterate, this is because the number of intersection points that do not
have the same sign as $\vs_\omega$ is uniformly bounded.  

But arcs in $\cBt$ separate $\qt_a$. Thus, 
$$
\sum_{\omegat \in \cBt} \hat \I (\omegat, \omegat_b) \leq 1.
$$
This is because every time $\omegat_b$ exits $P$ it intersects the boundary
with the opposite signature than when it enters it. The sum is not necessarily 
zero because $\omegat_b$ may start inside $P$ and end in $P'$. 
Therefore, summing \eqnref{lifts} over $\omega \in \cB$, we get
$$
\sum_{\omega \in \cB} \vs_\omega \, {\hat \I}(\omega, \omega_b) =O(1). 
$$
The proof for the case of a simple closed curve $\alpha_b \in \Sc{q_b}$ is similar. 

Finally, we note that the relations of the type \eqref{Eq:RelationOne}
are also relations in the relative homology with $\Z_2$--coefficients. 
But the edges in $\cB$ are independent in $\Z_2$--relative homology. 
Hence, this new relation is independent from the previous ones. 
\end{proof}

\section{Main counting statement}\label{Sec:main:counting}
This section contains the main combinatorial counting arguments
with the goal of proving \thmref{LatticeCount}. 
Recall the definition of $B(\Qs, X, \tau)$ from \secref{notation}. Define 
$$
\Bj X, \tau) \subset B(\Qs, X, \tau)
$$
to be the set of points $Z \in \cT(S)$ so that, for the associated quadratic
differentials $q_a$ and $q_b$, there is a $(q_a, \tau)$--regular triangulation $T_a$
and a $(q_b, \tau)$--regular triangulation $T_b$ that have $j$ common 
homologically independent saddle connections. Now let,
$$
  B(\Qs, X,Y,\tau)= B(\Qs, X,\tau) \cap \big(\Gamma(S) \cdot Y \big),
$$
and
$$
  \Bj X,Y,\tau)= \Bj X,\tau) \cap\big(\Gamma(S) \cdot Y \big).
$$
That is, $\Bj X,Y,\tau)$ is the intersection of the orbit of $Y$ with $\Bj X,\tau)$.
Also, recall from \secref{bounded} that (when $\cS_X$ is empty, $G(X)=2$): 
$$
 G(X) = 1 + \prod_{\alpha \in \cS_X} \frac 1{\sqrt{\Ext_X(\alpha)}}
 \emul \prod_{\alpha \in \cS_X} \frac 1{\sqrt{\Ext_X(\alpha)}}.
$$
Notice that if $\gg \cdot Y \in \Bj X,Y,\tau)$ then $\gg^{-1} \cdot X \in \Bj Y,X,\tau)$. 
Thus, the number of points in $\Bj X,Y,\tau)$ is the same as the number 
of points in $\Bj Y,X,\tau)$. We prove the following upper-bound for the size of 
$\Bj Y,X,\tau)$:

\begin{theorem} \label{Thm:LatticeCount}
Consider the stratum $\Qs$. Given $X, Y \in \cT(S)$
$$
\big| \Bj X,Y,\tau) \big| \lmul \tau^{|\cS_X|+|\cS_Y|} e^{(h-j)\tau} G(X) G(Y),
$$
where $h=\frac{\dim \Qs}{2}$.
\end{theorem}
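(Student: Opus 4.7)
The plan is to associate to each $Z \in \Bj X,Y,\tau)$ a pair of regular triangulations $(T_a, T_b)$ and bound the number of such pairs that can arise. Given $Z$, select a quadratic differential $q_a$ over $X_1 \in B_{r_0}(X)$ and $q_b$ over $X_2 \in B_{r_0}(Z)$ connected by a geodesic segment of length $\leq\tau$ in $\Qs$, and use \lemref{Exist} to extend the $j$ shared homologically independent saddle connections into a $(q_a,\tau)$-- and $(q_b,\tau)$--regular triangulation, respectively. Via the flow correspondence between saddle connections of $q_a$ and $q_b$, both $T_a$ and $T_b$ are realized as collections of saddle connections in $q_a$. Since each $Z$ admits only boundedly many such choices of $(q_a, q_b, T_a, T_b)$, bounding the number of quadruples bounds $|\Bj X,Y,\tau)|$.

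\textbf{Topological count.} By \lemref{up:to:twisting} applied to $X_1$ and to $X_2$, the number of topological types of $T_a$ (resp.\ $T_b$) modulo multi-twisting around $\cS_X$ (resp.\ $\cS_Y$) is uniformly bounded; the purely topological enumeration therefore contributes only a universal constant.

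\textbf{Period coordinate count.} With topological types fixed, each edge $\omega_b$ of $T_b$ represents a class in $H_1(S, \Sigma; \mathbb{Z})$, which is generated by the $h$ homologically independent edges of $T_a$ (after accounting for the single relation of \lemref{NonOrientable} in the non-orientable case). Write $\omega_b = \sum_{i=1}^h n_i\, \omega_i$. By \lemref{EdgeEdge}, the coefficient $n_i$ satisfies $|n_i| \lmul \I(\omega_i, \omega_b) \lmul e^\tau$, giving $O(e^\tau)$ integer choices per basis direction, while the triangle relations of \lemref{Triangle} enforce consistency without enlarging the count. Since $j$ of the $h$ coefficients are fixed by the shared edges, the number of admissible homology classes for the non-shared edges is $O(e^{(h-j)\tau})$, determining $T_b$ up to a bounded ambiguity.

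\textbf{Twisting and obstacle.} The remaining factor comes from twisting around the short curves. By \lemref{EdgeCylinder}, \lemref{curves}, and \lemref{Twist-q-X}, the twist of $T_a$ around each $\alpha \in \cS_X$ (resp.\ of $T_b$ around $\alpha \in \cS_Y$) can vary, and one extracts a polynomial factor in $\tau$ (accounting for the bounded rate at which a time-$\tau$ Teichm\"uller geodesic can twist through a large cylinder) together with a factor of $1/\sqrt{\Ext_X(\alpha)}$ (resp.\ $1/\sqrt{\Ext_Y(\alpha)}$) reflecting the latitude in choosing the direction $q_a$ through each thin-part neighborhood. Aggregating over $\cS_X \cup \cS_Y$ produces $\tau^{|\cS_X|+|\cS_Y|}\, G(X)\, G(Y)$. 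Multiplying all three contributions yields the stated bound. The principal obstacle is the clean partitioning of the exponential bound $e^\tau/\sqrt{\Ext_X(\alpha)\Ext_Y(\alpha)}$ from \lemref{curves} between the $e^{(h-j)\tau}$ period count and the twisting count, together with verifying via the relations of \lemref{Triangle} and \lemref{NonOrientable} that exactly $h$ integer parameters (not more) suffice to specify the homology class of an edge of $T_b$, matching $h=\tfrac12(\dim\Stratum+1)$.
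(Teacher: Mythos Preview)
Your overall architecture matches the paper's: associate to each $Z$ a pair of regular triangulations, count topological types up to twisting, then count the remaining integer data via intersection bounds and the relations of \lemref{Triangle} and \lemref{NonOrientable}. However, two steps in your execution do not go through as written.

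First, the period coordinate count is flawed. You assert that the edges of $T_a$ contain $h$ homologically independent saddle connections generating $H_1(S,\Sigma)$, and that the homology coefficient $n_i$ of $\omega_b$ is bounded by $\I(\omega_i,\omega_b)$. But when $\Sc{q_a}$ is nonempty, $T_a$ only triangulates the complement of the large cylinders and its edges do \emph{not} span $H_1(S,\Sigma)$; one must adjoin arcs $\beta_\alpha$ crossing each $F_\alpha$ to obtain a spanning set $U_a$. Moreover, the paper never identifies homology coefficients with geometric intersection numbers. Instead (Lemma~\ref{Lem:MarkingCount}), it encodes $\oDelta$ by a \emph{weighted} graph $\oW = \oT + \sum m(\oalpha,\oDelta)\,\oalpha$ with integer weights on the large-cylinder curves of $q_b$, shows the map $\oW \mapsto (\I(\oW,\omega))_{\omega\in\cE}$ to $\natls^{h_\cR}$ is finite-to-one using the relations in $\cR$, and then bounds the image. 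The key point is that the intersection of $\oW$ with a crossing arc $\beta_\alpha$ is bounded not by $e^\tau$ but by $e^\tau/\sqrt{E(\alpha)}$ (via \lemref{EdgeCylinder} and \lemref{curves}); this is precisely the mechanism that produces the factor $\prod_{\alpha\in\cS^c}1/\sqrt{E(\alpha)}$, and it is absent from your count.

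Second, your accounting for $\tau^{|\cS_X|+|\cS_Y|}G(X)G(Y)$ is misattributed. The factor $G(X)G(Y)$ does \emph{not} come from ``latitude in choosing $q_a$''; it emerges from the weighted-graph intersection bounds above together with the multiplicity of the map $Z\mapsto\oDelta$ (which is of order $\prod_{\alpha\in\Sn{q_b}}1/\Ext_Z(\alpha)$). The powers of $\tau$ arise not from large cylinders but from the \emph{non-cylinder} short curves: the paper first normalizes $q_a$ (the Claim at the start of the proof) so that $\twist_\alpha(X,q_a)=O(\tau)$ for $\alpha\in\Sn{q_a}$, and then \lemref{TauBounded} gives $O(\tau^{|\cS_X|})$ choices for the marking $\Delta$; a parallel accounting on the $Y$ side (via the multiplicity bound and \eqnref{SameLength}) yields $\tau^{|\cS_Y|}$. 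Without the normalization step and the marking formalism, you have no control over how many Dehn-twist classes of $T_a$ can occur, and your ``uniformly bounded'' topological count is simply false.
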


\begin{remark}
First we make a few remarks 
\begin{enumerate}
\item If, in the definition of $\Bj X,Y,\tau)$, we replace the assumption on the number 
of common homologically independent saddle connections with an assumption on the 
number of common homologically independent simple closed curves, the same 
statement would still holds. However, the theorem is strictly stronger. For example, 
assume $\cS_X \cap \cS_Y$ contains only one homologically trivial simple closed curves
$\alpha$.  We can still conclude that $j \geq 1$ because the geodesic representative 
of $\alpha$ in any quadratic differential $q$ contains a (homologically) non-trivial arc. 
That is, the number points $Y$, where the geodesic connecting $X$ to $Y$ follows 
$\Qs$ and contains a short curve throughout, is smaller than expected even when
$\alpha$ is a homologically trivial curve. 
\item The statement appears to be correct even without the term 
$\tau^{|\cS_X|+|\cS_Y|}$. However, the proof would become significantly 
more complicated. 
\end{enumerate}
\end{remark}

\subsection{Sketch of the proof \thmref{LatticeCount}}
Here is a an outline of our strategy :
\begin{enumerate}
\item We define a notion of a marking for the surface $S$ and what it
means for a marking to have a bounded length in a Riemann surface $X$.
A marking contains a partial triangulation of $S$, a set of short
simple closed curves with their lengths and some twisting information. 
Fixing a Riemann surface $X$, every quadratic differential $q$ 
where the underlying conformal structure is near $X$ defines a marking 
that has a bounded length in $X$. A marking takes the lengths of the 
short simple closed curves and the twisting information around short cylinder 
curves from $X$ and the triangulation and twisting around the non-cylinder 
short simple closed curves from $q$. Up to some twisting information, there are a 
uniformly 
bounded number of markings that have bounded length in a given Riemann 
surface $X$. 

\item Fixing a marking $\Delta_0$, a relation is a formal linear combination of
edges of $\Delta_0$ with integer coefficients. Given $\Delta_0$ and $\Delta_1$ 
and a set of relations $\cR$ we will define a set $M_\cR(\Delta_0, \Delta_1, \tau)$ 
consisting of all markings $\Delta$ such that $\Delta$ is a homeomorphic image of 
$\Delta_1$, its weighted intersection number with $\Delta_0$ is less than $e^\tau$ and 
so that the intersection patterns between $\Delta$ and $\Delta_0$ satisfy the relations in 
$\cR$. The weights depend on the length and the twisting information of each short 
simple closed curve. This is similar to assuming that there is a geodesic segment in 
a the stratum $\Qs$ starting near $X$ and ending near $Y$. 
\lemref{MarkingCount} provides and upper-bound for the number of 
elements in $M_\cR(\Delta_0, \Delta_1, \tau)$.

\item We then let $\cR$ be the set of relation of the type described in
 \lemref{Triangle} and \lemref{NonOrientable}. Each $Z \in \Bj X,Y,\tau)$ can 
 then be 
mapped to a marking in $\Delta \in M_\cR(\Delta_0, \Delta_1, \tau)$ for some 
marking $\Delta_1$ that has bounded length in $Y$ and some marking $\Delta_0$ 
that has both a bounded length and a bounded twisting in $X$. This map is 
finite-to-one except for some twisting information. 
An estimate for the number of possible markings $\Delta_0$ and $\Delta_1$ 
provides the desired upper-bound for the size of $\Bj X,Y, R)$.
\end{enumerate}

As is apparent from the outline, the main complication is to keep careful 
track of all the different twisting informations. Otherwise, the argument is
relatively elementary. 

\subsection{Markings on $S$}
Fix a set of points $\Sigma$ on $S$. A partial triangulation $T$ of $S$
with the vertex set $\Sigma$ is an embedding of a graph to $S$ where
vertices are mapped onto $\Sigma$ and the complementary components 
are either triangles or annuli. Even though the vertex set is fixed, we think of $T$ 
as representing a free homotopy class of triangulations. We say a curve $\gamma$ 
is carried by $T$ if the free homotopy class of $\gamma$ can be represented
by tracing the edges of $T$. We define a \emph{combinatorial} length of a simple 
closed curve $\gamma$ in $S$ to be the minimum number of arcs of $T$ that can 
appear in a representative of $\gamma$ and we denote it by $\ell_T(\gamma)$. 

Recall that a set of curves fill a subsurface $Q$ os $S$ if every essential
curve in $Q$ intersects one of these curves. We say a partial triangulation 
$T$ \emph{fills} a subsurface $Q$ of $S$ if, again, every essential curve
in $Q$ intersects $T$ (their free homotopy classes do not have disjoint
representatives). The two notions are related:

\begin{lemma}
There is a constant $\Bers$ such that, if $T$ fills a subsurface $Q$ of $S$, then the 
set of simple closed curves $\gamma$ carried by $T$ with $\ell_T(\alpha) \leq \Bers$ 
also fill the subsurface $Q$. \qed
\end{lemma}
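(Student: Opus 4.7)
The plan is to equip $Q$ with a hyperbolic metric adapted to $T$, apply Bers' theorem to obtain a short filling marking, and then homotope each marking curve into a simple carried curve of bounded combinatorial length.

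First, I would bound the combinatorial complexity of $T\cap Q$. Because $\Sigma$ is fixed and the complementary components of $T$ in $Q$ are triangles or annular collars of $\partial Q$ (the hypothesis that $T$ fills $Q$ rules out any complementary annulus around an interior curve of $Q$), an Euler-characteristic computation bounds the number of edges of $T\cap Q$ by a universal constant depending only on $\chi(Q)$ and $|\Sigma|$. I would then equip $Q$ with a hyperbolic metric $\rho$---with cusps at interior points of $\Sigma$ and geodesic boundary at $\partial Q$---realizing every edge of $T$ as a geodesic arc of $\rho$-length in a fixed universal interval $[c_0,C_0]$, for instance by gluing congruent hyperbolic triangles of fixed shape along the edges of $T$. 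By Gauss--Bonnet the area of $(Q,\rho)$ is universally bounded, and the collar neighborhoods of $\partial Q$ have universally bounded modulus.

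Second, I would invoke Bers' theorem to obtain a filling marking $\mu=\{\gamma_1,\dots,\gamma_k\}$ of the interior of $Q$ in which each $\gamma_i$ has $\rho$-length at most a universal constant $L$. Since every edge of $T$ has $\rho$-length at least $c_0$, each $\gamma_i$ crosses $T$ at most $O(L)$ times. Cutting $\gamma_i$ along $T$ produces arcs lying in triangles or collar annuli of $T\cap Q$. An arc inside a triangle is isotoped, relative to its endpoints, to travel along two of the three boundary edges, contributing $O(1)$ to the eventual combinatorial length. An arc inside a collar annulus is replaced by a path along the $T$-side boundary of the collar whose twisting around $\partial Q$ is bounded by $O(L)$, since $\gamma_i$ has $\rho$-length at most $L$ and the collar has $\rho$-modulus bounded below. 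Reassembling these pieces and performing a finite bigon-removal yields a simple carried curve $\gamma_i'$ isotopic to $\gamma_i$ with $\ell_T(\gamma_i')\lmul L$. Setting $\Bers$ equal to this universal upper bound, the family $\{\gamma_i'\}$ consists of simple carried curves of combinatorial length at most $\Bers$ and, being isotopic to the filling marking $\mu$, itself fills $Q$.

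The main obstacle is controlling the twisting inside collar annuli and preserving simplicity under the straightening. The twisting bound follows from the universal lower bound on collar moduli in $\rho$ together with the length bound $L$; simplicity is preserved because bigon-removal can only decrease $\ell_T$, so the final estimate is unaffected. Once these two points are in hand, the rest of the argument is formal combinatorial bookkeeping.
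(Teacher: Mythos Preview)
The paper does not prove this lemma; it is stated with a bare \qed, indicating the authors regard it as immediate. The intended argument is presumably a finiteness observation: the number of edges of $T$ is bounded in terms of $|\Sigma|$ and the topology of $S$, so up to homeomorphism there are only finitely many combinatorial types of pairs $(Q,T)$. For each such type, since $T$ fills $Q$ the complementary regions of $T$ in $Q$ are disks or peripheral annuli, hence $T$ is a spine of $Q$ and every essential curve in $Q$ is homotopic to one carried by $T$. In particular the carried curves fill $Q$; choose a finite filling subset, let $\Bers$ be the maximum combinatorial length appearing, and then maximize over the finitely many combinatorial types.

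Your approach via an auxiliary hyperbolic metric and Bers' theorem is more elaborate, and as written it has a gap. You propose to put cusps at the interior points of $\Sigma$ and simultaneously realize every edge of $T$ as a $\rho$--geodesic of length in a fixed interval $[c_0,C_0]$. But the edges of $T$ have both endpoints in $\Sigma$, so if those points are cusps the edges have infinite hyperbolic length, and the lower bound $c_0$ on which your crossing estimate depends is unavailable. This is repairable: you could instead glue congruent \emph{compact} hyperbolic (or flat) triangles, allowing cone singularities at the vertices rather than cusps, and then the edge lengths are genuinely bounded above and below. You would also need to specify the geometry on the annular complementary regions (the collars of $\partial Q$), which your triangle-gluing construction does not address; a flat cylinder of fixed modulus would do. With these fixes the rest of your outline goes through, but the one-line finiteness argument above avoids all of this machinery.
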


\begin{definition} \label{Def:Marking}
A marking $\Delta= \Delta \big(\cS, \{E(\alpha)\}, T \big)$ for $S$ is:
\begin{itemize}
\item a free homotopy class of oriented curve system $\cS$ (pairwise disjoint curves) 
together with a notion of zero twisting for each curve $\alpha \in \cS$, (that is, the 
expression $\twist_\alpha(\Delta, \param)$ makes sense),
\item a length $E(\alpha)$ associated to each simple closed curve $\alpha \in \cS$, 
and
\item a homotopy class of a partial triangulation $T$ with the vertex
set $\Sigma$ such that the core curve of any annulus in the complement of $T$
is in $\cS$.
\item for each $\alpha \in \cS$ intersecting $T$, $\twist_\alpha(\Delta, T) = O(1)$.
\end{itemize}
We denote the set of simple closed curves that are disjoint from $T$ by $\cS^c$ and
the remaining curves in $\cS$ by $\cS^n$ (the set $\cS^c$ is a place holder for 
large cylinder curves and the set $\cS^n$ is a place holder for non-cylinder curves 
or small cylinder curves).

We say a marking $\Delta= \big(\cS, \{ E(\alpha)\}, T \big)$ has a 
\emph{bounded length} in $X$ if:
\begin{enumerate}
\item $\cS= \cS_X$. 
\item For $\alpha \in \cS$, $E(\alpha) = \Ext_X(\alpha)$. 
\item For $\alpha \in \cS^c$, $\twist_\alpha(\Delta, X) = O(1)$. 
\item For each simple closed curve $\gamma \not \in \cS_X$ that is disjoint from $\cS_X$, 
$\ell_X(\gamma) \emul \ell_T(\gamma)$.
\end{enumerate}
We say $\Delta$ has \emph{bounded length in $X$ with $\tau$--bounded
twist} if we further have
\begin{enumerate}
\item[(5)] For $\alpha \in \cS^n$, $\twist_\alpha(\Delta, X) =O(\tau)$.
\end{enumerate}
\end{definition}

\begin{example}
We continue \exref{TT} of a surface $(X,q)$ described by a 
gluing of a polygon in $\reals^2$. As it was discussed, there are two thick 
subsurfaces in the complement of curves $\alpha$ and $\beta$
(\figref{(X,q)}). A $(q, \tau)$--regular triangulation of $(X,q)$ is depicted in 
\figref{regular}. Here $\Sqn=\{\alpha\}$ and $\Sqc=\{\beta\}$.

\begin{figure}[ht]
\setlength{\unitlength}{0.01\linewidth}
\begin{tikzpicture}
  [
    thick, 
    scale=.05\unitlength,
    vertex/.style={circle,draw,fill=black,thick,
                   inner sep=0pt,minimum size=1mm}
    ]

   \node[vertex] (l0) at (0,0)   {};
   \node[vertex] (l1) at (0,1)   {};
   \node[vertex] (l2) at (0,7)   {};
   \node[vertex] (l3) at (0,13)  {};
   \node[vertex] (m0) at (10,0)  {};
   \node[vertex] (m1) at (10,1)  {};
   \node[vertex] (m2) at (10,13) {};
   \node[vertex] (r0) at (40,1)  {};
   \node[vertex] (r1) at (40,3)  {};
   \node[vertex] (r2) at (40,7)  {};
   \node[vertex] (r3) at (40,9)  {};
   \node[vertex] (r4) at (40,13) {};

   \draw (l0) -- (l1) 
       node at (-1, -.1) {1};
   \draw (l1) -- (l2) 
       node[midway,left] {2};
   \draw (l2) -- (l3) 
       node[midway,left] {2};
   \draw (l3) -- (m2) 
       node[midway,above] {3};
   \draw (m2) -- (r4) 
       node[midway,above] {4};
   \draw (r4) -- (r3) 
       node[midway,right] {5};
   \draw (r3) -- (r2) 
       node[midway,right] {6};
   \draw (r2) -- (r1) 
       node[midway,right] {5};
   \draw (r1) -- (r0) 
       node[midway,right] {6};
   \draw (r0) -- (m1) 
       node[midway,below] {4};
   \draw (m1) -- (m0) 
       node at (11,-.1) {1};
   \draw (m0) -- (l0) 
       node[midway,below] {3};
 
   \draw (l1) -- (m0); 
   \draw (l1) -- (m1);
   \draw (l2) -- (m1); 
   \draw (l2) -- (m2);
   \draw (m1) -- (m2); 
   
\end{tikzpicture}
\caption{A $(q, \tau)$--regular triangulation.} 
\label{Fig:regular}
\end{figure}

Here a marking $\Delta$ that has bounded length in $X$ can be obtained 
as follows: The set $\cS$ is the set $\{\alpha, \beta\}$ of short curves in $X$ 
(depicted as blue curves in \figref{Delta}), the triangulation $T$ is the 
$(q, \tau)$--regular triangulation (depicted as the red triangulation) and 
$E(\alpha)$ and $E(\beta)$ are the extremal length of $\alpha$ and $\beta$ in 
$X$ respectively. The condition $(4)$ for $\Delta$ to have a bounded length
in $X$ is a consequence of $T$ being a $(q, \tau)$--regular triangulation. 

\begin{figure}[ht]
\setlength{\unitlength}{0.01\linewidth}
\includegraphics[width=40\unitlength]{Short} \qquad
\includegraphics[width=40\unitlength]{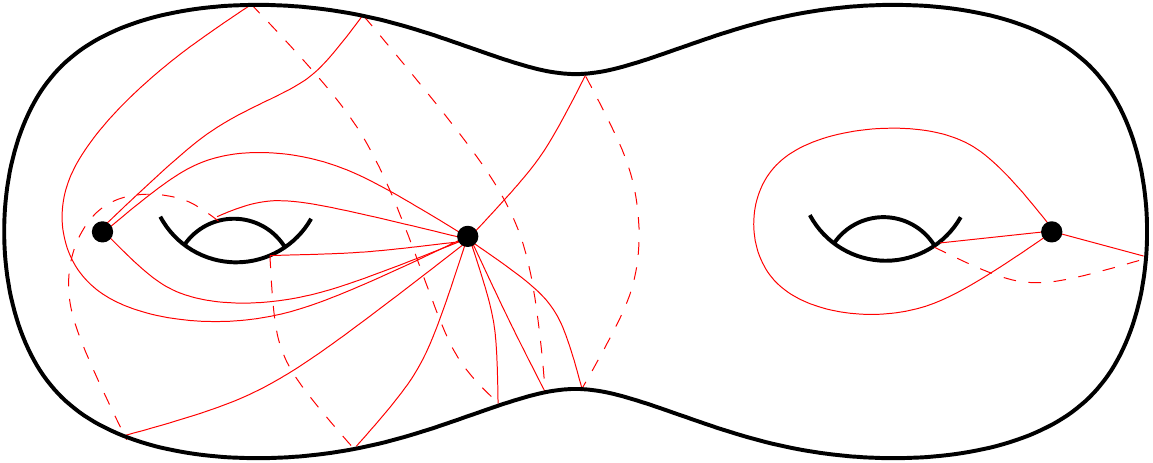}
\caption{The curves and the triangulation in the marking $\Delta$.}
\label{Fig:Delta}
\end{figure}

\end{example}

\begin{lemma} \label{Lem:TauBounded}
Let $M(X, \tau)$ be the set of markings $\Delta$ that have a bounded length 
in $X$ with $\tau$--bounded twist. Then
$$
|M(X,\tau)| \lmul \tau^{|\cS_X|}.
$$
\end{lemma}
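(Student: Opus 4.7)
The plan is to enumerate the combinatorial data that specify $\Delta = \Delta(\cS, \{E(\alpha)\}, T)$ and observe that every ingredient is pinned down by $X$ up to a universally bounded ambiguity, except for the twist powers around curves in $\cS^n$, each of which contributes a factor of order $\tau$.

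First I would use conditions (1) and (2) of bounded length to fix $\cS = \cS_X$ (as a free homotopy class of curve system) and $E(\alpha) = \Ext_X(\alpha)$ for each $\alpha$. The choice of orientation on each curve in $\cS$ and the partition $\cS_X = \cS^c \sqcup \cS^n$ dictated by whether $\alpha$ is disjoint from $T$ together contribute a factor depending only on $|\cS_X|$ and hence bounded by a universal constant.

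Next I would count the topological types of partial triangulation $T$ on $S$, with vertex set $\Sigma$ and complementary regions consisting of triangles or annular collars of the curves in $\cS^c$. Condition (4) of bounded length demands that every simple closed curve carried by $T$ with combinatorial length at most $\Bers$ have hyperbolic length on $X$ of comparable order, so in each thick component of $S \setminus \cS_X$ the restriction of $T$ is a triangulation of bounded complexity of a fixed topological surface. Standard surface combinatorics (the same topological input used in Lemma~\ref{Lem:up:to:twisting}) then produces only finitely many such types up to Dehn twists around curves in $\cS_X$.

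Finally I would count the admissible Dehn-twist powers of $T$ around each $\alpha \in \cS_X$, together with the compatible zero-twisting notion of $\Delta$. For $\alpha \in \cS^c$: condition (3) gives $\twist_\alpha(\Delta, X) = O(1)$, so there are $O(1)$ admissible zero-twistings, and since $\alpha$ is disjoint from $T$ no further twist freedom of $T$ is involved. For $\alpha \in \cS^n$: the $\tau$-bounded twist condition (5) allows $\twist_\alpha(\Delta, X) = O(\tau)$; combined with the marking axiom $\twist_\alpha(\Delta, T) = O(1)$, this forces $\twist_\alpha(T, X) = O(\tau)$, permitting $O(\tau)$ twist powers of $T$, after which the zero-twisting datum of $\Delta$ is determined up to $O(1)$ by $T$. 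Multiplying the bounded factors by $O(\tau)^{|\cS^n|} \leq O(\tau^{|\cS_X|})$ gives the claimed bound. The only real subtlety is keeping the various twisting accounts (of $\Delta$ versus $X$, of $\Delta$ versus $T$, and of $T$ versus $X$) simultaneously consistent; once the triangle inequality for twisting is applied carefully, the estimate is straightforward, so I expect no substantial obstacle beyond bookkeeping.
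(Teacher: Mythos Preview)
Your proposal is correct and follows essentially the same line as the paper's proof: fix $\cS$, $\{E(\alpha)\}$, and the zero-twisting around $\cS^c$ from conditions (1)--(3); invoke the topological finiteness underlying \lemref{up:to:twisting} to bound the partial triangulation $T$ up to Dehn twists about $\cS_X$; and then use condition (5) together with $\twist_\alpha(\Delta,T)=O(1)$ to restrict the remaining twist freedom about each $\alpha\in\cS^n$ to $O(\tau)$ choices. The paper's proof is terser but identical in structure; your version simply makes the bookkeeping (orientations, the $\cS^c/\cS^n$ split, and the triangle inequality for twists) explicit.
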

\begin{proof}
The set $\cS$ and the lengths $\{ E(\alpha)\}$ and the twisting around 
curves in $\cS^c$ are determined by definition. By \lemref{up:to:twisting},
there is a uniformly bounded number of possibilities for $T$ up to
twisting around curves in $\cS^n$.
But each of these twisting parameters is bounded by multiple of $\tau$
(condition (5) in the definition \defref{Marking}).
This finishes the proof.
\end{proof}
\noindent

\begin{definition}
Consider the markings 
$$
\Delta= \{ \cS, \{ E(\alpha)\}, T \} 
\quad\text{and}\quad
\Delta_0=  \{ \cS_0, \{ E_0(\alpha_0)\}, T_0 \}.
$$ 
Recall that $T$ and $T_0$ have the same vertex set $\Sigma$. 
For every $\alpha \in \cS^c$, let $\beta_{\alpha}$
be an arc with end point in $\Sigma$ and disjoint from $T$
that crosses $\alpha$ so that $T \cup \beta_\alpha$ has
bounded twisting around $\alpha$. Denote
$$
U = T \cup \bigcup_{\alpha \in \cS^c} \beta_\alpha.
$$
Let $\R[U]$ be the vector space of formal sums with real coefficient
of edges in $U$. Let $\cR$ be a finite subset of $\R[U]$ with integer 
coefficients. We define the set
$$
M_\cR(\Delta,\Delta_0, \tau)
$$
to be the set of markings $\oDelta= \{ \oS, \{ \oE(\oalpha)\}, \oT \}$ such that:

\begin{enumerate}[(I)]
\item $\oDelta$ is a homeomorphic image of $\Delta_0$, and for every 
$\oalpha \in \oS^{c}$ that is the image of $\alpha_0\in\cS_0^{c}$, 
we have $E_0(\alpha_0)=\oE(\oalpha)$. 
\item For every element $\sum a_\omega \omega \in \cR$ and every
arc $\oomega \in \oT$ (respectively, $\oalpha \in \oS^c$), we have
$$
\sum_{\omega \in U} a_\omega \I(\omega, \oomega)= O(1),
\qquad \left( \text{respectively,} \quad
\sum_\omega a_\omega \I(\omega, \oalpha)= O(1) \right).
$$
\item Given $\alpha \in \cS^{c},$ $\oalpha \in \oS^c$, $\omega \in T$ 
and $\oomega \in \oT,$ we have the following bounds on the intersection 
numbers:
\begin{align}
\I(\omega, \oomega) & \lmul e^{\tau}  \label{Eq:one} \\
\twist_{\oalpha}(\Delta, \oDelta) \sqrt{E(\oalpha)}\, \I(\oalpha, T) 
      & \lmul e^\tau \label{Eq:two} \\
\twist_\alpha(\Delta, \oDelta) \sqrt{E(\alpha)} \, \I(\alpha, \oT) 
      & \lmul e^\tau  \label{Eq:three} \\
\I(\alpha,\oalpha)\, \twist_\alpha (\Delta, \oDelta) 
\twist_\oalpha(\Delta, \oDelta) \, \sqrt{\oE(\oalpha) E(\alpha)} 
      &\lmul e^\tau \label{Eq:four} \\
\intertext{and finally if $\alpha=\oalpha \in \cS^{c}\cap \oS^{c}$ we have:}
\twist_\alpha(\Delta, \oDelta) \sqrt{E(\alpha)\oE(\oalpha)} 
      &\lmul e^\tau. \label{Eq:five}
\end{align}
\end{enumerate}
Note that the partial triangulations in $\Delta$ and $\oDelta$ are defined
up to homotopy. By above intersection bounds we mean that the
homotopy class of two partial triangulations have representations
with vertex set $\Sigma$ so that the above bounds
hold simultaneously. 
\end{definition}

Let $\langle \cR\rangle$ be the subspace of $\R[U]$ generated
by elements in $\cR$. We give the following upper bound for such markings: 
\begin{lemma} \label{Lem:MarkingCount}
Let $h_\cR=\dim(\R[U]/ \langle \cR \rangle)$. Then
$$
| M_\cR(\Delta, \Delta_0, \tau)| \lmul e^{h_{\! \cR} \tau} 
\prod_{\alpha \in \cS^c} \frac 1{\sqrt{E(\alpha)}}
\prod_{\alpha_0 \in \cS_0^c} \frac 1{\sqrt{E_0(\alpha_0)}}.
$$
\end{lemma}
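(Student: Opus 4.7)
The plan is to parametrize each $\oDelta \in M_\cR(\Delta, \Delta_0, \tau)$ by (i) the aggregate intersection vector $\big(\I(\omega,\oT)\big)_{\omega\in U}\in\Z^{|U|}$, (ii) the intersection vectors $\big(\I(\omega,\oalpha)\big)_{\omega\in U}$ for $\oalpha\in\oS^c$, and (iii) the twist parameter $\twist_{\oalpha}(\Delta,\oDelta)$ for each $\oalpha\in\oS^c$. Since $U=T\cup\bigcup_{\alpha\in\cS^c}\beta_\alpha$ fills $S$, the isotopy class of any arc or curve is determined by its intersection numbers with the edges of $U$. Hypothesis (I), that $\oDelta$ is a homeomorphic image of $\Delta_0$ of fixed topological type, then implies that the data in (i)--(iii) determine $\oDelta$ up to a uniformly bounded ambiguity: the twists $\twist_\alpha(\Delta,\oDelta)$ for $\alpha\in\cS^c$ are encoded in the intersections with $\beta_\alpha$, and condition (iv) of \defref{Marking} pins down the twist convention along $\oT$ itself.

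For (i), the bounds in (III) and a summation over edges of $\oT$ give $\I(\omega,\oT)\lmul e^\tau$ for $\omega\in T$ from \eqref{Eq:one}, and $\I(\beta_\alpha,\oT)\lmul e^\tau/\sqrt{E(\alpha)}$ from \eqref{Eq:three} combined with the defining properties of $\beta_\alpha$ (disjoint from $T$, with $O(1)$ twist around $\alpha$ relative to $T$). Condition (II), summed over edges of $\oT$, forces this vector to lie within bounded distance of an $h_\cR$-dimensional affine sublattice of $\Z^{|U|}$. A standard lattice-point count in the box determined by the coordinate bounds, intersected with this sublattice, then yields at most
\[
e^{h_\cR \tau}\prod_{\alpha\in\cS^c}\frac{1}{\sqrt{E(\alpha)}}
\]
possibilities for (i). The vectors in (ii) are of the same form applied to individual curves, and contribute only bounded extra factors since their large-scale behavior is subsumed in (i).

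For the twist data (iii), each $\oalpha\in\oS^c$ satisfies $\twist_{\oalpha}(\Delta,\oDelta)\lmul e^\tau/(\sqrt{\oE(\oalpha)}\,\I(\oalpha,T))$ by \eqref{Eq:two}. After factoring out the $e^\tau$ already spent on the intersections involving $\oalpha$, the net contribution of twists is $\prod_{\oalpha\in\oS^c}1/\sqrt{\oE(\oalpha)} = \prod_{\alpha_0\in\cS_0^c}1/\sqrt{E_0(\alpha_0)}$, the last equality using $\oE(\oalpha)=E_0(\alpha_0)$ from (I). The couplings in \eqref{Eq:four}--\eqref{Eq:five} ensure consistency between twists around $\cS^c$ and $\oS^c$ and prevent double counting. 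Multiplying all contributions yields the stated bound.

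The main obstacle is the bookkeeping in the final step: the five bounds in (III) couple intersection numbers and twist parameters in intertwined ways, and one must carefully separate contributions so that each $\alpha\in\cS^c$ produces exactly one factor $1/\sqrt{E(\alpha)}$, each $\alpha_0\in\cS_0^c$ produces exactly one factor $1/\sqrt{E_0(\alpha_0)}$, and the exponent $h_\cR\tau$ is controlled precisely by the codimension of $\langle\cR\rangle$ in $\Z[U]$. This is achieved via the decomposition $U=T\cup\{\beta_\alpha\}$: the $T$-coordinates absorb the geometric complexity of $\oT$ modulo the relations, the $\beta_\alpha$-coordinates absorb the twists around $\cS^c$, and the twists around $\oS^c$ remain as the only independent parameters to be counted separately. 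A secondary technical point is verifying that the per-edge relations of (II) genuinely combine under summation to the analogous global constraint used in the lattice-point count; this follows from the linearity of the intersection pairing.
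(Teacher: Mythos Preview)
Your overall shape is right: record intersection data with the edges of $U$, use the relations $\cR$ to cut the lattice down to dimension $h_\cR$, and then account for twists around $\oS^c$. The estimate you give for part (i) is correct and matches the paper. The gap is in your handling of (iii).

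If you count the twist $\twist_{\oalpha}(\Delta,\oDelta)$ separately after (i), the bound \eqref{Eq:two} gives at most $e^\tau/(\sqrt{\oE(\oalpha)}\,\I(\oalpha,T))$ values. Since $\I(\oalpha,T)$ can perfectly well be of bounded size (nothing forces it to be large), this contributes a genuine extra factor of order $e^\tau$ per curve $\oalpha\in\oS^c$, and your total becomes $e^{(h_\cR+|\oS^c|)\tau}$ rather than $e^{h_\cR\tau}$. The phrase ``factoring out the $e^\tau$ already spent on the intersections involving $\oalpha$'' is exactly where the argument breaks: the intersections of $\oalpha$ with $T$ are not the same data as the intersections of $\oT$ with $U$ that you counted in (i), so there is nothing to cancel against. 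The inequalities \eqref{Eq:four}--\eqref{Eq:five} couple different twist parameters to each other, but they do not by themselves absorb the extra exponential.

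The paper fixes this with one device you are missing: it does \emph{not} record $\oT$ and the twists separately, but encodes them together in the weighted graph
\[
\oW \;=\; \oT \;+\; \sum_{\oalpha\in\oS^c}\big\lfloor \twist_{\oalpha}(\Delta,\oDelta)\sqrt{\oE(\oalpha)}\big\rfloor\,\oalpha.
\]
The point of the weight $m(\oalpha)=\lfloor \twist_{\oalpha}(\Delta,\oDelta)\sqrt{\oE(\oalpha)}\rfloor$ is that the bounds in (III) are tailored so that $m(\oalpha)\,\I(\oalpha,\omega)\lmul e^\tau$ for $\omega\in T$ (by \eqref{Eq:two}) and $m(\oalpha)\,\I(\oalpha,\beta_\alpha)\lmul e^\tau/\sqrt{E(\alpha)}$ (by \eqref{Eq:four}, \eqref{Eq:five}). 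Hence $\I(\oW,\omega)$ obeys the \emph{same} coordinate bounds as $\I(\oT,\omega)$ alone, and the single lattice count over the $h_\cR$--dimensional quotient produces $e^{h_\cR\tau}\prod_{\alpha\in\cS^c}1/\sqrt{E(\alpha)}$ with no further exponential loss. The twist information then costs only the floor--function ambiguity in recovering $\twist_{\oalpha}$ from $m(\oalpha)$, which is exactly the factor $\prod_{\alpha_0\in\cS_0^c}1/\sqrt{E_0(\alpha_0)}$. Once you introduce $\oW$, your steps (ii) and (iii) disappear as separate items and the bookkeeping you flag as the ``main obstacle'' becomes automatic.
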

\begin{proof}
For $\oDelta \in M_\cR(\Delta,\Delta_0, \tau)$ consider the weighted graphs
$$
\oW= \oT+ \sum_{\oalpha \in\oS^{c}}  m(\oalpha, \oDelta) \, \oalpha,
$$ 
where the weights on the edges of $\oT$ are $1$ and the weight 
$m(\oalpha, \oDelta) \in \natls$ are defined to be 
$$ 
m(\oalpha, \oDelta)
= \left\lfloor \twist_\oalpha(\Delta, \oDelta) \sqrt{\oE(\oalpha)} \right\rfloor.
$$
Define $\cW$ to be the set of weighted graphs induced by elements of 
$M_\cR(\Delta,\Delta_0, \tau)$:
$$
\cW=\left\{ \oW \ST \oDelta \in M_\cR(\Delta,\Delta_0, \tau) \right\} .
$$
The weighted graph $\oW$ essentially determines $\oDelta$ except that,
for $\oalpha \in \oS^c$, the value of $m(\oalpha, \oDelta)$ determines
$\twist_\oalpha(\Delta, \oDelta)$ only up to 
$\frac 1{\sqrt{\oE(\oalpha)}} = \frac 1{\sqrt{E_0(\alpha_0)}}$ 
possibilities (we have used the
floor function in defining $m(\oalpha, \oDelta)$). Hence, 
\begin{equation}\label{Eq:ine}
 |M_\cR (\Delta, \Delta_0, \tau)| 
    \leq \prod_{\alpha_0 \in \cS_0^c} \frac 1{\sqrt{E_0(\alpha_0)}}\; |\cW|.
\end{equation}
We proceed in two steps:
\subsection*{Step 1} 
Consider the set $\cE \subset U$ that forms a basis for the 
space $\R[U]/ \langle \cR \rangle$.  
First, we claim that the map 
$$
\cI\from \cW \rightarrow \natls^{h_\cR}, 
\qquad
\oW \rightarrow \Big(\I(\oW,\omega) \Big)_{\omega \in \cE}
$$
is finite to one, where $\I(\oW,\omega)$ is defined to be
$$
\I(\oW,\omega)=\sum_{\oomega \in \oT} \I(\oomega,\omega)+ 
\sum_{\oalpha \in \oS^{c}} m( \oalpha, \oDelta)  \, \I(\oalpha,\omega).
$$ 
Note that in general a weighted graph $\oW$ is determined by the intersection 
numbers of its edges with all the edges of $U$. The map $\cI$
records the intersection number with arcs in $\cE$.
To prove the claim, we need to show that, there are only finitely many 
possibilities for the intersection number of $\oW$ with the other edges of $U$. 

We can consider an $\upsilon \in U$ as the element 
$1 \cdot \upsilon \in \R[U]$. Then $\upsilon$ can be written as a linear combination 
elements in the generating set $\cE$ (which generates $\R[U]/ \langle \cR \rangle)$)
and $\cR$ (the relations). That is, there are constants $c_\omega$ 
and $d_R$ so that
$$
\upsilon = \sum_{\omega \in \cE} c_\omega \omega 
           + \sum_{R \in \cR} d_R R. 
$$
But the intersection number is linear hence, for every $\oomega \in \oW$, we have
$$
\I(\upsilon, \oomega) = 
\sum_{\omega \in \cE} c_\omega \I(\omega, \oomega) 
             + O \left( \sum_{R \in \cR} d_R \right). 
$$
But the constants $d_R$ depend only on the set $\cR$ and otherwise
are uniformly bounded. Hence, there are only finite number
of possibilities for $\I(\upsilon, \oomega)$. This proves the claim. 

\subsection*{Step 2} We bound the size of 
$\mathcal{I}(\cW) \subset \natls^{h}$ by obtaining upper bounds on intersection 
numbers of $\oW$ with arcs $\omega \in \cE$.
\begin{itemize}
\item First, if $\omega \in T$, \eqnref{one} implies that 
\begin{equation}\label{Eq:ArcArc}
\I(\oT, \omega) \lmul e^{\tau}.
\end{equation}
Also, for $\oalpha \in \oS^{c}-\cS^{c}$ the \eqnref{two} implies
$$
m(\oalpha, \oDelta) \I(\oalpha, \omega) \lmul
\twist_\oalpha(\Delta, \oDelta) \sqrt{\oE(\oalpha)}  \I(\oalpha, \omega)
\lmul e^{\tau}.
$$
Hence, 
\begin{equation} \label{Eq:ArcCurve}
\I(\oW, \omega) \lmul e^\tau.
\end{equation}

\item For arc $\beta_\alpha \in U$ where $\alpha \in \cS^c$, and arc
$\oomega \in \oW$ by \eqnref{three} we have
\begin{equation}\label{Eq:CurveArc}
\I(\beta_\alpha, \oomega) 
\lmul \I(\alpha, \oomega) \twist_\alpha(\Delta, \oDelta)
\lmul  \frac{e^\tau}{\sqrt{E(\alpha)}}.
\end{equation}
And for $\oalpha \in \oS^{c}-\cS^{c}$, by \eqnref{four} we have
\begin{equation}\label{Eq:CurveCurve} 
m (\alpha,\Delta) \I(\alpha, \oalpha) \lmul 
\twist_\oalpha(\Delta, \oDelta) \sqrt{\oE(\oalpha)} \I(\alpha, \oalpha)
\lmul  \frac{e^\tau}{\sqrt{E(\alpha)}}.
\end{equation}
Finally, if $\alpha= \oalpha \in \cS^c \cap \oS^c$, by \eqnref{five}
\begin{equation}\label{Eq:SameCurve} 
m(\oalpha,\oDelta) \I(\oalpha, \beta_\alpha) \lmul 
\frac{e^\tau}{\sqrt{E(\alpha)}}.
\end{equation}
\end{itemize} 

Now from Equations \eqref{Eq:ArcCurve}, \eqref{Eq:CurveArc},
\eqref{Eq:CurveCurve} and \eqref{Eq:SameCurve}, we get: 
\begin{align*}
|\cW| \lmul |\mathcal{I}(\cW)| 
&\lmul  \prod_{\begin{subarray}{c} \alpha \in \cS^c \\ 
          \beta_\alpha \in \cE \end{subarray}} 
         \frac {e^\tau}{\sqrt{E(\alpha)}} \times 
\prod_{\omega \in T\cap \cE} e^{\tau} \\
&\leq e^{|\cE|  \tau}  \prod_{\alpha \in \cS^{c}} \frac {1}{\sqrt{E(\alpha)}}.
\end{align*}
Now, applying \eqnref{ine}, we get
$$
| M_\cR(\Delta, \Delta_0,\tau)| \lmul e^{h_\cR \tau} 
\prod_{\alpha \in \cS^c} \frac 1{\sqrt{E(\alpha)}}
\prod_{\alpha_{0} \in \cS_0^c} \frac 1{\sqrt{E_0(\alpha_{0})}},
$$
which is as we claimed. 
\end{proof}

\begin{proof}[Proof of \thmref{LatticeCount}]
Let $Z \in \Bj  X,Y,\tau)$ and let $(X_a,q_a)$ and $(X_b,q_b)$ be the 
initial and the terminal quadratic differentials for the \Teich geodesic 
in $\Qs$ starting near $X$ and finishing near $Z \in \Gamma \cdot Y$, as before. 
There may be many choices for these quadratic differentials. We 
need to be a bit careful. 

\subsection*{Claim:} We can choose $(X_a, q_a)$ and $(X_b, q_b)$ so that 
for any $\alpha \in \Sn{q_a}$,
\begin{equation} \label{Eq:NonCylinder}
\twist_\alpha(X, q_a) =O(\tau).
\end{equation}

\begin{proof}[Proof of claim]
Assume $(X_{\hat a},q_{\hat a})$ and $(X_{\hat b},q_{\hat b})$
are some choice of initial and terminal points with associated regular triangulations
$T_{\hat a}$ and $T_{\hat b}$ that have $j$ common saddle connection.
But, assume that they do not satisfy \eqnref{NonCylinder}. 
We define $(X_a, q_a)$ to be the image of $(X_{\hat a},q_{\hat a})$
under an appropriate number of Dehn twists around curves in 
$\Sn{q_{\hat a}}$ to ensure \eqref{Eq:NonCylinder}
and let $(X_b, q_b)$ be the image of $(X_{\hat b},q_{\hat b})$ under the
same homeomorphism. We will show that $X_a$ and $X_b$ are still near
$X$ and $Z$. 

For $\alpha \in \Sn{q_{\hat a}}$,
if $\frac 1{\Ext_X(\alpha)} \lmul \tau$, by \lemref{Twist-q-X}. 
$$
\twist_\alpha(X_{\hat a}, q_{\hat a}) \lmul \frac 1{\Ext_X(\alpha)}.
$$
Hence, using the triangle inequality and \thmref{PRT}
\begin{equation} \label{Eq:Triangle} 
\twist_\alpha(X, q_{\hat a}) \ladd \twist_\alpha(X, X_{\hat a}) 
+ \twist_\alpha(X_{\hat a}, q_{\hat a})
=O \left(\frac 1{\Ext_X(\alpha)} \right) = O(\tau).
\end{equation}
Therefore, \eqref{Eq:NonCylinder} is holds and no modification is required. 

Now, assume $\frac 1{\Ext_X(\alpha)} \gmul \tau$. 
Since $\alpha$ is a non-cylinder curve, $\frac 1{\Ext_{X_t}(\alpha)}$ changes at most linearly with time (\eqnref{ext:curve}). Hence, 
for $\tau$ large enough, we have
\begin{equation} \label{Eq:SameLength}
\frac 1{\Ext_X(\alpha)} \emul \frac 1{\Ext_Z(\alpha)}.
\end{equation}
Again by \lemref{Twist-q-X}, the number of Dehn twists $n_\alpha$
around $\alpha$ that needs to be applied to $q_{\hat a}$ to ensure 
\eqnref{NonCylinder} is at most $O(1/\Ext_X(\alpha))$. That is, 
$$
X_a = \prod_{\alpha \in \Sn{q_{\hat a}}} D_\alpha^{n_\alpha} X_{q_0},
$$
where $D_\alpha$ is a Dehn twist around $\alpha$ and 
$n_\alpha \lmul \frac 1{\Ext_X(\alpha)}$. By, \thmref{PRT}
$$
d_\T(X_{\hat a}, X_a) \emul \sum_{\alpha \in \Sn{q_{\hat a}}} 
n_\alpha \Ext_{X}(\alpha) \lmul 1.
$$
and 
$$
d_\T(Y_{\hat b}, X_b) \emul \sum_{\alpha \in \Sn{q_{\hat a}}} 
n_\alpha \Ext_{Z}(\alpha)\lmul 1.
$$
Hence, $(X_a, q_a)$ and $(X_b, q_b)$ are as desired. Also, 
the images $T_a$ and $T_b$ of $T_{\hat a}$ and $T_{\hat b}$ are
still regular triangulations and have $j$ arcs in common. 
\end{proof}

For the rest of the proof, we assume \eqnref{NonCylinder}
holds. To the pair $(X_a, q_a)$ we associate the marking 
$\Delta=\{ \cS, \{ E(\alpha)\} , T \}$ as follows:
\begin{itemize}
\item Let $\cS$ be the set of short curve in $X$ and set
$E(\alpha) = \Ext_X(\alpha)$.
\item Let $T$ be the $(q_a, \tau)$--regular triangulation $T_a$
which has $j$ edges in common with the triangulation $T_b$. 
\item If $\alpha \in \Sc{q_a}$ then set the twisting around $\alpha$
in $\Delta$ so that
$$
\twist_\alpha(\Delta, X) = O(1).
$$
\item If $\alpha \in \Sn{q_a}$ then set the twisting around 
$\alpha$ in $\Delta$ so that 
$$\twist_\alpha(\Delta, T) = O(1).$$
\end{itemize}
The result is a marking that has bounded length in $X$ and 
(by \eqnref{NonCylinder}) has $\tau$--bounded twist 
in $X$. Also, note that $\cS^c = \Sc{q_a}$ and $\cS^n = \Sn{q_a}$.

We can similarly associate a marking $\oDelta$ to the pair $(X_b, q_b)$.
Here we can only conclude that $\oDelta$ is bounded in $Z$
(not with bounded twist); this is because the inequality \eqref{Eq:NonCylinder} 
does not necessarily hold for $Z$ and $q_b$. Instead, similar
to  \eqnref{Triangle}, we have
\begin{equation} \label{Eq:TwistZ}
\twist_\oalpha(Z, q_b) \lmul \frac 1{\Ext_Z(\oalpha)}.
\end{equation}
Assume $Z = \gg(Y)$, for $\gg \in \Gamma(S)$. 
Let $\Delta_0 = \gg^{-1}(\oDelta)$. Then $\Delta_0$
in bounded in $Y$. Also, let $\cR$ be the elements in $\R[U]$
coming from \lemref{Triangle}, (and \lemref{NonOrientable}
in case quadratic differentials in $\Qs$ are not orientable) 
and the $j$ edges in $T$ that are present in the $(q_b, \tau)$--regular 
triangulation $T_b$. Taking this $T_b$ is the partial triangulation in 
$\oDelta$, we have $\oDelta \in M_\cR(\Delta, \Delta_0, \tau)$. 
The number of possible choices for $\Delta$ is $O(\tau^{|\cS_X|})$ 
(\lemref{TauBounded}) and there are finitely many choices 
of for the homeomorphism type of $\Delta_0$. 
\lemref{MarkingCount} provides an 
upper-bound for the size of the set $M_\cR(\Delta, \Delta_0, \tau)$. 
Also, using the fact that $\oDelta$ is bounded in $Z$ and \eqnref{TwistZ}, 
similar to \lemref{TauBounded}, we can conclude that the association 
$Z \mapsto \oDelta$ is at most 
$O\left(\prod_{\alpha \in \Sn{q_b}}\frac 1{\Ext_Z(\alpha)}\right)$--to-one. 

To summarize, we have defined a map from $\Bj X,Y,\tau)$ to
the union of sets of markings $M_\cR(\Delta, \Delta_0, \tau)$, where
$\Delta$ is bounded $X$ with $\tau$--bounded twist and $\Delta_0$ is bounded
in $Y$. The map is not one-to-one but we have a bound on the 
multiplicity. 

The size of $\Bj X,Y,\tau)$ is comparable to the product of the following:
the number of choices for $\Delta$, the number of choices for the 
homeomorphism class of $\Delta_0$, the maximum multiplicity of the association 
$Z \mapsto \oDelta$ and the size of $M_\cR(\Delta,\Delta_0, \tau)$.
That is, 
\begin{align*}
|\Bj X,Y,\tau)| &\lmul |M(X,\tau)| \times O(1) \times
\prod_{\alpha \in \Sn{q_b}}\frac 1{\Ext_Z(\alpha)} \times
|M_\cR(\Delta, \Delta_0, \tau)|\\
& \lmul \tau^{|\cS_X|} 
\prod_{\alpha \in \Sn{q_b}}\frac 1{\Ext_Z(\alpha)} 
e^{h_\cR\tau} \prod_{\alpha \in \cS^c} \frac 1{\sqrt{E(\alpha)}}
\prod_{\alpha_0 \in \cS_0^c} \frac 1{\sqrt{E_0(\alpha_0)}}\\
&\lmul \tau^{|\cS_X|+ |\cS_Y|} e^{h_\cR \tau} 
\prod_{\alpha \in \cS} \frac 1{\sqrt{E(\alpha)}}
\prod_{\alpha_0 \in \cS_0} \frac 1{\sqrt{E_0(\alpha_0)}}.
\end{align*}
The last line follows from the previous line because, for every term in 
the product $\prod_{\alpha \in \Sn{q_b}}\frac 1{\Ext_Z(\alpha)}$
we either have $\frac 1{\Ext_Z(\alpha)} = O(\tau)$ or,
as in \eqnref{SameLength}, 
$$
\frac 1{\Ext_Z(\alpha)} \lmul \frac 1{\sqrt{\Ext_Z(\alpha)}} \frac 
1{\sqrt{\Ext_X(\alpha)}},
\quad\text{and}\quad
\alpha \in \Sn{q_a} \cap \Sn{q_b} = \cS^n \cap \cS_0^n.
$$ 
That is, each term can either be counted in the power of $\tau$ in
the beginning of last line or it can be divided into a term in each of
the last two products. The proof is finished after checking that
$h_\cR = (h-j)$. This is true because all the relations in 
\lemref{Triangle} are also relations in $H_1(S, \Sigma)$. The fact that the 
$j$ arcs we have fixed in $T_a$ are homologically independent implies that 
these arcs and the other relations in homology are independent
in $\R[U]$. In fact, \lemref{NonOrientable} is used only when $\vs = -1$. 
But this is accounted for in the definition of $h$ (see \secref{Period}).
Hence, the dimension of $\R[U]/\langle \cR \rangle$ is exactly $j$ less
than $h= \frac{\dim \Stratum+1}{2}$. 
\end{proof}

\section{Geodesics in the thin part of moduli space}
\label{Sec:thin:part}

In this section we prove \thmref{short:saddle} and \thmref{j:short}.
The main idea, which is due to Margulis, is to prove an
inequality, which shows that the flow (or more precisely an
associated random walk) is biased toward a compact part of  the space.
Consider the stratum $\Qs$.  We discretize the projection
$$
  \pi(\Qs) \subset \cT(S),
$$
by fixing an appropriate net $\Nt$ in $\cT(S)$. 
Then, we consider the random walk $\{\lambda_{i}\}_{i\geq 0}$ on the points 
in $\Nt$ and apply \thmref{LatticeCount} to show that the projection of this 
random walk in $\Moduli$ is biased towards the compact subset of $\Moduli$. 
Moreover, we show that quadratic differentials 
$\{q(\lambda_{i},\lambda_{i+1})\}_{i\geq 0}$ (see \secref{Tt})
tend not to have short saddle connections. See \lemref{est:tilde:P:j} for the precise 
formulation. 

These estimates imply \thmref{short:saddle}; this is because, roughly speaking, 
every closed geodesic in $\Stratum$ can be approximated by a path along the 
net points.

\subsection{Short saddle connections and simple closed curves}
\label{Sec:ssc}
For a quadratic differential $(X,q) \in \QT$, recall the set of
\emph{short saddle connections} $\Omega_q(\ep)$ (\defref{Omega_q}). 
Define $s(q,\ep)$ to denote the maximum number of homologically independent 
disjoint saddle connections in $\Omega_q(\ep)$. Given the tuple $\sigma$, define
$$
\Qje(\sigma)= 
  \Big\{ (X,q) \in \Qs \ST s(q ,\ep) \geq j \Big\} 
  \subset  \QT.
$$
For the rest of this section, with fix $\sigma$ and denote $\Qje(\sigma)$
simply by $\Qje$. Also, recall the definition of $B(\Qs, X, \tau)$ 
from \secref{notation} and $\Bj X, \tau)$ from \secref{main:counting}. 
We would like to refine the definition of $\Bj X, \tau)$. Roughly speaking, 
we are interested in a ball of
radius $\tau$ centered at $X$ that is allowed to move in the
direction $\Qje$ only. Namely, define
$B(\Qje, X, \tau)$ to be the set $Z \in \T(S)$ so that 
\begin{itemize}
\item $Z \in B(\Qs, X, \tau)$
\item for the associated quadratic differential $q_a$, we have $s(q_a, \ep) \geq j$. 
\end{itemize} 
One can similarly define $B(\Qje, X, Y, \tau)$ as in \secref{main:counting}. 
Recall the choice of $\ep_1(\tau)$ from \lemref{Exist}. 
\begin{lemma}\label{Lem:quad}
For any $\tau>0$, there is  $\ep_2(\tau) < \ep_1(\tau)$ such that 
for $\ep< \ep_{2}(\tau)$, any integer $j \geq 0$, 
and any $X, Y \in \cT(S)$, we have
 \begin{equation}\label{Eq:qu1}
B(\Qje, X, \tau) \subset \Bj X, \tau),
 \end{equation}
 and 
 \begin{equation} \label{Eq:qu2}
B(\Qje, X, Y, \tau) \subset \Bj X, Y, \tau).
\end{equation}

\end{lemma}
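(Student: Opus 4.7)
The plan is to reduce both inclusions to the following technical claim: for $\ep$ small enough (depending on $\tau$), every saddle connection $\omega \in \Omega_{q_a}(\ep)$ remains, after identification by the \Teich flow, in $\Omega_{q_b}(\ep_1(\tau))$. Granting the claim, given $Z \in B(\Qje, X, \tau)$ with the realising geodesic running from $(X_a, q_a)$ to $(X_b, q_b)$, I would pick $j$ pairwise disjoint, homologically independent saddle connections $\Omega_0 \subset \Omega_{q_a}(\ep)$. Since $\ep_2(\tau) < \ep_1(\tau)$, we have $\Omega_0 \subset \Omega_{q_a}(\ep_1(\tau))$, and by the claim $\Omega_0 \subset \Omega_{q_b}(\ep_1(\tau))$. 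Applying \lemref{Exist} to $q_a$ and to $q_b$ with threshold $\ep_1(\tau)$ then extends $\Omega_0$ to a $(q_a,\tau)$--regular triangulation $T_a$ and a $(q_b,\tau)$--regular triangulation $T_b$, both containing $\Omega_0$; thus $T_a$ and $T_b$ share $j$ homologically independent saddle connections, meaning $Z \in \Bj X, \tau)$. The second inclusion then follows by intersecting both sides with $\Gamma(S) \cdot Y$.

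To establish the claim I would examine the two alternatives in the definition of $\Omega_{q_a}(\ep)$. If $\Ext_{q_a}(\omega) \le \ep$, equation \eqnref{ext:saddle} applied with $t = b - a \le \tau$ gives
$$
\frac{1}{\Ext_{q_b}(\omega)} \gmul \frac{1}{\Ext_{q_a}(\omega)} - \tau \ge \frac{1}{\ep} - \tau,
$$
which forces $\Ext_{q_b}(\omega) \le \ep_1(\tau)$ as soon as $\tfrac{1}{\ep} - \tau$ dominates the required multiple of $\tfrac{1}{\ep_1(\tau)}$. If instead $\omega$ lies on the geodesic representative of some $\alpha \in \cS_{X_a}$ with $\Ext_{X_a}(\alpha) \le \ep$, then \thmref{ker} combined with $d_\cT(X_a, X_b) \le \tau$ yields $\Ext_{X_b}(\alpha) \le e^{2\tau}\ep$, which is at most $\ep_1(\tau)$ once $\ep \le e^{-2\tau}\ep_1(\tau)$; what remains is to verify that $\omega$ still appears on the geodesic representative of $\alpha$ in $q_b$. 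I would take $\ep_2(\tau)$ to be the minimum of the resulting thresholds on $\ep$.

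The hard part is this last verification: showing that a saddle connection on the geodesic representative of a short curve continues to sit on it after flowing by time at most $\tau$. The approach I would take is to exploit that $\omega$ has extremely small $q_a$--flat length (at most $\ell_{q_a}(\alpha)$, which is tiny by \eqnref{Maskit} and the smallness of $\Ext_{X_a}(\alpha)$), so its $q_b$--flat length is at most $e^\tau \ell_{q_a}(\omega)$; together with \lemref{size:radius} and \corref{CollarLemma}, this controls the thin--part geometry of $q_b$ around $\alpha$ and forces the combinatorics of the geodesic representative of $\alpha$ in the thin part to agree in $q_a$ and $q_b$. A robust fallback, should this argument be obstructed in a borderline configuration, is to apply \lemref{ext:saddle} directly to $\ell_{q_b}(\omega)$: the conclusion is either that $\Ext_{q_b}(\omega)$ is small enough to give $\Ext_{q_b}(\omega) \le \ep_1(\tau)$, or the existence of another curve in $X_b$ with very small extremal length and short length relation to $\omega$. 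In either outcome, after a possible further shrinkage of $\ep_2(\tau)$, we get $\omega \in \Omega_{q_b}(\ep_1(\tau))$, completing the claim.
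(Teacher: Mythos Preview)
Your approach is essentially the same as the paper's: choose $\ep_2(\tau)=e^{-2\tau}\ep_1(\tau)$, split into the two cases in \defref{Omega_q}, invoke \eqnref{ext:saddle} for the first and \thmref{ker} for the second, and then apply \lemref{Exist} at both endpoints. The paper's proof is exactly this, stated in two sentences.

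The one place you diverge is the ``hard part'' you flag: whether a saddle connection $\omega$ lying on the $q_a$--geodesic representative of $\alpha$ still lies on the $q_b$--geodesic representative. The paper does not comment on this because it is automatic: $q_b=g_{b-a}\,q_a$, and the \Teich flow acts on the flat structure by the affine map $(x,y)\mapsto(e^t x,e^{-t}y)$ in natural coordinates. Affine maps send straight segments to straight segments, and they preserve the local geodesic condition at a cone point (angle $\ge\pi$ on each side), since each Euclidean half-plane at the cone point is mapped linearly to itself and the horizontal prong directions are fixed; one checks directly that if two directions differ by at least $\pi$ in cone angle then so do their images. Hence the $q_a$--geodesic representative of $\alpha$ is carried by $g_{b-a}$ to the $q_b$--geodesic representative, and $\omega$ remains on it. With this observation your proof is complete and matches the paper's; the fallback via \lemref{ext:saddle} is correct but unnecessary.
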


\begin{proof}
It is enough to let $\ep_{2}(\tau)=e^{-2\tau} \ep_1$. 
Assume, $Z \in B(\Qje, X,\tau)$, $q_a$ and $q_b$ are the associated
quadratic differentials and $(b-a) < \tau$. Let $\omega_1,\ldots, \omega_j$ 
be disjoint homologically independent saddle connections counted
in $s(q_a, \ep)$. Then, for each $i$, by \eqnref{ext:saddle}, 
$$
\left|\frac 1{\Ext_{q_a}(\omega_i)} - \frac 1{\Ext_{q_b}(\omega_i)}\right| 
  \leq \tau, 
$$
and by \thmref{ker} the extremal length of any short curve $\alpha$ 
containing $\omega_i$ changes by at most a factor of at most $e^{2\tau}$. 
That is, $\omega_i \in \Omega_{q_b}(\ep_1)$. 
The arcs $\omega_i$ are still disjoint and homologically independent in $q_b$. 
Hence, the set $\{\omega_i\}$ can be extended to both a $(q_a, \tau)$--regular 
triangulation $T_a$ and a $(q_b, \tau)$--regular triangulations $T_b$ 
(\lemref{Exist}). Thus, by the definition $Z \in \Bj X, \tau)$. The proof of 
\eqnref{qu2} is similar.
\end{proof}

\subsection{Choosing a net} \label{Sec:Net} 
By a $(c,2c)$--separated net $\cN \subset \Moduli$ we mean a set of 
points in $\Moduli$ so that:
\begin{itemize}
\item the \Teich distance between any two net points in $\cN$ is at least
$c$, and 
\item any point in $\Moduli$ is within distance $2c$ of a point in $\cN$.
\end{itemize}
Let 
$$
\cN(X, \tau) = \bp(B (X, \tau)) \cap \cN.
$$ 

Then, it is easy to check (see Lemma 3 in \cite{EM:geodesic-counting}): 

\begin{lemma} \label{Lem:polynomial}
There exists a constant $c_{0}>0$ such that for any $c>c_{0}$, 
and $(c,2c)$ net $\Nt$ as above, we have 
\begin{equation}
\label{Eq:netpoints}
 | \cN(X, \tau) | \lmul \tau^{3g-3+p}.
\end{equation}
\end{lemma}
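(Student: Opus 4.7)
The plan is to apply Minsky's product region theorem (\thmref{PRT}) to decompose the Teichmüller ball into a product structure, and then count net points in each factor using the cusp geometry of $\Moduli$. The key observation is that although $\Moduli$ has finite total volume, it extends to infinite Teichmüller diameter along each cusp direction corresponding to a short curve; a $(c,2c)$-separated net therefore has infinitely many points, but a ball of radius $\tau$ reaches only depth $O(\tau)$ into any cusp, admitting at most $O(\tau/c)$ net points along each cusp direction.

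First, I would let $\cA = \cS_X$ denote the set of short curves at $X$ and apply \thmref{PRT} to identify, up to an additive constant $B$, the Teichmüller metric on $\cT_{\ep_0}(\cA)$ with the sup metric on the product $(\half^2)^{|\cA|} \times \cT(S \setminus \cA)$. Under this identification, $B(X,\tau)$ is contained in a product of factor balls of radius $\tau + B$, and $\bp$ corresponds to quotienting each $\half^2$ factor by the Dehn twist around the corresponding short curve and the last factor by the mapping class group of $S \setminus \cA$.

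Second, I would count in each factor. Each Dehn-twist quotient $\half^2 / \Z$ is a cusp of finite volume; a ball of radius $\tau + O(1)$ in it meets a $(c,2c)$-net in $O(\tau)$ points, the factor of $\tau$ coming from the depth direction (the hyperbolic $y$-range is $[y_0 e^{-\tau}, y_0 e^{\tau}]$, spanning hyperbolic distance $O(\tau)$) while the Dehn-twist-quotiented $\theta$-direction contributes only $O(1)$ per depth level. For the factor $\cM(S \setminus \cA)$, I would argue by induction on the complex dimension of the moduli space: since $\cM(S \setminus \cA)$ has complex dimension $3g-3+p-|\cA|$, the inductive hypothesis yields $O(\tau^{3g-3+p-|\cA|})$ net points in a ball of radius $\tau + O(1)$. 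The base case $|\cA|=3g-3+p$ has a zero-dimensional complementary factor contributing $O(1)$; the other base case, a compact thick moduli space, contains boundedly many net points.

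Multiplying the factor counts yields
$$
|\cN(X, \tau)| \lmul \tau^{|\cA|} \cdot \tau^{3g-3+p-|\cA|} = \tau^{3g-3+p},
$$
as claimed. The main subtle point I anticipate is ensuring compatibility of the product decomposition with the net and the projection $\bp$: distinct net points in $\Moduli$ should correspond to distinct configurations in the quotient product structure, which requires choosing $c_0$ large enough so that the additive error $B$ from \thmref{PRT} is uniformly absorbed by the net spacing across all strata $\cT_{\ep_0}(\cA)$, and so that different Dehn twist cosets in each $\half^2$ factor produce genuinely distinct net representatives.
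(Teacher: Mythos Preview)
The paper does not give a self-contained proof here; it simply cites Lemma~3 of \cite{EM:geodesic-counting}. Your approach via Minsky's product region theorem and induction on the complexity of $S\setminus\cA$ is the standard argument, and is essentially what that reference does. So in spirit you are aligned with the paper.

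One point deserves more care. You set $\cA=\cS_X$ and invoke \thmref{PRT} on all of $B(X,\tau)$, but the product region theorem applies only on $\cT_{\ep_0}(\cA)$, and a ball of radius $\tau$ centered at $X$ can exit this region: some $\alpha\in\cA$ may satisfy $\Ext_Y(\alpha)>\ep_0$ at points $Y$ of the ball. You need to say why this does not break the count. The clean fix is to stratify $\bp(B(X,\tau))$ by the set $\cA'\subset\cA$ of curves that remain $\ep_0$--short, and apply the product decomposition with $\cA'$ on each stratum; the curves in $\cA\setminus\cA'$ then live in the thick part of their $\half^2/\Z$ factor, which contributes only $O(1)$ net points, and the induction on $\cM(S\setminus\cA')$ gives $\tau^{3g-3+p-|\cA'|}$. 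Summing over the boundedly many subsets $\cA'$ preserves the bound. Alternatively, one can enlarge $\ep_0$ to an $\ep_0'$ depending on $\tau$ at the cost of a worse additive constant $B$ in \thmref{PRT}, but then one must check that $B$ grows slowly enough not to spoil the polynomial bound; the stratification argument is cleaner.

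A second minor point: the net $\cN$ lives in $\Moduli$, not in the product $(\half^2/\Z)^{|\cA|}\times\cM(S\setminus\cA)$, so you should argue that a $(c,2c)$--net in $\Moduli$ projects to (or is comparable to) a net in each factor. This follows from \thmref{PRT} once $c_0$ is chosen larger than the additive constant $B$, as you note at the end.
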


Let $\Nt = \bp^{-1}(\cN)$. We assume the $r_0>2c$, where $r_0$ is
the constant used to define $B (\Qs,  X, \tau)$ (see \secref{notation}).
We denote the intersection of a ball in \Teich space, $B(\param)$, with $\Nt$ by 
$\Nt(\param)$. That is, for $X,Y \in \T(S)$, 
\begin{align*}
\Nt(\Qs, X, \tau)&= B (\Qs,  X,\tau) \cap \Nt,\\
\Nt(\Qje, X, \tau)&= B (\Qje,  X,\tau) \cap \Nt,\\
\Nt(\Qs, X, Y, \tau)&= B (\Qs,  X, Y, \tau) \cap \Nt,\\
\intertext{and}
\Nt(\Qje, X, Y, \tau)&= B (\Qje,  X, Y, \tau) \cap \Nt.
\end{align*}

\subsection{The main inequality.} \label{Sec:ineq}
For a real-valued function $f \from \cM(S) \to \reals$, consider the
average function
$$
\big(\Aj  f \big)\from \cT(S) \rightarrow \reals,
$$
defined by 
\begin{displaymath}
\big(\Aj f\big)(X) = e^{-h\tau} \sum_{Z \in  \Nt(\Qje, X ,\tau)} f(Z).
\end{displaymath}
Here, as before
$$ h=\frac{\dim \Stratum +1}{2}.$$

Our main tool is the following ($\ep_{2}(\tau)$ is as in \lemref{quad}):
\begin{proposition}
\label{Prop:inequalities:saddles}
Given $\tau>0$, and $\ep< \ep_{2}(\tau)$ we have
\begin{equation}
\label{Eq:Main2}
\big(\Aj G\big)(X) \lmul \tau^m  e^{-j\tau}G(X).
\end{equation}
where $G$ is as in \eqnref{GG} and $m$ depends only on the topology of $S$.
\end{proposition}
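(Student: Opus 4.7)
The plan is to reduce the claimed inequality to the lattice counting \thmref{LatticeCount} by decomposing the sum defining $\Aj G(X)$ along the fibers of $\bp\from \cT(S)\to\cM(S)$. Since $G$ is $\Gamma(S)$--invariant it descends to $\cM(S)$, and since $\Nt=\bp^{-1}(\cN)$, grouping terms by their $\bp$--images gives
$$
(\Aj G)(X)=e^{-h\tau}\sum_{y\in\cN(X,\tau+O(1))}G(y)\cdot\bigl|\bp^{-1}(y)\cap B(\Qje,X,\tau)\bigr|.
$$
For each $y$ appearing in the sum I would pick any lift $Y\in\bp^{-1}(y)\cap B(X,\tau+O(1))$; the fiber intersection is precisely $B(\Qje,X,Y,\tau)$.

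Next I would apply the two main inputs. Since $\ep<\ep_2(\tau)$, \lemref{quad} gives $B(\Qje,X,Y,\tau)\subset \Bj X,Y,\tau)$, and \thmref{LatticeCount} then yields
$$
\bigl|\bp^{-1}(y)\cap B(\Qje,X,\tau)\bigr|\lmul \tau^{c_1}e^{(h-j)\tau}G(X)G(y),
$$
where $c_1=2(3g-3+p)$ absorbs $|\cS_X|+|\cS_Y|$ and I have used that $G(Y)=G(y)$. Substituting into the displayed equation,
$$
(\Aj G)(X)\lmul \tau^{c_1}\,e^{-j\tau}\,G(X)\cdot S(X,\tau),\qquad S(X,\tau):=\!\!\!\sum_{y\in\cN(X,\tau+O(1))}\!\!\!G(y)^{2}.
$$

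The main obstacle is then to establish a polynomial bound $S(X,\tau)\lmul\tau^{m_0}$ for a constant $m_0$ depending only on the topology of $S$. The plan is to partition $\cN(X,\tau+O(1))$ according to the topological type of the short-curve system $\cS_y$ of each net point, of which there are only finitely many $\Gamma(S)$--orbits. For a fixed type $\cS$, Minsky's product region theorem \thmref{PRT} identifies the corresponding thin stratum of $\cM(S)$, up to bounded distortion, with $(\half^2)^{|\cS|}\!/\Gamma_\cS\times\cM(S\setminus\cS)$; on this product I would combine the $(c,2c)$--separation of $\cN$ in the \Teich metric with the polynomial net count \lemref{polynomial} applied to each factor. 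The delicate point is that although $G(y)$ grows with depth in the $\half^2$--factors, the induced net density thins compensatingly, so the contribution at each depth is polynomial in $\tau$; summing over depths and over the finitely many topological types gives the desired bound. This is the step where the argument diverges most sharply from its analog in \cite{EM:geodesic-counting}, where the product region theorem could be applied directly to the whole moduli space rather than to the stratum.

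Combining these ingredients yields $(\Aj G)(X)\lmul \tau^{m}e^{-j\tau}G(X)$ with $m=c_1+m_0$, completing the proof.
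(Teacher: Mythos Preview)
Your proposed bound $S(X,\tau)=\sum_{y\in\cN(X,\tau+O(1))}G(y)^{2}\lmul\tau^{m_0}$ is false, and this is a genuine gap. If $X$ lies in the thick part and $\alpha$ is any curve, then moving distance $\tau$ into the $\alpha$--cusp produces net points $y$ with $\Ext_y(\alpha)\emul e^{-2\tau}$, hence $G(y)^{2}\gmul e^{2\tau}$; already a single such term makes $S(X,\tau)$ exponential. More systematically, in a one--curve cusp the $(c,2c)$--net in $\cM(S)$ has essentially one point at each depth $kc$, $k=1,\dots,\lfloor\tau/c\rfloor$, with $G^{2}\emul e^{kc}$ there, so $S(X,\tau)\gmul\sum_{k}e^{kc}\emul e^{\tau}$. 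Your heuristic that ``the induced net density thins compensatingly'' is simply wrong in $\cM(S)$: the net is separated, so its density is bounded above and below, and nothing offsets the exponential growth of $G^{2}$.

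The paper sidesteps this by invoking \lemref{count} ($|I_Y|\emul G(Y)^{2}$) at exactly the place where you apply \thmref{LatticeCount} directly. Grouping the sum by $y_i=\bp(Z)$ as you do, the paper uses \lemref{count} to write the number of net points over $y_i$ as $\emul|B(\Qje,X,Y_i,\tau)|/G(Y_i)^{2}$, so the $i$--th summand becomes
\[
G(Y_i)\cdot\frac{|B(\Qje,X,Y_i,\tau)|}{G(Y_i)^{2}}
\;\lmul\;\frac{\tau^{|\cS_X|+|\cS_{Y_i}|}e^{(h-j)\tau}G(X)G(Y_i)}{G(Y_i)}
\;=\;\tau^{|\cS_X|+|\cS_{Y_i}|}e^{(h-j)\tau}G(X),
\]
which is independent of $Y_i$. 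Summing over the $\lmul\tau^{3g-3+p}$ net points from \lemref{polynomial} finishes the argument. In other words, the two factors of $G(Y)$ you accumulate (one from the weight $G(Z)$, one from the upper bound in \thmref{LatticeCount}) are meant to be cancelled by the $G(Y)^{2}$ coming from orbit multiplicity, not bounded after the fact. (There is admittedly a notational wrinkle in the paper: with the literal definition $\Nt=\bp^{-1}(\cN)$ one has $\Nt(\Qje,X,Y_i,\tau)=B(\Qje,X,Y_i,\tau)$ and \eqref{Eq:size} collapses; the intended reading is that net points are counted up to the local $O(1)$--stabiliser clusters, which is precisely what \lemref{count} quantifies.)
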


\begin{proof}
Enumerate the elements of  $\cN(X, \tau)$ as $y_1, \ldots, y_k$ and 
let $Y_i \in \T(S)$ be a pre-image of $y_i$, $i=1, \ldots, k$. 
By \lemref{count}, every net point in $Z \in B (\Qje, \Nt, X, Y_i, \tau)$ is near 
at most $G(Y_i)$ points in $B(\Qje,  X,Y_i, \tau)$. That is, 
\begin{equation}\label{Eq:size} 
\Big| \Nt(\Qje, X, Y_i, \tau)  \Big| \, G(Y_i)^2 \emul
 \Big| B(\Qje, X, Y_i, \tau)  \Big| 
\end{equation}
Hence, we have
\begin{align*}
\big(\Aj G\big)(X) &= e^{-h\tau} \sum_{Z \in \Nt(\Qje, X, \tau)} G(Z) \\
& =  e^{-h\tau} \sum_{i=1}^k \sum_{Z \in \Nt(\Qje, X, Y_i, \tau)} G(Y_i) \\
& \emul e^{-h\tau} \sum_{i=1}^k 
 \frac {\Big|B(\Qje, X, Y_i, \tau) \Big|}{G(Y_i)}
 \tag{\eqnref{size}} \\
& \lmul e^{-h\tau} \sum_{i=1}^k  \tau^{|\cS_X| + |\cS_Y|} 
e^{(h-j)\tau} G(X)  \tag{ \thmref{LatticeCount} and \eqref{Eq:qu2}}\\
& \leq e^{-j\tau} \tau^m G(X). \tag{\eqnref{netpoints}} 
\end{align*}
Here, $m =  (9g-9+3p) \geq |\cS_X| + |\cS_Y| + (3g-3+p)$. 
\end{proof}

\subsection*{Trajectories of the random walk.}
Suppose $R \GG \tau$ and let $n$ be the integer part of $R/\tau$. By a 
\emph{trajectory of the random walk} we mean a map 
$$
\lambda \from \{ 0, n\} \to \Nt \subset \cT(S)
$$ 
such that, for all $0 < k \le n $, we have 
$d_\cT(\lambda_k , \lambda_{k-1})\leq \tau$, where $\lambda_k=\lambda(k)$. 
Let $\cP_{\tau} (X, R)$ denote the set of all trajectories for which
$d_{\cT}(\lambda_0,X) = O(1)$. 
For $j \in \natls$, let $\Pjt X,R)$ denote the set of all trajectories 
$\lambda \in \cP_\tau(X,R)$ so that, 
\begin{itemize}
\item 
for $1\le k \le n$
$$
\lambda_k \in \Nt(\Qs, \lambda_{k-1}, \tau).
$$
\item 
$$ 
\Big| \big\{k \st 1 \leq k \leq n, \; \lambda_k \in B(\Qje, \lambda_{k-1}, 
  \Nt,\tau) \big\}\Big| \geq \theta \cdot n.
$$
\end{itemize}
Given $X, Y \in \cT$, let $\Pjt X,Y,R)$ denote the set of all trajectories 
$\lambda \in \Pjt X,R)$ such that $$
d_\cT\big(\pp(Y), \pp(\lambda_n)\big) =O(1). $$
We say that a trajectory is {\em almost closed in the quotient} if 
$$
d_\cT\big(\pp(\lambda_0), \pp(\lambda_n)\big) =O(1). 
$$
Finally, let $\Pjht X,R)= \Pjt X, X, R)$ denote the subset of these trajectories 
starting from $X$ which are almost closed in the quotient.
Let $\ep_{2}(\tau)$ be as in \lemref{quad} and \propref{inequalities:saddles}.

\begin{lemma}
\label{Lem:est:tilde:P:j}
For any $\delta_0> 0$ there is $\tau_0>0$ so that
for $\tau>\tau_0$, $0\le\theta\leq1$ and $\ep<\ep_{2}(\tau_0)$ we have
\begin{equation}
\label{Eq:est:tilde:P:j:2}
\big|\Pjt X,Y, R)\big| \lmul e^{(h-j \theta+\delta_0) R}\; \frac{G(X)}{G(Y)}.
\end{equation}
In particular,
\begin{equation}\label{Eq:est:tilde:P:j}
\big|\Pjht X,R)\big| \lmul e^{(h-j \theta+\delta_0) R}.
\end{equation}
\end{lemma}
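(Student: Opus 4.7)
The plan is to iterate Proposition \ref{Prop:inequalities:saddles} along the random walk, exploiting the bias toward the compact part of moduli space that it provides. Because each trajectory in $\Pjt X,Y,R)$ satisfies $d_\cT(\pp(\lambda_n),\pp(Y))=O(1)$ and $G$ descends to a coarsely constant function on bounded balls in $\Moduli$, we have $G(\lambda_n)\emul G(Y)$, whence
\[
  \big|\Pjt X,Y,R)\big| \lmul \frac{1}{G(Y)}\sum_{\lambda}G(\lambda_n),
\]
where the sum ranges over all sequences $(\lambda_0,\ldots,\lambda_n)$ in $\Nt$ with $d_\cT(\lambda_0,X)=O(1)$, with $\lambda_k\in\Nt(\Qs,\lambda_{k-1},\tau)$ for $1\le k\le n$, and with at least $\theta n$ indices $k$ satisfying $\lambda_k\in\Nt(\Qje,\lambda_{k-1},\tau)$. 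Dropping the endpoint constraint on $\lambda_n$ only enlarges the right-hand side.

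Next, I would decompose the sum according to the subset $I\subset\{1,\ldots,n\}$ recording exactly which steps are \emph{short} (i.e., land in $\Nt(\Qje,\lambda_{k-1},\tau)$); there are at most $2^n$ such subsets. For a fixed $I$, I sum over $\lambda_n$ first and then inductively over $\lambda_{n-1},\ldots,\lambda_1$. A short step $k\in I$ contributes a factor $\tau^m e^{(h-j)\tau}$ by Proposition \ref{Prop:inequalities:saddles}, while a non-short step contributes $\tau^m e^{h\tau}$ by the $j=0$ analog of the same proposition (whose proof is identical but applies \thmref{LatticeCount} with no common-edge hypothesis). Using $|I|\ge\theta n$, $n\tau\eadd R$, and the fact that $\lambda_0$ ranges over a uniformly bounded set of net points near $X$ with $G(\lambda_0)\emul G(X)$, the telescoping yields, for each fixed $I$, a bound of the form $\tau^{mn}\, e^{(h-j\theta)R}\, G(X)$ (up to a multiplicative factor depending only on $\tau$, which is fixed throughout).

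Summing over all admissible $I$ produces
\[
  \big|\Pjt X,Y,R)\big| \lmul
  \frac{G(X)}{G(Y)}\,\exp\!\Big((h-j\theta)R + \tfrac{R}{\tau}\bigl(\log 2+m\log\tau\bigr)\Big).
\]
The hard part, and the reason $\tau$ must be chosen large depending on $\delta_0$, is absorbing the factor $2^n\tau^{mn}$ into $e^{\delta_0 R}$: this reduces to the inequality $(\log 2+m\log\tau)/\tau\le\delta_0$, which holds once $\tau\ge\tau_0(\delta_0)$ is sufficiently large. Fixing such a $\tau_0$ and then choosing $\ep<\ep_2(\tau_0)$ as supplied by \lemref{quad} ensures that every application of Proposition \ref{Prop:inequalities:saddles} is valid, and \eqnref{est:tilde:P:j:2} follows. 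The statement \eqnref{est:tilde:P:j} is the special case $Y=X$.
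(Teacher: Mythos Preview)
Your argument is correct and follows the same iterative scheme as the paper: reduce to a sum of $G(\lambda_n)$ over admissible trajectories, then peel off one step at a time using \propref{inequalities:saddles} (with $j$ for the short steps and with $j=0$ for the others), and finally choose $\tau$ large to absorb the polynomial-in-$\tau$ losses into $e^{\delta_0 R}$.

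The one bookkeeping difference is worth noting. You partition trajectories by the exact pattern $I\subset\{1,\dots,n\}$ of short steps and pay a combinatorial $2^n$ for the union over $I$. The paper instead attaches to each partial trajectory the weight $e^{j\,q_k(\lambda)\tau}$, where $q_k(\lambda)$ counts short steps among the first $k$; with this weight the short and non-short branches contribute the \emph{same} factor $\tau^m e^{h\tau}$ per step, so no $2^n$ appears and one gets $V_\tau(R)\lmul (C\tau)^{mn}G(X)e^{hR}$ directly, with the $e^{-j\theta R}$ extracted at the end from $q(\lambda)\tau\ge\theta R$. Your route is slightly more elementary; the paper's is slightly slicker, but both lead to the same threshold $\tau_0(\delta_0)$ up to constants.

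One small correction: the parenthetical ``up to a multiplicative factor depending only on $\tau$'' is misleading. Each application of \propref{inequalities:saddles} carries an implicit universal constant $C$ from the $\lmul$, and after $n$ iterations this compounds to $C^n=e^{(R/\tau)\log C}$, not a single constant. This is harmless---it is absorbed exactly as your $2^n$ and $\tau^{mn}$ are---but your displayed exponent should read $\tfrac{R}{\tau}(\log 2+m\log\tau+\log C)$, and the condition on $\tau_0$ should be $(\log 2+m\log\tau+\log C)/\tau\le\delta_0$.
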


\begin{proof}
Define
$$ 
q_{k}(\lambda)=  \Big| \big\{i  \, \big| \, 1 \leq i \leq k, \; \lambda_i \in 
\Nt(\Qje,  \lambda_{i-1},\tau) \big\} \Big|.
$$
This keeps track of the number of steps in the trajectory $\lambda$ 
(amount the first $k$ steps) that can be approximated by a segment in $\Qje$. 
For $0 < r=k\tau < R$, let $\Pjht X,Y, R, r)$ be the set of trajectories 
obtained from a trajectory $\lambda \in \Pjht X,Y, R)$ but truncated after 
$k = r/\tau$ steps. Define
\begin{displaymath}
V_{\tau}(R,r) = \sum_{\lambda \in \Pjt X,Y, R, r)} 
 G(\lambda_k) e^{j \, q_k(\lambda)\tau}.
\end{displaymath}
Also, let $R = n \tau$, $q(\lambda) = q_n(\lambda)$ and
\begin{displaymath}
V _{\tau}(R) = \sum_{\lambda \in \Pjt X,Y, R)} 
   G(\lambda_n) e^{j q(\lambda) \tau}.
\end{displaymath}
Note that $G(Y) \emul G(\lambda_n)$ and $q(\lambda) \tau \geq \theta R$. 
Therefore,
\begin{equation}
\label{Eq:frac:estimate}
|\Pjt X,Y, R)| \lmul  \frac{V_{\tau}(R)}{G(Y) e^{j \theta R}}.
\end{equation}

If $\lambda_{k+1} \in  \Nt(\Qje, \lambda_{k}, \tau)$
then $q_{k+1}(\lambda)=q_{k}(\lambda)+1$ and 
$q_{k+1}(\lambda)=q_{k}(\lambda)$ otherwise. Hence,

\begin{align*}
V_{\tau}(R,r+\tau) 
& = \sum_{\lambda \in \Pjt X,Y, R, r+\tau)} 
   G(\lambda_{k+1}) e^{j q_{k+1}(\lambda)\tau} \\
& \leq  \sum_{\lambda \in \Pjt X,Y, R, r) }
 \left( \sum_{\lambda_{k+1}  \in  \Nt(\Qje,  \lambda_k,\tau)} 
   G(\lambda_{k+1}) e^{j (q_k(\lambda)+1) \tau}+ \right. \\  
 & \left. \qquad \qquad  \qquad \qquad \qquad \qquad \quad+
 \sum_{\lambda_{k+1} \not \in  \Nt(\Qje,  \lambda_k,\tau)} 
 G(\lambda_{k+1}) e^{j q_k(\lambda)\tau} \right).\\
\end{align*}
The two summands inside of the parenthesis are similar to the average
defined above. Using \eqnref{Main2}, the first term is less than
(up to a multiplicative error) 
$$
e^{j (q_k(\lambda)+1)\tau} e^{h\tau}(\Aj G)(\lambda_k)
\lmul e^{j (q_k(\lambda)+1)\tau} e^{h\tau} \tau^m e^{-j\tau} 
G(\lambda_k).
$$ 
and the second term is less than (again, up to a multiplicative error)
$$
e^{j q_k(\lambda)\tau} e^{h\tau} (A_{\tau,0} G)(\lambda_k)
\lmul e^{j q_k(\lambda)\tau} e^{h\tau} \tau^m G(\lambda_k).
$$ 
Note that the right hand sides of the above two equations are the same. 
Hence, 
\begin{align} \label{Eq:q:Delta:iter}
V_{\tau}(R,r+\tau) &\lmul  \tau^m e^{h\tau} 
\sum_{\lambda \in \cP_{\tau}(\Stratum, X,R,r)}
 e^{j q_k(\lambda)\tau} G(\lambda_k) \notag \\
 & = \tau^m \, e^{h \tau} \, V_{\tau}(R,r).
 \end{align}
Now iterating \eqref{Eq:q:Delta:iter} $n= R/\tau$ times we get
\begin{equation}
\label{Eq:qj:estimate}
V_{\tau}(R) \le (C\; \tau)^{mn} \, G(X) \, e^{h n\tau}
= G(X) e^{(h+ \frac{m (\log (\tau)+\log(C))}{\tau}) R},
\end{equation}
where $C>0, $ and $m \in \natls$ are uniform constants.
We choose $\tau$ large enough so that 
$$
\frac{m \log(\tau)+\log(C)}{\tau} < \delta_0.
$$ 
The lemma follows from \eqnref{frac:estimate} and \eqnref{qj:estimate}.
\end{proof}

Let $\Njt X,Y, R)$ be the number points $Z \in B(\Qs, X, Y, R)$ 
(see \secref{main:counting} for definition) 
so that associated geodesic $(X_t, q_t)$ spends $\theta$
proportion of time in $\Qje$. Similarly, for $x \in \Moduli$, let 
$N_\theta(\Cje, x,R)$ 
be the number of conjugacy classes mapping classes 
associated to closed geodesics $\bg$ in $\Stratum$ of length at most $R$ 
which pass within a uniformly bounded distance of the point $x$ and so that 
for at least $\theta$ fraction of the points $(x_t, q_t) \in \bg$, 
$s(q_t,\delta) \ge j$ (see \secref{ssc}). As we shall see in the proof of 
the lemma below, for $x=\bp(X)$, $\Njt X,X, R)$ may be much larger than 
$N_\theta(\Cje, x, R)$.

\begin{lemma} \label{Lem:geodeiscs:to:trajectories}
For any $\delta_1> 0$, there is $\tau_1$ so that, for $\tau>\tau_1$
$X \in \cT(S)$ and any sufficiently large $R$ 
(depending only on $\delta_1, \tau$) we have
\begin{equation}
\label{Eq:Nx:tilde:S}
N_\theta(\Cje, \bp(X), (1-\delta_1)R) \lmul  \Big| \Pjt X, R) \Big|,
\end{equation}
and
\begin{equation}
\label{Eq:Nx:tilde:S:1}
\Njt  X, Y, (1-\delta_1)R) \lmul  \Big| \Pjt X, Y, R) \Big| \; G(Y)^2.
\end{equation}

\end{lemma}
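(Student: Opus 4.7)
The plan is to associate to each closed geodesic (respectively, each lattice point) a random-walk trajectory in $\Pjt X, R)$ (respectively, in $\Pjt X, Y, R)$), and then bound the multiplicity of this association. Given a closed geodesic $\bg$ in $\Stratum$ of length $L \le (1-\delta_1)R$ passing within bounded distance of $x = \bp(X)$, I would lift $\bg$ to a \Teich geodesic segment $\cG \from [0, L] \to \QTs$ with $d_\T(\cG(0), X) = O(1)$ and $\cG(L) = \gg \cdot \cG(0)$, where $\gg \in \Gamma(S)$ lies in the conjugacy class determined by $\bg$. The lattice-point case is parallel: a point $Z = \gg \cdot Y$ whose associated geodesic from near $X$ has length $L \le (1-\delta_1)R$ and spends $\theta$-fraction of its time in $\Qje$ produces a segment $\cG \from [0, L] \to \QTs$ with $\cG(0)$ near $X$ and $\cG(L)$ near $Z$.

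Next I would discretize. Setting $n = \lfloor R/\tau \rfloor$, sample $\cG$ at times $t_k = kL/n$ for $k = 0, \ldots, n$. For $R$ sufficiently large (depending on $\tau$ and $\delta_1$), one has $L/n < \tau$, so each sub-segment has length at most $\tau$. Choose net points $\lambda_k \in \Nt$ with $d_\T(\lambda_k, \cG(t_k)) = O(1)$; the segment $\cG[t_{k-1}, t_k] \subset \QTs$ witnesses $\lambda_k \in \Nt(\Qs, \lambda_{k-1}, \tau)$. If the geodesic spends at least $\theta L$ time in $\Cje$, then the initial quadratic differential of at least $\theta n - O(1)$ of these sub-segments has at least $j$ homologically independent short saddle connections in $\Omega_q(\ep)$, so $\lambda_k \in \Nt(\Qje, \lambda_{k-1}, \tau)$ for these $k$. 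The additive $O(1)$ error is absorbed by the hypothesis that $R$ is sufficiently large (and further absorbed into the $\delta$-exponent when the lemma is invoked downstream); the resulting trajectory therefore lies in $\Pjt X, R)$ in the closed-geodesic case, and in $\Pjt X, Y, R)$ in the lattice case, since $\bp(\lambda_n) = \bp(Z) = \bp(Y)$.

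Finally, I would control the multiplicity of the associations. In the closed-geodesic case, the endpoint $\lambda_n$ lies within $O(1)$ of $\gg \cdot \lambda_0$, which recovers the conjugacy class $[\gg]$ up to bounded ambiguity, so $\bg \mapsto \lambda$ is $O(1)$-to-one and \eqref{Eq:Nx:tilde:S} follows. In the lattice case, $\lambda_n$ determines the orbit representative $Z \in \Gamma \cdot Y$ only up to the set of elements of $\Gamma \cdot Y$ within bounded distance of $\lambda_n$; by \lemref{count} this set has cardinality at most $O(G(Y)^2)$, which yields the factor $G(Y)^2$ in \eqref{Eq:Nx:tilde:S:1}.

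The main subtlety is the $\Qje$-proportion step: a continuous proportion $\theta$ of length along $\cG$ must translate into a combinatorial count of at least $\theta n$ discrete steps in $\Qje$. The error in this conversion is only an additive $O(1)$ (coming from boundary effects of $\cG^{-1}(\Qje) \subset [0, L]$), and is absorbed into the slack by the $(1-\delta_1)$ factor together with the assumption $R \ge R_0(\delta_1, \tau)$, so no essential difficulty arises.
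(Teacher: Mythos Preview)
Your overall strategy --- lift, discretize, then bound the multiplicity of the map to trajectories --- is the right one and matches the paper. The lattice-point case \eqref{Eq:Nx:tilde:S:1} is handled correctly: the trajectory determines $Z \in \Gamma \cdot Y$ only up to the $O(G(Y)^2)$ orbit points within bounded distance of $\lambda_n$, exactly as you say.

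The gap is in the closed-geodesic case \eqref{Eq:Nx:tilde:S}. You assert that the endpoint data $(\lambda_0, \lambda_n)$ recovers the conjugacy class $[\gg]$ up to \emph{uniformly} bounded ambiguity, so that $\bg \mapsto \lambda$ is $O(1)$-to-one. But all you actually know is that $\gg$ moves a point near $\lambda_0$ to a point near $\lambda_n$; the number of such $\gg \in \Gamma(S)$ is $|I_X| \emul G(X)^2$ by \lemref{count}, and distinct such $\gg$ can perfectly well lie in distinct conjugacy classes (e.g.\ $\gg$ and $\gg D_\alpha^k$ for $\alpha \in \cS_X$ and small $k$). So your argument only yields
\[
N_\theta(\Cje, \bp(X), (1-\delta_1)R) \lmul G(X)^2 \, \big| \Pjt X, R) \big|,
\]
which is strictly weaker than \eqref{Eq:Nx:tilde:S} when $X$ is thin. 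This matters downstream: in the proof of \thmref{j:short} one sums over all net points, and by \lemref{veech} the relevant $G(X)^2$ can be as large as $e^{cR}$, which swamps the $e^{\delta R/3}$ slack.

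The paper repairs this with a symmetrization trick: instead of choosing one lift per conjugacy class, it counts \emph{all} lifts of $\bg$ that start within $O(1)$ of $X$. There are approximately $|I_X|$ such lifts for each conjugacy class, so the left side becomes $|I_X| \cdot N_\theta(\Cje, x, \cdot)$. On the other side, if two lifts produce the same trajectory they fellow-travel, hence the corresponding elements differ by something in $I_X$; so the map (lifts) $\to$ (trajectories) is at most $|I_X|$-to-one. The two factors of $|I_X|$ cancel, giving \eqref{Eq:Nx:tilde:S} with the correct uniform constant.
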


\begin{proof} 
Recall the definition of 
$$
  I_X=\big\{ \gg \in \Gamma(S) \,\big|\, d_{\cT}(X, \gg \cdot X) =O(1) \big\} .
$$ 
from \lemref{count}. Consider a closed geodesic $\bg$ in $\Stratum$ which 
intersects a uniformly bounded neighborhood of $x=\pp(X)$. Let $[\bg]$ denote 
the corresponding conjugacy class in $\Gamma(S)$. Then there are approximately
$|I_X|$ lifts of $[\bg]$ to $\cT_g$ which start within a bounded distance of $X$. 
Each lift $\cG$ is a geodesic segment of length equal to the length of
$\bg$.

We can mark points distance $\tau$ apart on $\cG$, and replace 
these points by the nearest net points in $\Nt$. (This replacement is the 
cause of the $\delta_1R$ error). This gives a map $\Psi$ from lifts of geodesics 
to trajectories. If the original geodesic $\bg$ has length at most $(1-\delta_1)R$
and has $s(q_t, \delta) \geq j$ for $\theta$ fraction of its points, then  
the resulting trajectory belongs to $\Pjt X, R)$.

If two geodesic segments map to the same trajectory, then the segments
fellow travel within $O(1)$ of each other. In particular if $\gg_1$ and
$\gg_2$ are the pseudo-Anosov elements corresponding to the two
geodesics, then $d_{\cT}(\gg_2^{-1} \gg_1 X, X) = O(1)$, thus $\gg_2^{-1} 
\gg_1 \in
I_X$.

We now consider all possible geodesics contributing to
$N_\theta(\Cje, x,(1-\delta_1)R)$; for each of these we consider all the
possible lifts which pass near $X$, and then for each lift consider
the associated random walk trajectory. We get:
\begin{displaymath}
|I_X|\, N_\theta(\Cje, x, (1-\delta_1)R)  \lmul  
|I_X| \, \big|\Pjt X,R)\big|.
\end{displaymath}
The factor of $|I_X|$ on the left hand side is due to the fact that
we are considering all possible lifts which pass near $X$, and the
factor of $|I_X|$ on the right is the maximum possible number of times
a given random walk trajectory can occur as a result of this process. Thus,
the factors of $|I_X|$ cancel, and the lemma follows.
Note that by \lemref{count} (See also, \thmref{LatticeCount}) 
 $|I(Y)| \lmul G(Y)^2$. An argument similar to the proof of the 
first part implies \eqnref{Nx:tilde:S:1}.
\end{proof}

We need he following lemma which is due to Veech \cite{Veech}. 
\begin{lemma}
\label{Lem:veech}
Suppose $\bg$ is a closed geodesic of length at most
$R$ on $\cM(S)$. Then for any $x \in \bg$, any $X$ so that $\bp(X)=x$
and every simple closed curve 
$\alpha$
$$
\Ext_X(\alpha) \gmul e^{-(6g-4+2p)R}.
$$
\end{lemma}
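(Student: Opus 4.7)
The plan is to exploit the pseudo-Anosov structure of the closed geodesic, together with Kerckhoff's theorem and the fact (stated in \secref{bounded}) that any two curves that are short on the same Riemann surface are pairwise disjoint. Since $\bg \subset \cM(S)$ is a closed geodesic of length $\ell \leq R$, it is the projection of the axis of a pseudo-Anosov mapping class $\gamma \in \Gamma(S)$ of translation length $\ell$; any lift $X \in \cT(S)$ of $x \in \bg$ lies on this axis, so $d_\cT(X, \gamma^n X) \leq nR$ for every $n \geq 0$.

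First I would apply \thmref{ker} to the pair $(X, \gamma^n X)$, obtaining
\begin{equation*}
\Ext_X(\gamma^{-n}\alpha) \;=\; \Ext_{\gamma^n X}(\alpha) \;\leq\; e^{2nR}\,\Ext_X(\alpha)
\end{equation*}
for every simple closed curve $\alpha$ and every $n \geq 0$. Since a pseudo-Anosov has exactly two fixed points in $\PML(S)$, namely the projective classes of its stable and unstable foliations, and neither is a simple closed curve, the iterates $\{\gamma^{-n}\alpha\}_{n\geq 0}$ are pairwise distinct elements of $\cS$.

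Now suppose, for contradiction, that $\Ext_X(\alpha) < \ep_0 \, e^{-2(3g-3+p)R}$, where $\ep_0$ is the threshold from \secref{extremal}. Then for each $n \in \{0,1,\ldots, 3g-3+p\}$ the estimate above gives $\Ext_X(\gamma^{-n}\alpha) < \ep_0$, so each $\gamma^{-n}\alpha$ is short on $X$. By the Minsky disjointness property invoked in \secref{bounded}, the short curves on $X$ are pairwise disjoint; we therefore obtain $3g-2+p$ pairwise disjoint distinct simple closed curves on $S$, contradicting the topological bound $3g-3+p$ on the size of any curve system. Hence $\Ext_X(\alpha) \gmul e^{-(6g-6+2p)R}$, which is strictly stronger than the claimed bound $\Ext_X(\alpha) \gmul e^{-(6g-4+2p)R}$. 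No substantial obstacle is anticipated: the only points to verify are the Minsky disjointness at the threshold $\ep_0$ (already part of the paper's standing hypothesis) and the fact that pseudo-Anosovs fix no essential simple closed curve isotopy class, both of which are standard.
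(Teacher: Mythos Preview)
Your argument is correct. The paper does not give its own proof of this lemma; it simply attributes the statement to Veech \cite{Veech} and uses it as a black box. Your self-contained argument via Kerckhoff's formula plus the pigeonhole on disjoint short curves is exactly the standard way to recover this bound, and in fact yields the slightly sharper exponent $6g-6+2p$ rather than $6g-4+2p$. The only step worth flagging is the distinctness of the iterates $\gamma^{-n}\alpha$: you correctly deduce this from the fact that no nontrivial power of a pseudo-Anosov fixes a simple closed curve, which is standard.
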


\begin{proof}[Proof of \thmref{j:short}]
Let $\delta>0$. Choose $\delta_0, \delta_1 \leq \delta/3$. 
Now choose $\tau\geq \max\{\tau_0, \tau_1\}$ and let $R$ be large enough 
so that Equations \eqref{Eq:est:tilde:P:j} and \eqref{Eq:Nx:tilde:S} hold. 
We get,
\begin{equation}
\label{Eq:Nx:Q}
N_\theta(\Cje, x, R) \lmul  e^{(h-j \theta +2\delta/3)R}.
\end{equation}
Finally 
$$
N_\theta(\Cje R) \leq \sum_{x \in \cN} \Njt  x,R),
$$
where $\cN$ is the net chosen above. In view of \lemref{veech} and 
\lemref{polynomial}, the number of relevant points 
in the net is at most polynomial in $R$. However, for $R$ large enough, this
polynomial is less than $e^{\delta R/3}$. Thus \thmref{j:short} follows.
\end{proof}

\begin{proof}[Proof of \thmref{short:saddle}]
Let $\bg$ be a closed geodesic in $\Stratum \setminus \cK$. 
By taking $\cK$ large enough we can assure that every quadratic differential 
along $\bg$ has an arbitrary short saddle connection. 
We choose $\cK$ so that \lemref{ext:saddle} implies that any
such quadratic differential $(x,q)$, $\Omega_q(\ep)$ is non-empty
for $\ep \leq \ep_{2}(\tau)$. Hence the number of disjoint homologically independent 
saddle connections in $\Omega_q(\ep)$ is at least one. That is, $\bg$ is counted in 
$N_\theta(\Cje, R)$ for $j=1$ and $\theta=1$. The theorem now follows from 
\thmref{j:short}. 
\end{proof}

\begin{proof}[Proof of \thmref{j:short:XY}]
We can use the argument applied in the proof of \thmref{j:short}. 
Let $0< \delta_0,\delta_1 \leq \delta/3$.  Choose a net  satisfying 
\lemref{polynomial}. Then choose $\tau\geq \max\{\tau_0, \tau_1\}$ and let 
$R$ be large enough so that Equations \eqref{Eq:est:tilde:P:j:2} and \eqref{Eq:Nx:tilde:S:1} hold. As in the proof of \thmref{j:short}, 
\eqnref{XYr} follows from \lemref{polynomial} and \lemref{veech}.
\end{proof}

\section{The Hodge Norm and the Hodge Distance.} \label{Sec:Hodge}
In this section, we use the Hodge norm \cite{forni} to show that in
any compact  subset of $\Stratum$ the geodesic flow is uniformly
hyperbolic: see \cite{ABEM} and \remref{hyperbolicity} below. 
There are many approaches to proving hyperbolic like behavior for
the \Teich geodesic flow in different settings, see for example \cite{AGY, AG, forni, 
Hamenstadt:BM, Veech}.

Let $\cH^1\cT(S)$ be the bundle of area one abelian differentials over
$\T(S)$. We also denote by $g_t$ the geodesic flow on 
$\cH^1\cT(S)$ (where we square an abelian differential to get 
a quadratic differential). 

\subsection{Hodge norm.}
Fix a point $(X,\phi)$ in $\cH^1\cT(S)$,
where  $X \in \T(S)$ and $\phi$ is an abelian differential on $X$. 
Let $\pi\from \cH^1\cT(S)  \to \cT(S)$ and 
$\bp \from \cH^1\cT(S) \to \cH^1\Moduli$ be natural maps as in \secref{maps}. 
Let $\|\param \|_{H,t}$ denote the Hodge norm on the surface 
$X_t = \pi(g_t \phi)$. Also, for each abelian differential $\phi$, 
let $\Re(\phi), \Im(\phi)\in H^1(X, R)$ be
forms obtained by the real part and the imaginary part of the holonomy.

The following fundamental result is due to Forni \cite[\S{2}]{forni}:
\begin{theorem}
\label{Thm:forni}
For any $\lambda \in H^1(X,\reals)$ and any $t \ge 0$,
\begin{displaymath}
\|\lambda\|_{H,t} \le e^{t} \| \lambda\|_{H,0}.
\end{displaymath}
If, in addition, $\lambda \wedge \Re(\phi) = \lambda \wedge
\Im(\phi) = 0$ and, for some compact subset $\cK$ of $\cH^1 \Moduli$,
the segment $[\phi,g_t \phi]$ starts and ends in $\bp^{-1}(\cK)$
and spends at least half the time in $\bp^{-1}(\cK)$, then we have
\begin{displaymath}
\|\lambda\|_{H,t} \le e^{(1-\alpha)t} \| \lambda\|_{H,0},
\end{displaymath}
where $\alpha > 0$ depends only on $\cK$.
\end{theorem}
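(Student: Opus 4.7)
The plan is to represent the class $\lambda$ by its harmonic form $\lambda_t$ on $X_t$, so that $\|\lambda\|_{H,t}^2 = \int_{X_t} \lambda_t \wedge *_t \lambda_t$, and then differentiate this quantity along the Teichm\"uller flow. Using the complex structure of $X_t$, decompose the harmonic form into its $(1,0)$ and $(0,1)$ parts, $\lambda_t = \Lambda_t + \overline{\Lambda_t}$, where $\Lambda_t$ is the unique holomorphic $1$-form on $X_t$ whose real part is cohomologous to $\lambda$. Expand $\Lambda_t$ in an orthonormal basis of $H^{1,0}(X_t)$ whose first element is $\phi_t/\|\phi_t\|$, writing $\Lambda_t = m_t \phi_t + \xi_t$ with $\xi_t \perp \phi_t$.

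The main analytic input is Forni's variational formula: along the geodesic one has an identity of the form
\begin{displaymath}
\frac{d}{dt} \log \|\lambda\|_{H,t}^2 = -2\,\Re H_t(\Lambda_t, \Lambda_t) \big/ \|\Lambda_t\|_H^2,
\end{displaymath}
where $H_t$ is a sesquilinear form on $H^{1,0}(X_t)$ controlled by the Beltrami differential $\bar\phi_t/\phi_t$ of the Teichm\"uller deformation. Since $|\bar\phi_t/\phi_t| \equiv 1$, the Cauchy--Schwarz inequality gives $|H_t(\Lambda_t, \Lambda_t)| \le \|\Lambda_t\|_H^2$, and therefore $\left|\frac{d}{dt} \log \|\lambda\|_{H,t}^2\right| \le 2$. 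Integrating yields the universal bound $\|\lambda\|_{H,t} \le e^t \|\lambda\|_{H,0}$.

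For the refined estimate, note that $\lambda \wedge \Re(\phi) = \lambda \wedge \Im(\phi) = 0$ translates into the coefficient $m_t$ in the decomposition of $\Lambda_t$ being forced to vanish (up to an algebraic identity preserved by the flow), so $\Lambda_t$ lies in the orthogonal complement of $\phi_t$ in $H^{1,0}(X_t)$. On this complement the Cauchy--Schwarz bound above is strict, and by a compactness argument one can show that on $\bp^{-1}(\cK)$ there exists $\alpha(\cK) > 0$ with
\begin{displaymath}
\frac{|H_t(\xi,\xi)|}{\|\xi\|_H^2} \le 1 - 2\alpha \quad\text{for all } \xi \in \phi_t^\perp \setminus \{0\}.
\end{displaymath}
This uses that $\sup |H_t|$ restricted to the complement of the tautological plane is a continuous function on $\cH^1\cM(S)$ that is strictly less than $1$ everywhere (a consequence of the Riemann bilinear relations), hence bounded below by such a $2\alpha$ on the compact set $\cK$. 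Splitting the integral $\int_0^t$ into the part spent in $\bp^{-1}(\cK)$ (length at least $t/2$, contributing $\le 2(1-2\alpha)$ to the integrand) and the complementary part (contributing $\le 2$), one obtains $\|\lambda\|_{H,t} \le e^{(1-\alpha) t} \|\lambda\|_{H,0}$.

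The main obstacle is the identification of the orthogonality hypothesis with the condition $m_t = 0$ and proving that this vanishing is preserved by the flow; the flow preserves the wedge pairing with $\Re(\phi)$ and $\Im(\phi)$ because these classes are rescaled by $e^{\pm t}$ under $g_t$, but writing down explicitly how the decomposition $\Lambda_t = m_t \phi_t + \xi_t$ interacts with this requires care with complex versus real coefficients. The secondary technical point is establishing that the Hermitian form $H_t$ restricted to the tautological complement has operator norm strictly less than $1$ on every stratum point; this is essentially the Kontsevich--Zorich non-degeneracy statement, and I would cite Forni's paper for this rather than reprove it.
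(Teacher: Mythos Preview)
The paper does not prove this statement at all: it is stated as ``the following fundamental result is due to Forni \cite[\S 2]{forni}'' and used as a black box. Your sketch is essentially an outline of Forni's own argument (the variational formula for the Hodge norm, the Cauchy--Schwarz bound giving the trivial estimate $e^t$, and the spectral gap of the second fundamental form $B_q$ on the symplectic complement of the tautological plane, uniform on compact sets by continuity). As such it goes well beyond what the paper does, and the outline is broadly correct; the two technical worries you flag (invariance of the orthogonality condition under the flow, and strict inequality of the form on the complement) are exactly the right ones, and both are handled in Forni's paper rather than here.
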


 \thmref{forni} gives a partial hyperbolicity property of
 the geodesic flow on spaces of abelian differentials. In our
 application, we need a similar property for compact subsets of the spaces 
$\QMs$ of quadratic differentials.

\subsection{Quadratic and abelian differentials} \label{Sec:quad}
Here, we briefly treat the case when $q \in \mathcal{Q}\mathcal{M}(S)$ is 
not the global square of an abelian differential. 
A standard construction, given $X \in \T(S)$ and $q$ a quadratic 
differential on $X$, is to pass to the possibly ramified double cover on which the 
foliation defined by $q$ is orientable. More precisely, we consider the canonical 
(ramified) double cover 
$\phi \from \tilde{X} \rightarrow X$ such that $\phi^{*}(q)=\phi^2$.
(See the proof of \lemref{NonOrientable} for the explicit construction.)
The set of critical values of $\phi$ coincides with the set of zeros of $q$ 
with odd degree.  

This yields a surface $\tilde{X}$ with an abelian differential $\phi$. However, 
even if $\bp(X)$ belongs to a compact 
subset of $\Moduli$, there may be a curve that has a very small 
extremal length in $\tilde{X}$. This may occur since 
the flat structure defined by $q$ may have an arbitrarily short saddle connection
connecting distinct zeroes. Such a saddle connection lifts to a very short loop 
in the double cover. Let $\ell_{min}(q)$ denote the length of the shortest
saddle connection in the flat metric defined by $q$. We have, 
$$\ell_{min}(\phi) \geq \ell_{min}(q). $$
That is, if $q$ does not have any short saddle connection, then 
$\phi$ also does not have any short saddle connections.

\subsection{The Hodge norm on relative cohomology.}
Let $(X,q) \in \QTs$ and let $\Sigma$ be the set of singularities of $q$. 
Let $\tilde X$ be as before and $\tilde \Sigma$ be the pre-image
of $\Sigma$. On $\tilde{X}$, $\tilde q$ has a canonical square root which 
we denote by $\phi$. To simplify the notation, if $q$ is a square of an 
abelian differential, let $\tilde{X} =X,$ $\tilde{\Sigma}=\Sigma$. 

Let $\bj\from H^1(\tilde X,\tilde \Sigma,\reals) \to H^1(\tilde X,\reals)$ denote the
natural map. We define a norm $\| \param \|$ on the relative cohomology group
$H^1(\tilde X,\tilde \Sigma,\reals)$ as follows:
\begin{equation}
\label{Eq:df:relative:hodge}
\|\lambda\| = \| \bj(\lambda)\|_H + \sum_{(p,p') \in \Sigma \times \Sigma}
\left|\int_{\gamma_{p,p'}} (\lambda - \gh)\right|,
\end{equation}
where $\| \param \|_H$ denotes the Hodge norm on $H^1(\tilde X,\reals)$,
$\gh$ is the harmonic representative of the cohomology class
$\bj(\lambda)$ and $\gamma_{p,p'}$ is any path connecting the zeroes $p$
and $p'$. Since $\bj(\lambda)$ and $\gh$ represent the same class in
$H^1(\tilde X,\reals)$, the \eqnref{df:relative:hodge} does not
depend on the choice of $\gamma_{p,p'}$.

Let $q_t$, $X_t$ and $\phi_t$ be defined as usual and let $\| \param \|_t$ 
denote the norm \eqref{Eq:df:relative:hodge} on the
surface $\tilde X_t= \pi(g_t \phi)$. We have the following analogue 
of \thmref{forni}:

\begin{theorem}
\label{Thm:decay:relative:norm}
Let $\cK$ be a compact subset $\QMs$. Then there is $t_0>0$
so that for $t >t_0$ the following holds. Suppose $\bp(q_0), \bp(q_t) \in \cK$ 
and that the geodesic segment $[q_0, q_t]$ spends at 
least half the time in $\bp^{-1}(\cK)$. Suppose 
$\lambda \in H^1(\tilde{X},\tilde{\Sigma},\reals)$ with 
$$
\bj(\lambda) \wedge \Re (\phi) 
= \bj(\lambda) \wedge \Im (\phi) = 0.
$$
Then we have
\begin{displaymath}
\|\lambda\|_t \le e^{(1-\bar \alpha)t} \| \lambda\|_0,
\end{displaymath}
where $\bar \alpha > 0$ depends only on $\cK$.
\end{theorem}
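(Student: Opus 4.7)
The plan is to split $\|\lambda\|_t = \|\bj(\lambda)\|_{H,t} + R_t(\lambda)$, where
\begin{displaymath}
R_t(\lambda) := \sum_{(p,p')\in\tilde\Sigma\times\tilde\Sigma}|u_t(p')-u_t(p)|
\end{displaymath}
and $u_t$ is determined up to an additive constant by $du_t = \lambda - \gh_t$. I will handle the absolute part via Forni's theorem and bound the relative part by a $t$-uniform constant multiple of $\|\lambda\|_0$; the overall decay follows because a uniformly bounded quantity is absorbed into $e^{(1-\bar\alpha)t}\|\lambda\|_0$ for any $\bar\alpha<\alpha$ once $t$ is sufficiently large, $\alpha$ being the Forni decay rate.

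\textbf{Absolute part.} I first pass to the double cover: since $\bp(q_s)\in\cK$ implies $\ell_{min}(q_s)\ge\ell_0(\cK)>0$ and by \secref{quad} one has $\ell_{min}(\phi_s)\ge\ell_{min}(q_s)$, the lifted trajectory stays in a fixed compact subset $\hat\cK\subset\cH^1\Moduli$ whenever $\bp(q_s)\in\cK$; in particular it starts and ends in $\hat\cK$ and spends half of $[0,t]$ there. Together with the hypothesis $\bj(\lambda)\wedge\Re(\phi)=\bj(\lambda)\wedge\Im(\phi)=0$, \thmref{forni} then yields
\begin{displaymath}
\|\bj(\lambda)\|_{H,t} \le e^{(1-\alpha)t}\|\bj(\lambda)\|_{H,0} \le e^{(1-\alpha)t}\|\lambda\|_0
\end{displaymath}
for some $\alpha=\alpha(\hat\cK)>0$.

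\textbf{Relative part.} Since $\lambda-\gh_t$ is exact, the value $u_t(p')-u_t(p)$ is path-independent, so I evaluate it on a time-$0$ shortest saddle connection $\gamma_0$ from $p$ to $p'$, of $\tilde q_0$-length $\le D_{\cK}$. Writing $\lambda = \gh_0 + du_0$,
\begin{displaymath}
u_t(p')-u_t(p) \;=\; \bigl(u_0(p')-u_0(p)\bigr) \;+\; \int_{\gamma_0}\gh_0 \;-\; \int_{\gamma_0}\gh_t.
\end{displaymath}
The first summand is bounded by $R_0(\lambda)\le\|\lambda\|_0$; the second satisfies $|\int_{\gamma_0}\gh_0|\lmul D_\cK\|\gh_0\|_{\infty,0}\lmul\|\gh_0\|_{H,0}\le\|\lambda\|_0$ by the elliptic estimate on $\hat\cK$ for the $\tilde q_0$-harmonic form $\gh_0$. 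The third summand is delicate: decomposing $\gh_t$ into its $\tilde q_t$-Hodge $(1,0)$ and $(0,1)$ components and using Forni's contraction $e^{-\alpha t}$ on the subspace symplectically orthogonal to $\Re(\phi),\Im(\phi)$, one obtains a uniform-in-$t$ bound $|\int_{\gamma_0}\gh_t|\lmul\|\lambda\|_0$ along the trajectory. Hence $R_t(\lambda)\lmul\|\lambda\|_0$ and
\begin{displaymath}
\|\lambda\|_t \;\le\; e^{(1-\alpha)t}\|\lambda\|_0 + C(\cK)\|\lambda\|_0 \;\le\; e^{(1-\bar\alpha)t}\|\lambda\|_0
\end{displaymath}
for any $\bar\alpha<\alpha$ and all $t\ge t_0(\cK,\bar\alpha)$.

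\textbf{Main obstacle.} The uniform control of $|\int_{\gamma_0}\gh_t|$ is the heart of the matter. A naive comparison gives $\|\gh_t\|_{\infty,0}\le e^t\|\gh_t\|_{\infty,t}$ because vertical tangent vectors in $\tilde q_0$ are $e^t$ times longer than in $\tilde q_t$, so that even combined with Forni's $e^{-\alpha t}$ contraction one only gets $e^{(1-\alpha)t}$ rather than a bounded estimate. The symplectic-orthogonality hypothesis on $\bj(\lambda)$ is essential here: it forces $\gh_t$ to lie in the contracting part of the Hodge bundle, and a careful horizontal/vertical decomposition of $\gh_t$ (combined with Forni applied component-wise along the Lyapunov/Oseledets splitting on $\hat\cK$) cancels the geometric stretching factor. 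Without the recurrence hypothesis, excursions of the geodesic to the non-compact part would allow the Hodge norm to grow, and neither the elliptic estimates nor Forni's rate would suffice to salvage the uniform bound on the relative piece.
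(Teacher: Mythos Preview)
Your overall structure matches the paper's: split $\|\lambda\|_t$ into the absolute Hodge piece and the relative piece, and control the absolute piece directly via \thmref{forni}. The gap is precisely where you flag it: the bound on $|\int_{\gamma_0}\gh_t|$. Your appeal to a ``horizontal/vertical decomposition'' of $\gh_t$ along an Oseledets splitting is not a proof. The Oseledets splitting is only defined almost everywhere and is not uniformly Lipschitz; it does not give you a pointwise cancellation of the $e^t$ stretching factor against an $e^{-\alpha t}$ contraction on each component. The crude estimate $|\int_{\gamma_0}\gh_t|\lmul\|\gh_t\|_{H,t}\,\ell_t(\gamma_0)\lmul e^{(1-\alpha)t}\cdot e^t\,\|\lambda\|_0$ is all you can extract from your setup, and it is too weak.

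The paper's fix is not a finer analysis of $\gh_t$ but a \emph{path-switching trick using an intermediate time}. Pick $s\in[0.1t,0.9t]$ with $\bp(q_s)\in\cK$, and take short paths $\gamma_s,\gamma_t$ at times $s,t$ respectively. Evaluate the relative integral along $\gamma_s$ (not $\gamma_0$). The key identity is that $\gh_s$ and $\gh_t$ represent the same absolute class, so
\[
\int_{\gamma_s}\gh_t \;=\; \int_{\gamma_s-\gamma_t}\gh_t + \int_{\gamma_t}\gh_t \;=\; \int_{\gamma_s-\gamma_t}\gh_s + \int_{\gamma_t}\gh_t,
\]
which splits into $\int_{\gamma_s}\gh_s$, $\int_{\gamma_t}\gh_s$, and $\int_{\gamma_t}\gh_t$. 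Each of these is estimated by the elliptic inequality $|\int_\gamma\psi|\lmul\|\psi\|_{H,u}(1+\ell_u(\gamma))$ at a time $u$ where \emph{both} the norm and the length are under control: the worst term is $\|\gh\|_{H,s}\,\ell_s(\gamma_t)\lmul e^{(1-\alpha)s}e^{t-s}\|\lambda\|_0$, and since $s\ge 0.1t$ this is $\lmul e^{(1-0.1\alpha)t}\|\lambda\|_0$. The intermediate time is what buys you a genuine exponential gain on the relative part; your single-time approach cannot.
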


This theorem is essentially in \cite{AF} (Lemma $4.4$).
We reproduce the proof here for the convenience of the reader.

\begin{proof}[Proof of \thmref{decay:relative:norm}]
Since $\cK$ is compact, quadratic differentials in $\cK$ have no short
saddle connections. Hence, for $u \in [0,t]$, $\bp(q_u) \in \cK$ implies that 
$\tilde X_u$ is thick (has no curves with short extremal lengths). 
Therefore, there exist a constant $\CK$ such that
for any $u$ with $\bp(q_u) \in \cK$, any harmonic 
$\psi \in H^1(\tilde X_u,\reals)$ and any arc $\gamma$ on $\tilde X_u$ with 
end points in $\tilde \Sigma$,
\begin{equation}
\label{Eq:trivial:estimate:integral}
\left|\int_{\gamma} \psi\right| 
  \le \CK \, \|\psi \|_{H, u} \big(1 + \ell_u(\gamma)\big),
\end{equation}
where $\ell_u(\gamma)$ is the length of $\gamma$ in flat metric
associated to $\phi_u$. 

Under the assumptions of \thmref{decay:relative:norm},
there exists $s \in [0.1 t, 0.9t]$ such that $\bp(q_s) \in \cK$.
Fix $p,p' \in \Sigma$. Since $\tilde X_0$ is thick, there exists
a path $\gamma_0$ connecting $p$ and $p'$ with 
$\ell_0(\gamma_0) = O(1)$. Similarly, since $\tilde X_s$ and $\tilde X_t$
are thick there are paths $\gamma_s$ and $\gamma_t$ connecting
$p$ and $p'$ such that $\ell_s(\gamma_s) = O(1)$, 
$\ell_t(\gamma_t) = O(1)$. Then, 
$$
\ell_0(\gamma_s) = O(e^s)\qquad\text{and}\qquad
\ell_s(\gamma_t) = O(e^{t-s}).
$$

Suppose $\lambda \in H^1(\tilde X, \tilde \Sigma,\reals)$ with 
$\bj(\lambda) \wedge \Re(\phi) =
\bj(\lambda) \wedge \Im(\phi) = 0$. Let $\psi = \bj(\lambda)$.
For $0 \le u \le t$, let $\psi_u$ denote the
harmonic representative of the cohomology class $\psi$ on $\tilde X_u$.
We have
\begin{align}
\label{Eq:lambdat:norm}
\|\lambda\|_t - \|\lambda\|_0 & \le \|\psi\|_{H,t} -
\|\psi\|_{H,0} + \sum_{p,p' \in \Sigma \times \Sigma }\left|
 \int_{\gamma_{p,p'}} \psi_t - \psi_0 \right| \notag \\
& \le e^{(1-\alpha)t}
\|\psi\|_{H,0} + \sum_{p,p' \in \Sigma \times \Sigma }\left|
 \int_{\gamma_{p,p'}} \psi_t - \psi_0 \right|,
\end{align}
where we have used \thmref{forni}. Since the integral in
\eqnref{lambdat:norm} is independent of the choice of $\gamma_{p,p'}$, 
we use $\gamma_{p,p'} = \gamma_s$. Then, by 
\eqnref{trivial:estimate:integral},
\begin{equation}
\label{Eq:int:gamma:s:theta:0}
\left|\int_{\gamma_s} \psi_0 \right| 
  \le \CK \, \|\psi\|_{H,0} (1+\ell_0(\gamma_s)) 
  \lmul \CK \,  \|\psi\|_{H,0}\, e^s.
\end{equation}
Also,
\begin{align*}
\left|\int_{\gamma_s} \psi_t \right| & =
\left|\int_{\gamma_s-\gamma_t} \psi_t + \int_{\gamma_t}
\psi_t \right| \\
& = \left|\int_{\gamma_s-\gamma_t} \psi_s + \int_{\gamma_t}
\psi_t \right| \\
& \le \left|\int_{\gamma_s} \psi_s\right| + \left|\int_{\gamma_t}
 \psi_s \right| + \left| \int_{\gamma_t} \psi_t \right| \\
& \lmul \CK 
 \Big(\|\psi\|_{H,s} + \|\psi\|_{H,s} e^{t-s} + \|\psi\|_{H,t}\Big).
\end{align*}
where to pass from the first line to the second we used the fact
that $\psi_s$ and $\psi_t$ represent the same cohomology class in
$H^1(\tilde X,\reals)$, and in the last line we used
\eqnref{trivial:estimate:integral} to estimate each term. Then,
using \eqnref{int:gamma:s:theta:0}, we have
\begin{align*}
\left|\int_{\gamma_s} \psi_t - \psi_0 \right| & \lmul
\CK \Big( \|\psi\|_{H,s} + \|\psi\|_{H,s} e^{t-s} 
+ \|\psi\|_{H,t} + \|\psi \|_{H,0} e^s \Big) \\
& \le \CK \Big(e^{(1-\alpha)s} + e^{(1-\alpha)s+t-s} + e^{(1-\alpha)t} 
+ e^s\Big ) \|\psi\|_{H,0} \\
&\lmul \CK \, e^{(1-0.1\alpha)t} \|\psi\|_{H,0},
\end{align*}
where in the second line we used \thmref{forni} and in the last line
we use the fact that $s \in [0.1t,0.9t]$. Substituting into 
\eqnref{lambdat:norm} we get
\begin{displaymath}
\|\lambda\|_t - \|\lambda\|_0 \le \CK \, e^{(1-0.1\alpha)t}
\|\psi\|_{H,0} \le \CK \, e^{(1-0.1\alpha)t} \|\lambda\|_0.
\end{displaymath}
Assuming $t$ is large enough, we can assume that the multiplicative error
is less than $e^{\alpha_0 t}$ for some $\alpha_0 \leq 0.1 \alpha$. 
The theorem then holds for $\bar \alpha \leq (0.1 \alpha - \alpha_0)$. 
\end{proof}

\subsection{The Hodge Distance.}
Let $g_t$ be the Teichm{\"u}ller flow on $\QMs$.
To each quadratic differential $q$, we associate
its imaginary and real measured foliations $\eta^-(q)$, and $\eta^+(q)$.

The flow $g_t$ admits the following foliations:
\begin{enumerate}
\item $\cF^{ss}$, whose leaves are sets of the form 
$\big\{(X,q) \st \eta^+(q)=\text{const}\big\}$;
\item $\cF^{uu}$, whose leaves are sets of the form 
$\big\{(X,q) \st \eta^-(q)=\text{const}\big\}$.
\end{enumerate}
In other words, for $(X_0,q_0)\in \Qs$, a leaf of $\cF^{ss}$ is given by
$$
\alpha^{ss}(X_0,q_0)=\{(X,q)\in\Qs \st \
\eta^+(q)=\eta^+(q_0)\},
$$
and a leaf of $\cF^{uu}$ is given by
$$
\alpha^{uu}(X_0,q_0)=\{(X,q)\in\Qs \st
\eta^-(q)=\eta^-(q_0)\}.
$$
Note that the foliations $\cF^{ss}$, $\cF^{uu}$ are invariant under
both $g_t$ and $\Gamma(S)$; in particular, they
descend to the moduli space $\QMs$.

We also consider the foliation $\cF^u$ whose leaves are defined by
$$
\alpha^u(q)=\bigcup_{t\in{\mathbb R}} g_t\alpha^{uu}(q)
$$
and $\cF^s$ whose leaves are defined by
$$
\alpha^s(q)=\bigcup_{t\in{\mathbb R}} g_t\alpha^{ss}(q).
$$
If $\Stratum$ is a subset of moduli space of abelian differentials, we can 
locally identify a leaf of $\cF^{ss}$ (or $\cF^{uu}$) with
a subspace $W_-$ (or $W^+$) of $H^1(X,\Sigma,{\mathbb R})$. 
In fact, for $\psi \in W_-$ (or $\psi \in W^+)$, we have
\begin{equation} \label{Eq:Normal}
\bj(\psi) \wedge \Im(\psi) = 0
\quad\text{and}\quad 
\bj(\psi) \wedge \Re(\psi) = 0.
\end{equation}
See $\S{1}$ and $\S{2}$ of \cite{forni} for more details. 

If $\gamma$ is a map from $[0,r]$ into some leaf of $\cF^{ss}$, then
we define the Hodge length $\ell(\gamma)$ of $\gamma$ as $\int_0^r
\|\gamma'(t)\| \, dt$, where $\| \param \|$ is the Hodge
norm.  Finally:
\begin{itemize}
\item If two abelian differentials $\phi$ and $\phi'$
belong to the same leaf of $\cF^{ss}$, then we define $d_H(\phi,\phi')$ to 
be the infimum of $\ell(\gamma)$ where $\gamma$ varies over
paths connecting $\phi$ and $\phi'$ and staying in the leaf of $\cF^{ss}
\subset \Qs$. We make the same definition if $\phi$ and $\phi'$
are on the same leaf of $\cF^{uu}$.
\item By taking a ramified double cover (see \secref{quad}), we can define 
$d_H(q,q')$ for any $q,q'$ on the same leaf of $\cF^{ss}$ in  
$\Qs$. 
\end{itemize}

\begin{lemma}
\label{Lem:hodge:distance:nonincrease}
Let $\cK$ be a compact subset of $\Stratum$.
Suppose $(X,q), (X',q') \in \bp^{-1}(\cK)$ are in the same leaf of $
\cF^{ss}$. 
Let $\gamma$ be a Hodge length minimizing path connecting $q$ to
$q'$. Suppose $t > t_0$ is such that for all $q'' \in \gamma$,
\begin{equation}
\big\{ s \in [0,t] \st g_s q'' \in \bp^{-1}(\cK) \big\} \ge t/2.
\end{equation}
Then
\begin{displaymath}
d_H(g_t q, g_t q') \le e^{-c \, t} d_H(q,q'),
\end{displaymath}
where $c$ depend only on $\cK$.
\end{lemma}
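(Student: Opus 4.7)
The plan is to apply Theorem~\ref{Thm:decay:relative:norm} pointwise along $\gamma$, using the fact that the geodesic flow contracts the purely imaginary direction tangent to $\cF^{ss}$ by an additional factor of $e^{-t}$. First, parametrize $\gamma\from [0,r]\to\cF^{ss}$ by Hodge arc length so that $\gamma(0)=q$, $\gamma(r)=q'$, and $\ell(\gamma)=d_H(q,q')$. At each $s\in[0,r]$, the tangent vector $\psi(s)=\gamma'(s)$ lies in the subspace $W_-\subset H^1(\tilde X,\tilde\Sigma,\reals)$ identified (locally) with the tangent space to the stable leaf, and by \eqnref{Normal} it satisfies the orthogonality conditions
\begin{displaymath}
  \bj(\psi(s))\wedge \Re(\phi(s)) = 0 = \bj(\psi(s))\wedge \Im(\phi(s)),
\end{displaymath}
where $\phi(s)$ is the abelian differential (on the double cover, if needed) associated to $\gamma(s)$.

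Consider the path $g_t\gamma$, which by invariance of $\cF^{ss}$ is again contained in a single strong stable leaf and connects $g_tq$ to $g_tq'$. The tangent vector $(g_t)_*\psi(s)$ is computed in period coordinates, and since $\psi(s)$ lies in the ``purely imaginary'' direction $W_-$, one has the identity $(g_t)_*\psi(s) = e^{-t}\psi(s)$ as relative cohomology classes under Gauss--Manin parallel transport. Combining this with \thmref{decay:relative:norm}, which applies since the orbit of $\gamma(s)$ over $[0,t]$ spends at least half the time in $\bp^{-1}(\cK)$ by hypothesis, we get
\begin{displaymath}
  \|(g_t)_*\psi(s)\|_{t} \;=\; e^{-t}\,\|\psi(s)\|_{t} \;\le\; e^{-t}\cdot e^{(1-\bar\alpha)t}\,\|\psi(s)\|_{0} \;=\; e^{-\bar\alpha t}\,\|\psi(s)\|_{0}.
\end{displaymath}
Integrating over $s\in[0,r]$ yields $\ell(g_t\gamma)\le e^{-\bar\alpha t}\,\ell(\gamma)$, and since $g_t\gamma$ is a candidate path for computing $d_H(g_tq,g_tq')$, the lemma follows with $c=\bar\alpha$.

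The main point to verify carefully is that \thmref{decay:relative:norm} really applies at each intermediate $\gamma(s)$: the theorem requires not only the half-time-in-$\bp^{-1}(\cK)$ condition (which is hypothesized) but also that the endpoints $\bp(\gamma(s))$ and $\bp(g_t\gamma(s))$ lie in $\cK$. One resolves this by enlarging $\cK$ to a slightly larger compact $\cK'\subset\Stratum$ containing a uniform neighborhood of $\cK$ together with $\bp(\gamma)$ (one needs that a Hodge-minimizing path between two points in $\bp^{-1}(\cK)$ inside a single $\cF^{ss}$ leaf stays in a controlled neighborhood, which follows from the fact that $\gamma$ has bounded Hodge length and the Hodge and Teichm\"uller geometries are bi-Lipschitz on compact subsets). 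Replacing $\cK$ by $\cK'$ (which only affects the constant $\bar\alpha$), the hypothesis of \thmref{decay:relative:norm} is satisfied for every $\gamma(s)$ provided $t_0$ is chosen larger than the $t_0$ of that theorem for $\cK'$. A secondary, essentially cosmetic, step is to handle the non-orientable case by working on the canonical double cover as in \secref{quad}, where $\ell_{\min}(\phi)\ge\ell_{\min}(q)$ guarantees that $\bp(\phi)$ lies in a compact set of the corresponding abelian stratum whenever $\bp(q)\in\cK$.
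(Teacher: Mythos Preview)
Your proposal is correct and follows exactly the approach the paper intends: the paper's proof is the single sentence ``This follows from \thmref{decay:relative:norm} and \eqnref{Normal},'' and your argument is precisely the unpacking of that sentence --- identify tangent vectors to $\cF^{ss}$ with purely imaginary relative cohomology classes satisfying the wedge conditions of \eqnref{Normal}, use the $e^{-t}$ scaling of the flow on those directions, apply \thmref{decay:relative:norm} pointwise, and integrate. Your additional discussion of the endpoint hypothesis of \thmref{decay:relative:norm} and the passage to a slightly larger compact $\cK'$ fills in details the paper leaves implicit.
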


\begin{proof} This follows from \thmref{decay:relative:norm} and
\eqnref{Normal}.
\end{proof}

We now show that the above condition holds whenever the projections
of $g_t q$ and $g_t q'$ to $\Stratum$ are also close. See also Lemma $5.4$ of \cite{EM:geodesic-counting}.

\begin{lemma} \label{Lem:Good-Neighborhood}
Let $\cK$ be a compact subset of $\Stratum$. Then there
is a larger compact subset $\cK'\subset \Stratum$ and a covering of $\cK$ 
with a finite family of open sets $\cU$ so that the following holds. 
Let $U_1, U_2 \subset \Qs$ be connected open sets so that 
$\bp(U_i) \in \cU$, $i=1,2$. Let $(X,q), (X',q') \in U_1$ and $t>0$
be such that $g_t(q), g_t(q') \in U_2$.  Further, assume that
\begin{equation}
\label{Eq:stay:half:time:in:K}
\big\{ s \in [0,t] \st \bp(g_s q) \in \cK \big\} \ge t/2.
\end{equation}
Then, 
\begin{equation}
\label{Eq:stay:half:time:in:K'}
\big\{ s \in [0,t] \st \bp(g_s q') \in \cK' \big\} \ge t/2.
\end{equation}
\end{lemma}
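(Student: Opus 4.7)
The plan is to carry out a shadowing-type argument in period coordinates, showing that the two trajectories $g_s q$ and $g_s q'$ remain within a uniformly bounded Teichm\"uller distance (after a constant time reparametrization) throughout $[0,t]$. This is the standard way such statements are proved for the Teichm\"uller flow; compare Lemma 5.4 of \cite{EM:geodesic-counting}.

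First I would choose the covering $\cU$ of $\cK$ so that each $U\in\cU$ lifts to a ball in $\Qs$ on which the period coordinates $\boldsymbol\psi_{T,q}$ of \secref{Period} give a biLipschitz identification with a small open subset of $\cx^h$ (passing to the canonical orientation double cover of \secref{quad} when $\vs=-1$). Within such a chart, any tangent vector $v\in\cx^h$ decomposes uniquely as
\begin{equation*}
v = c\cdot v^{0}(q) + v^{ss} + v^{uu},
\end{equation*}
where $v^{0}(q)=(\RR\psi(q),-\II\psi(q))$ is the flow direction at $q$, and $v^{ss}$, $v^{uu}$ lie tangent to the foliations $\cF^{ss}$, $\cF^{uu}$ respectively (purely imaginary/real relative holonomy). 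Applying this to $v=\boldsymbol\psi(q')-\boldsymbol\psi(q)$, the condition $q,q'\in U_1$ forces $|v^{ss}|,|v^{uu}|,|c|\le \varepsilon_0$ for some $\varepsilon_0$ depending only on the chart size.

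Since $g_s$ acts in period coordinates by $(\RR,\II)\mapsto(e^{s}\RR,e^{-s}\II)$, we have for all $s\in[0,t]$
\begin{equation*}
\boldsymbol\psi(g_s q') - \boldsymbol\psi(g_s q)
= c\cdot v^{0}(g_sq) + e^{-s}\,v^{ss} + e^{s}\,v^{uu}.
\end{equation*}
The hypothesis $g_tq,g_tq'\in U_2$ forces $|e^{t}v^{uu}|\le \varepsilon_0$, so $|e^{s}v^{uu}|\le \varepsilon_0 e^{-(t-s)}\le \varepsilon_0$, while trivially $|e^{-s}v^{ss}|\le \varepsilon_0$. Thus in period coordinates $\boldsymbol\psi(g_sq')$ differs from $\boldsymbol\psi(g_{s+c}q)$ by a vector of norm at most $2\varepsilon_0$, so there exists a constant $C=C(\cK)$ with
\begin{equation*}
d_\cT\bigl(\pi(g_s q'),\pi(g_{s+c} q)\bigr)\le C
\quad\text{for all } s\in[0,t].
\end{equation*}

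Now take $\cK'$ to be the closed $C$-neighborhood (in the Teichm\"uller metric on $\Stratum$) of the image under the flow of $\cK$ over time $[-|c|,|c|]$, which is again compact since $|c|\le \varepsilon_0$. If $s+c\in[0,t]$ and $\bp(g_{s+c}q)\in\cK$, then $\bp(g_sq')\in\cK'$. So the set $\{s\in[0,t]:\bp(g_sq')\in\cK'\}$ contains the translate by $-c$ of $\{s\in[0,t]:\bp(g_sq)\in\cK\}$, intersected with $[0,t]$, which differs in measure from the latter by at most $2|c|\le 2\varepsilon_0$. For $t\ge t_0:=4\varepsilon_0$ this yields the desired bound $\ge t/2$, after an innocuous shrinkage of $\varepsilon_0$ (or a mild enlargement of $\cK'$ to absorb the boundary loss near $0$ and $t$).

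The main technical nuisance is the non-orientable case $\vs=-1$: one must work on the canonical double cover throughout, check that $\bp$--closeness downstairs implies closeness upstairs modulo the deck involution, and verify that the decomposition of $v$ into $v^0, v^{ss}, v^{uu}$ is equivariant with respect to that involution. None of this changes the substance of the argument, only the bookkeeping. The shadowing estimate itself is pure linear algebra in period coordinates once the charts are fixed.
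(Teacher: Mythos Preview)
Your core dynamical insight is the same as the paper's: the real (unstable) component of the displacement is bounded by the terminal condition at time $t$, and the imaginary (stable) component by the initial condition at time $0$, so the displacement stays controlled on all of $[0,t]$. The difficulty is in how you implement this.

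There is a genuine gap in the step ``$g_t q, g_t q'\in U_2$ forces $|e^t v^{uu}|\le \varepsilon_0$''. You have fixed the period chart $\boldsymbol\psi=\boldsymbol\psi_{T,q}$ at $U_1$, using saddle connections $\omega_1,\dots,\omega_h$ that are short on $U_1$. But $U_2$ is a lift of some other element of $\cU$, and the $\omega_i$ may be extremely long there; the change of basis to a chart adapted to $U_2$ is an integer matrix whose norm is not bounded independently of $t$ or of which lift $U_2$ happens to be. So the diameter of $\boldsymbol\psi(U_2)$ in your fixed chart is not uniformly bounded, and you cannot extract a uniform bound on $e^t v^{uu}$. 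The same problem recurs when you try to pass from ``period difference $\le 2\varepsilon_0$'' to ``$d_\cT\le C$'': a bound on $h$ fixed holonomies does not uniformly control the shortest saddle connection of $g_s q'$, which is the quantity governing compactness in $\Stratum$. (The decomposition $v=cv^0+v^{ss}+v^{uu}$ and the resulting time-shift bookkeeping is also unnecessary; $v=\Re v+i\,\Im v$ already does the job.)

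The paper sidesteps both issues by working with a multiplicative, basis-free quantity: the length of \emph{every} saddle connection. Choose $\cU$ so that within each element, $\ell_{q_1}(\omega)/\ell_{q_2}(\omega)\in[\rho^{-1},\rho]$ for all saddle connections $\omega$. Now suppose for some $s$ and some $\omega$ that $\ell_{q_s}(\omega)>2\rho\,\ell_{q'_s}(\omega)$. If $\omega$ is mostly vertical in $q_s$ (i.e.\ $|\Im\hol_{q_s}(\omega)|\ge\frac12\ell_{q_s}(\omega)$), then flowing back to time $0$ and using that $\Im$ scales by $e^{s}$ gives $\ell_q(\omega)>\rho\,\ell_{q'}(\omega)$, contradicting the $U_1$ hypothesis; if $\omega$ is mostly horizontal, flow forward to time $t$ and contradict the $U_2$ hypothesis. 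Hence all saddle-connection lengths stay comparable with ratio $2\rho$ for every $s\in[0,t]$, and one may take $\cK'=\{q:\ell_{\min}(q)\ge \ell_{\min}(\cK)/2\rho\}$. This gives \eqref{Eq:stay:half:time:in:K'} immediately, with no chart transitions and no Teichm\"uller-metric conversion needed.
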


\begin{proof}
Let $\rho>0.$ We can find an open cover $\cU$ of $\cK$ 
 so that the following holds. Let $U$ be connected open sets so that 
$\bp(U) \in \cU$, and let $(X_1,q_1), (X_2,q_2) \in U$. Then for any saddle 
connection $\omega$, we have 
\begin{equation} \label{Eq:Comparable}
\frac 1 \rho \ell_{q_1}(\omega) \leq \ell_{q_2}(\omega) 
  \leq  \rho \, \ell_{q_1}(\omega).
\end{equation}
Let $U_1, U_2 \subset \Qs$ be connected open sets so that 
$\bp(U_i) \in \cU$, $i=1,2$. Let $(X,q), (X',q') \in U_1$ and $t>0$
be such that $g_t(q), g_t(q') \in U_2$.  
We first claim that \eqref{Eq:Comparable} is true for quadratic differentials 
$q_s= g_s(q)$ and $q_s'= g_s(q')$ as well for a larger constant $\rho'=2\rho$.
Assume, for contradiction that 
$$
\ell_{q_s}(\omega) > \rho' \ell_{q_s'}(\omega).
$$
for some $s\in[0,t]$. Assume $\omega$ is mostly vertical in $q_s$. That is, 
$$
\Im (\hol_{q_s}(\omega)) > \frac 12 \ell_{q_s}(\omega). 
$$
Then
\begin{align*}
\ell_q(\omega) &\geq \Im (\hol_q(\omega)) \\
&= e^s  \Im (\hol_{q_s}(\omega)) \\
&>  \frac 12 e^s \ell_{q_s}(\omega)\\
& > \frac 12 e^s \rho' \, \ell_{q_s'}(\omega)
  \geq \frac 12 \rho' \, \ell_{q'}(\omega). 
\end{align*}
Which contradicts \eqnref{Comparable}. In case $\omega$ is mostly
horizontal, we move forward in time and argue the same way. This
proves the claim. 

Now let $\ep$ be such that the length of any saddle connection
in $q \in \cK$ is larger than $\ep$, and let $\cK'$ be the compact subset of 
$\Stratum$ consisting of quadratic differentials where the length of every saddle 
connection is larger than $\ep'=\ep/\rho'$. Then \eqref{Eq:stay:half:time:in:K'} 
follows from the above length comparison. 
\end{proof}

\begin{remark}
\label{Rem:hyperbolicity}
We have essentially shown that under the assumption \eqnref{stay:half:time:in:K}
we have exponential contraction along the foliation $\cF^{ss}$ (and similarly
exponential expansion along the foliation $\cF^{uu}$).
\end{remark}

\section{Outline of the proof of \thmref{asymp:all:geodesics}} \label{Sec:closing}

In this section, we prove \thmref{asymp:all:geodesics}. 
We only outline the arguments here since they are well known
a more detailed version is already present in \cite{Hamenstadt:BM}.
We essentially follow the work of Margulis \cite{Margulis:thesis}. 
First, we need a closing lemma. 

\begin{lemma}[Closing Lemma]
\label{Lem:closing}
Let $\cK$ be a compact subset of $\Stratum$ consisting of non-orbifold points. 
Given a quadratic differential $(x,q) \in \cK$ and $\delta > 0$, 
there exist constants $L_0 > 0$, and open neighborhoods 
$U \subset U'\subset \Stratum$ of $(x,q)$ with the following 
property. For $L>L_0$, suppose that $\bg: [0,L] \to \Stratum$ is a 
\Teich geodesic segment such that 
\begin{enumerate}[\quad (a)]
\item $\bg(0), \bg(L) \in U$ and 
\item $\bg$ spends more than half of its length in $\cK$. 
\end{enumerate}
Let  $\bg_1$ be the closed path in $\Stratum$ which is the union of 
$\bg$ and a segment connecting $\bg(L)$ to $\bg(0)$ in 
$U$. Then there exists a unique closed geodesic $\bg' \subset \Stratum$ 
with the following properties:
\begin{enumerate}[\quad (I)]
\item $\bg'$ and $\bg_1$ have lifts in $\T(S)$ which stay 
$\delta$--close with respect to the Teichm\"uller metric.
\item The length of $\bg'$ is within $\delta$ of $L$, 
\item $\bg'$ passes through $U'$.
\end{enumerate}
\end{lemma}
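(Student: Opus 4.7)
The plan is to adapt the classical Anosov closing argument to the partial hyperbolicity of the Teichm\"uller flow established in \secref{Hodge}. Near the non-orbifold point $(x,q)\in\cK$, the period coordinates make $\Stratum$ into a smooth manifold and the weak-stable and weak-unstable foliations $\cF^{ss}$, $\cF^{uu}$, together with the flow direction, furnish a local product structure. I will construct a Poincar\'e return map on a small transversal $\Sigma$ to $\bg$ at $\bg(0)$, show that it is hyperbolic with contraction and expansion rates $e^{\pm cL}$, and then produce its unique fixed point; the orbit of this fixed point is the sought-after closed geodesic $\bg'$.

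Choose the neighborhoods $U \subset U'$ small enough that a product chart identifies $U'$ with a polydisk $W^{ss}\times W^{uu}\times(-\eta,\eta)$, where the first factor parametrizes a slice of the stable leaf, the second a slice of the unstable leaf, and the third the flow direction; this is possible precisely because $(x,q)$ is not an orbifold point. Set $\Sigma = W^{ss}\times W^{uu}\times\{0\}$. For each $p \in \Sigma$ sufficiently close to $(x,q)$, there is a unique time $L(p)$ near $L$ with $g_{L(p)}(p)\in\Sigma$, defining a continuous return map $\Psi\colon \Sigma_0 \to \Sigma$ on a neighborhood $\Sigma_0 \ni (x,q)$. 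By \lemref{Good-Neighborhood}, after possibly shrinking $\Sigma_0$, the half-time-in-$\cK$ hypothesis satisfied by $\bg$ propagates to every orbit starting in $\Sigma_0$, with $\cK$ replaced by a slightly enlarged compact set $\cK'\subset\Stratum$. Consequently \lemref{hodge:distance:nonincrease} applies to both forward and backward orbits of points in $\Sigma_0$, yielding that $\Psi$ contracts the $W^{ss}$-factor by at most $e^{-cL}$ and expands the $W^{uu}$-factor by at least $e^{cL}$.

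With this hyperbolic structure in hand, $\Psi$ admits a unique fixed point $p^*\in\Sigma_0$, produced by the standard graph transform: the local stable and unstable manifolds of a would-be fixed point are the unique Lipschitz graphs over $W^{ss}$ and $W^{uu}$ respectively that are $\Psi$-invariant, obtained by iterating the graph-transform operator, which is an $e^{-cL}$-contraction in the appropriate sup norm; the point $p^*$ is their transverse intersection. The flow orbit of $p^*$ returns to $p^*$ after the time $L(p^*)$ and projects to a closed geodesic $\bg'\subset\Stratum$, which passes through $U'$ since $p^*\in U'$ (conclusion (III)). The hyperbolic inequalities -- forward contraction of the $W^{ss}$ component of $g_t(p^*) - g_t(\bg(0))$ and backward contraction of the $W^{uu}$ component -- ensure that lifts of $g_t(p^*)$ and $g_t(\bg(0))$ to $\cT(S)$ stay within $\delta$ throughout $[0,L(p^*)]$, which is conclusion (I); conclusion (II) follows from $|L(p^*)-L|<\delta$, a direct consequence of $\Psi(p^*)=p^*$ and of $\bg(L)\in U$; and uniqueness of $\bg'$ comes from uniqueness of the hyperbolic fixed point $p^*$.

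The main technical obstacle is matching the Hodge-norm contraction of \lemref{hodge:distance:nonincrease}, stated on the ramified double cover $\tilde X$ in terms of $H^1(\tilde X,\tilde\Sigma,\reals)$, with the Euclidean metric in period coordinates that governs the graph transform. On the compact set $\cK'$ these two norms are continuous and thus uniformly comparable, so the contraction passes between them with a multiplicative error that is harmless once $L$ is sufficiently large (this is the role of the threshold $L_0$). A related point is that the product chart must be chosen so that its directions coincide with $\cF^{ss}$ and $\cF^{uu}$; these foliations are smooth in period coordinates, so a linear change of coordinates aligns the chart as required. Finally, the non-orbifold assumption on $(x,q)$ is essential: it guarantees that the chart is a genuine manifold chart rather than a quotient by a finite group, so that the fixed point $p^*$ is not counted with multiplicity and $\bg'$ is uniquely determined.
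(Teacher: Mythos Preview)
Your proposal is correct and follows essentially the same approach as the paper: both arguments exploit the local product structure given by $\cF^{ss}$, $\cF^{uu}$, and the flow direction, use \lemref{Good-Neighborhood} to propagate the half-time-in-$\cK$ hypothesis to nearby orbits, invoke \lemref{hodge:distance:nonincrease} for exponential contraction and expansion along the stable and unstable leaves, and then locate the periodic orbit via a fixed-point argument. The paper phrases the last step as applying the contraction mapping principle first on $\cF^{ss}$ and then on $\cF^{uu}$ for the inverse map, whereas you package it as a graph transform; these are standard equivalent formulations of the Anosov closing mechanism, and your version is in fact more explicit than the paper's outline (which is deliberately brief and refers to \cite{Hamenstadt:BM} and \cite{Margulis:thesis} for details).
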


\begin{remark} 
We remark that in \lemref{closing} if we remove the assumption that $\cK$ consists 
of non-orbifold points then there are at most a uniformly bounded number of
closed geodesics satisfying conditions (I--III).
A version of the closing lemma can be found in \cite{Hamenstadt:BM}.
\end{remark}

\begin{proof}[Outline of the proof of \lemref{closing}]
Consider the stable and unstable foliations for the geodesic flow. Our goal is 
to show that if $U$ is small enough, the first return map on these foliations will 
define a contraction with respect to the Hodge distance function.
As a result, we find a fixed point for the first return map in $U'$; this is the same 
as a closed geodesic going through $U'$. 

In view of \lemref{hodge:distance:nonincrease} and \lemref{Good-Neighborhood}
 there is in fact a neighborhood of $(x,q)$ such that the time 
$L$ geodesic flow restricted to the neighborhood expands along the leaves of 
$\cF^{uu}$ and contracts along the leaves of $\cF^{ss}$. 

Then, the contraction mapping principle (applied first to the map on $\cF^{ss}$ 
and then to the inverse of the map on $\cF^{uu}$) allows us to find
a fixed point for the geodesic flow near $(x,q)$ (in a slightly bigger 
neighboorhood). In other words, there are neighborhoods
$U \subset U'$ of $(x,q)$ such that:

\begin{itemize}
\item if $\bg: [0,L] \to \Stratum$ satisfies properties $(a)$ and $(b)$ 
then in view of the hyperbolicity statement (\lemref{hodge:distance:nonincrease}) 

the time $L$ geodesic flow restricted
to $U$ expands along the leaves of $\cF^{uu}$
and contracts along the leaves of $\cF^{ss}$, in the metric $d_H$, 
\item for any $q_{1}, q_{2} \in U,$ if $q_1 \in \cF^{ss}(q_2)$ or 
$q_1 \in \cF^{uu}(q_2)$ then $d_{H}(q_{1},q_{2}) \leq \delta$.
\end{itemize}
We can apply the contraction mapping principle to $\cF^{ss}$ to find 
$(x_0,q_0) \in U'$ such that $g_{L}(q_{0})\in \cF^{uu} q_0$.
Now we can consider the first return map of the map $g_{-t}$ on 
$\cF^{uu}(q_0)$.
\end{proof}

\begin{proof}[Proof of \thmref{asymp:all:geodesics}]
Note that by the bound proved in \thmref{j:short}, 
we only need to consider the set of closed geodesics going through a fixed 
compact subset of $\Stratum$. We have 
\begin{itemize}
\item by \thmref{VM}, the geodesic flow on $\Stratum$ is mixing, and 
\item on a fixed compact subset of $\cQ^{1}\mathcal{M}(S,\sigma)$ 
the geodesic flow is uniformly hyperbolic.
\item every nearly closed orbit approximates a close orbit (\lemref{closing}). 
\end{itemize} 
Hence, all the ingredients are in place to drive \thmref{asymp:all:geodesics} 
 following the work of Margulis \cite{Margulis:thesis}. 
(See also $\S 20.6$ in  \cite{Katok:Hasselblat}.) 
\end{proof}

\end{document}